\tikzset{
>=stealth',
  punktchain/.style={
    rectangle,
    rounded corners,
    draw=black, very thin,
    text width=12em,
    minimum height=3em,
    text centered,
    on chain},
  punktchaindash/.style={
    rectangle,
    rounded corners,
    draw=black, dashed, very thin,
    text width=12em,
    minimum height=3em,
    text centered,
    on chain},
  optpunktchain/.style={
    rectangle,
    rounded corners,
    draw=black, ultra thick,
    text width=12em,
    minimum height=3em,
    text centered,
    on chain},
  graychain/.style={
    rectangle,
    rounded corners,
    draw=gray, very thick,
    text width=10em,
    minimum height=3em,
    text centered,
    on chain},
  approxMeth/.style={
    rectangle,
    rounded corners,
    draw=none, thin,
    text width=10em,
    minimum height=3em,
    text centered,
		on chain },
  line/.style={draw, thick, <-},
  element/.style={
    tape,
    top color=white,
    bottom color=blue!50!black!60!,
    minimum width=8em,
    draw=blue!40!black!90, very thick,
    text width=10em,
    minimum height=3.5em,
    text centered,
    on chain},
  every join/.style={->, thick,shorten >=1pt},
  decoration={brace},
  tuborg/.style={decorate},
  tubnode/.style={midway, right=2pt},
}
\definecolor{Blue}{rgb}{0,0,1}
\definecolor{Red}{rgb}{1,0,0}
\newcommand{\continuousTerm}{\upsilon}
\newcommand{\A}{{\vec A}}
\newcommand{\B}{{\vec B}}
\newcommand{\reviewerA}[1]{#1}
\newcommand{\reviewerB}[1]{#1}
\newcommand{\reviewerC}[1]{#1}
\newcommand{\reviewerANew}[1]{#1}
\newcommand{\reviewerCNew}[1]{#1}
\newcommand{\reviewerANewer}[1]{#1}
\newcommand{\reviewerCNewer}[1]{#1}
\newcommand{\state}{\vec x}	
\newcommand{\f}{\vec f}	
\newcommand{\zero}{\vec 0}	
\newcommand{\D}{\vec D}	
\newcommand{\lipschitzConstant}{\kappa}	
\newcommand{\ndof}{N}	
\newcommand{\statevarf}{\vec{\xi}}	
\newcommand{\timevarf}{\tau}	
\newcommand{\maxT}{T}	
\newcommand{\timedomain}{\ensuremath{\left[0,\maxT\right]}}	
\newcommand{\velocityDummy}{\vec v}	
\newcommand{\velocityDummyRed}{\hat{\vec v}}	
\newcommand{\objFunNo}{g}	
\newcommand{\objFun}[1]{\objFunNo\left(#1\right)}	
\newcommand{\stateInitialNo} {\ensuremath{\state_0}}
\newcommand{\podstate}{\ensuremath{\vec \Phi}}	
\newcommand{\relProjErrorNo}{\ensuremath{\bar \mu_\star}}	
\newcommand{\relProjErrorNok}{\ensuremath{\relProjErrorNo^k}}	
\newcommand{\relProjErrork}[2]{\ensuremath{\relProjErrorNok(#1,#2)}}	
\newcommand{\testBasis}{\ensuremath{\vec \Psi}}	
\newcommand{\testBasisEntry}[2]{\ensuremath{\psi_{#1#2}}}	
\newcommand{\testBasisijEntry}[2]{\ensuremath{\left[\testBasis^\timestepit_{ij}\right]_{#1#2}}}	
\newcommand{\testBasisiEntry}[2]{\ensuremath{\left[\testBasis^\timestepit_{i}\right]_{#1#2}}}	
\newcommand{\podstateEntry}[2]{\phi_{#1 #2}}	
\newcommand{\podstateRK}{\ensuremath{\bar{\podstate}}}	
\newcommand{\podstateRKEntry}[2]{\bar\phi_{#1#2}}	
\newcommand{\podres}{\ensuremath{\podstate_{r}}}	
\newcommand{\podjac}{\ensuremath{\podstate_{J}}}	
\newcommand{\sampleMat}{\ensuremath{\vec P}}	
\newcommand{\res}{\ensuremath{{\vec r}}}	
\newcommand{\resEntry}[1]{\ensuremath{r_{#1}}}	
\newcommand{\resiEntry}[1]{\ensuremath{[\res_i^\timestepit]_{#1}}}	
\newcommand{\RRed}{\ensuremath{\hat \res}}	
\newcommand{\resRK}{\ensuremath{\bar\res}}	
\newcommand{\resRKexplicit}{\ensuremath{\vec q}}	
\newcommand{\resRKexplicitRed}{\hat{\resRKexplicit}}	
\newcommand{\resRKexplicitiEntry}[1]{\ensuremath{[\resRKexplicit^\timestepit_i]_{#1}}}	
\newcommand{\resRKEntry}[1]{\bar r_{#1}}	
\newcommand{\stateRed}{\ensuremath{{ \hat \state}}}	
\newcommand{\stateRedP}{\ensuremath{\stateRed_P}}	
\newcommand{\stateRedG}{\ensuremath{{\stateRed_G}}}	
\newcommand{\stateRedFOM}{\ensuremath{{\stateRed_\star}}}	
\newcommand{\optVec}{\ensuremath{\vec z}}	
\newcommand{\optVecRed}{\ensuremath{\hat \optVec}}	
\newcommand{\unknown}{\ensuremath{\vec w}}	
\newcommand{\unknownEntry}[1]{ w_{#1}}	
\newcommand{\unknownjEntry}[1]{ [\unknown_j]_{#1}}	
\newcommand{\unknownRK}{\ensuremath{\bar\unknown}}	
\newcommand{\unknownRKRed}{\ensuremath{\hat{\bar\unknown}}}	
\newcommand{\unknownRKEntry}[1]{\bar w_{#1}}	
\newcommand{\nstate}{\ensuremath{{p}}}	
\newcommand{\range}[1]{\ensuremath{\mathrm{Ran}\left(#1\right)}}	
\newcommand{\unknownRed}{\ensuremath{ \hat \unknown}}	
\newcommand{\unknownRedPG}{\ensuremath{ \hat {\vec y}}}	
\newcommand{\unknownVariable}{\ensuremath{\vec u}}	
\newcommand{\dt}{\ensuremath{\Delta t}}	
\newcommand{\rightChol}{\ensuremath{\vec U}}	
\newcommand{\rightCholEntry}[2]{\ensuremath{u_{#1#2}}}	
\newcommand{\rightCholDIRK}{\ensuremath{\underline{\vec U}}}	
\newcommand{\rightCholDIRKEntry}[3]{\ensuremath{[\rightCholDIRK_{#3}]_{#1#2}}}	
\newcommand{\rightCholRK}{\ensuremath{\bar{\vec U}}}	
\newcommand{\rightCholRKEntry}[2]{\ensuremath{\bar u_{#1#2}}}	
\newcommand{\weightingMatrix}{\ensuremath{\vec A}}	
\newcommand{\timestepit}{\ensuremath{n}}	
\newcommand{\nrows}{\ensuremath{z}}	
\newcommand{\bbP}{\mathbb{P}}
\newcommand{\bbR}{\mathbb{R}}
\newcommand{\bbV}{\mathbb{V}}
\newcommand{\mat}[1]{\bm{{#1}}}
\newcommand{\matI}{\mat I}
\newcommand{\normtwo}[1]{\norm{#1}_{\reviewerA{2}}}
\renewcommand{\vec}[1]{\bm{{#1}}}
\newcommand{\threshold}{\epsilon}
\newcommand{\thresholdLSPG}{\bar\epsilon}
\newcommand{\thresholdRK}{\omega}
\newcommand{\thresholdRKLSPG}{\bar\omega}
\newtheorem{theorem}{Theorem}[section]
\newtheorem{lemma}[theorem]{Lemma}
\newtheorem{corollary}[theorem]{Corollary}
\newtheorem{remark}[theorem]{Remark}
\newenvironment{proof}{\paragraph{Proof}}{$\square$}
\newcommand{\defeq}{\vcentcolon=}
\newcommand{\natNo}{\ensuremath{\mathbb N}} 
\newcommand{\nat}[1]{\ensuremath{\natNo(#1)}} 
\newcommand{\natZero}[1]{\ensuremath{\natNo_0(#1)}} 
\newcommand{\innat}[1]{\ensuremath{\in\nat{#1}}} 
\newcommand{\innatsequence}[1]{\ensuremath{\reviewerC{=1,\ldots,#1}}} 
\newcommand{\snapsNo}{\ensuremath{\mathcal X}}
\newcommand{\w} {\ensuremath{{\boldsymbol w}}} 
\newcommand{\nsnap} {\ensuremath{{n_w}}}
\newcommand{\energyCrit} {\ensuremath{\nu}}
\newcommand{\podArgsNo}{\ensuremath{\podstate}}  %
\newcommand{\podArgs}[2]{\ensuremath{\podArgsNo\left(#1,#2\right)}} %
\newcommand{\snapmat} {\ensuremath{\boldsymbol W}}
\newcommand{\nenergy}{\ensuremath{n_e}}
\newcommand{\pressure}{\ensuremath{p}}
\newcommand{\pressuretruth}{\ensuremath{p_\star}}
\newcommand{\errorValue}{\ensuremath{\varepsilon}}
\newcommand{\timeproj}{\ensuremath{P_\star}}
\newcommand{\dttruth}{\ensuremath{\dt_\star}}
\newcommand{\resFOM}{\ensuremath{\bar\res_\star^n}}
\newcommand{\resGal}{\ensuremath{\bar{\res}_G^n}}
\newcommand{\resDisc}{\ensuremath{\bar{\res}_P^n}}
\newcommand{\resFOMTotal}[1]{\ensuremath{\res_\star^{#1}}}
\newcommand{\resGalTotal}[1]{\ensuremath{{\res}_G^{#1}}}
\newcommand{\resDiscTotal}[1]{\ensuremath{{\res}_P^{#1}}}
\newcommand{\deltaresDisc}{\ensuremath{\delta{\res}_P}}
\newcommand{\deltastateRedG}{\ensuremath{\delta{\state}_G}}
\newcommand{\deltastateRedP}{\ensuremath{\delta{\state}_P}}
\newcommand{\firstCoeffMulti}[1]{\ensuremath{\varepsilon_{#1}}}
\newcommand{\firstCoeffMultiD}[1]{\ensuremath{\bar\varepsilon_{#1}}}
\newcommand{\secondCoeffMulti}[1]{\ensuremath{\gamma_{#1}}}
\newcommand{\secondCoeffMultiD}[1]{\ensuremath{\bar\gamma_{#1}}}
\newcommand{\firstCoeffMultin}[1]{\ensuremath{\varepsilon^n_{#1}}}
\newcommand{\firstCoeffMultinTwoarg}[2]{\ensuremath{\varepsilon^{#2}_{#1}}}
\newcommand{\secondCoeffMultin}[1]{\ensuremath{\gamma^n_{#1}}}
\newcommand{\secondCoeffMultinTwoarg}[2]{\ensuremath{\gamma^{#2}_{#1}}}
\newcommand{\multiIndexSet}{\ensuremath{(\eta_i)}}
\newcommand{\multiIndex}[1]{\ensuremath{\eta_{#1}}}
\newcommand{\indicator}{\ensuremath{\boldsymbol 1}}
\newcommand{\ellstar}[1]{\ensuremath{\ell^{#1}}}
\newcommand{\elltwostar}[1]{\ensuremath{\ell_\varepsilon^{#1}}}
\newcommand{\elltwostarmax}{\ensuremath{\ell_\varepsilon^\star}}
\newcommand{\barelltwostar}[1]{\ensuremath{\bar\ell_\varepsilon^{#1}}}
\newcommand{\barelltwostarmax}{\ensuremath{\bar\ell_\varepsilon^\star}}
\newcommand{\timestepset}[1]{\ensuremath{\mathcal L(#1)}}
\newcommand{\sizetimestepset}{\ensuremath{\bar n}}
\newcommand{\bmax}{\ensuremath{b^\star}}
\newcommand{\astar}{\ensuremath{a^\star}}
\newcommand{\barastar}{\ensuremath{\bar a^\star}}
\newcommand{\betamax}{\ensuremath{\beta_\mathrm{max}}}
\newcommand{\aSymb}{\ensuremath{|\alpha^\star|}}
\newcommand{\bSymb}{\ensuremath{|\beta^\star|}}
\newcommand{\zSymb}{\ensuremath{k}}
\newcommand{\dSymb}{\ensuremath{|\alpha_0^\star|}}
\newcommand{\fSymb}{\ensuremath{|\beta_0^\star|}}
\newcommand{\tSymb}{\ensuremath{\dt}}
\newcommand{\kSymb}{\ensuremath{\lipschitzConstant}}
\newcounter{pfctr}
\newcounter{propctr}
\newcounter{proposctr}
\newcommand{\RR}[1]{\ensuremath{\mathbb{R}^{ #1 }}}
\newcommand{\vecmat}[2]{\left[#1^1 \ \cdots\ #1^{#2}\right]}
\journal{Journal of Computational Physics}
\begin{document}
\numberwithin{equation}{section}
\begin{frontmatter}

\title{Galerkin v.\ \reviewerA{least-squares Petrov--Galerkin} projection\\ in nonlinear model reduction}
\author[sandia]{Kevin Carlberg\corref{sandiacor}}
\ead{ktcarlb@sandia.gov}
\ead[url]{sandia.gov/~ktcarlb}
\address[sandia]{Sandia National Laboratories}
\cortext[sandiacor]{7011 East Ave, MS 9159, Livermore, CA 94550. Sandia is a
multiprogram laboratory operated by Sandia
Corporation, a Lockheed Martin Company, for the United States Department of
Energy under contract DE-AC04-94-AL85000.}
\address[gmu]{George Mason University}
\cortext[gmucor]{4400 University Drive, MS: 3F2, Exploratory Hall, Room 4201, Fairfax, Virginia 22030.}
\author[sandia]{Matthew Barone\corref{sandiacor}}
\ead{mbarone@sandia.gov}
\author[gmu]{Harbir Antil\corref{gmucor}}
\ead{hantil@gmu.edu}

\begin{abstract}

\reviewerA{Least-squares Petrov--Galerkin (LSPG)} model-reduction techniques such as the Gauss--Newton with
Approximated Tensors (GNAT) method have shown promise, as they have generated
stable, accurate solutions for large-scale turbulent, compressible flow
problems where standard Galerkin techniques have failed. However, there has
been limited comparative analysis of the two approaches. This is due in part
to difficulties arising from the fact that Galerkin techniques perform
\reviewerC{optimal} projection \reviewerCNewer{associated with residual
minimization} at the time-continuous level, while
\reviewerA{LSPG} techniques do
so at the time-discrete level.

This work provides a detailed theoretical and \reviewerB{computational} comparison of the
two techniques for two common classes of time integrators: linear
multistep schemes and Runge--Kutta schemes. We present a number
of new findings, including conditions under which the \reviewerA{LSPG} ROM has
a time-continuous representation, conditions under which the two techniques
are equivalent, and time-discrete error bounds for the two approaches.
Perhaps most surprisingly, we demonstrate both theoretically and
\reviewerB{computationally} that decreasing the time step does not necessarily decrease the
error for the \reviewerA{LSPG} ROM; instead, the time step should be `matched'
to the spectral content of the reduced basis. In numerical experiments carried out on  a
turbulent compressible-flow problem with over one million unknowns, we show
that increasing the time step to an intermediate value decreases both the
error and the simulation time of the \reviewerA{LSPG} reduced-order model by
an order of magnitude.
\end{abstract}

\begin{keyword}
model reduction \sep  GNAT \sep \reviewerC{least-squares Petrov--Galerkin projection} \sep Galerkin
projection  \sep CFD 


\end{keyword}

\end{frontmatter}
\section{Introduction}

While modeling and simulation of parameterized systems has become an essential
tool in many industries, the computational cost of executing high-fidelity
simulations is infeasibly high for many time-critical applications. For
example, real-time scenarios (e.g., model predictive control) require
simulations to execute in seconds or minutes, while many-query scenarios
(e.g., statistical inversion) can require thousands of simulations
corresponding to different \reviewerC{parameter} instances of the system.

Reduced-order models (ROMs) have been developed to mitigate this computational
bottleneck. First, they execute an \emph{offline} stage during which
computationally expensive training tasks (e.g., evaluating the high-fidelity
model at several points in the \reviewerC{parameter} space) compute a representative
low-dimensional `trial' basis for the system state.  Then, during the
inexpensive \emph{online} stage, these methods quickly compute approximate
solutions for arbitrary points in the \reviewerC{parameter} space via projection: they compute
solutions in the span of the trial basis while enforcing the
high-fidelity-model residual to be orthogonal to a low-dimensional `test'
basis.  They also introduce other approximations in the presence of general
\reviewerC{nonlinearities} or non-affine parameter
dependence.  See
Ref.~\cite{surveyWillcoxGugercin} and references within for a survey of
current methods.

By far the most popular model-reduction technique for nonlinear ordinary
differential equations (ODEs) is Galerkin projection \cite{sirovich1987tad3},
wherein the test basis is set to be equal to the trial basis, which is often
computed via proper orthogonal decomposition (POD) \cite{POD}. \reviewerA{Galerkin projection
can be considered \emph{continuous optimal}, as the 
Galerkin-ROM velocity minimizes the ODE (time-continuous) residual
in the $\ell^2$-norm.}
In addition, for specialized dynamical systems
(e.g., Lagrangian dynamical systems), performing Galerkin projection is
necessary to preserve problem structure
\cite{lall2003structure,carlbergStructureAiaa,carlberg2012spd}.  However,
theoretical \reviewerC{analyses}---in the form
of time-continuous error bounds \cite{rathinam:newlook} and 
stability analysis \cite{foias1991dissipativity}---as well as numerical experiments
have shown that Galerkin projection can lead to significant problems when
applied to general nonlinear ODEs: instability \cite{rempfer2000low},
inaccurate long-time responses \cite{sirisup2004spectral,noack2005need}, and no
guarantee of \textit{a priori} convergence (i.e., adding basis vectors can
degrade the solution) \reviewerB{\cite[Section 5]{rowley2004mrc}}. In turbulent fluid flows, some
of this poor performance can be attributed to the trial basis `filtering out'
small-scale modes essential for energy dissipation.

To address these shortcomings, alternative projection techniques have been
developed, particularly in fluid dynamics. These include stabilizing inner
products 
\cite{rowley2004mrc,barone2009stable,kalashnikova2010stability};  introducing
dissipation via closure models
\cite{aubry1988dynamics,sirisup2004spectral,Bergmann2009516,wang2012proper,san2013proper}
or numerical dissipation \cite{iollo2000stability};
performing nonlinear Galerkin projection based on approximate inertial
manifolds \cite{marion1989nonlinear,shen1990long,jolly1991preserving};
including a pressure-term representation \cite{noack2005need,galletti2004low};
modifying the POD basis by including many modes (such that dissipative
modes are captured), changing the norm \cite{iollo2000stability},
\reviewerC{enabling}
adaptivity \cite{Bergmann2009516}, or \reviewerC{including} basis functions that resolve a
range of scales \cite{balajewicz2012stabilization} or respect the attractor's
power balance \cite{balajewicz2013low}; and performing
Petrov--Galerkin projection \cite{Fang2013540}.

Alternatively, a promising new model-reduction methodology eschews Galerkin
projection in favor of performing projection at the \emph{fully discrete
level}, i.e., after the ODE has been discretized in time \cite{CarlbergGappy}.
This \emph{discrete-optimal} method\reviewerC{---known as least-squares
Petrov--Galerkin (LSPG) projection---}computes the solution that minimizes the
$\ell^2$-norm of the \reviewerC{(time-discrete)} residual arising at each time
step; this \reviewerANew{ensures that} adding basis vectors
\reviewerANew{yields a} monotonic decrease in the least-squares objective function.
When equipped with gappy POD \cite{sirovichOrigGappy} \reviewerC{(a least-squares
generalization of---and precursor to---the discrete empirical interpolation
method \cite{chaturantabut2010journal}) to approximate}
the discrete residual as a complexity-reduction mechanism,
this approach is known as the Gauss--Newton with Approximated Tensors (GNAT)
method \cite{carlbergJCP}. \reviewerB{While LSPG projection does
not necessarily guarantee \textit{a priori} accuracy and stablility for
turbulent, compressible flows, it has been computationally demonstrated to
\reviewerANew{generate accurate and stable responses for such problems on
which Galerkin projection yielded unstable responses
\cite{CarlbergGappy,carlbergHawaii,carlbergJCP}.}}

In spite of its promise, theoretical analysis has been limited to developing
consistency conditions for snapshot collection
\cite{CarlbergGappy,carlbergJCP} and discrete-time error bounds for simple
time integrators \cite{carlbergJCP,amsallemLocalGnat}.  In particular, major outstanding
questions include: (1) What are time-continuous and time-discrete
representations of the Galerkin and \reviewerA{LSPG} ROMs for broad classes of
time integrators? (2) Are there conditions under which the two techniques are
equivalent? (3) What discrete-time error bounds are available for the two
techniques for broad classes time integrators? Related to the third issue is
how parameters (e.g., time step or basis dimension) for the \reviewerA{LSPG} ROM
should be chosen to optimize performance.  This work aims to fill this gap by
performing a number of detailed theoretical and \reviewerB{computational} studies that
compare Galerkin and \reviewerA{LSPG} ROMs for the two most important classes
of time integrators: linear multistep methods and Runge--Kutta schemes. We
summarize the most important theoretical results (which map to the three
questions above) as follows:

\newcommand{\myline}[2]{
\path(#1.south) --(#2.north)  coordinate[pos=0.4](mid);
\draw[-latex,black,very thick] (#1.south) -| (mid) -| (#2.north);
}
\newcommand{\mylineHoriz}[2]{
\path(#1.west) --(#2.east)  coordinate[pos=0.4](mid);
\draw[-latex,black,very thick] (#1.west) |- (mid) |- (#2.east);
}

 \begin{figure}[htbp]
  \centering
\scriptsize
\begin{center}

  \begin{tikzpicture} [node distance=.5cm, start chain=going below,inner sep=5pt]
   \node[punktchain, join, text width=9em,minimum height=0.5cm] (ODE) {Full-order model\\ ODE};
	\begin{scope}
	[start branch=pgCont going left]
		 \node[approxMeth,text width=7.5em,minimum height=0.5cm] (PgprojCont) {Petrov--Galerkin\\ projection};
		 \node[punktchaindash] (DiscOptROMODE) {\reviewerA{Least-squares\\
		 Petrov--Galerkin} ROM\\ ODE};
	\begin{scope}
	[start branch=discOptGo going below]
		 \node[approxMeth,text width=9em,minimum height=0.5cm] (discOptprojtd) {time discretization};
	\end{scope}
	\end{scope}
	\begin{scope}
	[start branch=galCont going right]
		 \node[approxMeth,join, text width=5em,minimum height=0.5cm] (GalprojCont)
		 {Galerkin projection};
		 \node[optpunktchain,join] (DiscOptROM) {\reviewerA{Galerkin} ROM\\ ODE};

	\begin{scope}
	[start branch=galDiscGo going below]
		 \node[approxMeth,join, text width=9em,minimum height=0.5cm] (Galprojtd) {time discretization};
	\end{scope}
	\end{scope}
   \node[approxMeth, join, text width=9em,minimum height=0.5cm] (FOMtd) {time discretization};
   \node[punktchain, join, text width=9em,minimum height=0.5cm] (ODeltaE) {Full-order model\\ O$\Delta$E};
	\begin{scope}
	[start branch=galDisc going right]
		 \node[approxMeth,join, text width=5em,minimum height=0.5cm] (Galproj) {Galerkin\\ projection};
		 \node[punktchain,join] (galODeltaE) {\reviewerA{Galerkin} ROM\\ O$\Delta$E};
	\end{scope}
	\begin{scope}
	[start branch=discopt going left]
		 \node[approxMeth,join, text width=7.5em,minimum height=0.5cm]
		 (optProjDisc) {\reviewerA{Petrov--Galerkin\\ projection}};
		 \node[optpunktchain,join] (DiscOptROM) {\reviewerA{Least-squares\\
		 Petrov--Galerkin} ROM\\ O$\Delta$E};
	\end{scope}
\myline{Galprojtd}{galODeltaE}
\path [line,dashed] (PgprojCont) -- (ODE);
\path [line,dashed] (DiscOptROMODE) -- (PgprojCont);
\path [line,dashed] (discOptprojtd) -- (DiscOptROMODE);
\path [line,dashed] (DiscOptROM) -- (discOptprojtd);
  \end{tikzpicture}
  \end{center}
\caption{Relationship between Galerkin and discrete-optimal ROMs at the
time-continuous \reviewerB{(ODE)} and time-discrete \reviewerB{(O$\Delta$E)}
levels. Bolded outlines imply \reviewerA{the ROM associates with a minimum-residual
solution at that time-discretization level}.
Dashed lines imply the relationships are valid under certain conditions (see
Theorems \ref{thm:discOptROMcontDiscEqual} and
\ref{thm:discOptROMcontDiscEqualRK}).}
\label{fig:flowchart}
\end{figure}
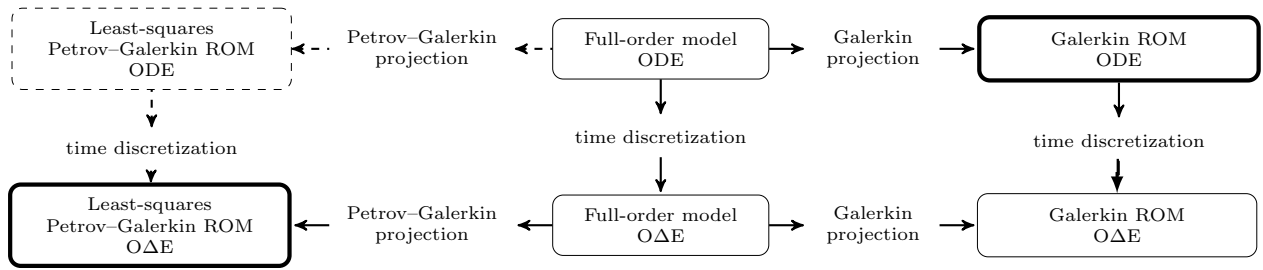
 \begin{enumerate} 
 \item Continuous and discrete representations
 \begin{itemize} 
  \item \reviewerC{Galerkin projection and time discretization are
	commutative} (Theorem \ref{thm:galProjDiscCommutative}).
	\item \reviewerA{LSPG} ROMs can be derived for 
	Runge--Kutta schemes (Section \ref{sec:discreteOptDiscrete}). 
	\item The \reviewerA{LSPG} ROM has a time-continuous (i.e., ODE) representation under
	certain conditions (\reviewerC{Section \ref{sec:LSPGcontinuous}}, Figure
	\ref{fig:flowchart}). This ODE depends on the time step $\dt$.
 \end{itemize} 
 \item Equivalence conditions
 \begin{itemize} 
	\item Galerkin \reviewerC{and LSPG} ROMs are \reviewerC{equivalent} for explicit time integrators
	(Corollaries \ref{cor:GalDiscOptExpLin} and \ref{cor:GalDiscOptExpRK}).
  \item  Galerkin \reviewerC{and LSPG} ROMs are \reviewerC{equivalent} in the limit
	of $\dt\rightarrow 0$ for implicit time integrators (Theorem \ref{thm:dtZeroEquiv}).
	\item Galerkin ROMs are discrete optimal for \reviewerC{symmetric-}positive-definite residual
	Jacobians (\reviewerC{Theorems \ref{thm:galDiscOptLinear}--\ref{thm:galDiscOptRK}}).
 \end{itemize} 
 \item Error analysis
 \begin{itemize} 
	\item We provide \reviewerCNewer{local and global \textit{a posteriori} and \textit{a
	priori}} error bounds for both Galerkin and
	\reviewerA{LSPG} ROMs for linear multistep schemes
	\reviewerC{(Section \ref{sec:errorLinear})}.
	\item For \reviewerCNewer{backward differentiation formulas}, we show that the
	\reviewerA{LSPG} ROM \reviewerC{can yield} a lower \reviewerCNewer{local \textit{a
	posteriori}} error bound than the Galerkin ROM (Corollary \ref{cor:discreteOptBeatsGal}) because it solves a
	time-global optimization problem (over a time window) rather than a
	time-local optimization problem \reviewerCNewer{(Remark \ref{rem:optimalityCompare})}.
	\item For the backward Euler time integrator, we show that \emph{an
	intermediate} time step should yield the lowest error bound (Corollary
	\ref{corr:auxiliaryProblem} \reviewerC{and Remark \ref{rem:modestTimestep}}).
	\item For the backward Euler time integrator, we show that a larger basis
	size leads to a smaller optimal time step for the \reviewerA{LSPG}
	ROM (Corollary \ref{corr:auxiliaryProblem}).
	\item We provide \reviewerCNewer{global \textit{a posteriori} and \textit{a
	priori}} error bounds for \reviewerA{both Galerkin
	and LSPG ROMs}
	for Runge--Kutta schemes (Section \ref{sec:errorRK}).
 \end{itemize} 
	 \end{enumerate}
Figure \ref{fig:flowchart} summarizes time-continuous and time-discrete
representations of the two techniques.

In addition to the above theoretical results, we present numerical results for
a large-scale compressible fluid-dynamics problem with turbulence
model\reviewerC{ing}
characterized by over one million degrees of freedom. These results illustrate
the practical significance of the above theoretical results. Critically, we show that
employing an intermediate time step for the \reviewerA{LSPG} ROM can decrease both the error and the
simulation time by an order of magnitude, which is a highly non-intuitive
result that is of immense practical significance.

The remainder of the paper is organized as follows. Section \ref{sec:FOM}
formulates the full-order model, including its representation at the
time-continuous and time-discrete levels. Section \ref{sec:Galerkin} presents
the Galerkin ROM at the continuous and discrete levels, and Section
\ref{sec:discreteOpt} does so for the \reviewerA{LSPG} ROM. In particular, we
provide conditions under which the \reviewerA{LSPG} ROM has a time-continuous
representation. Section \ref{sec:comparative} provides conditions under which
the Galerkin and \reviewerA{LSPG} ROMs are equivalent; in particular,
equivalence holds for explicit integrators (Section \ref{sec:equalExplicit}),
in the limit of the time step going to zero for implicit integrators (Section
\ref{sec:equalDt}), and for symmetric-positive-definite residual Jacobians
(Section \ref{sec:equalSPD}). Section \ref{sec:error} provides error analysis
for Galerkin and \reviewerA{LSPG} ROMs for linear multistep schemes (Section
\ref{sec:errorLinear}), Runge--Kutta schemes (Section \ref{sec:errorRK}),
and a detailed analysis in the case of backward Euler (Section
\ref{sec:errorBackEuler}). Section \ref{s:numerics} provides detailed
numerical examples that illustrate the practical importance of the analysis.
Finally, Section \ref{sec:conclusions} provides conclusions.

In the remainder of this paper,  matrices are denoted by capitalized bold
letters, vectors by lowercase bold letters, scalars by unbolded letters. The
columns of a matrix $\bm{A}\in\RR{m\times {\timestepit}}$ are denoted by
$\bm{a}_i\in\RR{m}$, $i\innat{{\timestepit}}$ with $\nat{a}\defeq\{1,\ldots,
a\}$ such that $\bm{A}\defeq\left[\bm{a}_1\ \cdots\
\bm{a}_{\timestepit}\right]$. \reviewerCNewer{We also define $\natZero{a}\defeq\{0,\ldots,
a\}$.} The scalar-valued matrix elements are denoted
by $a_{ij}\in\RR{}$ such that  $\bm{a}_j\defeq\left[a_{1j}\ \cdots\
a_{mj}\right]^T$, $j\innat{{\timestepit}}$. A superscript
denotes the value of a variable at that time instance, e.g.,
$\state^\timestepit$ is the value of $\state$ at time $\timestepit\dt$, where $\dt$ is the time
step.

%
%
\section{Full-order model}\label{sec:FOM}
We begin by formulating both the time-continuous (ODE) and time-discrete
(O$\Delta$E) representations of the full-order model (FOM).
\subsection{Continuous representation}
In this work, we consider the full-order model (FOM) to be an initial value
problem characterized by a system of
nonlinear ODEs
\begin{align} \label{eq:ODE}
\frac{d\state}{dt}  &= {\f }(\state,t) , \qquad 
\state(0) = \stateInitialNo,
\end{align} 
where $\state:\timedomain\rightarrow\RR{\ndof }$ denotes the (time-dependent)
state, $\stateInitialNo\in\RR{\ndof}$ denotes the
initial condition, and $\f :\RR{\ndof }\times \timedomain\rightarrow\RR{\ndof }$ with
$\left(\statevarf,\timevarf\right)\mapsto\f\left(\statevarf,\timevarf\right)$.
This ODE can arise, for example, from applying spatial discretization (e.g.,
finite element, finite volume, or finite difference) to a
partial differential equation with time dependence. We note that most
model-reduction techniques are applied to parameterized systems wherein the velocity
$\f$ \reviewerC{is also parameter dependent}. However, we limit our presentation to
unparameterized systems for notational simplicity, as we are
interested comparing Galerkin and \reviewerA{LSPG} ROMs for a given instance
of the ODE.

\subsection{Discrete representation}
A time-discretization method is required to solve Eq.~\eqref{eq:ODE}
numerically. We now characterize the full-order-model O$\Delta$E, which is the time-discrete
representation of the model, for two classes of time integrators:
linear multistep schemes and Runge--Kutta schemes.
\subsubsection{Linear multistep schemes}
A linear $k$-step method applied to numerically solve Eq.\ \eqref{eq:ODE} can be expressed as
 \begin{equation} \label{eq:linearMultistepDef}
 \sum_{j=0}^k\alpha_j\state^{\timestepit -j} = \dt \sum_{j=0}^k\beta_j \f \left(\state^{\timestepit -j},
 t^{\timestepit -j}\right),
 \end{equation} 
 where $\dt$ is the time step\reviewerA{, the coefficients $\alpha_j$ and $\beta_j$ define a specific linear multistep
 scheme}, $\alpha_0\neq 0$, and $\sum\limits_{j=0}^k\alpha_j = 0$ is necessary for
 consistency.  In this case, the O$\Delta$E is characterized by the following
 system of \reviewerC{algebraic} equations to be solved at each time instance $\timestepit\in\nat{\maxT/\dt}$:
 \begin{equation} \label{eq:resLinMultiSolve}
\res^\timestepit \left(\unknown^\timestepit \right)=0,
 \end{equation}
 where $\unknown^\timestepit\in\RR{\ndof}$ is the unknown variable  and
 $\res^\timestepit:\RR{\ndof}\rightarrow\RR{\ndof}$ denotes the 
 linear multistep residual defined as
 \begin{equation} \label{eq:resLinMulti}
 \res^\timestepit \left(\unknown\right) \defeq \alpha_0 \unknown - \dt \beta_0 \f (\unknown,t^\timestepit )+
 \sum_{j=1}^k\alpha_j\state^{\timestepit -j} -
 \dt \sum_{j=1}^k\beta_j \f \left(\state^{\timestepit -j},t^{\timestepit -j}\right).
  \end{equation} 
	Then, the state can be updated explicitly as 
 \begin{equation*} 
 \state^\timestepit  = \unknown^\timestepit .
  \end{equation*} 
Hence, the unknown is equal to the state.
These methods are implicit if $\beta_0\neq 0$.

\subsubsection{Runge--Kutta schemes}
For an $s$-stage Runge--Kutta scheme, the O$\Delta$E is characterized by the
following system of \reviewerC{algebraic} equations to be
solved at each time step:
\begin{equation} \label{eq:resRKSolve}
\res^\timestepit _i\left(\unknown^\timestepit _1,\ldots,\unknown^\timestepit
_s\right) = 0,\quad i\innat{s}.
\end{equation} 
Here, the Runge--Kutta residual is defined as
\begin{equation} \label{eq:resRK}
\res^\timestepit _i\left(\unknown_1,\ldots,\unknown_s\right) \defeq \unknown_i -
 \f (\state^{\timestepit -1} + \dt \sum_{j=1}^sa_{ij}\unknown_j, t^{\timestepit -1} +
c_i\dt),\quad i\innat{s}
\end{equation} 
and the state is explicitly updated as
\begin{equation} \label{eq:fom_update}
\state^{\timestepit } = \state^{\timestepit -1} + \dt \sum_{i=1}^sb_i
\unknown^\timestepit _i.
\end{equation} 
 Here, the unknowns $\unknown^\timestepit_i$ correspond to the velocity $ d\state/dt$ at
 times $t^{\timestepit -1} + c_i\dt$, $i\innat{s}$\reviewerA{, and the coefficients
 $b_i$, $c_i$, and $a_{ij}$ define a specific Runge--Kutta scheme}.
These methods are \reviewerC{explicit if $a_{ij}= 0$, $\forall j \geq i$ and
are diagonally implicit if $a_{ij}=0$, $\forall j> i$}.

\section{Galerkin ROM}\label{sec:Galerkin}
This section provides the time-continuous and time-discrete representations of
the Galerkin ROM, as well as key results related to optimality and
commutativity of projection and time discretization.
\subsection{Continuous representation}
Galerkin-projection reduced-order models compute an approximate solution
$\tilde \state\approx \state$ with $\tilde \state\in\RR{\ndof}$ to
Eq.~\eqref{eq:ODE} by introducing two approximations. First, they restrict the
approximate solution to lie in a low-dimensional affine trial subspace $\stateInitialNo +
\range{\podstate}$,
where $\podstate\in\RR{\ndof \times \nstate}$ with $\podstate^T\podstate =
\reviewerCNewer{\matI}$
denotes the given reduced basis (in matrix form) of dimension $\nstate \ll
\ndof $ \reviewerCNewer{and $\range{\weightingMatrix}$ denotes the range of
matrix $\weightingMatrix$}.
This basis can be computed by a variety of techniques, e.g., eigenmode analysis,
POD \cite{POD}, or the reduced-basis method
\cite{prud2002reliable,rozza2007reduced,veroy2003peb,nguyen2005certified,veroy2005certified}.
Then, 
the approximate solution can be expressed as 
\begin{equation} \label{eq:romSubspace}
\tilde \state(t) = \stateInitialNo + \podstate \stateRed(t),
\end{equation} 
where
$\stateRed:\timedomain\rightarrow\RR{\nstate}$ denotes the (time-dependent) generalized
coordinates of the approximate solution. Second,
these methods substitute $\state \leftarrow \tilde \state$ into Eq.\ \eqref{eq:ODE} and enforce the
ODE residual to be orthogonal to $\range{\podstate}$, which results in a
low-dimensional system of nonlinear ODEs
\begin{align} \label{eq:romODE}
\frac{d\stateRed}{dt} &= \podstate^T \f (\stateInitialNo + \podstate\stateRed,t) , \qquad
\stateRed(0) = 0.
\end{align} 
\begin{remark} \rm
In order to obtain computational efficiency, it is necessary to reduce the
computational complexity of repeatedly computing matrix--vector products of
the form $\podstate^T \f$. This can be done using a variety of methods, e.g.,
collocation \cite{astrid2007mpe,ryckelynck2005phm,LeGresleyThesis}, gappy POD
\cite{sirovichOrigGappy,bos2004als,astrid2007mpe,CarlbergGappy,carlbergJCP},
the discrete empirical interpolation method (DEIM)
\cite{barrault2004eim,chaturantabut2010journal,galbally2009non,drohmannEOI,
HAntil_MHeinkenschloss_DCSorensen_2013a}, reduced-order quadrature
\cite{HAntil_SField_RHNochetto_MTiglio_2013},
finite-element subassembly methods
\cite{an2008optimizing,farhat2014dimensional}, or reduced-basis-sparsification
techniques \cite{carlberg2012spd}. However, in this work we limit ourselves to
comparatively analyzing different projection techniques. For this reason, we do
not perform additional analysis for such complexity-reduction mechanisms; this
is the subject of follow-on work.
\end{remark}

We now restate the well-known result that Galerkin projection leads to a
notion of \reviewerC{minimum-residual} optimality at the continuous level. This is reflected in the
top-right box of Figure \ref{fig:flowchart}, where the bolded outline
indicates \reviewerC{minimum-residual} optimality.
\begin{theorem}[Galerkin: continuous optimality]\label{thm:GalContOpt}
\reviewerC{If the reduced basis is orthogonal, i.e., $\podstate^T\podstate =
\reviewerCNewer{\matI}$, then t}he Galerkin ROM \eqref{eq:romSubspace}--\eqref{eq:romODE} is continuous
optimal in the sense that the approximated velocity
minimizes the \reviewerA{$\ell^2$-norm of the FOM ODE residual \eqref{eq:ODE}}  over
$\range{\podstate}$, i.e.,
\begin{equation} \label{eq:optimalityOrth}
\frac{d{\tilde \state}}{dt}(\stateInitialNo + \podstate\stateRed,t) =
\arg\min_{\velocityDummy\in\range{\podstate}}\|\velocityDummy -
\f(\stateInitialNo + \podstate\stateRed,t)\|_2^2.
\end{equation} 
\end{theorem}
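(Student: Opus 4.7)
The plan is to verify the identity directly by (i) computing the left-hand side from the definition of the Galerkin ROM, (ii) solving the least-squares problem on the right-hand side in closed form, and (iii) showing the two expressions coincide, leveraging the orthogonality hypothesis $\podstate^T\podstate = \matI$.

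First I would differentiate the ansatz \eqref{eq:romSubspace} to obtain $\frac{d\tilde\state}{dt} = \podstate\,\frac{d\stateRed}{dt}$, and then substitute the Galerkin ROM ODE \eqref{eq:romODE} to conclude
\begin{equation*}
\frac{d\tilde\state}{dt} = \podstate\,\podstate^T \f(\stateInitialNo + \podstate\stateRed,t).
\end{equation*}
This places the LHS in $\range{\podstate}$ as required, and exhibits it as an oblique/orthogonal projection of the FOM velocity $\f$ onto the trial subspace.

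Next I would parameterize $\velocityDummy \in \range{\podstate}$ as $\velocityDummy = \podstate\hat{\velocityDummy}$ with $\hat{\velocityDummy}\in\RR{\nstate}$, so that the right-hand side becomes the unconstrained least-squares problem
\begin{equation*}
\min_{\hat{\velocityDummy}\in\RR{\nstate}} \|\podstate\hat{\velocityDummy} - \f(\stateInitialNo + \podstate\stateRed,t)\|_2^2.
\end{equation*}
This objective is strictly convex in $\hat{\velocityDummy}$ (since $\podstate$ has full column rank by the orthogonality assumption), so the unique minimizer is characterized by the normal equations $\podstate^T\podstate\,\hat{\velocityDummy} = \podstate^T\f$. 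Using $\podstate^T\podstate = \matI$, the minimizer is $\hat{\velocityDummy}^\star = \podstate^T\f(\stateInitialNo + \podstate\stateRed,t)$, which gives $\velocityDummy^\star = \podstate\,\podstate^T\f(\stateInitialNo + \podstate\stateRed,t)$.

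Comparing with the expression for $d\tilde\state/dt$ derived in the first step finishes the proof. There is no real obstacle here: the result is a one-line consequence of the normal-equations characterization of orthogonal projection, and the orthogonality hypothesis $\podstate^T\podstate = \matI$ is used only to simplify $(\podstate^T\podstate)^{-1}$ to the identity; without it, the same argument would still yield an orthogonal projector $\podstate(\podstate^T\podstate)^{-1}\podstate^T$, but the clean form stated in \eqref{eq:optimalityOrth} relies on the orthonormal basis assumption explicitly invoked in the theorem.
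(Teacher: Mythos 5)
Your proposal is correct and follows essentially the same route as the paper's proof: reduce the constrained problem to an unconstrained least-squares problem in the generalized coordinates, characterize the minimizer via stationarity (the normal equations), and use $\podstate^T\podstate = \matI$ to identify it with the Galerkin velocity $\podstate^T\f$. No meaningful differences.
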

\begin{proof} 
Because $\frac{d{\tilde \state}}{dt}=
\podstate\frac{d\stateRed}{dt}$, problem \eqref{eq:optimalityOrth} can be
written as 
\begin{equation} \label{eq:optimalityOrth2}
\frac{d{ \stateRed}}{dt}(\stateInitialNo + \podstate\stateRed,t) =
\arg\min_{\velocityDummyRed\in\RR{\nstate}}\objFun{\velocityDummyRed}\reviewerC{,}
\end{equation} 
where $\objFun{\velocityDummyRed}\defeq\|\podstate\velocityDummyRed -
\f(\stateInitialNo + \podstate\stateRed,t)\|_2^2$. We now assess whether
Eq.~\eqref{eq:optimalityOrth2} holds, i.e., whether $\frac{d{ \stateRed}}{dt}$ as
defined by Eq.~\eqref{eq:romODE} is
the minimizer of $\objFunNo$.

The function $\objFunNo$ can be expressed as
$\objFun{\velocityDummyRed}=\velocityDummyRed^T\podstate^T\podstate\velocityDummyRed
- 2\velocityDummyRed^T\podstate^T\f(\stateInitialNo + \podstate\stateRed,t) +
\f(\stateInitialNo + \podstate\stateRed,t)^T\f(\stateInitialNo + \podstate\stateRed,t)$. Due to the strict convexity of the
function $\objFunNo$, the global minimizer $\velocityDummyRed^\star$ is
equal to the stationary point of $\objFunNo$, i.e.,
$\velocityDummyRed^\star$ satisfies
 \begin{gather} 
 0 = \frac{d g}{d\velocityDummyRed}\left(\velocityDummyRed^\star\right)=
  2\podstate^T\podstate \velocityDummyRed^\star -
	2\podstate^T\f(\stateInitialNo + \podstate\stateRed,t)\\
  \label{eq:optSolution}
	\velocityDummyRed^\star=
	\podstate^T\f(\stateInitialNo + \podstate\stateRed,t),
  \end{gather} 
	where orthogonality of $\podstate$ has been used. Comparing
	Eqs.~\eqref{eq:optSolution} and \eqref{eq:romODE} shows 
$\frac{d{ \stateRed}}{dt}(\stateInitialNo + \podstate\stateRed,t)=\velocityDummyRed^\star$, which is the desired result.
\end{proof} 
\begin{remark}[\reviewerANew{Galerkin ROM enrichment yields a monotonic decrease in the FOM ODE residual}] \rm
Due to optimality property \eqref{eq:optimalityOrth} \reviewerANew{of the
Galerkin ROM,} adding vectors to the trial basis---which
\reviewerCNew{enriches} the trial subspace $\range{\podstate}$---results in a
monotonic decrease \reviewerC{in} the \reviewerC{minimum-residual} objective
function in problem \eqref{eq:optimalityOrth}\reviewerANew{, which is simply the
$\ell^2$-norm of the FOM ODE residual}. \reviewerANew{Because the FOM ODE
residual is equivalent to the difference between the ROM and FOM velocities,
this implies that the $\ell^2$-norm of the error in the ROM velocity will
monotonically decrease as the trial subspace is enriched.}
\end{remark}


\reviewerA{Thus, Galerkin ROMs exhibit desirable properties (i.e.,
minimum-residual \reviewerANew{optimality}) at the
time-continuous level. We now derive a time-discrete representation for the
Galerkin ROM, noting that these properties are lost at the
time-discrete level.}

\subsection{Discrete representation}
As before, a time-discretization method is needed to numerically solve Eq.~\eqref{eq:romODE}. We
now characterize the O$\Delta$E for the Galerkin ROM.
\subsubsection{Linear multistep schemes}
A linear $k$-step method applied to numerically solve Eq.\ \eqref{eq:romODE}
can be expressed as
 \begin{equation*} 
 \sum_{j=0}^k\alpha_j\stateRed^{\timestepit -j} = \dt \sum_{j=0}^k\beta_j \podstate^T
 \f \left(\stateInitialNo + \podstate\stateRed^{\timestepit -j},
 t^{\timestepit -j}\right).
 \end{equation*} 
Here, the O$\Delta$E is characterized by the following system of 
\reviewerC{algebraic} equations to be solved at each time
step:
 \begin{equation} \label{eq:romResLinMultiSolve}
 \RRed^\timestepit \left(\unknownRed^\timestepit \right) = 0.
  \end{equation} 
Here, the discrete residual corresponds to 
 \begin{equation} \label{eq:romResLinMulti}
 \RRed^\timestepit \left(\unknownRed\right) \defeq \alpha_0\unknownRed - \dt \beta_0
 \podstate^T \f (\stateInitialNo + \podstate\unknownRed,t^\timestepit )+
 \sum_{j=1}^k\alpha_j\stateRed^{\timestepit -j} -
 \dt \sum_{j=1}^k\beta_j \podstate^T \f \left(\stateInitialNo +
 \podstate\stateRed^{\timestepit -j},t^{\timestepit -j}\right)
  \end{equation} 
	and the generalized state is explicitly updated as
 \begin{equation*} 
 \stateRed^\timestepit  = \unknownRed^\timestepit.
  \end{equation*} 

\subsubsection{Runge--Kutta schemes}
Applying an $s$-stage Runge--Kutta method to solve Eq.\ \eqref{eq:romODE}
leads to an O$\Delta$E characterized by the
following system of \reviewerC{algebraic} equations to be solved at each time step:
\begin{equation} \label{eq:romResRKSolve}
\RRed^\timestepit _i\left(\unknownRed^\timestepit
_1,\ldots,\unknownRed^\timestepit _s\right) = 0,\quad i\innat{s}.
\end{equation} 
Here, discrete the residual is defined as
\begin{equation} \label{eq:romResRK}
\RRed^\timestepit _i\left(\unknownRed_1,\ldots,\unknownRed_s\right) \defeq \vec
\unknownRed_i -
 \podstate^T\f (\stateInitialNo + \podstate\stateRed^{\timestepit -1} + \dt
 \sum_{j=1}^sa_{ij}\vec \podstate\unknownRed_j, t^{\timestepit -1} +
c_i\dt),\quad i\innat{s}
\end{equation} 
and the generalized state is computed explicitly as
\begin{equation} \label{eq:romG_RK_update}
\stateRed^{\timestepit } = \stateRed^{\timestepit -1} + \dt \sum_{i=1}^sb_i \unknownRed^\timestepit _i.
\end{equation} 

\reviewerA{Note that the Galerkin-ROM solution satisfying
Eqs.~\eqref{eq:romResLinMultiSolve} or \reviewerANew{\eqref{eq:romResRKSolve}} does
not in general associate with the solution to an optimization problem;
therefore, the optimality property the method exhibits at the continuous level
has been lost at the discrete level.} We now show that \reviewerC{Galerkin} projection and time discretization are
commutative\reviewerA{; this implies that
	Galerkin ROMs can be analyzed, implemented, and interpreted
equivalently at both the time-discrete and time-continuous levels.
} This corresponds to the rightmost part of Figure
\ref{fig:flowchart}.
\begin{theorem}[Galerkin: commutativity of projection and time discretization]\label{thm:galProjDiscCommutative}
\ \\ Performing a 
Galerkin projection on the governing ODE and subsequently
applying time discretization yields the same model as first applying time
discretization on the governing ODE and subsequently performing Galerkin
projection.
\end{theorem}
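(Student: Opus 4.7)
The plan is to show commutativity separately for each of the two time-discretization families, by starting from both ends of the diagram and verifying that the resulting discrete systems coincide equation for equation.

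For the linear multistep case, I would first write down the result of applying a $k$-step scheme directly to the Galerkin ROM ODE \eqref{eq:romODE}; this is exactly \eqref{eq:romResLinMulti}--\eqref{eq:romResLinMultiSolve}. Next I would compute the alternative path: start from the FOM O$\Delta$E residual $\res^\timestepit$ in \eqref{eq:resLinMulti}, restrict the unknown via the ansatz $\unknown^\timestepit = \stateInitialNo + \podstate\unknownRed^\timestepit$ (and similarly for prior states $\state^{\timestepit-j} = \stateInitialNo + \podstate\stateRed^{\timestepit-j}$), and enforce orthogonality $\podstate^T\res^\timestepit = 0$. Distributing $\podstate^T$ through the definition and using $\podstate^T\podstate = \matI$ collapses every $\podstate^T\podstate\,(\cdot)$ factor on the state-update terms to the identity, producing $\alpha_0\unknownRed^\timestepit - \dt\beta_0\podstate^T\f(\stateInitialNo + \podstate\unknownRed^\timestepit,t^\timestepit) + \sum_{j=1}^k \alpha_j \stateRed^{\timestepit-j} - \dt\sum_{j=1}^k \beta_j \podstate^T\f(\stateInitialNo + \podstate\stateRed^{\timestepit-j},t^{\timestepit-j})$ plus a leftover term $\bigl(\sum_{j=0}^k\alpha_j\bigr)\,\podstate^T\stateInitialNo$ arising from the constant offset $\stateInitialNo$. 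The consistency requirement $\sum_{j=0}^k \alpha_j = 0$ annihilates this leftover, yielding exactly \eqref{eq:romResLinMulti}, which proves commutativity in this case.

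For the Runge--Kutta case I would proceed analogously, but first fix a convention for how Galerkin projection acts on the stage equations \eqref{eq:resRK}: since each FOM unknown $\unknown_i^\timestepit$ represents the velocity $d\state/dt$ at an intermediate time, I would restrict it via $\unknown_i^\timestepit = \podstate\,\unknownRed_i^\timestepit$ (consistent with $d\tilde\state/dt = \podstate\,d\stateRed/dt$) and enforce $\podstate^T\res^\timestepit_i = 0$ for every stage $i\innat{s}$. Substituting $\state^{\timestepit-1} = \stateInitialNo + \podstate\stateRed^{\timestepit-1}$ into \eqref{eq:resRK} and applying $\podstate^T$, the orthonormality $\podstate^T\podstate = \matI$ collapses the $\unknown_i$ term to $\unknownRed_i$, producing exactly \eqref{eq:romResRK}. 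Going the other direction, the Runge--Kutta scheme applied to the ROM ODE \eqref{eq:romODE} produces the same stage residuals. I would likewise verify that the discrete state-update \eqref{eq:fom_update} under the ansatz reduces to \eqref{eq:romG_RK_update} upon multiplying by $\podstate^T$ and using orthonormality.

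The main obstacle I anticipate is handling the constant offset $\stateInitialNo$ in the affine trial subspace cleanly: the offset does not project trivially, and one must invoke the linear-multistep consistency condition $\sum_j \alpha_j = 0$ (or, for Runge--Kutta, the telescoping structure of the state update) to make the offset contributions vanish. A secondary bookkeeping issue is choosing and justifying the right convention for what ``applying Galerkin projection at the discrete level'' means for Runge--Kutta, since the stage unknowns are velocities rather than states; once this convention is fixed so that the trial-subspace restriction on states induces a trial-subspace restriction on velocities, the remaining algebra is a direct verification.
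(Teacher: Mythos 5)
Your proposal is correct and follows essentially the same route as the paper's proof: both directions are verified by substituting the affine ansatz into the discrete FOM residuals, applying $\podstate^T$ with $\podstate^T\podstate=\matI$, and matching the result term-by-term against Eqs.~\eqref{eq:romResLinMulti} and \eqref{eq:romResRK}. Your explicit tracking of the offset term $\bigl(\sum_{j=0}^k\alpha_j\bigr)\podstate^T\stateInitialNo$ and its cancellation via the consistency condition is exactly the mechanism the paper invokes, and your convention for the Runge--Kutta stage unknowns ($\unknown_i=\podstate\unknownRed_i$) matches the paper's as well.
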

\begin{proof}
 \underline{Linear multistep schemes}. Eq.~\eqref{eq:romResLinMultiSolve} was
 derived by performing Galerkin projection on the continuous representation of
 the FOM and
 subsequently applying
 time discretization. If instead we apply Galerkin projection to the discrete
 representation of the FOM in Eq.~\eqref{eq:resLinMultiSolve},
 set $\unknown= \stateInitialNo+ \podstate \unknownRed$ and $\state^i= \stateInitialNo+ \podstate \stateRed^i$,
 $i\innat{\timestepit} $, and use
 $\sum_{j=1}^k\alpha_j=0$ and $\podstate^T\podstate = \reviewerCNewer{\matI}$, we obtain the
 following O$\Delta$E to be solved at each time step: $\vec \Phi^T \res^\timestepit \left(\stateInitialNo +
\podstate\unknownRed\right)=0$. Comparing the definition of the linear
multistep residual \eqref{eq:resLinMulti} with
Eq.~\eqref{eq:romResLinMulti} reveals
 \begin{equation} \label{eq:reducedRedGalLin}
\RRed^\timestepit \left(\unknownRed\right) = \vec \Phi^T \res^\timestepit \left(\stateInitialNo +
\podstate\unknownRed\right),
  \end{equation} 
	which shows that the same discrete equations $\RRed^\timestepit
	\left(\unknownRed\right)=0$ are obtained at each time step regardless of the
	ordering of time discretization and Galerkin projection.\\
\noindent\underline{Runge--Kutta schemes}. 
Eq.~\eqref{eq:romResRKSolve} was
 derived by performing Galerkin projection on the continuous FOM
 representation and
 then applying
 time discretization. If instead we apply Galerkin projection to the discrete
 FOM representation in Eq.~\eqref{eq:resRKSolve},
set $\state^{\timestepit -1}= \stateInitialNo + \podstate \stateRed^{\timestepit -1}$, $\unknown_i =
 \podstate\unknownRed_i$, $i\innat{s}$, and use $\podstate^T\podstate = \reviewerCNewer{\matI}$, we
 obtain the following O$\Delta$E to be solved at each time step: $\vec \Phi^T \res^\timestepit _i\left(
\podstate\unknownRed_1,\ldots,\podstate\unknownRed_s\right)=0,\
i\innat{s}$. Comparing the definition of the Runge--Kutta
residual \eqref{eq:resRK} with
Eq.~\eqref{eq:romResRK} reveals
  \begin{equation} \label{eq:reducedRedGalRK}
\RRed^\timestepit _i\left(\unknownRed_1,\ldots,\unknownRed_s\right) = \vec \Phi^T \res^\timestepit _i\left(
\podstate\unknownRed_1,\ldots,\podstate\unknownRed_s\right),\quad i\innat{s},
  \end{equation} 
which shows that the same discrete equations $\RRed^\timestepit
_i\left(\unknownRed_1,\ldots,\unknownRed_s\right)=0$, $i\innat{s}$ are
obtained at each time step regardless of the ordering of time discretization
and Galerkin projection.
\end{proof}

\section{\reviewerC{Least-squares Petrov--Galerkin} ROM}\label{sec:discreteOpt}

\reviewerC{Rather than performing minimum-residual optimal projection
on the full-order model ODE (i.e., at the continuous level), this can be
executed on the full-order model O$\Delta$E (i.e., at the discrete level).
Doing so enables \textit{discrete optimality}, which contrasts
with the continuous optimality exhibited by Galerkin projection. In
particular, we consider optimal projections that minimize the discrete
residual(s) (in some weighted $\ell^2$-norm) arising at each time instance.}

We note that other residual-minimizing approaches have been developed in the
case of \reviewerC{linear \cite{bui2008model} and nonlinear \cite{LeGresleyThesis}
steady-state problems,}
and space--time solutions \cite{constantineResMin}. In addition, a
recently developed approach \cite{abgrall2015robust} has suggested
\reviewerB{$L^1$} minimization of the
residual arising at each time instance for hyperbolic problems.

\subsection{Discrete representation}\label{sec:discreteOptDiscrete}
We begin by developing the time-discrete representation for the
\reviewerA{LSPG} ROM for both linear multistep schemes and Runge--Kutta schemes. The
latter 
is a novel contribution, as previous work has derived
discrete-optimal \reviewerA{LSPG} ROMs only for linear multistep schemes
\cite{CarlbergGappy,carlbergJCP}. Optimality of this approach corresponds to the bolded bottom-left box
of Figure \ref{fig:flowchart}.
\subsubsection{Linear multistep schemes}\label{s:disc_opt_integ}
As before with Galerkin projection, discrete-optimal ROMs compute 
solutions using two approximations. First, they restrict the
approximate solution to lie in the same low-dimensional affine trial subspace
$\tilde\state\in\stateInitialNo +
\range{\podstate}$ as Galerkin \reviewerC{methods}; thus, the approximate solution can be written according to
Eq.~\eqref{eq:romSubspace}. In the case of linear multistep schemes, the
unknown at time step $\timestepit$ is simply the state, i.e.,
$\unknown^\timestepit= \state^\timestepit$, which implies that $\tilde
\unknown^\timestepit = \stateInitialNo+ \podstate\unknownRed^\timestepit$.
Second, the discrete-optimal ROM substitutes
$\unknown^\timestepit \leftarrow \tilde \unknown^\timestepit$ into the O$\Delta$E \eqref{eq:resLinMultiSolve} and solves a
minimization problem to ensure the approximate solution is optimal in
\reviewerC{a minimum-residual sense}
at the discrete level:
 \begin{equation} \label{eq:discreteOptLinear}
\tilde \unknown^\timestepit = \arg
 \underset{\optVec\in\stateInitialNo +
 \range{\podstate}}{\min}\|
\weightingMatrix\left(\optVec\right)
\res^\timestepit \left(\optVec\right)\|_2^2
  \end{equation} 
	or equivalently
 \begin{equation} \label{eq:discreteOptLinear2}
\unknownRed^\timestepit = \arg
 \min_{\optVecRed\in\RR{\nstate}}\|
\weightingMatrix\left(\stateInitialNo + \podstate\optVecRed\right)
\res^\timestepit \left(\stateInitialNo + \podstate\optVecRed\right)\|_2^2.
  \end{equation} 
	Here, $\weightingMatrix\in\RR{\nrows\times\ndof}$ with $\nrows\leq\ndof$ is a
	weighting matrix that enables the definition of a weighted (semi)norm. Examples of
	such reduced-order models include the \reviewerA{LSPG} method
	\reviewerC{presented in Refs.\ \cite{CarlbergGappy,carlbergJCP,LeGresleyThesis}}
		($\weightingMatrix = \matI$)\reviewerC{, LSPG with collocation ($\weightingMatrix =
	\sampleMat$ with $\sampleMat$ consisting of selected rows of the
		identity matrix) \cite{LeGresleyThesis}, and the related GNAT method
	\cite{CarlbergGappy,carlbergJCP} ($\weightingMatrix =
	\left(\sampleMat\podres\right)^+\sampleMat$ with $\podres$ a basis
	for the residual and the superscript $+$ denoting the Moore--Penrose
		pseudoinverse).}

Note that \reviewerCNewer{necessary conditions for the solution} to
Eq.~\eqref{eq:discreteOptLinear2} \reviewerCNewer{corresponds to 
stationary of} the objective function in Eq.~\eqref{eq:discreteOptLinear2},
i.e., \reviewerCNewer{the solution} satisfies 
\begin{equation} \label{eq:LSPGlinmulti}
\testBasis^\timestepit(\unknownRed^\timestepit)^T
\res^\timestepit \left(\stateInitialNo +
\podstate\unknownRed^\timestepit\right) = 0\reviewerC{,}
\end{equation} 
where the entries of  $\testBasis^\timestepit\in\RR{\ndof\times\nstate}$ are
 \begin{align}\label{eq:LSPGlinmultiDef}
 \begin{split}
\testBasisEntry{i}{j}^\timestepit(\unknownRed)=&a_{mi}(\stateInitialNo
+\podstate\unknownRed)\frac{\partial a_{ml}(\stateInitialNo
+\podstate\unknownRed)}{\partial
\unknownEntry{k}}\podstateEntry{k}{j}\resEntry{\ell}^\timestepit(\stateInitialNo+\podstate\unknownRed)+\\
&a_{mi}(\stateInitialNo
+\podstate\unknownRed)a_{ml}(\stateInitialNo+\podstate\unknownRed)\frac{\partial
\resEntry{\ell}^\timestepit}{\partial
\unknownEntry{k}}(\stateInitialNo+\podstate\unknownRed)\podstateEntry{k}{j},\quad
i\in\nat{\ndof},\ j\in\nat{\nstate},
\end{split} 
\end{align} 
where a repeated index implies summation. Because Eq.~\eqref{eq:LSPGlinmulti}
corresponds to a Petrov--Galerkin projection with trial subspace
$\range{\podstate}$ and test subspace $\range{\testBasis}$, the
discrete-optimal projection \reviewerC{can be} referred to as a least-squares
Petrov--Galerkin projection \cite{carlbergJCP,CarlbergGappy}.

\subsubsection{Runge--Kutta schemes}

\reviewerA{LSPG} ROMs for Runge--Kutta schemes also approximate the solution according to
Eq.~\eqref{eq:romSubspace}. However, because the unknown at time step
$\timestepit$ and stage $i$ is
the velocity at an intermediate time point, i.e., $\unknown^\timestepit_i =
\dot \state\left(t^{\timestepit-1} + c_i\dt\right) $ for $i\innat{s}$, we have
$\tilde
\unknown_i^\timestepit = \podstate\dot\stateRed\left(t^{\timestepit-1} +
c_i\dt\right)$ for this case.
Then, these techniques substitute
$\unknown^\timestepit \leftarrow \tilde \unknown^\timestepit$ into the
O$\Delta$E \eqref{eq:resRKSolve} and solve the following
\reviewerC{minimum-residual} problem:
 \begin{equation} \label{eq:discreteOptRK}
\left(\tilde \unknown^\timestepit_1,\ldots,\tilde \unknown^\timestepit_s\right) = \arg
 \underset{\left(\optVec_1,\ldots,\optVec_1\right)\in
 \range{\podstate}^s}{\min}\sum_{i=1}^s\|
\weightingMatrix_i\left( \optVec_1,\ldots,\optVec_s\right)
\res_i^\timestepit \left( \optVec_1,\ldots,\optVec_s\right)\|_2^2
  \end{equation} 
	or equivalently
 \begin{equation} \label{eq:discreteOptRK2}
\left(\hat \unknown^\timestepit_1,\ldots,\hat\unknown^\timestepit_s\right) = \arg
 \underset{\left(\optVecRed_1,\ldots,\optVecRed_s\right)\in
 \RR{\nstate\times s}}{\min}\sum_{i=1}^s\|
\weightingMatrix_i\left( \podstate\optVecRed_1,\ldots,\podstate\optVecRed_s\right)
\res_i^\timestepit \left(
\podstate\optVecRed_1,\ldots,\podstate\optVecRed_s\right)\|_2^2.
  \end{equation} 
	Here,
	$\weightingMatrix_i\in\RR{\nrows\times\ndof}$, $i\in\nat{s}$ with
	$\nrows\leq\ndof$ are weighting matrices.
As before, the solution to Eq.~\eqref{eq:discreteOptRK2} corresponds to a
stationary point of the objective function,
i.e., it satisfies 
\begin{equation} \label{eq:LSPGRK}
\sum_{j=1}^s\testBasis_{ij}^\timestepit(\unknownRed_1^\timestepit,\ldots,\unknownRed_s^\timestepit)^T
\res_j^\timestepit \left(
\podstate\unknownRed_1^\timestepit,\ldots,\podstate\unknownRed_s^\timestepit\right)
= 0,\quad i=1,\ldots, s,
\end{equation} 
where entries of the test bases $\testBasis_{ij}^\timestepit\in\RR{\ndof\times \nstate}$,
$i,j\in\nat{s}$ are
\begin{align} \label{eq:discreteOptRKTest}
\begin{split}
\testBasisijEntry{k}{\ell}(\unknownRed_1,\ldots,\unknownRed_s)=& 
[\weightingMatrix_i]_{uk}(\podstate\unknownRed_1,\ldots,\podstate\unknownRed_s)\frac{\partial
[\weightingMatrix_i]_{um}(\podstate\unknownRed_1,\ldots,\podstate\unknownRed_s)}{\partial
\unknownjEntry{n}}\podstateEntry{n}{\ell}\resiEntry{m}(\podstate\unknownRed_1,\ldots,\podstate\unknownRed_s)+\\
&[\weightingMatrix_i]_{uk}(\podstate\unknownRed_1,\ldots,\podstate\unknownRed_s)[\weightingMatrix_i]_{um}(\podstate\unknownRed_1,\ldots,\podstate\unknownRed_s)\frac{\partial \resiEntry{m}}{\partial
\unknownjEntry{n}}(\podstate\unknownRed_1,\ldots,\podstate\unknownRed_s)\podstateEntry{n}{\ell},\\
\end{split}
\end{align} 
where $[\cdot]_{ij}$ denotes entry $(i,j)$ of the argument.
This again leads to a least-squares Petrov--Galerkin interpretation for the
discrete-optimal ROM.

In the explicit \reviewerA{or diagonally implicit} case\reviewerA{s}, we can consider \reviewerC{an alternative} notion of discrete
optimality. 
Explicit Runge--Kutta schemes are characterized by $a_{ij}=0$, $\forall j\geq
i$\reviewerA{, while diagonally implicit Runge--Kutta (DIRK)
schemes \cite{alexander1977diagonally} are characterized by $a_{ij}=0$, $\forall j>
i$}. In \reviewerA{these cases}, solutions $\unknown^\timestepit_i$, $i\innat{s}$ can be
computed sequentially, i.e., 
\begin{equation*} 
\resRKexplicit^\timestepit _i\left(\unknown_i^\timestepit \right) = 0,\quad i\innatsequence{s}
\end{equation*} 
with
\begin{equation}\label{eq:resRKexplicitDef}
\resRKexplicit^\timestepit _i(\unknown)\defeq\unknown-\f(\state^{\timestepit -1}\reviewerA{+\dt a_{ii}\unknown} + \dt\sum_{j=1}^\reviewerA{{i-1}}a_{ij}\unknown^\timestepit_j,t^{\timestepit -1}+c_i\dt),\quad
i\innatsequence{s}.
\end{equation}
We can then formulate the following sequence of optimization problems to
compute discrete \reviewerC{minimum-residual} approximations:
 \begin{equation} \label{eq:galOptRKexplicit}
\tilde \unknown^\timestepit_i = \arg\min_{\optVec\in \range{\podstate}}\|
\weightingMatrix_i(\optVec)\resRKexplicit_i^\timestepit (\optVec)
\|_2^2,\quad
i\innatsequence{s},
  \end{equation} 
 or equivalently
 \begin{equation} \label{eq:galOptRKexplicit2}
\hat \unknown^\timestepit_i = \arg\min_{\optVecRed\in \RR{\nstate}}\|
\weightingMatrix_i(\podstate\optVecRed)\resRKexplicit_i^\timestepit (\podstate\optVecRed)
\|_2^2,\quad
i\innatsequence{s}.
  \end{equation} 
	Here, the associated Petrov--Galerkin projection is 
\begin{equation} \label{eq:LSPGRKexplicit}
\testBasis_{i}^\timestepit(\unknownRed_i^\timestepit)^T\resRKexplicit_i^\timestepit
(\podstate\unknownRed_i^\timestepit) = 0,\quad i\innatsequence{s},
\end{equation} 
with test-basis entries of 
\begin{equation} \label{eq:testBasisExplicitRK}
\testBasisiEntry{j}{k}(\reviewerC{\unknownRed}) =
[\weightingMatrix_i]_{uj}(\podstate\unknownRed)\frac{\partial
[\weightingMatrix_i]_{u\ell}(\podstate\unknownRed)}{\partial
\unknownEntry{m}}\podstateEntry{m}{k}\resRKexplicitiEntry{\ell}(\podstate\unknownRed)+
[\weightingMatrix_i]_{uj}(\podstate\unknownRed)[\weightingMatrix_i]_{um}(\podstate\unknownRed)\reviewerA{\frac{\partial [\resRKexplicit^\timestepit_i]_m(\podstate\unknownRed)}{\partial \unknownEntry{\ell}}\podstateEntry{\ell}{k}},
\end{equation} 
\reviewerA{Note that
in the explicit case,  the \reviewerA{LSPG} ROM generally requires solving a system of
nonlinear equations at each Runge--Kutta stage. Because 
$ {\partial
\resRKexplicit^\timestepit_i}/\partial \unknown= \matI$,
the system of
equations is linear if $\weightingMatrix_i$ are constant
matrices, and only an explicit solution update is required if
$\weightingMatrix_i= \matI$ and $\podstate^T\podstate =
\matI$.}
 
\begin{remark}[\reviewerANew{LSPG ROM enrichment yields a monotonic decrease in the FOM
O$\Delta$E residual}]\label{rem:discreteOptAPriori} \rm
Due to optimality property \eqref{eq:discreteOptLinear} \reviewerANew{of the
LSPG ROM,} 
adding vectors to the trial basis---which \reviewerCNew{enriches} the trial subspace
$\range{\podstate}$---results in a monotonic decrease in the 
\reviewerC{minimum-residual} objective function in problem
\eqref{eq:discreteOptLinear}\reviewerANew{, which is simply the weighted $\ell^2$-norm
of the FOM O$\Delta$E residual associated with linear multistep schemes.}
This result also holds for \reviewerA{LSPG} ROMs applied to Runge--Kutta
schemes, as the computed solutions satisfy alternative optimality properties
in the implicit \eqref{eq:discreteOptRK} and explicit/\reviewerCNew{diagonally implicit}
\eqref{eq:galOptRKexplicit} cases.
\end{remark}

\reviewerA{
We now see the critical tradeoff between the Galerkin and LSPG ROMs: Galerkin
ROMs exhibit continuous (minimum-residual) optimality, while LSPG ROMs exhibit
discrete optimality.
Without further analysis, it is unclear which of these attributes is
preferable. Numerical experiments (Section \ref{s:numerics}) and supporting
error analysis (Section \ref{sec:error}) will highlight the benefits of
discrete optimality over continuous optimality in practice.}

\subsection{Continuous representation}\label{sec:LSPGcontinuous}
Because the \reviewerA{LSPG} ROM introduces approximations at the discrete
level, it is unclear whether it can be interpreted at the continuous level.
\reviewerA{In fact, it has not previously been shown that a continuous
representation of the \reviewerA{LSPG} ROM even exists.} We now show that an ODE
representation of the \reviewerA{LSPG} ROM \reviewerA{does indeed exist} for
both linear multistep schemes and Runge--Kutta schemes under certain
conditions; however, the ODE depends on the time step used to define the
\reviewerA{LSPG} ROM. This associates with the top-left section of the
relationship diagram in Figure \ref{fig:flowchart}.
		\begin{theorem}[\reviewerA{LSPG} ROM continuous representation: linear
		multistep schemes]\label{thm:discOptROMcontDiscEqual}
		\ \\ The \reviewerA{LSPG} ROM for linear multistep integrators is
		equivalent to applying a Petrov--Galerkin projection to the ODE with test basis (in
		matrix form) 
		\begin{equation}\label{eq:testBasisDiscOptLinMulti}
		\testBasis(\stateRed,t) =
		\weightingMatrix^T\weightingMatrix\left(\alpha_0\matI - \Delta
		t\beta_0\frac{\partial\f}{\partial \statevarf}(\stateInitialNo +
		\podstate\stateRed, t)\right)\podstate
		\end{equation}
 and subsequently applying time integration with a linear multistep scheme
 with time step $\dt$ if $\weightingMatrix$ is a constant matrix and (at
 least) one of the following conditions holds:
 \begin{enumerate} 
  \item $\beta_j=0$, $j\geq 1$ (e.g., a single-step method), 
  \item  the velocity $\f$ is linear in the state, or
  \item  $\beta_0= 0$ (i.e., explicit schemes).
	 \end{enumerate}
\end{theorem}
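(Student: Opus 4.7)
The plan is to show that, under the stated hypotheses on $\weightingMatrix$ and the scheme, the LSPG stationary condition for linear multistep schemes coincides with the equation produced by applying the linear multistep scheme to the Petrov--Galerkin-projected ODE with the given test basis. First I would simplify the LSPG test basis $\testBasis^\timestepit$: when $\weightingMatrix$ is constant, the first term in the test-basis definition---which carries $\partial\weightingMatrix/\partial\unknown$---vanishes, leaving $\testBasis^\timestepit = \weightingMatrix^T\weightingMatrix\,(\partial\res^\timestepit/\partial\unknown)\,\podstate$. Differentiating the linear multistep residual yields $\partial\res^\timestepit/\partial\unknown = \alpha_0\matI - \dt\beta_0\,\partial\f/\partial\statevarf$, which matches the time-continuous test basis $\testBasis(\stateRed^\timestepit, t^\timestepit)$ stated in the theorem.

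Next I would rewrite the discrete residual in a form that exhibits its continuous meaning. Substituting $\unknown = \stateInitialNo + \podstate\unknownRed^\timestepit$ and $\state^{\timestepit-j} = \stateInitialNo + \podstate\stateRed^{\timestepit-j}$ into the residual definition and using the consistency identity $\sum_{j=0}^k\alpha_j = 0$ to cancel the $\stateInitialNo$ contributions yields
\begin{equation*}
\res^\timestepit\bigl(\stateInitialNo + \podstate\stateRed^\timestepit\bigr) = \sum_{j=0}^k\alpha_j\,\podstate\stateRed^{\timestepit-j} - \dt\sum_{j=0}^k\beta_j\,\f\bigl(\stateInitialNo + \podstate\stateRed^{\timestepit-j}, t^{\timestepit-j}\bigr),
\end{equation*}
which is exactly the linear multistep discretization of $\podstate\dot\stateRed - \f$ restricted to the trial subspace. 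Abbreviating $\testBasis^m \equiv \testBasis(\stateRed^m, t^m)$, the LSPG equation $(\testBasis^\timestepit)^T\res^\timestepit = 0$ therefore reads $\sum_j\alpha_j(\testBasis^\timestepit)^T\podstate\stateRed^{\timestepit-j} = \dt\sum_j\beta_j(\testBasis^\timestepit)^T\f^{\timestepit-j}$.

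I would then apply the linear multistep scheme to the Petrov--Galerkin-projected ODE in its explicit form $\dot\stateRed = (\testBasis^T\podstate)^{-1}\testBasis^T\f$; left-multiplying the resulting discrete update by $(\testBasis^\timestepit)^T\podstate$ produces the same left-hand side as above but with right-hand side $\dt\sum_j\beta_j(\testBasis^\timestepit)^T\podstate\bigl((\testBasis^{\timestepit-j})^T\podstate\bigr)^{-1}(\testBasis^{\timestepit-j})^T\f^{\timestepit-j}$. Equivalence with the LSPG equation then reduces to proving, for every $j$ with $\beta_j \neq 0$, the algebraic identity
\begin{equation*}
(\testBasis^\timestepit)^T\podstate\bigl((\testBasis^{\timestepit-j})^T\podstate\bigr)^{-1}(\testBasis^{\timestepit-j})^T\f^{\timestepit-j} = (\testBasis^\timestepit)^T\f^{\timestepit-j}.
\end{equation*}

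To finish I would dispatch the three conditions. Under (1) only the $j=0$ term contributes and the identity becomes the tautology $(\testBasis^\timestepit)^T\podstate((\testBasis^\timestepit)^T\podstate)^{-1}(\testBasis^\timestepit)^T\f^\timestepit = (\testBasis^\timestepit)^T\f^\timestepit$. Under (3) the test basis collapses to $\alpha_0\weightingMatrix^T\weightingMatrix\podstate$, which is independent of both $\stateRed$ and $t$, so $\testBasis^{\timestepit-j} = \testBasis^\timestepit$ and the oblique-projection factor on the left reduces to the identity. Under (2), linearity of $\f$ in the state makes $\partial\f/\partial\statevarf$ $\stateRed$-independent, which (in the standard time-invariant reading of ``linear'') renders the test basis constant and reproduces the same collapse. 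The main obstacle I anticipate is purely notational---tracking where each test basis is evaluated in the discretized projected ODE and making explicit the ``explicit-form'' convention used to apply a linear multistep scheme to the implicit PG equation---after which each of the three cases reduces to a one-line verification of the algebraic identity above.
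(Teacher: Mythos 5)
Your proposal is correct and follows essentially the same route as the paper's proof: simplify the LSPG test basis under the constant-$\weightingMatrix$ assumption so that it coincides with the Petrov--Galerkin test basis \eqref{eq:testBasisDiscOptLinMulti}, discretize the projected ODE in standard form, pre-multiply by $\testBasis^T\podstate$, and verify case by case that the resulting discrete equations coincide with the LSPG stationarity condition. Your packaging of the three cases into the single oblique-projection identity is only a minor (and tidy) reorganization of the paper's case-by-case treatment, and your explicit caveat that Case 2 requires the time-invariant reading of ``linear'' (so that the test basis is genuinely constant across the time instances appearing in the multistep sum) flags a point the paper passes over silently, but the substance of the argument is the same.
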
	 
		\begin{proof}
		Applying Petrov--Galerkin projection to the full-order model ODE~\eqref{eq:ODE} using a trial
		subspace $\stateInitialNo + \range{\podstate}$ and test subspace
		$\range{\testBasis}$ yields the following ODE
	\begin{align} \label{eq:discreteOptODE}
	\testBasis(\stateRed,t)^T\podstate\frac{d\stateRed}{dt}  &=
	\testBasis(\stateRed,t)^T{\f }(\stateInitialNo
	+ \podstate\stateRed,t) , \qquad 
	\stateRed(0) = 0,
	\end{align} 
	which can be written in standard form as
	\begin{align} \label{eq:discreteOptODEstandard}
	\frac{d\stateRed}{dt}  &=
	\left(\testBasis(\stateRed,t)^T\podstate\right)^{-1}\testBasis(\stateRed,t)^T{\f }(\stateInitialNo
	+ \podstate\stateRed,t) , \qquad 
	\stateRed(0) = 0.
	\end{align} 
		\underline{Case 1}
	Applying a linear multistep time integrator with the stated assumption of $\beta_j=0$, $j\geq 1$ to
	numerically solve
	Eq.~\eqref{eq:discreteOptODEstandard} results in the following discrete
	equations to be solved at each time instance:
 \begin{equation} \label{eq:romResLinMultiFirststep01}
  \alpha_0\unknownRedPG^n - \dt \beta_0
 \left(\testBasis(\unknownRedPG^n,t^\timestepit)^T\podstate\right)^{-1}\testBasis(\unknownRedPG^n,t^\timestepit)^T
 \f (\stateInitialNo + \podstate\unknownRedPG^n,t^\timestepit )+
 \sum_{j=1}^k\alpha_j\stateRed^{\timestepit -j}=0.
  \end{equation} 
Pre-multiplying by $\testBasis(\unknownRedPG^n,t^\timestepit)^T\podstate$ yields 
discrete equations
 $\RRed^\timestepit \left(\unknownRedPG^\timestepit \right) = 0$
with residual
 \begin{equation} \label{eq:romResLinMultiFirststep1}
 \RRed^\timestepit \left(\unknownRed\right) \defeq
 \alpha_0\testBasis(\unknownRed,t^\timestepit)^T\podstate\unknownRed - \dt \beta_0
 \testBasis(\unknownRed,t^\timestepit)^T \f (\stateInitialNo + \podstate\unknownRed,t^\timestepit )+
 \sum_{j=1}^k\alpha_j\testBasis(\unknownRed,t^\timestepit)^T\podstate\stateRed^{\timestepit
 -j}.
  \end{equation} 
	Comparing Eqs.~\eqref{eq:romResLinMultiFirststep1} and \eqref{eq:resLinMulti}
	reveals 
$\RRed^\timestepit \left(\unknownRed\right) =
\testBasis(\unknownRed,t^\timestepit)^T\res^\timestepit\left(\stateInitialNo + \podstate
\unknownRed\right)$ and so the solution $\unknownRedPG^\timestepit$ satisfies
\begin{equation} \label{eq:LSPGlinmultiproof1continuous}
\testBasis(\unknownRedPG^\timestepit,t^\timestepit)^T\res^\timestepit\left(\stateInitialNo + \podstate
\unknownRedPG^\timestepit\right) = 0.
\end{equation}

Under the stated assumptions, we have $\partial
\res^n/\partial\unknown(\state)= \alpha_0\matI - \Delta
t\beta_0\frac{\partial\f}{\partial \statevarf}(\state, t^\timestepit)$
and so the \reviewerA{LSPG} test basis $\testBasis^\timestepit$ defined in
Eq.~\eqref{eq:LSPGlinmultiDef} is equal to the test basis in
Eq.~\eqref{eq:testBasisDiscOptLinMulti} evaluated at time instance $n$, i.e.,
$\testBasis^\timestepit(\unknownRed) = \testBasis(\unknownRed,t^\timestepit)$.
Therefore, the solution $\unknownRed^\timestepit$ to the \reviewerA{LSPG}
O$\Delta$E \eqref{eq:LSPGlinmulti} satisfies
\begin{equation} \label{eq:LSPGlinmultiproof1}
\testBasis(\unknownRed^\timestepit,t^\timestepit)^T\res^\timestepit \left(\stateInitialNo +
\podstate\unknownRed^\timestepit\right) = 0.
\end{equation} 
This shows that $\unknownRed^\timestepit = \unknownRedPG^\timestepit$, i.e., the
solutions to the \reviewerA{LSPG} O$\Delta$E and the \reviewerA{O$\Delta$E}
obtained after applying Petrov--Galerkin projection with test basis
$\testBasis(\state,t)$ defined by Eq.~\eqref{eq:testBasisDiscOptLinMulti} to the full-order model ODE and subsequently applying
time integration
are equivalent under the stated
assumptions, which is the desired result.\\
\noindent\underline{Case 2}
In this case, the test basis is independent of the state, i.e., 
 \begin{equation} \label{eq:testBasisConst}
 \testBasis(t) = \weightingMatrix^T\weightingMatrix\left(\alpha_0\matI -
 \dt\beta_0\frac{\partial \f}{\partial \statevarf}(\cdot,t)\right)\podstate.
  \end{equation} 
	Applying a linear multistep time integrator to solve
	Eq.~\eqref{eq:discreteOptODEstandard} and subsequently pre-multiplying by
	the constant matrix $\testBasis(t^\timestepit)^T\podstate$ yields the following
	discrete equations arising at each time step
 \begin{equation} \label{eq:romDiscreteOptLinMultiSolve}
 \RRed^\timestepit \left(\unknownRedPG^\timestepit \right) = 0,
  \end{equation} 
where the residual is defined as
 \begin{align} \label{eq:romResLinMulti2}
 \begin{split}
 \RRed^\timestepit \left(\unknownRed\right) \defeq
 &\alpha_0\testBasis(t^\timestepit)^T\podstate\unknownRed - \dt \beta_0
 \testBasis(t^\timestepit)^T \f (\stateInitialNo +
 \podstate\unknownRed,t^\timestepit )+
 \sum_{j=1}^k\alpha_j\testBasis(t^\timestepit)^T\podstate\stateRed^{\timestepit
 -j} -\\
 &\dt \sum_{j=1}^k\beta_j \testBasis(t^\timestepit)^T \f \left(\stateInitialNo +
 \podstate\stateRed^{\timestepit -j},t^{\timestepit -j}\right).
  \end{split} 
  \end{align} 
	Comparing Eqs.~\eqref{eq:romResLinMulti2} and \eqref{eq:resLinMulti}
	reveals 
$\RRed^\timestepit \left(\unknownRed\right) =
\testBasis(t^\timestepit)^T\res^\timestepit\left(\stateInitialNo+\podstate
\unknownRed\right)
  $ and so the solution $\unknownRedPG^\timestepit$ satisfies
 \begin{equation} \label{eq:LSPGlinmultiproof2continuous}
\testBasis(t^\timestepit)^T\res^\timestepit\left(\stateInitialNo+\podstate
\unknownRedPG^\timestepit\right) = 0.
  \end{equation} 
Under these assumptions, we have $\partial\res^\timestepit /\partial\unknown =
\alpha_0\matI -
	\dt\beta_0\partial\f/\partial\statevarf(\cdot,t^\timestepit )$
and so the \reviewerA{LSPG} test basis $\testBasis^\timestepit$ defined in
Eq.~\eqref{eq:LSPGlinmultiDef} is equal to the test basis in
Eq.~\eqref{eq:testBasisConst} at time instance $n$, i.e.,
$\testBasis^\timestepit(\unknownRed) = \testBasis(t^\timestepit)$.
Therefore, the \reviewerA{LSPG} O$\Delta$E \eqref{eq:LSPGlinmulti} can be
expressed as
\begin{equation} \label{eq:LSPGlinmultiproof2}
\testBasis(t^\timestepit)^T\res^\timestepit \left(\stateInitialNo +
\podstate\unknownRed^\timestepit\right) = 0.
\end{equation} 
This shows that $\unknownRed^\timestepit = \unknownRedPG^\timestepit$, i.e.,
the solutions to the \reviewerA{LSPG} O$\Delta$E
and the \reviewerA{O$\Delta$E}
obtained after applying Petrov--Galerkin projection with test basis
$\testBasis(t)$ defined by Eq.~\eqref{eq:testBasisDiscOptLinMulti} to the
full-order model ODE and subsequently applying time integration are equivalent
under the stated assumptions.\\
\noindent\underline{Case 3}
	The assumption $\beta_0=0$ results in a constant test basis
 \begin{equation} \label{eq:testBasis3}
 \testBasis = \ \alpha_0\weightingMatrix^T\weightingMatrix\podstate.
  \end{equation} 
	Applying a linear multistep time integrator to solve
	Eq.~\eqref{eq:discreteOptODEstandard} and subsequently pre-multiplying by
	the constant matrix $\testBasis^T\podstate$ yields
 \begin{equation} \label{eq:case3SolPG}
 \RRed^\timestepit \left(\unknownRedPG^\timestepit \right) = 0,
  \end{equation} 
which is
	to be solved at 
	each time step with a residual defined as
 \begin{equation} \label{eq:romResLinMulti3}
 \RRed^\timestepit \left(\unknownRed\right) \defeq
 \alpha_0\testBasis^T\podstate\unknownRed - \dt \beta_0
 \testBasis^T \f (\stateInitialNo +
 \podstate\unknownRed,t^\timestepit )+
 \sum_{j=1}^k\alpha_j\testBasis^T\podstate\stateRed^{\timestepit
 -j} -
 \dt \sum_{j=1}^k\beta_j \testBasis^T \f \left(\stateInitialNo +
 \podstate\stateRed^{\timestepit -j},t^{\timestepit -j}\right).
  \end{equation}
	As in Case 2, this leads to $\RRed^\timestepit \left(\unknownRed\right) =
\testBasis^T\res^\timestepit\left(\stateInitialNo+\podstate
\unknownRed\right)$. Because $\frac{\partial \res^n}{\partial\unknown}(\state)=
	\alpha_0\matI $, we also again have $\testBasis^\timestepit(\unknownRed) =
	\testBasis$. This leads to the desired result, as the 
O$\Delta$Es for the \reviewerA{LSPG} ROM
and the ROM obtained after applying
Petrov--Galerkin projection with test basis $\testBasis$ to the full-order
model ODE and subsequently applying time integration
both satisfy $\testBasis^T\res^\timestepit(\stateInitialNo +
\podstate\unknownRed^\timestepit)=0$ under the stated assumptions.
\end{proof}

We now provide conditions under which the \reviewerA{LSPG} ROM for
Runge--Kutta schemes can be expressed as an ODE.

\reviewerA{\begin{theorem}[\reviewerA{LSPG} ROM continuous representation: Runge--Kutta schemes]\label{thm:discOptROMcontDiscEqualRK}
The \reviewerA{LSPG} ROM for Runge--Kutta integrators is
equivalent to applying a Petrov--Galerkin projection to the ODE with test basis (in
matrix form) 
\begin{equation}\label{eq:testBasisDiscreteOptRK}
\testBasis(\stateRed,t) =
\weightingMatrix^T\weightingMatrix\left(\matI - \dt a\frac{\partial\f}{\partial \statevarf}(\stateInitialNo +
\podstate\stateRed, t
		)\right)\podstate
\end{equation}	
		and
		subsequently applying time integration if
		$\weightingMatrix_i=\weightingMatrix$, $\forall i$ are constant
		matrices and 
the integrator is a \textit{singly diagonally implicit Runge--Kutta (SDIRK) scheme, i.e., $a_{ij}=0$, $\forall j>i$} and $a_{ii}=a$, $\forall
i$.\footnote{\reviewerA{Note that summation on repeated indicies
is not implied.}}
\end{theorem}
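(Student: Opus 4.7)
The plan is to mirror the strategy of Theorem \ref{thm:discOptROMcontDiscEqual} but for the Runge--Kutta setting, exploiting the SDIRK structure that allows stages to be solved sequentially via \eqref{eq:resRKexplicitDef}--\eqref{eq:testBasisExplicitRK}. First, I would apply Petrov--Galerkin projection to the FOM ODE \eqref{eq:ODE} with trial subspace $\stateInitialNo + \range{\podstate}$ and the state-dependent test basis $\testBasis(\stateRed,t)$ from \eqref{eq:testBasisDiscreteOptRK}, obtaining the reduced ODE $\testBasis(\stateRed,t)^T\podstate \, d\stateRed/dt = \testBasis(\stateRed,t)^T\f(\stateInitialNo + \podstate\stateRed,t)$, just as in \eqref{eq:discreteOptODE}. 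I would then assume $\testBasis^T\podstate$ is invertible so that the ODE can be written in standard form and an SDIRK scheme applied.

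Next, I would apply the SDIRK integrator to this reduced ODE. Because $a_{ij}=0$ for $j>i$, the stages decouple in the same sequential way as for the full-order explicit/DIRK case \eqref{eq:resRKexplicitDef}, so the $i$-th stage equation reduces (after pre-multiplying by $\testBasis(\unknownRedPG^n_i, t^{n-1}+c_i\dt)^T\podstate$) to
\begin{equation*}
\testBasis(\unknownRedPG^n_i, t^{n-1}+c_i\dt)^T \, \resRKexplicit^n_i(\podstate\unknownRedPG^n_i) = 0, \qquad i\innatsequence{s},
\end{equation*}
where the historical stages $\unknownRedPG^n_1,\ldots,\unknownRedPG^n_{i-1}$ enter exactly as in \eqref{eq:resRKexplicitDef}. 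This is the analog of \eqref{eq:LSPGlinmultiproof1continuous} in the earlier proof.

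The third step is to check that the LSPG test basis \eqref{eq:testBasisExplicitRK} associated with the sequential formulation \eqref{eq:LSPGRKexplicit} coincides with \eqref{eq:testBasisDiscreteOptRK} evaluated at the appropriate stage state and time. Differentiating \eqref{eq:resRKexplicitDef} gives $\partial \resRKexplicit^n_i/\partial \unknown = \matI - \dt\,a_{ii}\,\partial\f/\partial\statevarf$, which equals $\matI - \dt\,a\,\partial\f/\partial\statevarf$ under the SDIRK assumption $a_{ii}=a$ (this is precisely where the \emph{singly} diagonally implicit property is needed). Since $\weightingMatrix$ is constant, the first term in \eqref{eq:testBasisExplicitRK} involving $\partial\weightingMatrix/\partial\unknown$ vanishes, leaving $\testBasis^n_i(\unknownRed) = \weightingMatrix^T\weightingMatrix(\matI - \dt\,a\,\partial\f/\partial\statevarf(\stateInitialNo + \podstate\unknownRed + \cdots, t^{n-1}+c_i\dt))\podstate$, matching \eqref{eq:testBasisDiscreteOptRK}. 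Comparing with the preceding display and invoking \eqref{eq:LSPGRKexplicit} then yields $\unknownRed^n_i = \unknownRedPG^n_i$.

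The main obstacle is the bookkeeping for the sequential stage dependencies: one must verify that the contributions of previously computed stages $\{\unknownRed^n_j\}_{j<i}$ inside $\resRKexplicit^n_i$ appear identically in both the LSPG-then-solve and the projected-ODE-then-discretize constructions. The SDIRK hypothesis $a_{ii}\equiv a$ is the essential ingredient; without it, the factor $\matI - \dt\,a_{ii}\,\partial\f/\partial\statevarf$ would vary across stages and could not be absorbed into a single stage-independent test basis $\testBasis(\stateRed,t)$. Constancy of $\weightingMatrix$ is likewise essential to eliminate the $\partial\weightingMatrix/\partial\unknown$ term in \eqref{eq:testBasisExplicitRK}; both restrictions are the direct Runge--Kutta analogs of the hypotheses appearing in Theorem \ref{thm:discOptROMcontDiscEqual}.
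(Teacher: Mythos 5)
Your proposal is correct and follows essentially the same route as the paper's proof: project the ODE with the test basis \eqref{eq:testBasisDiscreteOptRK}, apply the SDIRK scheme stage-by-stage, pre-multiply by $\testBasis^T\podstate$ to identify the resulting residual with $\testBasis^T\resRKexplicit^\timestepit_i(\podstate\unknownRed)$, and then use $\partial\resRKexplicit^\timestepit_i/\partial\unknown = \matI - \dt\,a\,\partial\f/\partial\statevarf$ together with constancy of $\weightingMatrix$ to match the LSPG test basis \eqref{eq:testBasisExplicitRK}. The only cosmetic slip is that in your displayed stage equation the first argument of $\testBasis$ should be the stage state $\stateRed^{\timestepit-1}+\dt\sum_{j=1}^{i}a_{ij}\unknownRedPG^\timestepit_j$ rather than the stage unknown $\unknownRedPG^\timestepit_i$ itself, which you in any case acknowledge in your third step.
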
}
\begin{proof}
\reviewerA{Applying Petrov--Galerkin projection to Eq.~\eqref{eq:ODE} using a trial
subspace $\stateInitialNo + \range{\podstate}$ and test subspace
$\range{\testBasis}$ yields the following ODE (in standard form)
	\begin{align} \label{eq:discreteOptODEstandard2}
	\frac{d\stateRed}{dt}  &=
	\left(\testBasis(\stateRed,t)^T\podstate\right)^{-1}\testBasis(\stateRed,t)^T{\f }(\stateInitialNo
	+ \podstate\stateRed,t) , \qquad 
	\stateRed(0) = 0.
	\end{align} 
	Applying an SDIRK time integrator to numerically solve
	Eq.~\eqref{eq:discreteOptODEstandard2} results in the following sequence of discrete
	equations to be solved at each time step:
\begin{align} \label{eq:romResRKPG}
\begin{split}
 \vec
\unknownRedPG_i^n -
 &[(\testBasis(\stateRed^{\timestepit -1} + \dt
 \sum_{j=1}^ia_{ij}\vec \unknownRedPG_j^n, t^{\timestepit -1} +
c_i\dt)^T\podstate]^{-1}\testBasis(\stateRed^{\timestepit -1} + \dt
 \sum_{j=1}^ia_{ij}\vec \unknownRedPG_j^n, t^{\timestepit -1} +
c_i\dt)^T
\\
&\f (\stateInitialNo + \podstate\stateRed^{\timestepit -1} + \dt
 \sum_{j=1}^ia_{ij}\vec \podstate\unknownRedPG_j^n, t^{\timestepit -1} +
c_i\dt)=0,\quad i\innatsequence{s}.
\end{split} 
\end{align} 
Pre-multiplying by $\testBasis(\stateRed^{\timestepit -1} + \dt
 \sum_{j=1}^ia_{ij}\vec \unknownRedPG_j, t^{\timestepit -1} +
c_i\dt)^T\podstate$ yields the following
discrete equations
\begin{equation*}\label{eq:romDiscreteOptRKSolveFirst}
\resRKexplicitRed^\timestepit _i\left(\unknownRedPG_i^\timestepit \right) = 0,\quad i\innatsequence{s}
\end{equation*} 
with residual
\begin{align} \label{eq:romResRKPG2}
\begin{split}
\resRKexplicitRed^\timestepit_i(\unknownRed) \defeq  &
\testBasis(\stateRed^{\timestepit -1} + \dt a\unknownRed + \dt
 \sum_{j=1}^{i-1}a_{ij}\unknownRedPG_j^\timestepit, t^{\timestepit -1} +
c_i\dt)^T\cdot\\
&\left[\podstate\unknownRed -
\f (\stateInitialNo + \podstate\stateRed^{\timestepit -1} + \dt a \unknownRed +\dt
 \sum_{j=1}^{i-1}a_{ij}\vec \podstate\unknownRedPG_j^\timestepit, t^{\timestepit -1} +
c_i\dt)\right].
\end{split} 
\end{align} 
Comparing Eqs.~\eqref{eq:romResRKPG2} and \eqref{eq:resRKexplicitDef} reveals 
\begin{equation*}
\resRKexplicitRed^\timestepit_i(\unknownRed) =
\testBasis(\stateRed^{\timestepit -1} + \dt a\unknownRed + \dt
 \sum_{j=1}^{i-1}a_{ij}\unknownRedPG_j^\timestepit, t^{\timestepit -1} +
c_i\dt)^T
\resRKexplicit^\timestepit_i\left(\podstate\unknownRed\right),\quad
i\innatsequence{s}
\end{equation*}
such that the solutions
$\unknownRedPG_i^\timestepit$ satisfy
\begin{equation}\label{eq:RKPGequations}
\testBasis(\stateRed^{\timestepit -1} + \dt
 \sum_{j=1}^{i}a_{ij} \unknownRedPG^\timestepit_j, t^{\timestepit -1} +
c_i\dt)^T
\resRKexplicit^\timestepit_i\left(\unknownRedPG^\timestepit_i\right)
= 0,\quad i\innatsequence{s}.
\end{equation}}

\reviewerA{Now, under the stated assumptions, we have 
$$\frac{\partial
\resRKexplicit^\timestepit_i}{\partial\unknown}(\unknownVariable) = 
\matI - \dt a\frac{\partial\f}{\partial \statevarf}(\state^{\timestepit
-1} + \dt a\unknownVariable + \dt \sum\limits_{j=1}^{i-1}a_{ij}\unknown_j^\timestepit, t^{\timestepit -1} +
c_i\dt)$$
such that the \reviewerA{LSPG} test basis $\testBasis_{i}^\timestepit$
defined in Eq.~\eqref{eq:testBasisExplicitRK} is related to the test basis in
Eq.~\eqref{eq:testBasisDiscreteOptRK} as follows:
$$\testBasis_{i}^\timestepit(\unknownRed) = 
\testBasis(\stateRed^{\timestepit -1} +\dt a\unknownRed  + \dt
 \sum_{j=1}^{i-1}a_{ij}\vec \unknownRed_j^\timestepit, t^{\timestepit -1} +
c_i\dt).
$$
Therefore, the solutions
$\unknownRed^\timestepit_i$ to the
\reviewerA{LSPG} O$\Delta$E \eqref{eq:LSPGRKexplicit} satisfy
\begin{equation} \label{eq:LSPGRK2}
\testBasis(\stateRed^{\timestepit -1} + \dt
 \sum_{j=1}^ia_{ij}\vec \unknownRed^\timestepit_j, t^{\timestepit -1} +
c_i\dt)^T
\resRKexplicit^\timestepit_i\left(\unknownRed^\timestepit_i\right)
= 0,\quad i\innatsequence{s}.
\end{equation}
Comparing Eqs.~\eqref{eq:RKPGequations} and \eqref{eq:LSPGRK2} shows that the
$\unknownRed_i^\timestepit = \unknownRedPG_i^\timestepit$, $i\in\nat{s}$, i.e.,
the
solutions to the \reviewerA{LSPG} O$\Delta$E and the O$\Delta$E
obtained after applying Petrov--Galerkin projection with test basis
$\testBasis(\state,t)$ defined by Eq.~\eqref{eq:testBasisDiscreteOptRK} to the
full-order model ODE and subsequently applying time integration are equivalent
under the stated assumptions, which is the desired result.}
\end{proof}

We now show that the \reviewerA{LSPG} ROM has a time-continuous representation
for all \reviewerA{explicit and} single-state Runge--Kutta schemes, \reviewerA{which include the widely
	used forward Euler, backward Euler, and implicit midpoint schemes.}
\reviewerA{\begin{corollary}[\reviewerA{LSPG} ROM continuous representation: explicit Runge--Kutta]
The \reviewerA{LSPG} ROM for Runge--Kutta integrators is
equivalent to applying a Petrov--Galerkin projection to the ODE with test basis (in
matrix form) $$\testBasis(\stateRed,t) =
\weightingMatrix^T\weightingMatrix\podstate$$ and
		subsequently applying time integration if
		$\weightingMatrix_i=\weightingMatrix$, $\forall i$ are constant
		matrices and an explicit Runge--Kutta scheme is employed.
\end{corollary}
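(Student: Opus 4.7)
The plan is to deduce this corollary as a specialization of Theorem \ref{thm:discOptROMcontDiscEqualRK} by observing that an explicit Runge--Kutta scheme can be viewed as an SDIRK scheme with the common diagonal coefficient $a=0$. Explicit Runge--Kutta integrators satisfy $a_{ij}=0$ for all $j\geq i$, which in particular implies $a_{ii}=0$ for all $i$; hence they fall into the SDIRK family (trivially) with $a=0$. Under this identification, the test basis in Eq.~\eqref{eq:testBasisDiscreteOptRK} collapses, since the $\dt a \, \partial\f/\partial\statevarf$ term vanishes, leaving exactly $\testBasis(\stateRed,t)=\weightingMatrix^T\weightingMatrix\podstate$, which is the claimed test basis.

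Concretely, I would first verify that the hypothesis of Theorem \ref{thm:discOptROMcontDiscEqualRK} admits the choice $a=0$ without issue: inspecting the proof of that theorem, all occurrences of $a$ appear inside arguments of $\f$ (as $\dt a\,\unknownRed$) or as a multiplicative factor in the test-basis expression, so there is no division by $a$ and the entire argument remains valid as $a\rightarrow 0$. Second, I would apply the theorem with $\weightingMatrix_i=\weightingMatrix$ constant and $a=0$, yielding that the \reviewerA{LSPG} O$\Delta$E coincides with the O$\Delta$E obtained from Petrov--Galerkin projection of the FOM ODE using test basis $\weightingMatrix^T\weightingMatrix\podstate$, followed by time integration with the given explicit Runge--Kutta scheme.

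Alternatively, if a direct argument is preferred, the proof can be carried out from scratch by noting that in the explicit case the residual \eqref{eq:resRKexplicitDef} satisfies $\partial\resRKexplicit_i^\timestepit/\partial\unknown=\matI$, so by Eq.~\eqref{eq:testBasisExplicitRK} the LSPG test basis reduces to $\testBasis_i^\timestepit = \weightingMatrix^T\weightingMatrix\podstate$, which is state- and stage-independent. The LSPG optimality condition \eqref{eq:LSPGRKexplicit} then reads $(\weightingMatrix^T\weightingMatrix\podstate)^T\resRKexplicit_i^\timestepit(\podstate\unknownRed_i^\timestepit)=0$, which is precisely the O$\Delta$E obtained from applying this explicit Runge--Kutta scheme to the Petrov--Galerkin ODE with the same constant test basis.

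I do not anticipate a genuine obstacle: the result is essentially a simplification of Theorem \ref{thm:discOptROMcontDiscEqualRK} in which the state-dependent Jacobian term vanishes. The only subtlety worth double-checking is that the SDIRK proof does not implicitly assume $a\neq 0$ anywhere (e.g., through invertibility of $\testBasis^T\podstate$ that might rely on the $\dt a\,\partial\f/\partial\statevarf$ contribution); a brief inspection confirms that $\testBasis^T\podstate = \podstate^T\weightingMatrix^T\weightingMatrix\podstate$ at $a=0$ remains nonsingular under the standard assumption that $\weightingMatrix\podstate$ has full column rank, so the argument carries through.
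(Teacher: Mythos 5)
Your proposal is correct and follows essentially the same route as the paper: the paper's proof simply observes that explicit Runge--Kutta schemes satisfy the hypotheses of Theorem \ref{thm:discOptROMcontDiscEqualRK} with $a=0$, which is exactly your main argument. Your additional checks (that nothing in the SDIRK proof divides by $a$, and the direct derivation via $\partial\resRKexplicit_i^\timestepit/\partial\unknown=\matI$) are sound but not needed beyond what the paper states.
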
		
\begin{proof}
Explicit Runge--Kutta schemes are characterized by $a_{ij} = 0$, $j \geq i$ and
so they satisfy the conditions Theorem
\ref{thm:discOptROMcontDiscEqualRK} with $a = 0$.
\end{proof}
}
\begin{corollary}[\reviewerA{LSPG} ROM continuous representation: single-stage Runge--Kutta]
The \reviewerA{LSPG} ROM for \reviewerC{Runge--Kutta integrators} is
equivalent to applying a Petrov--Galerkin projection to the ODE with test basis (in
matrix form) $$\testBasis(\stateRed,t) =
\weightingMatrix^T\weightingMatrix\left(\matI - \dt a_{11}\frac{\partial\f}{\partial \statevarf}(\stateInitialNo +
\podstate\stateRed, t
		)\right)\podstate$$ and
		subsequently applying time integration if
		$\weightingMatrix_i=\weightingMatrix$, $\forall i$ are constant
		matrices and a single-stage Runge--Kutta scheme is employed.
\end{corollary}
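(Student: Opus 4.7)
The plan is to recognize this corollary as an immediate specialization of Theorem \ref{thm:discOptROMcontDiscEqualRK} to the single-stage setting, so essentially no new analysis is required beyond verifying that the SDIRK hypothesis is vacuously satisfied when $s=1$.

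Concretely, I would first observe that any single-stage Runge--Kutta scheme has only a single coefficient $a_{11}$ in its Butcher tableau. The SDIRK conditions in Theorem \ref{thm:discOptROMcontDiscEqualRK}---namely $a_{ij}=0$ for all $j>i$ with $i,j\innat{s}$ and $a_{ii}=a$ for all $i\innat{s}$---then hold trivially: the first condition is vacuous since there are no index pairs $(i,j)$ with $j>i$ when $s=1$, and the second reduces to the tautology $a_{11}=a_{11}$, so we may take $a=a_{11}$.

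Next I would invoke Theorem \ref{thm:discOptROMcontDiscEqualRK} directly under the additional stated assumption that $\weightingMatrix_i = \weightingMatrix$ are constant matrices. This yields that the LSPG ROM is equivalent to Petrov--Galerkin projection of the FOM ODE using the test basis from Eq.~\eqref{eq:testBasisDiscreteOptRK}, followed by Runge--Kutta time integration. Substituting $a = a_{11}$ into that expression gives exactly the test basis claimed in the statement of the corollary.

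There is really no main obstacle here: the entire content of the corollary is the (trivial) verification of the hypotheses of Theorem \ref{thm:discOptROMcontDiscEqualRK}. The only pitfall would be to worry about whether the single-stage scheme is genuinely \emph{singly} diagonally implicit as opposed to merely diagonally implicit, but since there is only one stage, the two notions coincide, so this concern is immediately dispatched.
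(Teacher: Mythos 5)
Your proposal is correct and follows exactly the same route as the paper's own proof, which simply notes that a single-stage scheme ($s=1$) satisfies the hypotheses of Theorem \ref{thm:discOptROMcontDiscEqualRK} with $a=a_{11}$. Your additional remarks on the vacuity of the $a_{ij}=0$, $j>i$ condition and the coincidence of DIRK and SDIRK for $s=1$ are a harmless elaboration of the same argument.
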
		
\begin{proof}
Single-stage Runge--Kutta schemes are characterized by $s=1$ and so they
satisfy the conditions of \reviewerA{Theorem \ref{thm:discOptROMcontDiscEqualRK} with $a=a_{11}$}.
\end{proof}

\section{Equivalence conditions}\label{sec:comparative}
This section performs theoretical analysis that highlights cases in which
Galerkin and \reviewerA{LSPG} ROMs are equivalent\reviewerA{, in which case
the Galerkin ROM exhibits 
\textit{both}
continuous and discrete optimality. This suggests that \reviewerCNewer{time
discretization and minimum-residual projection are commutative in these scenarios.}} 
Section \ref{sec:equalExplicit} shows that equivalence holds for explicit time
integrators, Section \ref{sec:equalDt} demonstrates equivalence in the limit
of $\dt\rightarrow 0$, and Section \ref{sec:equalSPD} shows equivalence in the
case of symmetric-positive-definite residual Jacobians.
\subsection{Equivalence for explicit integrators}\label{sec:equalExplicit}

\begin{corollary}[\reviewerC{Equivalence}: explicit linear multistep
scheme]\label{cor:GalDiscOptExpLin} 
\ \\ Galerkin projection is \reviewerA{equivalent to LSPG projection with
$\weightingMatrix=\frac{1}{\sqrt{\alpha_0}}\matI$} for explicit linear multistep schemes.
\end{corollary}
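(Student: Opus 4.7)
The plan is to show directly at the discrete level that the LSPG O$\Delta$E collapses onto the Galerkin O$\Delta$E under the stated substitution, so that by induction the two ROMs generate identical generalized coordinates at every time step.

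First I would observe that explicit linear multistep schemes are characterized by $\beta_0 = 0$, so the residual \eqref{eq:resLinMulti} is affine in $\unknown$ with Jacobian $\partial r_\ell^n/\partial w_k = \alpha_0\delta_{\ell k}$. Second, with $\weightingMatrix = \tfrac{1}{\sqrt{\alpha_0}}\matI$ the entries $a_{mi} = \tfrac{1}{\sqrt{\alpha_0}}\delta_{mi}$ are constants independent of $\unknown$, so the first summand of the LSPG test-basis expression \eqref{eq:LSPGlinmultiDef} (which carries the factor $\partial a_{ml}/\partial w_k$) vanishes identically. Substituting into the surviving second summand gives
\[
\testBasisEntry{i}{j}^n = \tfrac{1}{\alpha_0}\,\delta_{il}\cdot\alpha_0\,\delta_{\ell k}\,\podstateEntry{k}{j} = \podstateEntry{i}{j},
\]
so that $\testBasis^n = \podstate$ at every time instance $n$. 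The LSPG stationarity condition \eqref{eq:LSPGlinmulti} then reduces to $\podstate^T\res^n(\stateInitialNo + \podstate\unknownRed^n) = 0$, which is precisely the Galerkin O$\Delta$E \eqref{eq:reducedRedGalLin}. Since both ROMs share the same initial generalized coordinates $\stateRed(0) = 0$, a short induction on $n$ closes the argument.

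As an equivalent and perhaps more illuminating alternative, one could invoke Case 3 of Theorem \ref{thm:discOptROMcontDiscEqual}: under the explicit hypothesis $\beta_0 = 0$ and constant $\weightingMatrix$, the LSPG ROM coincides with the time discretization of a Petrov--Galerkin projection on the FOM ODE with test basis $\alpha_0\weightingMatrix^T\weightingMatrix\podstate$; substituting $\weightingMatrix = \tfrac{1}{\sqrt{\alpha_0}}\matI$ collapses this test basis to $\podstate$, making the projection Galerkin, and Theorem \ref{thm:galProjDiscCommutative} then delivers the equivalence with the Galerkin ROM at the discrete level. No step here is delicate; the only thing requiring momentary care is noticing that the constancy of $\weightingMatrix$ annihilates the first summand of \eqref{eq:LSPGlinmultiDef}, after which the arithmetic is immediate.
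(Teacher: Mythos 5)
Your proposal is correct and takes essentially the same route as the paper, whose proof likewise reduces the claim to Case 3 of Theorem \ref{thm:discOptROMcontDiscEqual} with $\weightingMatrix=\frac{1}{\sqrt{\alpha_0}}\matI$ and the observation that $\testBasis = \podstate$ in that case. Your additional direct computation of the test basis from Eq.~\eqref{eq:LSPGlinmultiDef} merely unpacks what Case 3 already establishes (constancy of $\weightingMatrix$ kills the first summand and $\partial\res^\timestepit/\partial\unknown=\alpha_0\matI$ collapses the second to $\podstate$), so the two arguments coincide in substance.
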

\begin{proof}
In the case of explicit linear multistep schemes, $\beta_0=0$ and so Galerkin
projection corresponds to Case 3 of Theorem \ref{thm:discOptROMcontDiscEqual}
with $\weightingMatrix=\frac{1}{\sqrt{\alpha_0}}\matI$, as $\testBasis =
\podstate$ in this case.
\end{proof}

\begin{corollary}[\reviewerC{Equivalence}: explicit Runge--Kutta
scheme]\label{cor:GalDiscOptExpRK} 
Galerkin projection is \reviewerA{equivalent to LSPG projection with
$\weightingMatrix=\matI$} for explicit Runge--Kutta
schemes.
\end{corollary}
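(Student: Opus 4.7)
The plan is to derive this corollary as an immediate consequence of the preceding Corollary on the LSPG continuous representation for explicit Runge--Kutta schemes together with Theorem \ref{thm:galProjDiscCommutative} on the commutativity of Galerkin projection and time discretization. Since the preceding corollary already establishes that, under constant $\weightingMatrix_i=\weightingMatrix$ and an explicit Runge--Kutta scheme, the LSPG ROM coincides with performing Petrov--Galerkin projection on the ODE \eqref{eq:ODE} with test basis $\testBasis = \weightingMatrix^T\weightingMatrix\podstate$ and subsequently applying the same time integrator, almost all of the technical content is already in place.

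First, I would specialize that result by setting $\weightingMatrix=\matI$, which reduces the test basis to $\testBasis = \matI^T\matI\podstate = \podstate$. Second, I would observe that Petrov--Galerkin projection of the full-order ODE onto $\range{\podstate}$ with test basis equal to $\podstate$ is, by definition, the Galerkin projection \eqref{eq:romODE} (noting $\podstate^T\podstate = \matI$ makes the standard-form ODE collapse to the Galerkin ROM ODE). Third, I would invoke Theorem \ref{thm:galProjDiscCommutative} to conclude that applying the explicit Runge--Kutta integrator to this Galerkin ROM ODE produces exactly the same O$\Delta$E as the Galerkin ROM O$\Delta$E \eqref{eq:romResRKSolve}--\eqref{eq:romG_RK_update}. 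Chaining these three equalities gives LSPG-O$\Delta$E = Petrov--Galerkin-ODE + time integration = Galerkin-ROM-ODE + time integration = Galerkin-ROM-O$\Delta$E, which is the desired equivalence.

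There is no serious obstacle here, since the preceding corollary already handles the nontrivial step (commuting LSPG with time discretization in the explicit RK setting by exploiting $a_{ii} = 0$, i.e., $a = 0$ in Theorem \ref{thm:discOptROMcontDiscEqualRK}). The only thing to be mildly careful about is verifying that $\weightingMatrix = \matI$ is permissible in the hypotheses of the preceding corollary (it is trivially constant), and that the uniqueness/invertibility needed to pass from \eqref{eq:discreteOptODEstandard2} back to the Galerkin ROM ODE holds automatically because $\testBasis^T\podstate = \podstate^T\podstate = \matI$. With these two observations the proof is essentially a one-line substitution into the preceding corollary.
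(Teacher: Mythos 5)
Your proposal is correct and follows essentially the same route as the paper: the paper's proof likewise invokes Theorem \ref{thm:discOptROMcontDiscEqualRK} with $\weightingMatrix=\matI$ and $a=0$ to conclude $\testBasis=\podstate$, so that LSPG reduces to Petrov--Galerkin projection with test basis equal to the trial basis, i.e., Galerkin projection. Your explicit appeal to Theorem \ref{thm:galProjDiscCommutative} to close the loop at the discrete level is a step the paper leaves implicit, but it is the same argument.
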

\begin{proof}
In the case of explicit Runge--Kutta schemes, \reviewerC{$a_{ij}=0$ $\forall
j\geq i$} and so Galerkin
projection corresponds \reviewerA{to Theorem} \ref{thm:discOptROMcontDiscEqualRK}
with $\weightingMatrix = \matI$ \reviewerC{and $a = 0$}, as $\testBasis =
\podstate$ in this case.
\end{proof}

\subsection{Equivalence in the limit of $\dt\rightarrow 0$}\label{sec:equalDt}

\begin{theorem}[\reviewerC{Limiting equivalence}]\label{thm:dtZeroEquiv}
\ \\ In the limit of $\dt \rightarrow 0$, Galerkin 
\reviewerA{projection is equivalent to LSPG projection with
$\weightingMatrix=\frac{1}{\sqrt{\alpha_0}}\matI$ for linear multistep
schemes and $\weightingMatrix_i=\reviewerCNewer{\matI}$, $i\innat{s}$ for Runge--Kutta schemes}.
\end{theorem}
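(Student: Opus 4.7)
The plan is to establish equivalence separately for each family of integrators by directly simplifying the LSPG test bases under the specified weighting matrices, passing to the limit $\dt\to 0$, and verifying that the resulting Petrov--Galerkin discrete equations reduce to the Galerkin O$\Delta$E (obtained via the commutation in Theorem \ref{thm:galProjDiscCommutative}).

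For the linear multistep case with $\weightingMatrix = \frac{1}{\sqrt{\alpha_0}}\matI$, I would observe that because $\weightingMatrix$ is state-independent, the first (derivative-of-weighting) term in \eqref{eq:LSPGlinmultiDef} vanishes, and the test basis collapses to $\testBasis^\timestepit(\unknownRed) = \frac{1}{\alpha_0}\frac{\partial \res^\timestepit}{\partial \unknown}(\stateInitialNo + \podstate\unknownRed)\podstate$. Differentiating the residual \eqref{eq:resLinMulti} gives $\testBasis^\timestepit(\unknownRed) = \podstate - \tfrac{\dt\beta_0}{\alpha_0}\tfrac{\partial \f}{\partial \statevarf}\podstate$, which tends to $\podstate$ as $\dt\to 0$. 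Plugging this into the LSPG stationarity condition \eqref{eq:LSPGlinmulti} yields, in the limit, $\podstate^T\res^\timestepit(\stateInitialNo+\podstate\unknownRed^\timestepit) = 0$, which by \eqref{eq:reducedRedGalLin} is precisely the Galerkin O$\Delta$E.

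The Runge--Kutta case proceeds identically with $\weightingMatrix_i = \matI$: constancy annihilates the first term of \eqref{eq:discreteOptRKTest}, and differentiating \eqref{eq:resRK} gives $\partial \res_i^\timestepit/\partial \unknown_j = \delta_{ij}\matI - \dt a_{ij}\tfrac{\partial \f}{\partial \statevarf}$, so $\testBasis_{ij}^\timestepit \to \delta_{ij}\podstate$ as $\dt\to 0$. The sum in the LSPG condition \eqref{eq:LSPGRK} then collapses to $\podstate^T\res_i^\timestepit(\podstate\unknownRed_1^\timestepit,\ldots,\podstate\unknownRed_s^\timestepit) = 0$ for every $i\in\nat{s}$, which matches the Galerkin RK O$\Delta$E via \eqref{eq:reducedRedGalRK}.

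The chief subtlety, and the main obstacle I anticipate, is promoting convergence of the test bases into convergence of the implicitly defined LSPG solutions $\unknownRed^\timestepit(\dt)$. I would handle this by invoking continuous dependence on $\dt$: the LSPG discrete system is a smooth family of nonlinear equations in $\dt$, so under standard smoothness of $\f$ (the same hypotheses required for well-posedness of the implicit-function argument at each step) the root map is continuous at $\dt = 0$, and the uniquely determined limiting system is exactly the Galerkin system. A minor technical point is that the weighting $\weightingMatrix = \alpha_0^{-1/2}\matI$ is well-defined precisely because $\alpha_0 \neq 0$ is already part of the consistency assumptions on linear multistep methods.
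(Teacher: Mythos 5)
Your proposal follows essentially the same route as the paper's proof: both compute the LSPG test bases under the stated weightings, use the residual Jacobians to show they reduce to $\podstate$ (resp.\ $\podstate\delta_{ij}$) as $\dt\to 0$, and identify the limiting stationarity conditions with the Galerkin O$\Delta$Es via Eqs.~\eqref{eq:reducedRedGalLin} and \eqref{eq:reducedRedGalRK}. Your additional remark about promoting convergence of the test bases to convergence of the implicitly defined solutions is a point the paper's proof leaves implicit, but it does not alter the argument.
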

\begin{proof}
\underline{Linear multistep schemes}. Consider solving
the \reviewerA{LSPG} O$\Delta$E \eqref{eq:LSPGlinmulti}
with $\weightingMatrix =
\frac{1}{\sqrt{\alpha_0}}\matI$.
Then, the test basis defined in Eq.~\eqref{eq:LSPGlinmultiDef}
is simply 
\begin{equation*}
\testBasis^\timestepit(\unknownRed) = \frac{1}{\alpha_0}\frac{\partial \res^\timestepit}{\partial
\unknown}\left(\stateInitialNo+\podstate\unknownRed\right)\podstate.
\end{equation*}
From Eq.~\eqref{eq:resLinMulti}, we can write the residual Jacobian as
\begin{equation*} 
\frac{\partial \res^\timestepit}{\partial \unknown}(\unknownVariable) =
\alpha_0\matI - \dt \beta_0\frac{\partial\f}{\partial
\statevarf}(\unknownVariable,t^\timestepit).
\end{equation*} 
Therefore, we have
 \begin{equation*} 
 \lim_{\dt\rightarrow 0} \testBasis^\timestepit(\unknownRed)=
 \lim_{\dt\rightarrow 0} \frac{1}{\alpha_0}\left(\alpha_0\matI - \dt
 \beta_0\frac{\partial\f}{\partial
 \statevarf}(\stateInitialNo+\podstate\unknownRed,t^\timestepit)\right)
\podstate=\podstate
  \end{equation*} 
	and so in the limit of $\dt\rightarrow 0$, the \reviewerA{LSPG} ROM solution
	satisfies
\begin{equation} \label{eq:limitingCaseLinear}
\lim_{\dt\rightarrow 0}
\testBasis^\timestepit(\unknownRed)^T
\res^\timestepit
\left(\stateInitialNo+\podstate\unknownRed^\timestepit\right)
=
\podstate^T
\res^\timestepit
\left(\stateInitialNo+\podstate\unknownRed^\timestepit\right)  = 0.
\end{equation} 
Because the Galerkin ROM solution also satisfies Eq.~\eqref{eq:limitingCaseLinear}
(see Eq.~\eqref{eq:reducedRedGalLin} of Theorem
\ref{thm:galProjDiscCommutative}), the two techniques are equivalent in this limit, which is the desired
result.\\ 
\noindent \underline{Runge--Kutta schemes}.
Consider solving
the \reviewerA{LSPG} O$\Delta$E \eqref{eq:LSPGRK}
with $\weightingMatrix_i = \matI$, $i\in\nat{s}$.
Then, the test basis defined in Eq.~\eqref{eq:discreteOptRKTest}
is simply 
 \begin{equation*} 
 \testBasis^\timestepit_{ij}(\unknownRed_1,\ldots,\unknownRed_s) = \frac{\partial \res^\timestepit_i}{\partial
 \unknown_j}(\podstate\unknownRed_1,\ldots,\podstate\unknownRed_s)\podstate.
  \end{equation*} 
Now, from Eq.~\eqref{eq:resRK} the Jacobian can be expressed as
 \begin{equation*} 
\frac{\partial
\res_i^n}{\partial\unknown_j}(\unknownVariable_1,\ldots, \unknownVariable_s) = 
\matI\delta_{ij} - \dt a_{ij}\frac{\partial\f}{\partial \statevarf}(\state^{\timestepit
-1} + \dt \sum_{j=1}^sa_{ij}\unknownVariable_j, t^{\timestepit -1} +
c_i\dt).
  \end{equation*} 
	Therefore, we have
 \begin{equation*} 
 \lim_{\dt\rightarrow 0} \testBasis^\timestepit_{ij}(\unknownRed_1,\ldots,\unknownRed_s)=
 \lim_{\dt\rightarrow 0} \left(\matI\delta_{ij} - \dt a_{ij}\frac{\partial\f}{\partial \statevarf}(\state^{\timestepit
-1} + \dt \sum_{j=1}^sa_{ij}\unknownVariable_j, t^{\timestepit -1} +
c_i\dt)\right)
\podstate=\podstate\delta_{ij}
  \end{equation*} 
	and so in the limit of $\dt\rightarrow 0$, the \reviewerA{LSPG} ROM solution
	satisfies
\begin{equation} \label{eq:limitingCaseRK}
\lim_{\dt\rightarrow 0}
\sum_{j=1}^s\testBasis^\timestepit_{ij}(\unknownRed_1,\ldots,\unknownRed_s)^T
\res_j^\timestepit
\left(\podstate\unknownRed_1^\timestepit,\ldots,\podstate\unknownRed_s^\timestepit\right)
=
\podstate^T
\res_i^\timestepit
\left(\stateInitialNo+\podstate\unknownRed^\timestepit\right)  = 0,\quad
i\in\nat{s}.
\end{equation} 
Because the Galerkin ROM solution also satisfies Eq.~\eqref{eq:limitingCaseRK}
(see Eq.~\eqref{eq:reducedRedGalRK} of Theorem
\ref{thm:galProjDiscCommutative}), the two techniques are equivalent in this limit, which is the desired
result.
\end{proof}
\subsection{Equivalence for symmetric-positive-definite residual
Jacobians}\label{sec:equalSPD}
\begin{theorem}[\reviewerC{Equivalence}: linear multistep
schemes]\label{thm:galDiscOptLinear}
In the case of linear multistep schemes, Galerkin projection \reviewerA{is equivalent to LSPG projection}
with $\weightingMatrix\left(\optVec\right) = \rightChol\left(\optVec\right)$,
where $\rightChol$ is the Cholesky
factor\footnote{
Its derivative can be computed by solving the Lyapunov equation
$
\frac{\partial \rightChol}{\partial \unknownEntry{k}}^T\rightChol +
\rightChol\frac{\partial\rightChol}{\partial\unknownEntry{k}}= - \left[\frac{\partial
\res^\timestepit }{\partial\unknown}\right]^{-1}  \frac{\partial^2 \res^\timestepit }{\partial
\unknown\partial \unknownEntry{k}}\left[\frac{\partial \res^\timestepit }{\partial\unknown}\right]^{-1}
$.
	} of the residual-Jacobian inverse
\begin{equation}\label{eq:cholDef}
\left[\frac{\partial \res^\timestepit
}{\partial\unknown}\right]^{-1}=\rightChol^T\rightChol,
\end{equation}
if $\partial \res^\timestepit /\partial
\unknown\left(\unknown^\timestepit ,t^\timestepit \right) = \alpha_0 \matI -
\dt
\beta_0\frac{\partial \f }{\partial \statevarf}\left(\unknown^\timestepit ,t^\timestepit \right)$
is symmetric positive
definite and if
\begin{gather}\label{eq:galDiscOptLinearMultiAssumption}
\frac{\partial\rightCholEntry{i}{\ell}}{\partial\unknownEntry{k}}\podstateEntry{k}{j}\resEntry{\ell}^\timestepit 
=0,\quad\forall i,k.
\end{gather}
Here, index notation has been used. 
\end{theorem}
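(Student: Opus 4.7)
The plan is to show that under the stated assumptions, the LSPG test basis defined in Eq.~\eqref{eq:LSPGlinmultiDef} with weighting matrix $\weightingMatrix=\rightChol$ reduces exactly to the trial basis $\podstate$, so that the LSPG stationarity condition \eqref{eq:LSPGlinmulti} collapses onto the Galerkin condition $\podstate^T\res^\timestepit(\stateInitialNo+\podstate\unknownRed^\timestepit)=0$ obtained in Eq.~\eqref{eq:reducedRedGalLin} of Theorem \ref{thm:galProjDiscCommutative}.

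First, I would observe that the SPD assumption on $\partial\res^\timestepit/\partial\unknown$ guarantees that its inverse is SPD as well, so the Cholesky factor $\rightChol$ satisfying \eqref{eq:cholDef} exists and is invertible; this legitimizes the choice $\weightingMatrix(\optVec)=\rightChol(\optVec)$. Next, I would substitute $a_{ij}=\rightCholEntry{i}{j}$ into \eqref{eq:LSPGlinmultiDef}, splitting the resulting expression into the two terms (the one involving $\partial a_{ml}/\partial \unknownEntry{k}$ and the one involving $\partial \resEntry{\ell}^\timestepit/\partial \unknownEntry{k}$). The first term contains the factor $\frac{\partial \rightCholEntry{m}{\ell}}{\partial\unknownEntry{k}}\podstateEntry{k}{j}\resEntry{\ell}^\timestepit$, which vanishes by assumption \eqref{eq:galDiscOptLinearMultiAssumption}.

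For the second term, I would recognize the contraction $\rightCholEntry{m}{i}\rightCholEntry{m}{\ell}$ as entry $(i,\ell)$ of $\rightChol^T\rightChol$, and then invoke the Cholesky identity \eqref{eq:cholDef} to rewrite it as entry $(i,\ell)$ of $[\partial\res^\timestepit/\partial\unknown]^{-1}$. In matrix form, the surviving term thus reads
\begin{equation*}
\testBasis^\timestepit(\unknownRed) = \left[\frac{\partial\res^\timestepit}{\partial\unknown}\right]^{-1}\frac{\partial\res^\timestepit}{\partial\unknown}\podstate = \podstate,
\end{equation*}
which is exactly the Galerkin test basis. Under the stated assumption $\partial\res^\timestepit/\partial\unknown=\alpha_0\matI-\dt\beta_0\partial\f/\partial\statevarf$, substituting $\testBasis^\timestepit=\podstate$ into \eqref{eq:LSPGlinmulti} yields $\podstate^T\res^\timestepit(\stateInitialNo+\podstate\unknownRed^\timestepit)=0$, which coincides with the Galerkin O$\Delta$E \eqref{eq:reducedRedGalLin}, establishing equivalence of the two ROMs.

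The main obstacle will be the careful bookkeeping of the index notation when contracting $\rightCholEntry{m}{i}$ with $\rightCholEntry{m}{\ell}$ and with the Jacobian $\partial\resEntry{\ell}^\timestepit/\partial\unknownEntry{k}$: one must be precise about which index is summed, to recognize that the product is indeed $\rightChol^T\rightChol$ (and not $\rightChol\rightChol^T$) so that the Cholesky identity \eqref{eq:cholDef} applies directly. Once the indexing is verified, the conclusion follows mechanically, so no further technicality is expected beyond justifying existence of $\rightChol$ via the SPD hypothesis.
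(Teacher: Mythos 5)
Your proposal is correct and follows essentially the same route as the paper: the paper's proof simply asserts that under the stated assumptions the LSPG test basis $\testBasis^\timestepit$ in Eq.~\eqref{eq:LSPGlinmultiDef} collapses to $\podstate$, and then invokes Eq.~\eqref{eq:reducedRedGalLin} to conclude both ROMs satisfy $\podstate^T\res^\timestepit(\stateInitialNo+\podstate\unknownRed^\timestepit)=0$. You have merely filled in the index computation (first term killed by \eqref{eq:galDiscOptLinearMultiAssumption}, second term reduced via $\rightChol^T\rightChol=[\partial\res^\timestepit/\partial\unknown]^{-1}$) that the paper leaves implicit.
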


\begin{proof}
Under the stated assumptions, the \reviewerA{LSPG} test basis defined in
Eq.~\eqref{eq:LSPGlinmultiDef} is equal to the trial basis, i.e.,
$\testBasis^\timestepit(\unknownRed^\timestepit) = \podstate$. By invoking
Eq.~\eqref{eq:reducedRedGalLin}, we can see that the
O$\Delta$Es for the the \reviewerA{LSPG} ROM
\eqref{eq:LSPGlinmulti} and Galerkin ROM \eqref{eq:romResLinMultiSolve} both
satisfy $\vec \Phi^T \res^\timestepit \left(\stateInitialNo +
\podstate\unknownRed^\timestepit\right) = 0$,
which is the
desired result.
\end{proof}

\reviewerC{
\begin{theorem}[\reviewerC{Equivalence}: \reviewerC{diagonally implicit} Runge--Kutta
schemes]
In the case of diagonally implicit Runge--Kutta schemes, Galerkin projection is equivalent to LSPG projection
with $\weightingMatrix_i\left(\optVec\right) = \rightCholDIRK_i\left(\optVec\right)$,
where $\rightCholDIRK_i$ is the Cholesky
factor
of the residual-Jacobian inverse
\begin{equation}\label{eq:cholDef}
\left[\frac{\partial \resRKexplicit^\timestepit_i
}{\partial\unknown}\right]^{-1}=\rightCholDIRK_i^T\rightCholDIRK_i,
\end{equation}
if $\partial \resRKexplicit^\timestepit_i /\partial
\unknown\left(\unknown^\timestepit ,t^\timestepit \right) = \matI -
\dt
a_{ii}\frac{\partial \f }{\partial \statevarf}\left(
\state^{\timestepit -1}+ \dt a_{ii}
\unknown^\timestepit + \dt\sum_{j=1}^{i-1}a_{ij}\unknown^\timestepit_j,t^{\timestepit -1}+c_i\dt \right)$
is symmetric positive
definite and if
\begin{gather}\label{eq:galDiscOptLinearMultiAssumption}
\frac{\partial\rightCholDIRKEntry{j}{\ell}{i}}{\partial\unknownEntry{m}}\podstateEntry{m}{k}\resRKexplicitiEntry{\ell}^\timestepit 
=0,\quad\forall j,k.
\end{gather}
Here, index notation has been used. 
\end{theorem}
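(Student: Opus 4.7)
The plan mirrors Theorem \ref{thm:galDiscOptLinear} transposed to the sequential LSPG formulation for diagonally implicit Runge--Kutta schemes. The goal is to show that, under the stated hypotheses, the stagewise LSPG test basis collapses to the trial basis $\podstate$, so that the LSPG O$\Delta$E \eqref{eq:LSPGRKexplicit} and the Galerkin O$\Delta$E (obtained by specializing the commutativity identity \eqref{eq:reducedRedGalRK} to a DIRK scheme) coincide at every stage $i\in\nat{s}$.

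First I would substitute $\weightingMatrix_i=\rightCholDIRK_i$ into the test-basis formula \eqref{eq:testBasisExplicitRK}. That expression is a sum of two terms: one contracting $\partial[\weightingMatrix_i]_{u\ell}/\partial\unknownEntry{m}$ against $\resRKexplicitiEntry{\ell}$, and one contracting $[\weightingMatrix_i]_{uj}[\weightingMatrix_i]_{um}$ against $\partial[\resRKexplicit_i^\timestepit]_m/\partial\unknownEntry{\ell}$. Hypothesis \eqref{eq:galDiscOptLinearMultiAssumption} is exactly the statement that the first term vanishes identically. The second term, written in matrix form, is $\rightCholDIRK_i^T\rightCholDIRK_i(\partial\resRKexplicit_i^\timestepit/\partial\unknown)\podstate$; symmetric positive definiteness of the Jacobian guarantees that the Cholesky factor $\rightCholDIRK_i$ exists, and its defining relation then gives $\rightCholDIRK_i^T\rightCholDIRK_i(\partial\resRKexplicit_i^\timestepit/\partial\unknown)=\matI$. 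Hence $\testBasis_i^\timestepit=\podstate$ at every stage.

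Next I would identify the DIRK Galerkin stage equation as a projection of the sequential residual. Because $a_{ij}=0$ for $j>i$, the full RK residual \eqref{eq:resRK} evaluated at the partially-solved tuple satisfies $\res_i^\timestepit(\podstate\unknownRed_1^\timestepit,\ldots,\podstate\unknownRed_{i-1}^\timestepit,\podstate\unknownRed_i,\ldots)=\resRKexplicit_i^\timestepit(\podstate\unknownRed_i)$. Combining this with \eqref{eq:reducedRedGalRK}, the Galerkin stage-$i$ equation reads $\podstate^T\resRKexplicit_i^\timestepit(\podstate\unknownRed_i^\timestepit)=0$, and with $\testBasis_i^\timestepit=\podstate$ the LSPG stage-$i$ equation \eqref{eq:LSPGRKexplicit} reduces to the same equation. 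Induction on $i$ (since earlier stages are already matched when solving stage $i$) then delivers equivalence of the full stagewise O$\Delta$Es, after which the explicit state update \eqref{eq:fom_update} produces the same $\state^\timestepit$.

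The main obstacle is entirely bookkeeping rather than conceptual: verifying that the free and contracted indices in \eqref{eq:galDiscOptLinearMultiAssumption} line up so as to annihilate precisely the derivative-of-weighting summand in \eqref{eq:testBasisExplicitRK}, and checking that substituting previously computed reduced stage values into $\res_i^\timestepit$ is consistent between the Galerkin and LSPG sequential solves (so the induction step is legitimate). Once those alignments are confirmed, the Cholesky simplification proceeds verbatim as in Theorem \ref{thm:galDiscOptLinear}, and the equivalence of the two O$\Delta$Es is immediate.
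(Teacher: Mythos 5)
Your proposal is correct and follows essentially the same route as the paper's proof: show that under the stated hypotheses the stage-$i$ LSPG test basis \eqref{eq:testBasisExplicitRK} collapses to $\podstate$ (the cancellation of the derivative-of-weighting term via \eqref{eq:galDiscOptLinearMultiAssumption} and the Cholesky identity $\rightCholDIRK_i^T\rightCholDIRK_i\,\partial\resRKexplicit_i^\timestepit/\partial\unknown=\matI$), then invoke the commutativity identity \eqref{eq:reducedRedGalRK} so that both O$\Delta$Es reduce to $\podstate^T\resRKexplicit_i^\timestepit(\podstate\unknownRed_i^\timestepit)=0$ for each stage. The only difference is that you spell out the stagewise induction and index bookkeeping that the paper leaves implicit.
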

\begin{proof}
Under the stated assumptions, the LSPG test basis defined in
Eq.~\eqref{eq:testBasisExplicitRK} is equal to the trial basis, i.e.,
$\testBasis_{i}^\timestepit(\unknownRed_i^\timestepit) = \podstate$,
$i\innat{s}$. By invoking
Eq.~\eqref{eq:reducedRedGalRK}, we can see that the
O$\Delta$Es for the the LSPG ROM
\eqref{eq:LSPGRKexplicit} and Galerkin ROM \eqref{eq:romResRKSolve} both
satisfy $\vec \Phi^T \resRKexplicit_i^\timestepit
\left(\podstate\unknownRed_i^\timestepit\right) = 0$, $i\innatsequence{s}$,
which is the
desired result.
\end{proof}
}

\begin{theorem}[\reviewerC{Equivalence}: \reviewerC{implicit} Runge--Kutta
schemes]\label{thm:galDiscOptRK}
In the case of Runge--Kutta schemes, Galerkin projection 
exhibits discrete optimality if 
$\partial \resRK^\timestepit /\partial \unknownRK\left(\unknownRK^\timestepit ,t^\timestepit \right)$
is symmetric positive
definite and if
\begin{gather}\label{eq:galDiscOptLinearMultiAssumption2}
\frac{\partial\rightCholRKEntry{i}{\ell}}{\partial\unknownRKEntry{k}}\podstateRKEntry{k}{j}\resRKEntry{\ell}^\timestepit 
=0,\quad\forall i,k.  
\end{gather}
Here, index notation has been used and
$\rightCholRK\reviewerC{\in\RR{s\ndof\times s\ndof}}$ is the Cholesky
factor of the residual-Jacobian inverse, i.e.,
\begin{equation}\label{eq:cholDef2}
\left[\frac{\partial
\resRK^\timestepit
}{\partial\unknownRK}\right]^{-1}=\rightCholRK^T\rightCholRK.
\end{equation}
Here,
\begin{equation*}
\unknownRK\defeq\left[
\begin{array}{c}
\unknown_1\\
\vdots\\
\unknown_s
\end{array}
\right]\in\RR{s\ndof},\quad
\resRK^\timestepit :
\unknownRK
\mapsto
\left[
\begin{array}{c}
\res^\timestepit _1\left(\unknown_1,\ldots,\unknown_s\right)\\
\vdots\\
\res^\timestepit _s\left(\unknown_1,\ldots,\unknown_s\right)
\end{array}
\right]\in\RR{s\ndof},
\quad
\quad \podstateRK\defeq\left[\begin{array}{c c c }
\podstate & &\\
&\ddots  &\\
&&\podstate  \\
\end{array}
\right]\in\RR{s\ndof \times s\nstate}. 
\end{equation*}
\end{theorem}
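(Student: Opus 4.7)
The plan is to mirror the argument of Theorem \ref{thm:galDiscOptLinear}, but applied to the stacked formulation introduced in the statement. The key observation is that once we group the $s$ stage equations into a single vector residual $\resRK^\timestepit$ on $\RR{s\ndof}$ with argument $\unknownRK\in\RR{s\ndof}$ and trial matrix $\podstateRK$, the Runge--Kutta LSPG problem \eqref{eq:discreteOptRK2} takes exactly the same algebraic shape as the linear-multistep LSPG problem, namely
\begin{equation*}
\unknownRKRed^\timestepit = \arg\min_{\optVecRed\in\RR{s\nstate}}\|\weightingMatrix(\podstateRK\optVecRed+\text{known terms})\resRK^\timestepit(\podstateRK\optVecRed+\text{known terms})\|_2^2,
\end{equation*}
where the stacked weighting $\weightingMatrix\defeq\rightCholRK$ is block-diagonal in the stage index (this corresponds to taking $\weightingMatrix_i$ as the $i$th block-row of $\rightCholRK$).

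First, I would write down the stacked analogue of the LSPG test basis \eqref{eq:LSPGlinmultiDef}. By direct differentiation of the stacked objective, this test basis decomposes into two contributions: a \emph{Jacobian term} $\rightCholRK^T\rightCholRK(\partial\resRK^\timestepit/\partial\unknownRK)\podstateRK$, and a \emph{weighting-derivative term} whose entries are precisely those annihilated by assumption \eqref{eq:galDiscOptLinearMultiAssumption2} (which is the exact stacked counterpart of assumption \eqref{eq:galDiscOptLinearMultiAssumption} used in Theorem \ref{thm:galDiscOptLinear}). Second, I would use the SPD hypothesis on $\partial\resRK^\timestepit/\partial\unknownRK$ to justify the Cholesky factorization \eqref{eq:cholDef2} and thereby simplify the Jacobian term: $\rightCholRK^T\rightCholRK(\partial\resRK^\timestepit/\partial\unknownRK)\podstateRK=\podstateRK$. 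Combining these two facts yields the stacked test basis $\testBasis^\timestepit(\unknownRKRed)=\podstateRK$.

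Third, I would substitute this test basis back into the LSPG stationarity conditions to obtain $\podstateRK^T\resRK^\timestepit(\podstateRK\unknownRKRed^\timestepit+\text{known terms})=0$. Because $\podstateRK$ is block-diagonal with $\podstate$ on the diagonal and $\resRK^\timestepit$ is the stacked vector of stage residuals, this single equation decouples into
\begin{equation*}
\podstate^T\res_i^\timestepit(\podstate\unknownRed_1^\timestepit,\ldots,\podstate\unknownRed_s^\timestepit)=0,\qquad i\innat{s},
\end{equation*}
which, via relation \eqref{eq:reducedRedGalRK} from Theorem \ref{thm:galProjDiscCommutative}, coincides with the Galerkin ROM O$\Delta$E \eqref{eq:romResRKSolve}. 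Hence the Galerkin and LSPG solutions agree, giving discrete optimality of the Galerkin scheme.

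I expect the main technical obstacle to be the explicit verification that the stacked LSPG test basis admits exactly the two-term decomposition described above, together with the careful bookkeeping needed to confirm that the stacked index-notation assumption \eqref{eq:galDiscOptLinearMultiAssumption2} is precisely what is required to kill the weighting-derivative term (rather than some stricter or weaker condition). In particular, one must check that the $\rightCholRK$ in \eqref{eq:cholDef2} and the $\weightingMatrix_i$ in \eqref{eq:discreteOptRKTest} are consistently identified under the stacked/block-diagonal reinterpretation; the SPD assumption is needed not only to guarantee existence of the Cholesky factor but also to ensure the required inverse relation $\rightCholRK^T\rightCholRK=[\partial\resRK^\timestepit/\partial\unknownRK]^{-1}$ is meaningful on the entire $s\ndof$-dimensional stacked space.
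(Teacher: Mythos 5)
Your overall strategy---stacking the stage residuals into $\resRK^\timestepit$ on $\RR{s\ndof}$ with trial matrix $\podstateRK$ and then replaying Theorem~\ref{thm:galDiscOptLinear}---is exactly what the paper does, and the conclusion (discrete optimality of the Galerkin solution under the SPD and Cholesky-derivative assumptions) goes through. However, there is one concrete error in your reduction: you assert that the stacked weighting $\rightCholRK$ is block-diagonal in the stage index, so that the stacked problem ``takes exactly the same algebraic shape'' as the LSPG problem \eqref{eq:discreteOptRK2}. For a fully implicit Runge--Kutta scheme the stacked Jacobian has off-diagonal blocks $-\dt\, a_{ij}\,\partial\f/\partial\statevarf$ for $i\neq j$, so its inverse and the Cholesky factor $\rightCholRK$ of that inverse are \emph{not} block-diagonal. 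Consequently the objective you obtain is
$\|\rightCholRK(\optVec)\resRK^\timestepit(\optVec)\|_2^2=\sum_{i}\|\sum_{j}\rightCholRK_{ij}\res_j^\timestepit\|_2^2$,
in which each term couples all $s$ stage residuals, whereas \eqref{eq:discreteOptRK2} has the decoupled form $\sum_i\|\weightingMatrix_i\res_i^\timestepit\|_2^2$. The paper makes precisely this point: it writes the objective with doubly indexed weights $\weightingMatrix_{ij}=\rightCholRK_{ij}$ and concludes that Galerkin satisfies a \emph{slightly more general} notion of discrete optimality than the LSPG schemes considered in the paper, rather than coinciding with \eqref{eq:discreteOptRK2}. (This is also why the DIRK case is stated as a separate theorem: there, symmetry plus block triangularity forces the stacked Jacobian, and hence $\rightCholRK$, to be block-diagonal, and the exact LSPG identification is recovered.) Your subsequent steps---collapsing the test basis to $\podstateRK$ via \eqref{eq:cholDef2} and assumption \eqref{eq:galDiscOptLinearMultiAssumption2}, and decoupling the stationarity conditions into $\podstate^T\res_i^\timestepit=0$ via \eqref{eq:reducedRedGalRK}---are fine, so the theorem as stated is proved; only the claimed equivalence with the specific LSPG formulation \eqref{eq:discreteOptRK2} should be retracted. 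A minor additional point: for Runge--Kutta schemes the stacked unknown is a velocity, so the trial set is the linear subspace $\range{\podstateRK}$ with no affine ``known terms'' added outside the argument of $\f$.
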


\begin{proof}
First, note that solution
$(\unknownRed^\timestepit_1,\ldots,\unknownRed^\timestepit_s)$ to the Galerkin
O$\Delta$E \eqref{eq:reducedRedGalRK} 
equivalently satisfies
\begin{equation*} 
\podstateRK^T\resRK^\timestepit \left(\podstateRK\unknownRKRed^\timestepit\right)=0,
\end{equation*} 
where 
\reviewerCNewer{
$
\unknownRKRed\defeq\left[
\unknownRed_1^T\
\cdots\
\unknownRed_s^T
\right]^T\in\RR{s\nstate}.
$
}
We are now precisely in the situation of Theorem \ref{thm:galDiscOptLinear}:
the Galerkin solution is the solution to the (discrete) optimization problem
 \begin{equation} \label{eq:galOptRK}
 \underset{\optVec\in
 \range{\reviewerC{\podstateRK}}}{\mathrm{minimize}}\|
\rightCholRK\left(\optVec\right)
\resRK^\timestepit \left(\optVec\right)\|_2^2
  \end{equation} 
under the assumed conditions. \reviewerC{This objective function can be
written equivalently as 
\begin{equation} \label{eq:equivRKSPD}
\|
\rightCholRK\left(\optVec\right)
\resRK^\timestepit \left(\optVec\right)\|_2^2 =
\sum_{i=1}^s\|\sum_{j=1}^s\rightCholRK_{ij}\left(\optVec_1,\ldots,\optVec_s\right)
\res^\timestepit_j(\optVec_1,\ldots,\optVec_s)
\|_2^2,
\end{equation} 
where $\rightCholRK_{ij}\in\RR{\ndof\times\ndof}$ denotes the $(i,j)$ block of $\rightCholRK$.
Therefore, the Galerkin ROM solution satisfies
\begin{equation} \label{eq:galROMOptRK}
(\unknownRed^\timestepit_1,\ldots,\unknownRed^\timestepit_s) = \underset{\left(\optVecRed_1,\ldots,\optVecRed_s\right)\in
 \RR{\nstate\times s}}{\arg\min}\sum_{i=1}^s\|\sum_{j=1}^s
\weightingMatrix_{ij}\left( \podstate\optVecRed_1,\ldots,\podstate\optVecRed_s\right)
\res_j^\timestepit \left(
\podstate\optVecRed_1,\ldots,\podstate\optVecRed_s\right)\|_2^2,
\end{equation} 
where $\weightingMatrix_{ij}=\rightCholRK_{ij}$.
Comparing Eqs.~\eqref{eq:galROMOptRK} 
and \eqref{eq:discreteOptRK2} reveals that the Galerkin ROM satisfies a
slightly more general notion of discrete optimality than the LSPG schemes
considered in this work.
}
\end{proof}


\reviewerA{
This analysis demonstrates that Galerkin projection exhibits discrete
optimality
when the residual Jacobian is symmetric
positive definite.
This is aligned with recent work that has shown
Galerkin projection to be effective for Lagrangian dynamical
systems \cite{lall2003structure,carlbergStructureAiaa,carlberg2012spd}---which
are characterized by symmetric-positive-definite residual
Jacobians---due to the fact that Galerkin projection preserves properties such
as symplectic time evolution and energy conservation. For these reasons, using
Galerkin projection is sensible for problems exhibiting these characteristics.}

\section{Error analysis}\label{sec:error}

\reviewerA{Ultimately, we are interested in assessing the state-space error between the
(computed) time-discrete ROM solution and the (unknown) time-continuous FOM
solution. This error comprises two contributions:
the state-space error between (1) the time-continuous FOM and time-discrete FOM
solutions (i.e., time-discretization error), and (2) the
time-discrete ROM and time-discrete FOM solutions.
}
This section \reviewerA{focuses on the latter and} performs
time-discrete state-space error analyses for Galerkin and \reviewerC{LSPG}
ROMs applied to different time integrators. 

\reviewerA{Section \ref{sec:errorLinear} derives  error bounds for
the Galerkin and LSPG ROMs for linear multistep schemes. Here, Theorem~\ref{eq:error_multi_step}
provides \textit{a posteriori} bounds that depend on the ROM solution, while
Theorem~\ref{thm:error_multi_step_apriori} reports \textit{a priori} bounds.
Section \ref{sec:errorBackEuler} provides \textit{a posteriori} error bounds
for the backward Euler scheme, as well as additional analyses that highlight
	the important role of the time step in the LSPG ROM, which is discussed in
	Remark \ref{rem:modestTimestep}. Section~\ref{sec:errorRK} derives ROM error
	bounds for 
Runge--Kutta schemes. Here, Theorem~\ref{thm:RKerrors} provides \textit{a posteriori}
bounds, Corollary \ref{cor:DIRK_error} specializes these results to explicit Runge--Kutta
and DIRK schemes, and Theorem~\ref{thm:RKerrors_apriori} and Corollary
\ref{cor:RKerrors_apriori_explicit} report \textit{a priori} bounds for
Runge--Kutta schemes.
}

\subsection{Linear multistep schemes}\label{sec:errorLinear}
Here, we perform error analysis for implicit linear multistep
schemes. We will use subscripts $\star$, $G$ and \reviewerA{$P$} to denote the solution to
full-order model O$\Delta$E~\eqref{eq:resLinMultiSolve}, Galerkin ROM
O$\Delta$E~\eqref{eq:romResLinMultiSolve}, and the \reviewerC{LSPG} ROM
O$\Delta$E~\eqref{eq:LSPGlinmulti}, respectively. \reviewerA{We also acknowledge that linear multistep schemes with $k > 1$
usually employ different coefficients $\beta_j$ and $\alpha_j$ for different
time \reviewerCNew{instances}; this is necessary because at time \reviewerCNew{instance} $n$, a maximum of $n+1$
states is available from past history (starting with the initial condition at
$n=0$). Therefore, we allow for coefficients
 that depend on the time \reviewerCNew{instance} $n$, i.e., $\alpha^n_j$ and $\beta^n_j$. In
 addition, we} define
$\testBasis^n\defeq\testBasis^\timestepit(\stateRedP^\timestepit)$ whose
entries are defined by Eq.~\eqref{eq:LSPGlinmultiDef}. 
\reviewerC{We can then write the discrete equations arising at each time
\reviewerCNew{instance}
$n$ for linear multistep schemes as}
\begin{align}
    \alpha^{\reviewerA{n}}_0 \state^{n}_\reviewerA{\star} 
&= \beta^{\reviewerA{n}}_0 \dt   \f \del{\stateInitialNo +
\state_\reviewerA{\star}^{n},t^n} +
\resFOM\sbr{\state_\reviewerA{\star}^{n-k}, \dots,
\state_\reviewerA{\star}^{n-1}} , && \state_\reviewerA{\star}^0 = \zero  \label{eq:fom_multi_step} \\
     \alpha^{\reviewerA{n}}_0 \stateRedG^{n} 
&= \beta^{\reviewerA{n}}_0 \dt  \podstate ^T \f \del{\stateInitialNo + \podstate \stateRedG^{n},t^n} +
\resGal\sbr{\stateRedG^{n-k}, \dots, \stateRedG^{n-1}} , && \stateRedG^0 =
\zero  \label{eq:rom_G_multi_step} \\
    \alpha^{\reviewerA{n}}_0 \stateRedP^{n} 
&= \beta^{\reviewerA{n}}_0 \dt  \del{(\testBasis^n)^T \podstate }^{-1}
(\testBasis^n)^T 
\f\del{\stateInitialNo + \podstate  \stateRedP^{n},t^n} + \resDisc\sbr{\stateRedP^{n-k},
\dots, \stateRedP^{n-1}} , && \stateRedP^0 = \zero
\label{eq:rom_P_multi_step}, 
\end{align}
where 
\begin{align} \label{eq:residuals}
\begin{aligned}
    \resFOM\sbr{\state^{n-k}, \dots, \state^{n-1}} 
&\defeq \sum_{\ell=1}^k \del{ \beta^{\reviewerA{n}}_\ell \dt  \f 
\del{\stateInitialNo + \state^{n-\ell}, t^{n-\ell}} -
\alpha^{\reviewerA{n}}_\ell \state^{n-\ell} }, \\
    \resGal\sbr{\stateRed^{n-k}, \dots, \stateRed^{n-1}}
&\defeq \sum_{\ell=1}^k \del{ \beta^{\reviewerA{n}}_\ell \dt  \podstate ^T \f 
\del{\stateInitialNo + \podstate  \stateRed^{n-\ell}, t^{n-\ell}} -
\alpha^{\reviewerA{n}}_\ell \stateRed^{n-\ell} }, \\
    \resDisc\sbr{\stateRed^{n-k}, \dots, \stateRed^{n-1}}  
&\defeq    \sum_{\ell=1}^k \del{ \beta^{\reviewerA{n}}_\ell \dt
\del{(\testBasis^{n})^T \podstate }^{-1} (\testBasis^{n})^T \f  \del{\stateInitialNo +
\podstate  \stateRed^{n-\ell}, t^{n-\ell}} - \alpha^{\reviewerA{n}}_\ell
\stateRed^{n-\ell} },
\end{aligned}
\end{align}
\reviewerC{and  $\state_\star^{k}\defeq \state^{k} - 
\stateInitialNo$.}
\noindent \reviewerCNewer{We also define the FOM residuals at time instance $n$ associated with the trajectories
associated with the FOM, Galerkin ROM,
and LSPG ROM O$\Delta$Es as
\begin{align}
\resFOMTotal{n}(\state)\defeq\alpha_0^n\state  -
\dt\beta_0^n\f(\stateInitialNo + \state) - \resFOM\sbr{\state_\star^{n-k},
\dots, \state_\star^{n-1}}\\
\resGalTotal{n}(\state)\defeq\alpha_0^n\state  -
\dt\beta_0^n\f(\stateInitialNo + \state) - \resFOM\sbr{\podstate\stateRedG^{n-k},
\dots, \podstate\stateRedG^{n-1}}\\
\resDiscTotal{n}(\state)\defeq\alpha_0^n\state  -
\dt\beta_0^n\f(\stateInitialNo + \state) - \resFOM\sbr{\podstate\stateRedP^{n-k},
\dots, \podstate\stateRedP^{n-1}}.
\end{align}}
We define the Galerkin and \reviewerC{LSPG} operators as
\[
    \bbV \defeq \podstate  \podstate ^T  , 
    \quad 
    \mbox{and}
    \quad
    \bbP^n \defeq \podstate  \del{(\testBasis^n)^T \podstate }^{-1}
		(\testBasis^n)^T  ,
\]
\reviewerC{respectively}, and Galerkin and \reviewerC{LSPG} state-space errors at time
instance $\timestepit$ as
\[
    \deltastateRedG^n \defeq \state_\star ^n - \podstate  \stateRedG^n,
    \quad 
    \mbox{and}
    \quad
    \deltastateRedP^n \defeq \state_\star ^n - \podstate  \stateRedP^n, 
\]
respectively. \reviewerCNewer{Note that the Galerkin operator $\bbV$
is an orthogonal projector ($\podstate$ is assumed to be orthogonal), while the LSPG operator $\bbP$ is an
oblique projector.} As the second argument in $\f$ does not play any role for linear
multistep schemes
\reviewerC{(the time index always matches that of the first argument)}, will
drop it for notational convenience \reviewerC{in this section and in Section
\ref{sec:errorBackEuler}}.

\subsubsection{\reviewerCNewer{\textit{A posteriori} error bounds}}
\reviewerCNewer{We proceed by deriving \textit{a posteriori} error bounds for the Galerkin and
LSPG ROMs for linear multistep schemes.}
\reviewerCNewer{We} assume Lipschitz continuity of $\f$ in the first argument:
\begin{itemize}
\item[$\del{\bf A_1}$] There \reviewerCNew{exists} a constant $\lipschitzConstant >0$ such that for $\vec x, \vec y \in \bbR^N$ 
    \begin{equation*}
        \normtwo{\f (\state) - \f(\vec y)} \le \lipschitzConstant  \normtwo{\state - \vec y}.
    \end{equation*}
\end{itemize}

\reviewerCNewer{
\begin{theorem}[Local {\textit{a posteriori} error bounds: linear multistep
schemes}]\label{theorem:localAPosterioriLMM}
If $\del{\bf A_1}$ holds and \reviewerA{
$\dt < \abs{\alpha^{\reviewerA{j}}_0}/(\abs{\beta^{\reviewerA{j}}_0}
\lipschitzConstant )$}, \reviewerA{$\forall j\innat{n}$}, then
\begin{align}
    \normtwo{\deltastateRedG^{n}} 
&\le \sum_{\ell=0}^k \firstCoeffMultin{\ell} \normtwo{ \del{ \matI  - \bbV }
\f \del{\stateInitialNo + \podstate  \stateRedG^{n-\ell}} }  
    + \sum_{\ell=1}^k \secondCoeffMultin{\ell} \normtwo{\deltastateRedG^{n-\ell}}
		\label{eq:error_FOM_ROM_G_multi_stepfirst2}\\
\normtwo{\deltastateRedP^{n}} 
&\le \sum_{\ell=0}^k \firstCoeffMultin{\ell} \normtwo{ \del{ \matI  - \bbP^{n}
} \f \del{\stateInitialNo + \podstate  \stateRedP^{n-\ell}} }  
    + \sum_{\ell=1}^k \secondCoeffMultin{\ell} \normtwo{\deltastateRedP^{n-\ell}}
		\label{eq:error_FOM_ROM_P_multi_stepfirst2}
\end{align}
 where we have defined $\firstCoeffMulti{\ell}^\reviewerCNewer{m}\defeq
 \abs{\beta^{m}_\ell} \dt
/h^{m}$,
$\secondCoeffMulti{\ell}^{m}\defeq (
\reviewerCNew{\abs{\alpha^{m}_\ell}+\abs{\beta^{m}_\ell} \lipschitzConstant
		\dt} )/h^{m}$, and $h^{m} \defeq
		\abs{\alpha^{m}_0} -
		\abs{\beta^{m}_0} \lipschitzConstant  \dt $.
\end{theorem}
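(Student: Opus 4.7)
The plan is to derive both inequalities by working in the full-order state space: lift each ROM update to $\bbR^N$ by left-multiplying with $\podstate$, subtract from the FOM update, apply a clever triangle-inequality splitting to separate the Lipschitz (velocity-difference) error from the projection (out-of-subspace) error, and then solve the resulting scalar inequality for $\|\deltastateRedG^n\|$ (respectively $\|\deltastateRedP^n\|$).

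First I would lift. For the Galerkin case, left-multiplying \eqref{eq:rom_G_multi_step} by $\podstate$ and using $\bbV = \podstate\podstate^T$ together with the definition of $\resGal$ in \eqref{eq:residuals} yields
\begin{equation*}
\alpha_0^n\,\podstate\stateRedG^n = \beta_0^n\dt\,\bbV\f(\stateInitialNo + \podstate\stateRedG^n) + \sum_{\ell=1}^k\bigl(\beta_\ell^n\dt\,\bbV\f(\stateInitialNo + \podstate\stateRedG^{n-\ell}) - \alpha_\ell^n\,\podstate\stateRedG^{n-\ell}\bigr).
\end{equation*}
Subtracting this from \eqref{eq:fom_multi_step} and grouping past-history terms into $\alpha_\ell^n\deltastateRedG^{n-\ell}$ gives an exact identity for $\alpha_0^n\deltastateRedG^n$ whose only `velocity' terms have the form $\f(\stateInitialNo + \state_\star^{n-\ell}) - \bbV\f(\stateInitialNo + \podstate\stateRedG^{n-\ell})$.

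Next I would perform the key splitting. Rewrite each such difference as
\begin{equation*}
\f(\stateInitialNo + \state_\star^{n-\ell}) - \bbV\f(\stateInitialNo + \podstate\stateRedG^{n-\ell}) = \bigl[\f(\stateInitialNo + \state_\star^{n-\ell}) - \f(\stateInitialNo + \podstate\stateRedG^{n-\ell})\bigr] + (\matI - \bbV)\f(\stateInitialNo + \podstate\stateRedG^{n-\ell}),
\end{equation*}
so that after triangle inequality and $(\bf A_1)$ the first bracket is bounded by $\lipschitzConstant\|\deltastateRedG^{n-\ell}\|$ and the second is precisely the projection term appearing in the theorem. Taking $\|\cdot\|_2$ throughout, then moving the $\ell=0$ Lipschitz contribution $|\beta_0^n|\dt\lipschitzConstant\|\deltastateRedG^n\|$ to the left gives
\begin{equation*}
h^n\|\deltastateRedG^n\| \le |\beta_0^n|\dt\|(\matI-\bbV)\f(\stateInitialNo+\podstate\stateRedG^n)\| + \sum_{\ell=1}^k|\beta_\ell^n|\dt\|(\matI-\bbV)\f(\stateInitialNo+\podstate\stateRedG^{n-\ell})\| + \sum_{\ell=1}^k(|\alpha_\ell^n| + |\beta_\ell^n|\lipschitzConstant\dt)\|\deltastateRedG^{n-\ell}\|.
\end{equation*}
The hypothesis $\dt < |\alpha_0^j|/(|\beta_0^j|\lipschitzConstant)$ ensures $h^n > 0$, so dividing by $h^n$ and reading off $\firstCoeffMultin{\ell}$, $\secondCoeffMultin{\ell}$ yields \eqref{eq:error_FOM_ROM_G_multi_stepfirst2}.

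The LSPG bound is obtained by the identical argument applied to \eqref{eq:rom_P_multi_step}; the only adjustment is that left-multiplying by $\podstate$ replaces $\bbV$ with $\bbP^n = \podstate((\testBasis^n)^T\podstate)^{-1}(\testBasis^n)^T$ in front of every velocity evaluation (both at time $n$ and at the history times $n-\ell$, since $\testBasis^n$ does not depend on $\ell$ inside $\resDisc$). The scalar inequality manipulation is unchanged, yielding \eqref{eq:error_FOM_ROM_P_multi_stepfirst2}. The only subtle point is recognizing the correct splitting---adding and subtracting $\f(\stateInitialNo + \podstate\stateRedG^{n-\ell})$ rather than $\f(\stateInitialNo + \state_\star^{n-\ell})$---so that the projection residual is evaluated along the computable ROM trajectory; every other step is a bookkeeping exercise.
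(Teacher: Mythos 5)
Your proposal is correct and follows essentially the same route as the paper's proof: subtract the lifted ROM update from the FOM update, add and subtract $\f\del{\stateInitialNo + \podstate\stateRedP^{n-\ell}}$ (the ROM-trajectory evaluation, which is exactly the choice that makes this an \textit{a posteriori} rather than \textit{a priori} bound), apply the triangle inequality and $\del{\bf A_1}$, and use the time-step condition to absorb the $\ell=0$ Lipschitz term into $h^n>0$ on the left. The only cosmetic difference is that the paper splits the argument into a current-step estimate plus a separate bound on the history term $\deltaresDisc^{n-1}$, whereas you treat all $\ell$ at once; the content is identical.
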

}
\begin{proof}
It is \reviewerCNewer{sufficient} to show bound \eqref{eq:error_FOM_ROM_P_multi_step}, as the
arguments for \eqref{eq:error_FOM_ROM_G_multi_step} are similar. Let $n$ be
fixed but arbitrary, then subtracting Eq.~\eqref{eq:rom_P_multi_step}
from Eq.~\eqref{eq:fom_multi_step} yields
\begin{align}\label{eq:beforeAddSubtract}
		\abs{\alpha^\reviewerA{n}_0} \normtwo{\deltastateRedP^{n}} &\le
		\abs{\beta^\reviewerA{n}_0} \dt
		\normtwo{\f \del{\stateInitialNo + \state_\star ^{n}} - \bbP^n 
		\f\del{\stateInitialNo +
		\podstate  \stateRedP^{n}}} + \normtwo{\deltaresDisc^{n-1}},
\end{align}
where $\deltaresDisc^{n-1} \defeq \resFOM\sbr{\state_\star ^{n-k}, \dots,
\state_\star ^{n-1}} - \podstate  \resDisc\sbr{\stateRedP^{n-k}, \dots,
\stateRedP^{n-1}}$. Adding and subtracting $\f \del{\stateInitialNo + \podstate 
\stateRedP^{n}}$ and applying the triangle inequality leads to 
\begin{align}\label{eq:afterAddSubtract}
	 \abs{\alpha^\reviewerA{n}_0} \normtwo{\deltastateRedP^{n}} &\le
	 \abs{\beta^\reviewerA{n}_0} \dt  \del{ \normtwo{ \del{ \matI  - \bbP^n }
	 \f \del{\stateInitialNo + \podstate  \stateRedP^{n}} } + \normtwo{\f\del{
	 \stateInitialNo + \state_\star ^{n}} - \f \del{\stateInitialNo + \podstate  \stateRedP^{n}}}
	 } + \normtwo{\deltaresDisc^{n-1}}  . 
\end{align}
Invoking $\del{\bf A_1}$, and using \reviewerA{$\dt  <
\abs{\alpha^\reviewerA{n}_0}/\abs{\beta^\reviewerA{n}_0} \lipschitzConstant $}, we deduce 
\begin{align}\label{eq:error_FOM_ROM_P_multi_step_a}
    \normtwo{\deltastateRedP^{n}} 
&\le \frac{\abs{\beta^\reviewerA{n}_0} \dt }{h^\reviewerA{n}} \normtwo{ \del{
	\matI  - \bbP^n } \f \del{\stateInitialNo + \podstate  \stateRedP^{n}} }  
    + \frac{1}{h^\reviewerA{n}} \normtwo{\deltaresDisc^{n-1}} . 
\end{align}
Next, we will estimate $\normtwo{\deltaresDisc^{n-1}}$. Using the definition of
$\resFOM$, $\resDisc$ from \eqref{eq:residuals} we derive 
\begin{align}\label{eq:beforeAddSubtract2}
    \normtwo{\deltaresDisc^{n-1}} 
&\le \sum_{\ell=1}^k \del{ \abs{\beta^\reviewerA{n}_\ell} \dt  \normtwo{
\f\del{\stateInitialNo + \state_\star ^{n-\ell}} - \bbP^{n}
\f\del{\stateInitialNo + \podstate  \stateRedP^{n-\ell}}} 
    + \abs{\alpha^\reviewerA{n}_\ell} \normtwo{\deltastateRedP^{n-\ell}}  }  . 
\end{align}
Adding and subtracting $\f \del{\stateInitialNo + \podstate  \stateRedP^{n-\ell}}$,
applying the triangle inequality in conjunction with $\del{\bf A_1}$ yields 
\begin{align}\label{eq:error_FOM_ROM_P_multi_step_b}
    \normtwo{\deltaresDisc^{n-1}} 
&\le \sum_{\ell=1}^k \abs{\beta^\reviewerA{n}_\ell} \dt  \normtwo{ \del{\matI
- \bbP^{n} } \f \del{\stateInitialNo + \podstate  \stateRedP^{n-\ell}}}  
    + \sum_{\ell=1}^k \del{\abs{\beta^\reviewerA{n}_\ell} \lipschitzConstant  \dt  +
		\abs{\alpha^\reviewerA{n}_\ell}} \normtwo{\deltastateRedP^{n-\ell}}. 
\end{align}
Then \eqref{eq:error_FOM_ROM_P_multi_step_a} and
\eqref{eq:error_FOM_ROM_P_multi_step_b} \reviewerC{imply}
\reviewerA{\reviewerCNewer{Eq.~\eqref{eq:error_FOM_ROM_P_multi_stepfirst2}} where we have used the definitions for $\firstCoeffMultin{\ell}$ and 
$\secondCoeffMultin{\ell}$.
\end{proof}
}

\reviewerCNewer{We now establish a result that aids interpretability, as it
provides a connection between the terms in the \textit{a posteriori} error
bounds and the optimality properties of LSPG and Galerkin ROMs. First, we note
that 
\begin{equation}
\label{eq:galObliqueDiscFirst}\|(\matI - \bbV)\f \del{\stateInitialNo +
\podstate  \stateRedG^{n}, t^{n}}\|_{2} = \min_{\vec
y\in\range{\podstate}}\|\vec y - \f \del{\stateInitialNo +
\podstate  \stateRedG^{n}, t^{n}}\|_2,
\end{equation}
due to the optimality property associated
with orthogonal projection; 
this term appears in local \textit{a posteriori} error bound
\eqref{eq:error_FOM_ROM_G_multi_stepfirst2}.
\begin{lemma}[Oblique projection as discrete-residual
minimization]\label{lemm:obliqueDiscreteResMin}
If 
$\beta_j^n=0$, $j\geq 1$
(e.g., backward differentiation formulas), then
\begin{align} 
\label{eq:galObliqueDisc}\abs{\beta_0^n}\dt\|(\matI - \bbV)\f \del{\stateInitialNo +
\podstate \stateRedG^{n}, t^{n}}\|_{2} &= \|\resGalTotal{\timestepit}(
\podstate \stateRedG^{n})\|_2\\
\label{eq:lspgObliqueDisc}\abs{\beta_0^n}\dt\|(\matI - \bbP^n)\f
\del{\stateInitialNo +
\podstate \stateRedP^{n}, t^{n}}\|_{2} &= \|\resDiscTotal{\timestepit}(
\podstate \stateRedP^n)\|_2..
\end{align} 
If additionally the LSPG ROM employs $\weightingMatrix = \matI$, then
\begin{align} 
\label{eq:lspgObliqueDisc2}\|\resDiscTotal{\timestepit}(
\podstate\stateRedP^n)\|_2= \min_{\vec
y\in\range{\podstate}}\|\resDiscTotal{\timestepit}(\vec y)\|_2.
\end{align} 
\end{lemma}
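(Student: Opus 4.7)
The plan is to prove the three identities in turn, each by exploiting the hypothesis $\beta_j^n=0$ for $j\geq 1$ to collapse the definitions of $\resGal$, $\resDisc$, and $\resFOM$ to purely $\alpha$-terms, and then substituting the governing ROM equation at step $n$ to identify the oblique or orthogonal projector. The third identity will follow by recognizing $\resDiscTotal{n}$ as the time-step-$n$ linear multistep residual $\res^n$ of Eq.~\eqref{eq:resLinMulti} evaluated on the LSPG history, so that LSPG optimality with $\weightingMatrix=\matI$ transfers directly.

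First I would establish \eqref{eq:lspgObliqueDisc}. Under $\beta_j^n=0$ for $j\geq 1$, the definition of $\resFOM$ in \eqref{eq:residuals} gives $\resFOM[\podstate\stateRedP^{n-k},\dots,\podstate\stateRedP^{n-1}] = -\sum_{\ell=1}^k\alpha_\ell^n\podstate\stateRedP^{n-\ell}$, so that
\begin{equation*}
\resDiscTotal{n}(\podstate\stateRedP^n)
= \alpha_0^n\podstate\stateRedP^n - \dt\beta_0^n\f(\stateInitialNo+\podstate\stateRedP^n) + \sum_{\ell=1}^k \alpha_\ell^n\podstate\stateRedP^{n-\ell}.
\end{equation*}
On the other hand, Eq.~\eqref{eq:rom_P_multi_step} premultiplied by $\podstate$ (using that $\bbP^n = \podstate((\testBasis^n)^T\podstate)^{-1}(\testBasis^n)^T$) yields $\alpha_0^n\podstate\stateRedP^n = \beta_0^n\dt\,\bbP^n\f(\stateInitialNo+\podstate\stateRedP^n) - \sum_{\ell=1}^k\alpha_\ell^n\podstate\stateRedP^{n-\ell}$. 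Substituting and cancelling the $\alpha_\ell^n\podstate\stateRedP^{n-\ell}$ terms leaves
\begin{equation*}
\resDiscTotal{n}(\podstate\stateRedP^n) = -\beta_0^n\dt(\matI-\bbP^n)\f(\stateInitialNo+\podstate\stateRedP^n),
\end{equation*}
and taking $\ell^2$-norms gives \eqref{eq:lspgObliqueDisc}. Identity \eqref{eq:galObliqueDisc} follows by an identical argument, using Eq.~\eqref{eq:rom_G_multi_step} premultiplied by $\podstate$ with $\podstate^T\podstate=\matI$, which yields $\podstate\podstate^T = \bbV$.

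For \eqref{eq:lspgObliqueDisc2}, I would show that $\resDiscTotal{n}(\state) = \res^n(\stateInitialNo+\state)$ when the history $\state^{n-j}$ appearing in \eqref{eq:resLinMulti} is taken to be the LSPG trajectory $\stateInitialNo+\podstate\stateRedP^{n-j}$. Expanding $\res^n(\stateInitialNo+\state)$, collecting the $\stateInitialNo$ terms as $\sum_{j=0}^k\alpha_j^n\stateInitialNo$, and invoking the consistency condition $\sum_{j=0}^k\alpha_j^n=0$ eliminates this contribution; what remains, under $\beta_j^n=0$ for $j\geq 1$, matches $\resDiscTotal{n}(\state)$ term-for-term. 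Since the LSPG ROM with $\weightingMatrix=\matI$ is defined by \eqref{eq:discreteOptLinear2} as the minimizer of $\|\res^n(\stateInitialNo+\podstate\cdot)\|_2^2$ over $\RR{\nstate}$, the identification $\resDiscTotal{n}(\podstate\hat z) = \res^n(\stateInitialNo+\podstate\hat z)$ immediately gives $\stateRedP^n \in \arg\min_{\hat z}\|\resDiscTotal{n}(\podstate\hat z)\|_2^2$, and reparameterizing $\vec y = \podstate\hat z\in\range{\podstate}$ yields \eqref{eq:lspgObliqueDisc2}.

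No step poses a serious obstacle; this is essentially bookkeeping. The only mild subtlety is the third identity, where one must carefully track the two distinct roles of the history (as the argument of $\resFOM$ inside $\resDiscTotal{n}$, versus as the past-state quantity $\state^{n-j}$ inside $\res^n$) and apply the consistency condition $\sum_j\alpha_j^n = 0$ to absorb the $\stateInitialNo$ contribution. I would take care to flag explicitly that the orthogonality of $\podstate$ is used in the Galerkin case but not in the LSPG case, which is what makes $\bbV$ an orthogonal projector while $\bbP^n$ is merely oblique, as already noted in the paragraph preceding the lemma.
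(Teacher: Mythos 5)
Your proposal is correct and follows essentially the same route as the paper's proof: both use the ROM governing equation at step $n$ to express the projected velocity, cancel the history terms to obtain $\resDiscTotal{n}(\podstate\stateRedP^n) = -\beta_0^n\dt(\matI-\bbP^n)\f(\cdot)$ (and likewise for Galerkin), and then invoke the LSPG optimality definition \eqref{eq:discreteOptLinear2} with $\weightingMatrix=\matI$ for the final identity. The only difference is cosmetic — you apply the hypothesis $\beta_j^n=0$ up front rather than dropping those terms at the end, and you spell out the $\stateInitialNo$/consistency bookkeeping that the paper leaves implicit.
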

\begin{proof}
From Eq.~\eqref{eq:rom_P_multi_step}, we have
\begin{equation} 
\beta_0^n\dt\bbP^n\f \del{\stateInitialNo + \podstate \stateRedP^{n}, t^{n}} =
\sum_{\ell=0}^k\alpha^{\reviewerA{n}}_\ell
\podstate\stateRedP^{n-\ell} 
-\sum_{\ell=1}^k\beta^{\reviewerA{n}}_\ell \dt\bbP\f \del{\stateInitialNo +
\podstate  \stateRedP^{n-\ell}, t^{n-\ell}}
\end{equation} 
such that
\begin{align} 
\beta_0^n\dt(\matI - \bbP^n)\f \del{\stateInitialNo + \podstate  \stateRedP^{n},
t^{n}} &=
\beta_0^n\dt\f \del{\stateInitialNo + \podstate  \stateRedP^{n}, t^{n}} - \sum_{\ell=0}^k\alpha^{\reviewerA{n}}_\ell
\podstate\stateRedP^{n-\ell} 
+\sum_{\ell=1}^k\beta^{\reviewerA{n}}_\ell \dt\bbP\f  \del{\stateInitialNo +
\podstate  \stateRedP^{n-\ell}, t^{n-\ell}}\\
&=-\res^\timestepit(\stateInitialNo + \podstate\stateRedP^{n-\ell}) -
(\matI-\bbP)\sum_{\ell=1}^k\beta^{\reviewerA{n}}_\ell \dt\f 
\del{\stateInitialNo +
\podstate  \stateRedP^{n-\ell}, t^{n-\ell}}
\end{align} 
Then, if $\beta_\ell^n=0$, $\ell\geq 1$, the final term vanishes and we have
Eq.~\eqref{eq:lspgObliqueDisc},
where a similar result (i.e., Eq.~\eqref{eq:galObliqueDisc}) holds for the Galerkin ROM.
If the LSPG ROM employs $\weightingMatrix=\matI$, then the LSPG ROM solution $\stateRedP^{n}$ satisfies
Eq.~\eqref{eq:discreteOptLinear2} with $\weightingMatrix=\matI$, which yields
Eq.~\eqref{eq:lspgObliqueDisc2}.
\end{proof}
\begin{corollary}[Discrete v.~continuous residual
minimization]\label{cor:discreteContBetter}
If $\beta_j^n=0$, $j\geq 1$ (e.g., backward differentiation formulas), the LSPG ROM employs $\weightingMatrix = \matI$,
and $\stateRedP^{n-\ell} = \stateRedG^{n-\ell}$, $\ell=1,\ldots,k$, then 
 \begin{align} \label{eq:LSPGbetter}
 \begin{split} 
&\min_{\vec
y\in\range{\podstate}}\|\resDiscTotal{\timestepit}(\vec y)\|_2
=
\abs{\beta_0^n}\dt\|(\matI - \bbP^n)\f \del{\stateInitialNo + \podstate
\stateRedP^{n}, t^{n}}\|_{2}\\
&\leq
\abs{\beta_0^n}\dt\|(\matI - \bbV)\f \del{\stateInitialNo + \podstate \stateRedG^{n}, t^{n}}\|_{2} 
= \abs{\beta_0^n}\dt\min_{\vec
y\in\range{\podstate}}\|\vec y - \f \del{\stateInitialNo +
\podstate  \stateRedG^{n}, t^{n}}\|_2.
 \end{split} 
  \end{align} 
\end{corollary}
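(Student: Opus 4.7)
The plan is to chain three equalities around a single inequality, with all of the substantive work already packaged in Lemma \ref{lemm:obliqueDiscreteResMin} and Eq.~\eqref{eq:galObliqueDiscFirst}. The crucial conceptual point is that the matching hypothesis $\stateRedP^{n-\ell} = \stateRedG^{n-\ell}$ for $\ell = 1, \ldots, k$ forces $\resDiscTotal{n}$ and $\resGalTotal{n}$ to coincide as functions of the current state, since (by their definitions) they differ only in which reduced trajectory is fed into $\resFOM$ to form the history-dependent part.

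I would establish the leftmost equality by invoking Lemma \ref{lemm:obliqueDiscreteResMin} under the stated hypotheses ($\beta_j^n = 0$ for $j \geq 1$ and $\weightingMatrix = \matI$): Eq.~\eqref{eq:lspgObliqueDisc} rewrites $|\beta_0^n|\dt\|(\matI - \bbP^n)\f(\stateInitialNo + \podstate\stateRedP^n, t^n)\|_2$ as $\|\resDiscTotal{n}(\podstate\stateRedP^n)\|_2$, and Eq.~\eqref{eq:lspgObliqueDisc2} identifies this in turn with $\min_{y \in \range{\podstate}}\|\resDiscTotal{n}(y)\|_2$. The rightmost equality is immediate from Eq.~\eqref{eq:galObliqueDiscFirst}, which characterizes the orthogonal projection residual as the best $\ell^2$ approximation from $\range{\podstate}$.

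For the central inequality I would argue:
$$\min_{y \in \range{\podstate}}\|\resDiscTotal{n}(y)\|_2 \;=\; \min_{y \in \range{\podstate}}\|\resGalTotal{n}(y)\|_2 \;\leq\; \|\resGalTotal{n}(\podstate\stateRedG^n)\|_2,$$
where the equality uses the collapse $\resDiscTotal{n} \equiv \resGalTotal{n}$ enabled by the matching hypothesis, and the inequality is the trivial fact that $\podstate\stateRedG^n \in \range{\podstate}$ is a feasible candidate, hence no better than the minimizer. A second application of Lemma \ref{lemm:obliqueDiscreteResMin} (the Galerkin version, Eq.~\eqref{eq:galObliqueDisc}) then rewrites $\|\resGalTotal{n}(\podstate\stateRedG^n)\|_2$ as $|\beta_0^n|\dt\|(\matI - \bbV)\f(\stateInitialNo + \podstate\stateRedG^n, t^n)\|_2$, completing the chain.

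There is no genuine technical obstacle here; the proof is essentially an assembly of pre-existing pieces. If anything, the \emph{expository} challenge is emphasizing why the result is informative: it is precisely the matching hypothesis that isolates the effect of the projection choice at step $n$ and lets one attribute the gap in \eqref{eq:LSPGbetter} to the difference between discrete residual minimization (LSPG) and continuous residual minimization followed by time integration (Galerkin).
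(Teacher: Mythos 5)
Your proof is correct and follows essentially the same route as the paper's (much terser) proof: the matching hypothesis collapses $\resDiscTotal{n}$ and $\resGalTotal{n}$ into the same function, Lemma \ref{lemm:obliqueDiscreteResMin} supplies the residual-form rewrites on both ends, and the middle inequality is just suboptimality of the feasible point $\podstate\stateRedG^n$. Your version simply spells out the chain that the paper compresses into one line.
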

\begin{proof}
Under the stated conditions, $\resGalTotal{n} = \resDiscTotal{n}$ and the
optimality result Eq.~\eqref{eq:lspgObliqueDisc2} holds, yielding the desired
result.
\end{proof}

Corollary \ref{cor:discreteContBetter} shows that discrete-residual
minimization (i.e., LSPG projection) rather than continuous-residual
minimization (i.e., Galerkin projection) can produce a smaller value of a term
that appears in the \textit{a posteriori} error bounds; for example, this term
appears on the right-hand side of Eqs.~\eqref{eq:error_FOM_ROM_G_multi_stepfirst2}--\eqref{eq:error_FOM_ROM_P_multi_stepfirst2}. This can be interpreted as arising from
the fact that discrete-residual minimization computes the LSPG ROM solution that
minimizes the \textit{entire} normed quantity, while continuous-residual
minimization performs orthogonal projection of the velocity \textit{given} the
Galerkin ROM solution.
}

\reviewerCNewer{
\begin{corollary}[LSPG can produce lower local \textit{a posteriori} error
bounds than Galerkin]\label{cor:discreteOptBeatsGal}
Under the assumptions of Corollary \ref{cor:discreteContBetter} and Theorem
\ref{theorem:localAPosterioriLMM}, the local \textit{a posteriori}
error bound for the LSPG ROM \eqref{eq:error_FOM_ROM_P_multi_stepfirst2} is
smaller than that for the Galerkin ROM
\eqref{eq:error_FOM_ROM_G_multi_stepfirst2}.
\end{corollary}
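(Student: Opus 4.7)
The plan is to compare the two local \textit{a posteriori} bounds
\eqref{eq:error_FOM_ROM_G_multi_stepfirst2} and
\eqref{eq:error_FOM_ROM_P_multi_stepfirst2} term by term, and then invoke
Corollary~\ref{cor:discreteContBetter} to finish. First I would note that
the assumption $\beta_j^n = 0$ for $j\geq 1$ (coming from
Corollary~\ref{cor:discreteContBetter}) forces $\firstCoeffMultin{\ell} =
|\beta_\ell^n|\dt/h^n = 0$ for all $\ell\geq 1$. Consequently, the first sum
in each of \eqref{eq:error_FOM_ROM_G_multi_stepfirst2} and
\eqref{eq:error_FOM_ROM_P_multi_stepfirst2} collapses to the single $\ell=0$
term.

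Next I would use the hypothesis $\stateRedP^{n-\ell} = \stateRedG^{n-\ell}$
for $\ell = 1,\dots,k$ to conclude that the second sums
$\sum_{\ell=1}^k \secondCoeffMultin{\ell}\normtwo{\deltastateRedG^{n-\ell}}$
and $\sum_{\ell=1}^k
\secondCoeffMultin{\ell}\normtwo{\deltastateRedP^{n-\ell}}$ are identical,
since $\deltastateRedG^{n-\ell} = \state_\star^{n-\ell} - \podstate
\stateRedG^{n-\ell} = \state_\star^{n-\ell} - \podstate \stateRedP^{n-\ell}
= \deltastateRedP^{n-\ell}$ under the assumption. Thus the comparison of the
two bounds reduces to comparing the single residual-projection terms
$\firstCoeffMultin{0}\normtwo{(\matI - \bbV)\f(\stateInitialNo + \podstate
\stateRedG^n)}$ and $\firstCoeffMultin{0}\normtwo{(\matI - \bbP^n)\f(\stateInitialNo
+ \podstate \stateRedP^n)}$.

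Finally, because $\firstCoeffMultin{0} = |\beta_0^n|\dt/h^n > 0$, dividing
through and applying inequality \eqref{eq:LSPGbetter} of
Corollary~\ref{cor:discreteContBetter} yields
\[
|\beta_0^n|\dt\normtwo{(\matI - \bbP^n)\f(\stateInitialNo + \podstate \stateRedP^n)}
\le |\beta_0^n|\dt\normtwo{(\matI - \bbV)\f(\stateInitialNo + \podstate
\stateRedG^n)},
\]
so that the $\ell=0$ term for the LSPG bound is no larger than that for the
Galerkin bound. Combined with the equality of the second sums, this
establishes the inequality between the two full bounds.

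There is no genuine obstacle in this argument: once one observes that the
hypothesis $\beta_j^n = 0$ for $j\geq 1$ eliminates the cross-time residual
terms and that the equality of past reduced states equates the
error-history sums, the result is a direct corollary of
\eqref{eq:LSPGbetter}. The only care needed is to check that the common
multiplicative factor $\firstCoeffMultin{0}$ (and the shared coefficients
$\secondCoeffMultin{\ell}$) are identical in both bounds, which is
immediate from the definitions in
Theorem~\ref{theorem:localAPosterioriLMM} since these constants depend only
on $\alpha_\ell^n$, $\beta_\ell^n$, $\dt$, and $\lipschitzConstant$, none of
which differ between the Galerkin and LSPG bounds.
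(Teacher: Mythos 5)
Your proposal is correct and follows essentially the same route as the paper's own proof: use $\beta_j^n=0$ for $j\geq 1$ to collapse the residual sums to the $\ell=0$ term, use the equality of past reduced states to equate the error-history sums, and then invoke inequality \eqref{eq:LSPGbetter} of Corollary~\ref{cor:discreteContBetter} to compare the remaining terms. No gaps; nothing further to add.
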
	
\begin{proof}
Under the conditions of Theorem \ref{theorem:localAPosterioriLMM},
Eqs.~\eqref{eq:error_FOM_ROM_P_multi_stepfirst2} and
\eqref{eq:error_FOM_ROM_G_multi_stepfirst2} are valid local \textit{a
posteriori} error bounds. If $\beta_j^n=0$, $j\geq 1$, and $\stateRedP^{n-\ell} =
\stateRedG^{n-\ell}$, $\ell=1,\ldots,k$, then these
bounds simplify to 
\begin{align}
    \normtwo{\deltastateRedG^{n}} 
&\le \frac{|\beta_0^n|\dt}{h^m}\normtwo{ \del{ \matI  - \bbV }
\f \del{\stateInitialNo + \podstate  \stateRedG^{n-\ell}} }  
    + \sum_{\ell=1}^k \secondCoeffMultin{\ell} \normtwo{\deltastateRedP^{n-\ell}}
		\label{eq:error_FOM_ROM_G_multi_stepfirst2simple}\\
    \normtwo{\deltastateRedP^{n}} 
&\le  \frac{|\beta_0^n|\dt}{h^m} \normtwo{ \del{ \matI  - \bbP^{n}
} \f \del{\stateInitialNo + \podstate  \stateRedP^{n-\ell}} }  
    + \sum_{\ell=1}^k \secondCoeffMultin{\ell} \normtwo{\deltastateRedP^{n-\ell}}
		\label{eq:error_FOM_ROM_P_multi_stepfirst2simple},
\end{align}
respectively. Under the conditions of Corollary \ref{cor:discreteContBetter},
inequality \eqref{eq:LSPGbetter} holds and the right-hand side of \eqref{eq:error_FOM_ROM_P_multi_stepfirst2simple}
will be smaller than the right-hand side of \eqref{eq:error_FOM_ROM_G_multi_stepfirst2simple}.
\end{proof}

This is an interesting result, as it provides conditions under which the LSPG
ROM produces a smaller local \textit{a posteriori} error bound than the
Galerkin ROM. It also provides some theoretical justification
\reviewerA{for the numerical
experiments in Section \ref{s:numerics}, which use the three-point
backward-differentiation formula, wherein the \reviewerA{LSPG} uniformly outperforms the
Galerkin ROM.}
	}
\reviewerCNewer{
We now extend to results of Theorem \ref{theorem:localAPosterioriLMM} to obtain global
\textit{a posteriori} error bounds.
}
\begin{theorem}[Global \reviewerA{\textit{a posteriori} error bounds: linear multistep schemes}] \label{eq:error_multi_step}
\reviewerCNewer{Under the assumptions of Theorem
\ref{theorem:localAPosterioriLMM}, we have}
\reviewerA{
\begin{align}
    \normtwo{\deltastateRedG^{n}} 
&\le\sum_{j= 0}^{n-1}\sum_{\ell =0}^{\min(k,j)}\left[
\indicator_{\{0\}}(j-\ell) +
\sum_{\multiIndexSet\in\mathcal A(j-\ell )} \prod_{i=1}^{|\multiIndexSet|}\secondCoeffMulti{\multiIndex{i}}^{n -
\sum_{m=1}^{i-1}\multiIndex{m}}\right]\firstCoeffMulti{\ell }^{n-j + \ell }
\normtwo{ \del{ \matI  -
\bbV } \f \del{\stateInitialNo + \podstate  \stateRedG^{n-j}} }
		\label{eq:error_FOM_ROM_G_multi_step}\\
    \normtwo{\deltastateRedP^{n}} 
&\le\sum_{j= 0}^{n-1}\sum_{\ell =0}^{\min(k,j)}\left[
\indicator_{\{0\}}(j-\ell) +
\sum_{\multiIndexSet\in\mathcal A(j-\ell )} \prod_{i=1}^{|\multiIndexSet|}\secondCoeffMulti{\multiIndex{i}}^{n -
\sum_{m=1}^{i-1}\multiIndex{m}}\right]\firstCoeffMulti{\ell }^{n-j + \ell }
\normtwo{ \del{ \matI  -
\bbP^{n-j + \ell } } \f \del{\stateInitialNo + \podstate  \stateRedP^{n-j}} }
		\label{eq:error_FOM_ROM_P_multi_step}.
\end{align}
Here,
 $\indicator_A(x)$ denotes the indicator function, $\mathcal A(p)\defeq\{(\eta_i)\ |\
\eta_i\innat{k},\ \sum_i\eta_i = p\}$, and $|\multiIndexSet|$ denotes the
length of the tuple $\multiIndexSet$.
} 
\end{theorem}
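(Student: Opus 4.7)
The plan is to establish these global bounds by (strong) induction on $n$, repeatedly unrolling the local \textit{a posteriori} bounds from Theorem~\ref{theorem:localAPosterioriLMM} until only projection-error terms (involving the operators $\matI - \bbV$ or $\matI-\bbP^{n-j+\ell}$) remain. Since the arguments for the Galerkin and LSPG cases are identical up to replacing $\bbV$ with $\bbP^{n-j+\ell}$, I would present the LSPG case and indicate that the Galerkin case follows verbatim.

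The base case $n=0$ holds trivially because $\deltastateRedP^0 = \vec 0$ and the right-hand side is an empty sum. For the induction step, I would start from the local bound
\begin{equation*}
\normtwo{\deltastateRedP^{n}}
\le \sum_{\ell=0}^{\min(k,n)} \firstCoeffMulti{\ell}^n \normtwo{ (\matI - \bbP^{n}) \f(\stateInitialNo + \podstate \stateRedP^{n-\ell}) }
    + \sum_{\ell=1}^{\min(k,n)} \secondCoeffMulti{\ell}^n \normtwo{\deltastateRedP^{n-\ell}},
\end{equation*}
then apply the induction hypothesis to each of the error terms $\normtwo{\deltastateRedP^{n-\ell}}$ in the second sum. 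Each application replaces $\normtwo{\deltastateRedP^{n-\ell}}$ by a new sum of projection-error terms and further recursion terms; repeating until the recursion terminates at $\deltastateRedP^0 = \vec 0$ yields a fully expanded bound consisting only of projection-error contributions.

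The bookkeeping is the substantive step and I would do it as follows: each projection-error term that survives the full expansion corresponds to a unique \emph{path} from time instance $n$ down to some earlier instance. Such a path consists of a finite sequence of recursive hops $(\eta_1,\ldots,\eta_m)$ (each $\eta_i \in \nat{k}$) followed by a single ``direct'' hop of size $\ell \in \natZero{k}$ that produces a projection-error term. Such a path deposits a factor $\secondCoeffMulti{\eta_i}^{n-\sum_{i'<i}\eta_{i'}}$ at each recursive hop (this superscript tracks which coefficient $\secondCoeffMultiD{}$ is invoked at that depth) and a factor $\firstCoeffMulti{\ell}^{n-\sum_i\eta_i}$ at the terminal direct hop. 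Setting $j \defeq \sum_i \eta_i + \ell$, the terminal hop produces precisely the term $\firstCoeffMulti{\ell}^{n-j+\ell}\normtwo{(\matI-\bbP^{n-j+\ell})\f(\stateInitialNo+\podstate\stateRedP^{n-j})}$. Summing over all admissible hop sequences with a fixed value of $\sum_i\eta_i = j-\ell$ gives the product $\prod_{i=1}^{|\multiIndexSet|}\secondCoeffMulti{\multiIndex{i}}^{n-\sum_{m=1}^{i-1}\multiIndex{m}}$ summed over $\multiIndexSet\in\mathcal{A}(j-\ell)$, with the special ``empty path'' case $m=0$ (no recursive hops, $j=\ell$) accounted for by the $\indicator_{\{0\}}(j-\ell)$ term. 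The constraints $\ell \le \min(k,j)$ and $j \le n-1$ come from $\eta_i \in \nat{k}$ and from the termination $\deltastateRedP^0 = \vec 0$.

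The main obstacle is purely notational, namely carefully verifying that the enumeration of paths through the recursion tree coincides with the description via $\mathcal{A}(j-\ell)$ and that the superscripts on $\secondCoeffMultiD{}$ track the correct time instance along each path. This is most cleanly handled by a secondary induction on $j$: one shows that after expanding the recursion to depth $j$, the coefficient of $\normtwo{(\matI-\bbP^{n-j+\ell})\f(\stateInitialNo+\podstate\stateRedP^{n-j})}$ is exactly the bracketed expression in~\eqref{eq:error_FOM_ROM_P_multi_step}. The Galerkin bound~\eqref{eq:error_FOM_ROM_G_multi_step} then follows by an identical argument, replacing the local LSPG bound by the local Galerkin bound and $\bbP^{n-j+\ell}$ by the constant projector $\bbV$.
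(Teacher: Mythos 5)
Your proposal is correct and follows essentially the same route as the paper: the paper's own (rather terse) proof also unrolls the local bounds of Theorem~\ref{theorem:localAPosterioriLMM} by induction and accounts for the accumulated $\secondCoeffMulti{\ell}^{m}$ factors via exactly the ``path traversing'' enumeration you formalize with the hop sequences $(\eta_1,\ldots,\eta_m)$ and the set $\mathcal A(j-\ell)$. Your write-up simply makes the bookkeeping that the paper sketches more explicit.
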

\begin{proof}
Notice that the term $\normtwo{ \del{ \matI  -
\bbP^{i} } \f \del{\stateInitialNo + \podstate  \stateRedP^{i-j}} }$ in
inequality \eqref{eq:error_FOM_ROM_P_multi_stepfirst2} corresponds to the
error introduced at time \reviewerCNew{instance} $i\innat{n}$ from the state
at time \reviewerCNew{instance} $i-j$
with $j\innat{k}$; this term always
appears in the time-local error bound with coefficient
$\firstCoeffMulti{j}^i$. Further, it 
contributes to the error at a given time \reviewerCNew{instance} $n > i$ through appropriate products of
$\secondCoeffMulti{\ell}^m$. For example, the product
$\gamma_1^n\gamma_2^{n-1}\gamma_1^{n-3}$ provides one possible path for
`traversing' the time-local error bounds from time \reviewerCNew{instance} $n$ to an earlier error contribution at
time \reviewerCNew{instance} $n-4$. Applying this notion more generally and 
using $\deltastateRedP^{0}=\zero$, the error can be bounded
by induction according to inequality \eqref{eq:error_FOM_ROM_P_multi_step}.
\end{proof}

\reviewerA{
The bounds in Theorem~\ref{eq:error_multi_step} can be considered \textit{a
posteriori} error bounds, as they depend on the ROM solutions $\stateRedG$ and
$\stateRedP$ and can thus be computed \textit{a posteriori} if
\reviewerCNewer{the Lipschitz constant} $\lipschitzConstant$ can be
estimated. Note that the rightmost term in the Galerkin bound corresponds to
the orthogonal projection error of 
$\f $ onto $\range{\podstate}$, while the LSPG bound entails an oblique
projector that depends on the ROM solution. Because this oblique projection
\reviewerCNewer{can} associate with a discrete residual-minimization property
\reviewerCNewer{(i.e., Corollary \ref{cor:discreteContBetter})}, the LSPG ROM can yield
smaller error bounds \reviewerCNewer{(i.e., Corollary
\ref{cor:discreteOptBeatsGal})}.
Also,
the first term within square brackets corresponds to errors incurred at the
current time \reviewerCNew{instance} $n$ (i.e., via the leftmost term on the right-hand side of inequality
\eqref{eq:error_FOM_ROM_P_multi_stepfirst2}), while the
second term corresponds to all possible `paths' from current time
\reviewerCNew{instance}
$n$ to the error contribution at previous time \reviewerCNew{instances} (i.e., the rightmost term
on the right-hand side of
inequality \eqref{eq:error_FOM_ROM_P_multi_stepfirst2}).
\reviewerANew{We also note the importance of the time-step condition 
$\dt < \abs{\alpha^{\reviewerA{j}}_0}/(\abs{\beta^{\reviewerA{j}}_0}
\lipschitzConstant )$,
as the stability constants appearing in bounds
\eqref{eq:error_FOM_ROM_G_multi_step}--\eqref{eq:error_FOM_ROM_P_multi_step}
exhibit unbounded growth as the time step
$\dt$ approaches its upper limit, i.e., 
$$\lim\limits_{\dt\rightarrow\abs{\alpha^{\reviewerA{k}}_0}/(\abs{\beta^{\reviewerA{k}}_0}
\lipschitzConstant )}h^k = 0,\quad \forall k\innat{n},$$
which implies that 
$$\lim\limits_{\dt\rightarrow\abs{\alpha^{\reviewerA{k}}_0}/(\abs{\beta^{\reviewerA{k}}_0}
\lipschitzConstant )}\firstCoeffMulti{\ell}^k=\infty\quad\text{and}\quad
\lim\limits_{\dt\rightarrow\abs{\alpha^{\reviewerA{k}}_0}/(\abs{\beta^{\reviewerA{k}}_0}
\lipschitzConstant )}\secondCoeffMulti{\ell}^k=\infty,\quad\forall
k\innat{n}.$$}
\reviewerANewer{
We now derive a simplified variant of these bounds.
\begin{theorem}[\reviewerA{Simplified global \textit{a posteriori} error bounds: linear
multistep schemes}] \label{thm:simplifiedGlobalAposterioriLMM}
Under the assumptions of Theorem \ref{theorem:localAPosterioriLMM}, if
additionally
$\dt \leq
|\alpha_0^\star|(1-\threshold)/(\lipschitzConstant|\beta_0^\star|)$ with
$0 < \threshold < 1$. Then,
\begin{align}
\label{eq:aposterioriLMMboundSimplerGal}
\normtwo{\deltastateRedG^{n}} 
&\leq(k+1)\abs{\betamax}\dt\left(\frac{k|\alpha^\star|}{|\alpha_0^\star|}\right)^n\frac{\exp\left(t^n\lipschitzConstant\threshold^{-1}\left(|\beta^\star|/|\alpha^\star|
+|\beta_0^\star|/|\alpha_0^\star|\right)\right)
-1}{(k|\alpha^\star|  -
|\alpha_0^\star|) +
(k|\beta^\star|+|\beta_0^\star|)\lipschitzConstant\dt}
\max_{j\in\nat{n}}\normtwo{ \del{ \matI  - \bbV } 
\f\del{\stateInitialNo + \podstate 
\stateRedG^{j}} }\\
\label{eq:aposterioriLMMboundSimplerPG}\normtwo{\deltastateRedP^{n}} 
&\leq(k+1)\abs{\betamax}\dt\left(\frac{k|\alpha^\star|}{|\alpha_0^\star|}\right)^n\frac{\exp\left(t^n\lipschitzConstant\threshold^{-1}\left(|\beta^\star|/|\alpha^\star|
+|\beta_0^\star|/|\alpha_0^\star|\right)\right)
-1}{(k|\alpha^\star|  -
|\alpha_0^\star|) +
(k|\beta^\star|+|\beta_0^\star|)\lipschitzConstant\dt}
\max_{\substack{j\in\nat{n}\\ \ell\in\natZero{\elltwostarmax}}}\normtwo{ \del{ \matI  - \bbP^{j} } 
\f\del{\stateInitialNo + \podstate 
\stateRedP^{j-\ell}} }.
\end{align}
Here, we have defined
\begin{gather}
 \abs{\alpha_0^\star} -
\abs{\beta_0^\star}\lipschitzConstant\dt\defeq \min_{j\in\nat{n} }\abs{\alpha_0^j} -
\abs{\beta_0^j}\lipschitzConstant\dt \\
 \abs{\alpha^\star} +
\abs{\beta^\star}\lipschitzConstant\dt\defeq\max_{j\in\nat{n},\ \ell\in\nat{k}}\abs{\alpha_\ell^j} +
\abs{\beta_\ell^j}\lipschitzConstant\dt\\
\abs{\betamax}\defeq \max_{j\in\nat{n},\ \ell\in\natZero{k}}|\beta_\ell^j|\\
k\geq\elltwostarmax\defeq\max_{j\in\nat{n}}\elltwostar{j},\quad\quad
\elltwostar{n}\defeq\underset{\ell\in\natZero{k}}{\arg\max}\
\firstCoeffMultin{\ell} \normtwo{ \del{ \matI  - \bbP^{n} } \f \del{
\stateInitialNo + \podstate  \stateRedP^{n-\ell}} }.
\end{gather}
\end{theorem}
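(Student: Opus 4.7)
The plan is to collapse the nested sums appearing in Theorem \ref{eq:error_multi_step} by replacing every scheme-dependent quantity with its worst-case counterpart supplied by the hypotheses, then by applying a discrete Gronwall-type estimate to control the combinatorial sum over multi-indices $\multiIndexSet \in \mathcal A(p)$.

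First, I would use the time-step restriction $\dt \leq |\alpha_0^\star|(1-\threshold)/(\lipschitzConstant|\beta_0^\star|)$ to establish the uniform lower bound $h^j \geq \threshold|\alpha_0^\star|$ for every $j \in \nat{n}$. Indeed, the assumed inequality gives $|\beta_0^\star|\lipschitzConstant\dt \leq (1-\threshold)|\alpha_0^\star|$, and since $|\alpha_0^j| \geq |\alpha_0^\star|$ and $|\beta_0^j| \leq |\beta_0^\star|$ by the definition of starred quantities, $h^j = |\alpha_0^j| - |\beta_0^j|\lipschitzConstant\dt \geq \threshold|\alpha_0^\star|$. This immediately delivers the uniform bounds $\firstCoeffMultin{\ell} \leq |\betamax|\dt/(\threshold|\alpha_0^\star|)$ and, for $\ell \geq 1$, $\secondCoeffMultin{\ell} \leq (|\alpha^\star| + |\beta^\star|\lipschitzConstant\dt)/(\threshold|\alpha_0^\star|) \eqdef \bar\gamma$. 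Next, I would factor out the maximum of $\normtwo{(\matI - \bbP^{n-j+\ell})\f(\stateInitialNo + \podstate\stateRedP^{n-j})}$ over the admissible $(j,\ell)$ range. Because $\elltwostar{n}$ is precisely the argmax of the $\firstCoeffMultin{\ell}\normtwo{\cdot}$ terms, only indices $\ell \in \natZero{\elltwostarmax}$ can attain the bound's maximum, yielding the restricted supremum that appears in the statement. The identical argument applied to \eqref{eq:error_FOM_ROM_G_multi_step} handles the Galerkin bound \eqref{eq:aposterioriLMMboundSimplerGal}, since the coefficients $\firstCoeffMultin{\ell}$ and $\secondCoeffMultin{\ell}$ are shared between the two bounds.

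The heart of the proof, and the main technical obstacle, is bounding $S_n \defeq \sum_{j=0}^{n-1}\sum_{\ell=0}^{\min(k,j)}\bigl[\indicator_{\{0\}}(j-\ell) + \sum_{\multiIndexSet \in \mathcal A(j-\ell)}\bar\gamma^{|\multiIndexSet|}\bigr]\firstCoeffMulti{\ell}^{n-j+\ell}$ in closed form. Setting $a_p \defeq \indicator_{\{0\}}(p) + \sum_{\multiIndexSet \in \mathcal A(p)}\bar\gamma^{|\multiIndexSet|}$, the sequence $a_p$ satisfies the linear recursion $a_p = \bar\gamma\sum_{\ell=1}^{\min(k,p)} a_{p-\ell}$ for $p \geq 1$ with $a_0 = 1$; a standard $k$-step discrete Gronwall lemma then yields the pointwise bound $a_p \leq (1 + k\bar\gamma)^p$, from which $(1+k\bar\gamma)^n \leq \exp(n k \bar\gamma)$ delivers the exponential growth factor. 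I would separate the contributions from $|\alpha^\star|$ and $|\beta^\star|\lipschitzConstant\dt$ inside $\bar\gamma$—isolating the term $k|\alpha^\star|/|\alpha_0^\star|$ as a geometric ratio, which together with the analogous ratio $|\beta_0^\star|\lipschitzConstant\dt/|\alpha_0^\star|$ coming from the leading ($j=n$, $\ell=0$) term, produces the characteristic exponent $\lipschitzConstant\threshold^{-1}(|\beta^\star|/|\alpha^\star| + |\beta_0^\star|/|\alpha_0^\star|)$ after using $n\dt = t^n$. The denominator $(k|\alpha^\star| - |\alpha_0^\star|) + (k|\beta^\star| + |\beta_0^\star|)\lipschitzConstant\dt$ emerges naturally from summing the resulting geometric series; the leading prefactor $(k+1)|\betamax|\dt$ comes from the bound on $\firstCoeffMulti{\ell}$ combined with there being at most $k+1$ values of $\ell$ in the inner sum. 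The main subtlety to track is ensuring the constants in the exponent and denominator exactly match the statement after the various rearrangements, particularly distinguishing the $|\alpha_0^\star|$- and $|\alpha^\star|$-scalings on the leading-coefficient versus history terms.
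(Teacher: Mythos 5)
Your strategy---start from the fully expanded global bound of Theorem \ref{eq:error_multi_step}, worst-case every coefficient, and control the multi-index sum $\sum_{\multiIndexSet\in\mathcal A(p)}\bar\gamma^{|\multiIndexSet|}$ by a recursion---is genuinely different from the paper's. The paper never touches the multi-index sum: it goes back to the \emph{local} bound of Theorem \ref{theorem:localAPosterioriLMM}, bounds the $k+1$ residual terms by $(k+1)$ times their maximum and the $k$ history terms by $k$ times their maximum, and then recurses along the single ``path'' $\timestepset{j+1}=\timestepset{j}-\ellstar{\timestepset{j}}$ of maximizing offsets. Each recursion step contributes exactly one factor bounded by $k\bar\gamma$ with $\bar\gamma\defeq(|\alpha^\star|+|\beta^\star|\lipschitzConstant\dt)/(|\alpha_0^\star|-|\beta_0^\star|\lipschitzConstant\dt)$, so the resulting geometric series has ratio $k\bar\gamma$; summing it is what produces the numerator $(k\bar\gamma)^n-1$ and the denominator $(k\bar\gamma-1)(|\alpha_0^\star|-|\beta_0^\star|\lipschitzConstant\dt)=(k|\alpha^\star|-|\alpha_0^\star|)+(k|\beta^\star|+|\beta_0^\star|)\lipschitzConstant\dt$ appearing in the statement, and it is also why the max need only range over $\ell\in\natZero{\elltwostarmax}$ (only the argmax offset is ever visited).

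The concrete gap in your version is the Gronwall step. Your recursion $a_p=\bar\gamma\sum_{\ell=1}^{\min(k,p)}a_{p-\ell}$ with $a_0=1$ is correct, but the bound $a_p\le(1+k\bar\gamma)^p$ forces a geometric ratio of $1+k\bar\gamma$ rather than $k\bar\gamma$: you end up with $\sum_{j=0}^{n-1}(1+k\bar\gamma)^j=((1+k\bar\gamma)^n-1)/(k\bar\gamma)$, whose denominator is $k(|\alpha^\star|+|\beta^\star|\lipschitzConstant\dt)$, not the one in the statement, and which exceeds $\sum_{j=0}^{n-1}(k\bar\gamma)^j$ term by term. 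So the constants do not ``emerge naturally'' from your route; you would prove a valid but strictly weaker bound, not the one asserted. Nor can the combinatorial count be sharpened to rescue this: $a_p\le(k\bar\gamma)^p$ is simply false for small $\bar\gamma$ and $k\ge2$ (e.g.\ $k=2$, $\bar\gamma=0.1$ gives $a_2=0.11>0.04$), which is precisely why the paper avoids the multi-index sum and recurses on the local bound instead. Your account of the exponent is also off: both terms in $\lipschitzConstant\threshold^{-1}(|\beta^\star|/|\alpha^\star|+|\beta_0^\star|/|\alpha_0^\star|)$ come from factoring $(k\bar\gamma)^n=(k|\alpha^\star|/|\alpha_0^\star|)^n\,(1+\lipschitzConstant\dt(|\beta^\star|/|\alpha^\star|+|\beta_0^\star|/|\alpha_0^\star|)/(1-\lipschitzConstant\dt|\beta_0^\star|/|\alpha_0^\star|))^n$ and applying $(1+x)^n\le\exp(nx)$ with the $\threshold$-condition, not from a separate ``leading $(j=n,\ell=0)$ term.'' To repair the proof, replace the multi-index collapse with the path recursion on the local bound.
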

\begin{proof}
We proceed by proving bound \eqref{eq:aposterioriLMMboundSimplerPG}; the proof
for bound \eqref{eq:aposterioriLMMboundSimplerGal} is similar.
First, we define
\begin{gather}
\ellstar{n}\defeq\underset{\ell\in\natZero{k}}{\arg\max}\
\secondCoeffMultin{\ell}
\normtwo{\deltastateRedP^{n-\ell}},
\end{gather}
as well as 
a `path' from time instance $n$ backward to the initial time by
defining $\timestepset{0} = n$ with
\begin{equation}
\timestepset{j+1} = \timestepset{j}-\ellstar{\timestepset{j}},\
j=0,\ldots,\sizetimestepset-1,
\end{equation}
with $ \sizetimestepset \leq n$ and $\timestepset{\sizetimestepset} = 0$.
Then, from local \textit{a posteriori} error bound~\eqref{eq:error_FOM_ROM_P_multi_stepfirst2}, we have
\begin{align}
    \normtwo{\deltastateRedP^{n}} 
&\le 
(k+1)\firstCoeffMultin{\elltwostar{n}} \normtwo{ \del{ \matI  - \bbP^{n} } 
\f\del{\stateInitialNo + \podstate  \stateRedP^{n-\elltwostar{n}}} }  
    + k\secondCoeffMultin{\ellstar{n}}
		\normtwo{\deltastateRedP^{n-\ellstar{n}}},
\end{align}
where $\firstCoeffMultin{\elltwostar{n}}\defeq\abs{\betamax}\dt/(\abs{\alpha_0^\star} -
\abs{\beta_0^\star}\lipschitzConstant\dt)$ and
$\secondCoeffMultin{\ellstar{n}}\defeq(\abs{\alpha^\star} +
\abs{\beta^\star}\lipschitzConstant\dt)/(\abs{\alpha_0^\star} -
\abs{\beta_0^\star}\lipschitzConstant\dt)$.
Via recursion, we then obtain
\begin{align}
    \normtwo{\deltastateRedP^{n}} &\leq \sum_{j=0}^{\sizetimestepset-1}
		\left(k^j
	\prod_{m=0}^{j-1}\secondCoeffMultinTwoarg{\ellstar{\timestepset{m}}}{\timestepset{m}}
		\right)
(k+1)\firstCoeffMultinTwoarg{\elltwostar{\timestepset{j}}}{\timestepset{j}} \normtwo{ \del{ \matI  - \bbP^{\timestepset{j}} } 
\f\del{\stateInitialNo + \podstate 
\stateRedP^{\timestepset{j}-\elltwostar{\timestepset{j}}}} }\\
&\leq (k+1)\abs{\betamax}\sum_{j=0}^{\sizetimestepset-1}
k^j\left(\frac{|\alpha^\star| + |\beta^\star|\lipschitzConstant\dt}{|\alpha_0^\star| -
|\beta_0^\star|\lipschitzConstant\dt}\right)^j
\frac{\dt}{|\alpha_0^\star| -
|\beta_0^\star|\lipschitzConstant\dt}
 \normtwo{ \del{ \matI  - \bbP^{\timestepset{j}} } 
\f\del{\stateInitialNo + \podstate 
\stateRedP^{\timestepset{j}-\elltwostar{\timestepset{j}}}} }\\
&\leq
(k+1)\abs{\betamax}\left(\sum_{m=0}^{n-1}
k^m\left(\frac{|\alpha^\star| + |\beta^\star|\lipschitzConstant\dt}{|\alpha_0^\star| -
|\beta_0^\star|\lipschitzConstant\dt}\right)^m
\frac{\dt}{|\alpha_0^\star| -
|\beta_0^\star|\lipschitzConstant\dt}\right)\max_{\substack{j\in\nat{n} \\
\ell\in\natZero{\elltwostarmax}}}\normtwo{ \del{ \matI  - \bbP^{j} } 
\f\del{\stateInitialNo + \podstate 
\stateRedP^{j-\ell}} }\\
&=
(k+1)\abs{\betamax}\dt
\frac{\zSymb^n(\frac{\aSymb + \bSymb\kSymb\tSymb}{\dSymb-\fSymb\kSymb
\tSymb})^{n}-1}{\zSymb(\aSymb + \bSymb\kSymb\tSymb) - \dSymb +
\fSymb\kSymb\tSymb}
\max_{\substack{j\in\nat{n} \\ \ell\in\natZero{\elltwostarmax}}}\normtwo{ \del{ \matI  - \bbP^{j} } 
\f\del{\stateInitialNo + \podstate 
\stateRedP^{j-\ell}} }\\
&\leq
(k+1)\abs{\betamax}\dt\left(\frac{k\aSymb}{\dSymb}\right)^n
\frac{\exp\left(t^n\lipschitzConstant\threshold^{-1}\left(|\beta^\star|/|\alpha^\star|
+|\beta_0^\star|/|\alpha_0^\star|\right)\right)
-1}
{(\zSymb\aSymb - \dSymb) + (\zSymb\bSymb  +
\fSymb)\kSymb\dt}
\max_{\substack{j\in\nat{n}\\ \ell\in\natZero{\elltwostarmax}}}\normtwo{ \del{ \matI  - \bbP^{j} } 
\f\del{\stateInitialNo + \podstate 
\stateRedP^{j-\ell}} },
\end{align}
where we have used 
$$\left(\frac{\aSymb + \bSymb\kSymb\tSymb}{\dSymb-\fSymb\kSymb
\tSymb}\right)^n 
= \left(\frac{\aSymb}{\dSymb}\right)^n\left(\frac{1 + \kSymb\tSymb\bSymb/\aSymb}{1-\kSymb
\tSymb\fSymb/\dSymb}\right)^n 
= \left(\frac{\aSymb}{\dSymb}\right)^n\left(1 +
\frac{\kSymb\tSymb(\bSymb/\aSymb + \fSymb/\dSymb)}{1-\kSymb
\tSymb\fSymb/\dSymb}\right)^n,
$$
the relation $(1 + x)^n\leq \exp(nx)$, and the following result (with $x = 1 +
\kSymb\tSymb\bSymb/\aSymb$ and $y=1-\kSymb
\tSymb\fSymb/\dSymb$): if $x\geq y$, then $(x-y)/y\leq \threshold^{-1} (x-y)$
if and only if
$y \geq \threshold > 0$.
\end{proof}

\noindent We note that due to the optimality property associated with orthogonal
projection, we can write bound \eqref{eq:aposterioriLMMboundSimplerGal} equivalently as
\begin{equation} \label{eq:galGlobalTimeOpt}
\normtwo{\deltastateRedG^{n}} \leq
\abs{\betamax}\dt\left(\frac{k|\alpha^\star|}{|\alpha_0^\star|}\right)^n\frac{\exp\left(t^n\lipschitzConstant\threshold^{-1}\left(|\beta^\star|/|\alpha^\star|
+|\beta_0^\star|/|\alpha_0^\star|\right)\right)
-1}{(k|\alpha^\star|  -
|\alpha_0^\star|) +
(k|\beta^\star|+|\beta_0^\star|)\lipschitzConstant\dt}
\max_{j\in\nat{n}}\min_{\vec
y\in\range{\podstate}}\normtwo{ \vec y - \f\del{\stateInitialNo + \podstate 
\stateRedG^{j}} }.
\end{equation} 
 
\noindent We now prove conditions under which the \textit{a posteriori} error bound is
 independent of the time step $\dt$ and total number of time instances $n$;
 the bound is fixed for a given time $t^n$.
\begin{corollary}[Time-step-independent global \textit{a posteriori} error
bounds: linear multistep schemes]\label{cor:timestepIndependenceAPosteriori}
Under the assumptions of Theorem \ref{thm:simplifiedGlobalAposterioriLMM}, if additionally $k|\alpha^\star|
= |\alpha_0^\star|$ (e.g., backward Euler, where $k=1$ and $|\alpha^\star| =
|\alpha_0^\star| = 1$) then the global \textit{a posteriori} error bounds
\eqref{eq:aposterioriLMMboundSimplerGal}--\eqref{eq:aposterioriLMMboundSimplerPG}
 are independent
of the time step and simplify to 
\begin{align}
\label{eq:aposterioriLMMboundSimplerGalDtIndep}
\normtwo{\deltastateRedG^{n}} 
&\leq\frac{(k+1)\abs{\betamax}}{(k|\beta^\star|+|\beta_0^\star|)\lipschitzConstant}\left(\exp\left(t^n\lipschitzConstant\threshold^{-1}\left(|\beta^\star|/|\alpha^\star|
+|\beta_0^\star|/|\alpha_0^\star|\right)\right)
-1\right)
\max_{j\in\nat{n}}\normtwo{ \del{ \matI  - \bbV } 
\f\del{\stateInitialNo + \podstate 
\stateRedG^{j}} }\\
\label{eq:aposterioriLMMboundSimplerPGDtIndep}
\normtwo{\deltastateRedP^{n}} 
&\leq\frac{(k+1)\abs{\betamax}}{
(k|\beta^\star|+|\beta_0^\star|)\lipschitzConstant}\left(\exp\left(t^n\lipschitzConstant\threshold^{-1}\left(|\beta^\star|/|\alpha^\star|
+|\beta_0^\star|/|\alpha_0^\star|\right)\right)
-1\right)
\max_{\substack{j\in\nat{n}\\ \ell\in\natZero{\elltwostarmax}}}\normtwo{ \del{ \matI  - \bbP^{j} } 
\f\del{\stateInitialNo + \podstate 
\stateRedP^{j-\ell}} }.
\end{align}
\end{corollary}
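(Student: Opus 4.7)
The plan is to obtain both bounds directly by specializing the hypothesis $k|\alpha^\star| = |\alpha_0^\star|$ in the corresponding inequalities \eqref{eq:aposterioriLMMboundSimplerGal} and \eqref{eq:aposterioriLMMboundSimplerPG} of Theorem \ref{thm:simplifiedGlobalAposterioriLMM}. The argument is the same for the Galerkin and LSPG bounds, so I would present one of them in detail (say the LSPG case) and observe that the Galerkin case follows by an identical substitution.

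First I would write down the global LSPG \textit{a posteriori} bound \eqref{eq:aposterioriLMMboundSimplerPG} verbatim, isolating the three places where $k|\alpha^\star|$ and $|\alpha_0^\star|$ appear: the geometric factor $\left(k|\alpha^\star|/|\alpha_0^\star|\right)^n$, the additive term $(k|\alpha^\star| - |\alpha_0^\star|)$ in the denominator, and the argument of the exponential (which depends on the ratios $|\beta^\star|/|\alpha^\star|$ and $|\beta_0^\star|/|\alpha_0^\star|$ and is therefore unaffected structurally by the assumption). Substituting $k|\alpha^\star| = |\alpha_0^\star|$, the geometric factor collapses to $1^n = 1$ and the first summand of the denominator vanishes, leaving only $(k|\beta^\star|+|\beta_0^\star|)\lipschitzConstant\dt$.

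Next I would cancel the explicit $\dt$ appearing in the numerator against this $\dt$ in the denominator. This cancellation is the critical algebraic step: it is exactly what removes the dependence of the prefactor on $\dt$. What remains multiplying the exponential-minus-one factor is $(k+1)|\betamax|/\bigl((k|\beta^\star|+|\beta_0^\star|)\lipschitzConstant\bigr)$, which matches the prefactor claimed in \eqref{eq:aposterioriLMMboundSimplerPGDtIndep}. The exponential's argument retains its $t^n$ dependence (so the bound still grows in physical time), but depends on $\dt$ only through $\dt$ entering the hypothesis $\dt \leq |\alpha_0^\star|(1-\threshold)/(\lipschitzConstant|\beta_0^\star|)$ through $\threshold$; it is otherwise $\dt$-free.

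Then I would repeat the identical substitution in \eqref{eq:aposterioriLMMboundSimplerGal} to obtain \eqref{eq:aposterioriLMMboundSimplerGalDtIndep}, and finally mention the concrete verification for backward Euler ($k=1$, $\alpha_0 = 1$, $\alpha_1 = -1$, so $|\alpha^\star| = |\alpha_0^\star| = 1$ and $k|\alpha^\star| = |\alpha_0^\star|$). I do not anticipate any real obstacle here: the result is essentially a bookkeeping corollary that highlights a nontrivial cancellation, so the main task is simply to present the algebra cleanly and to point out explicitly (perhaps in the preceding discussion rather than within the proof) that the total number of time instances $n$ has dropped out and only the physical time $t^n$ remains.
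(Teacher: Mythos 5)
Your proposal is correct and follows exactly the paper's own (much terser) proof: substitute $k|\alpha^\star| = |\alpha_0^\star|$ into \eqref{eq:aposterioriLMMboundSimplerGal}--\eqref{eq:aposterioriLMMboundSimplerPG}, so that the factor $\left(k|\alpha^\star|/|\alpha_0^\star|\right)^n$ collapses to one, the term $(k|\alpha^\star| - |\alpha_0^\star|)$ in the denominator vanishes, and the remaining $\dt$ in the denominator cancels the explicit $\dt$ in the prefactor. The paper states this in one sentence; your write-up simply makes the bookkeeping explicit.
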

\begin{proof}
The result can be derived by substituting $k|\alpha^\star|
= |\alpha_0^\star|$ into inequalities \eqref{eq:aposterioriLMMboundSimplerGal}
and \eqref{eq:aposterioriLMMboundSimplerPG}.
\end{proof}

We now prove conditions under which \textit{a posteriori} error bounds
\eqref{eq:aposterioriLMMboundSimplerGal}--\eqref{eq:aposterioriLMMboundSimplerPG} can
be written in `residual form', i.e., in terms of the discrete residual
arising at each time step. This will enable the respective optimality
properties of the Galerkin and LSPG ROMs to be compared in Remark \ref{rem:optimalityCompare}.

\begin{corollary}[\reviewerA{Simplified global \textit{a posteriori} error
bounds in residual form: linear
multistep schemes}] \label{eq:error_multi_step_twoResidual}
Under the assumptions of Theorem \ref{thm:simplifiedGlobalAposterioriLMM}, if additionally 
 $\beta_j^m=0$,
$j\geq 1$,
$\forall m\innat{n}$
(e.g., backward differentiation formulas). Then, 
\begin{gather}
\label{eq:aposterioriLMMboundSimplerGalRes}
\normtwo{\deltastateRedG^{n}} 
\leq(k+1)\left(\frac{k|\alpha^\star|}{|\alpha_0^\star|}\right)^n\frac{\exp\left(t^n\lipschitzConstant\threshold^{-1}\left(|\beta^\star|/|\alpha^\star|
+|\beta_0^\star|/|\alpha_0^\star|\right)\right)
-1}{(k|\alpha^\star|  -
|\alpha_0^\star|) +
(k|\beta^\star|+|\beta_0^\star|)\lipschitzConstant\dt}
\max_{j\in\nat{n}}\|\resGalTotal{j}(
\podstate \stateRedG^{j})\|_2\\
\label{eq:aposterioriLMMboundSimplerPGRes}\normtwo{\deltastateRedP^{n}} 
\leq(k+1)\left(\frac{k|\alpha^\star|}{|\alpha_0^\star|}\right)^n\frac{\exp\left(t^n\lipschitzConstant\threshold^{-1}\left(|\beta^\star|/|\alpha^\star|
+|\beta_0^\star|/|\alpha_0^\star|\right)\right)
-1}{(k|\alpha^\star|  -
|\alpha_0^\star|) +
(k|\beta^\star|+|\beta_0^\star|)\lipschitzConstant\dt}
\max_{j\in\nat{n}}\|\resDiscTotal{j}(
\podstate \stateRedP^j)\|_2.
\end{gather}
If additionally $\weightingMatrix = \matI$, then
\begin{gather}
\label{eq:aposterioriLMMboundSimplerPGResOpt}\normtwo{\deltastateRedP^{n}} 
\leq(k+1)\left(\frac{k|\alpha^\star|}{|\alpha_0^\star|}\right)^n\frac{\exp\left(t^n\lipschitzConstant\threshold^{-1}\left(|\beta^\star|/|\alpha^\star|
+|\beta_0^\star|/|\alpha_0^\star|\right)\right)
-1}{(k|\alpha^\star|  -
|\alpha_0^\star|) +
(k|\beta^\star|+|\beta_0^\star|)\lipschitzConstant\dt}
\max_{j\in\nat{n}}\min_{\vec
y\in\range{\podstate}}\|\resDiscTotal{j}(
\vec y)\|_2.
\end{gather}
\end{corollary}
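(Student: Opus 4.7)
The plan is to derive \eqref{eq:aposterioriLMMboundSimplerGalRes}--\eqref{eq:aposterioriLMMboundSimplerPGResOpt} directly from the bounds \eqref{eq:aposterioriLMMboundSimplerGal}--\eqref{eq:aposterioriLMMboundSimplerPG} established in Theorem~\ref{thm:simplifiedGlobalAposterioriLMM}, by applying Lemma~\ref{lemm:obliqueDiscreteResMin} to rewrite the projection-error terms on those right-hand sides as discrete-residual norms. The argument is essentially algebraic: no new estimates are needed, only a simplification enabled by the extra hypothesis $\beta_j^m=0$ for $j\ge 1$.

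First I would observe that, under this hypothesis, the coefficients $\firstCoeffMultinTwoarg{\ell}{m}=|\beta_\ell^m|\dt/h^m$ vanish for $\ell\ge 1$, so by the definition of $\elltwostar{m}$ we have $\elltwostar{m}=0$ for every $m\in\nat{n}$, and therefore $\elltwostarmax=0$. The double maximum over $(j,\ell)\in\nat{n}\times\natZero{\elltwostarmax}$ that appears on the right-hand side of \eqref{eq:aposterioriLMMboundSimplerPG} thus collapses to $\max_{j\in\nat{n}}\|(\matI-\bbP^j)\f(\stateInitialNo+\podstate\stateRedP^j)\|_2$, and \eqref{eq:aposterioriLMMboundSimplerGal} is already of the correct form.

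Next I would invoke Lemma~\ref{lemm:obliqueDiscreteResMin}, whose hypothesis $\beta_j^n=0$, $j\ge 1$, is exactly what is assumed. Identities \eqref{eq:galObliqueDisc}--\eqref{eq:lspgObliqueDisc} then give
\[
|\beta_0^j|\dt\,\|(\matI-\bbV)\f(\stateInitialNo+\podstate\stateRedG^j)\|_2=\|\resGalTotal{j}(\podstate\stateRedG^j)\|_2
\]
and its LSPG analogue. Since the hypothesis forces $\abs{\betamax}=\max_{j\in\nat{n}}|\beta_0^j|$, and in the BDF setting (and indeed in every fixed-coefficient single-step example of interest) $|\beta_0^j|$ is independent of $j$, the prefactor $\abs{\betamax}\dt$ in the Theorem bound cancels the factor $1/(|\beta_0^j|\dt)$ produced by the lemma, yielding \eqref{eq:aposterioriLMMboundSimplerGalRes} and \eqref{eq:aposterioriLMMboundSimplerPGRes}.

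For the optimality statement \eqref{eq:aposterioriLMMboundSimplerPGResOpt}, the additional assumption $\weightingMatrix=\matI$ activates the final assertion \eqref{eq:lspgObliqueDisc2} of the lemma, which rewrites each $\|\resDiscTotal{j}(\podstate\stateRedP^j)\|_2$ as $\min_{\vec y\in\range{\podstate}}\|\resDiscTotal{j}(\vec y)\|_2$; substitution into \eqref{eq:aposterioriLMMboundSimplerPGRes} completes the proof. The only subtle point, and thus the main obstacle, is the coefficient cancellation in the second step: if the coefficients $\beta_0^m$ were allowed to depend genuinely on $m$, one would either need to restrict the statement to the constant-$|\beta_0|$ regime covered by standard BDF, or to absorb a harmless ratio $\abs{\betamax}/\min_{j\in\nat{n}}|\beta_0^j|$ into the stability constant.
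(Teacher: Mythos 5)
Your proposal is correct and follows essentially the same route as the paper's own (very terse) proof: under $\beta_j^m=0$ for $j\ge1$ one gets $\betamax=|\beta_0|$ and $\elltwostarmax=0$, and Lemma~\ref{lemm:obliqueDiscreteResMin} converts the projection-error terms into residual norms, with \eqref{eq:lspgObliqueDisc2} giving the minimization form when $\weightingMatrix=\matI$. Your remark about the cancellation of $\abs{\betamax}\dt$ against $1/(|\beta_0^j|\dt)$ when $\beta_0^j$ varies with $j$ is a fair point that the paper glosses over by simply writing $\betamax=\beta_0$.
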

\begin{proof}
Under the stated assumptions $\betamax = \beta_0$, $\elltwostarmax = 0$, and Lemma
\ref{lemm:obliqueDiscreteResMin} holds, yielding the desired result.
\end{proof}
}

\reviewerANewer{
	\begin{remark}[Optimality in \textit{a posteriori} error
	bounds]\label{rem:optimalityCompare}
Comparing inequalities \eqref{eq:galGlobalTimeOpt} and
\eqref{eq:aposterioriLMMboundSimplerPGResOpt} highlights the differences
in how optimality affects the Galerkin and LSPG ROM error bounds. Writing these expressions more
compactly under the conditions of Corollary \ref{eq:error_multi_step_twoResidual} yields
\begin{gather}
\label{eq:aposterioriLMMboundSimplerGaloptimality}
\normtwo{\deltastateRedG^{n}} 
\leq\continuousTerm\beta_0\dt
\max_{j\in\nat{n}}\min_{\vec y\in\range{\Phi}}\|\vec y - \f\del{\stateInitialNo + \podstate 
\stateRedG^{j}}\|_2\\
\normtwo{\deltastateRedP^{n}} 
\leq\continuousTerm
\max_{j\in\nat{n}}\min_{\optVec\in\stateInitialNo +
 \range{\podstate}}\normtwo{\resDiscTotal{j} \left(\optVec\right)}.
\end{gather}
where 
$$\continuousTerm\defeq(k+1)\left(\frac{k|\alpha^\star|}{|\alpha_0^\star|}\right)^n\frac{\exp\left(t^n\lipschitzConstant\threshold^{-1}\left(|\beta^\star|/|\alpha^\star|
+|\beta_0^\star|/|\alpha_0^\star|\right)\right)
-1}{(k|\alpha^\star|  -
|\alpha_0^\star|) +
(k|\beta^\star|+|\beta_0^\star|)\lipschitzConstant\dt}.$$
	\end{remark}
}

\subsubsection{\textit{A priori} error bounds}
We now derive \textit{a priori} error bounds by slightly modifying
the steps in the above proof\reviewerANewer{s}. \reviewerCNewer{The most significant
difference in the subsequent results is that the oblique projection associated
with the LSPG ROM no longer associates with residual minimization, as the
argument of the operators corresponds to the full-order-model solution.}

\reviewerCNewer{
\begin{corollary}[Local {\textit{a priori} error bounds: linear multistep
schemes}]\label{theorem:localAPrioriLMM}
If $\del{\bf A_1}$ holds,  
$\dt < \abs{\alpha^{j}_0}/(\abs{\beta^{j}_0}
\lipschitzConstant )$, $\forall j\innat{n}$ for the Galerkin ROM, and $\dt <
\abs{\alpha^{j}_0}/(\abs{\beta^{j}_0} \lipschitzConstant \| \bbP^n \|)$, $\forall j\innat{n}$ for
the LSPG ROM, then
\begin{align}
    \normtwo{\deltastateRedG^{n}} 
&\le \sum_{\ell=0}^k \firstCoeffMultin{\ell} \normtwo{ \del{ \matI  - \bbV }
\f \del{\stateInitialNo + \state_\star^{n-\ell}} }  
    + \sum_{\ell=1}^k \secondCoeffMultin{\ell} \normtwo{\deltastateRedG^{n-\ell}}
		\label{eq:error_FOM_ROM_G_multi_stepfirst2priori}\\
\normtwo{\deltastateRedP^{n}} 
&\le \sum_{\ell=0}^k \firstCoeffMultiD{\ell}^n \normtwo{ \del{ \matI  - \bbP^{n}
} \f \del{\stateInitialNo + \state_\star^{n-\ell}} }  
    + \sum_{\ell=1}^k \secondCoeffMultiD{\ell}^n \normtwo{\deltastateRedP^{n-\ell}}
		\label{eq:error_FOM_ROM_P_multi_stepfirst2priori}
\end{align}
where we have defined $\firstCoeffMultiD{\ell}^{m}\defeq
\abs{\beta^{m}_\ell} \dt
/\bar h^{m}$
$\secondCoeffMultiD{\ell}^{m}\defeq
(\reviewerCNew{\abs{\alpha^{m}_\ell}+\abs{\beta^{m}_\ell} \lipschitzConstant
		\dt \| \bbP^{m} \|_2 })/\bar h^{m}$, 
		$\bar h^{m} \defeq \abs{\alpha^{m}_0} -
		\abs{\beta^{m}_0} \lipschitzConstant  \dt \|
		\bbP^{m} \|_2$. Other quantities
		are defined in Theorem \ref{eq:error_multi_step}.
\end{corollary}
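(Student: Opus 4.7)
The strategy is to essentially mimic the proof of Theorem \ref{theorem:localAPosterioriLMM}, but to add and subtract $\f(\stateInitialNo + \state_\star^{n-\ell})$ rather than $\f(\stateInitialNo + \podstate \stateRed^{n-\ell})$ so that the projection-error terms involve the FOM trajectory, yielding an \textit{a priori} bound. The two new ingredients (relative to the \textit{a posteriori} case) are: (i) for the LSPG ROM, the oblique projector $\bbP^{n}$ no longer admits a norm of one, so we must retain $\|\bbP^{n}\|_2$ in the Lipschitz-continuity step; and (ii) for the Galerkin ROM, $\bbV$ is an orthogonal projector with $\|\bbV\|_2 = 1$, so the constants remain identical to those of the \textit{a posteriori} bound.

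I will prove \eqref{eq:error_FOM_ROM_P_multi_stepfirst2priori}; the Galerkin case is analogous and simpler. Fix $n$ and subtract $\podstate$ times Eq.~\eqref{eq:rom_P_multi_step} from Eq.~\eqref{eq:fom_multi_step}, as in \eqref{eq:beforeAddSubtract}, to obtain
\begin{equation*}
\abs{\alpha^{n}_0}\normtwo{\deltastateRedP^{n}} \le \abs{\beta^{n}_0}\dt \normtwo{\f\del{\stateInitialNo + \state_\star^{n}} - \bbP^{n}\f\del{\stateInitialNo + \podstate\stateRedP^{n}}} + \normtwo{\deltaresDisc^{n-1}}.
\end{equation*}
Now, in contrast with the \textit{a posteriori} proof, I add and subtract $\bbP^{n}\f\del{\stateInitialNo + \state_\star^{n}}$ (instead of $\f\del{\stateInitialNo + \podstate\stateRedP^{n}}$) and apply the triangle inequality together with $\del{\bf A_1}$ and the operator-norm bound $\normtwo{\bbP^{n}\del{\f\del{\stateInitialNo + \state_\star^{n}}-\f\del{\stateInitialNo + \podstate\stateRedP^{n}}}} \le \lipschitzConstant \normtwo{\bbP^{n}}\normtwo{\deltastateRedP^{n}}$. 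Rearranging using the time-step condition $\dt < \abs{\alpha^{n}_0}/(\abs{\beta^{n}_0}\lipschitzConstant\normtwo{\bbP^{n}})$ yields
\begin{equation*}
\normtwo{\deltastateRedP^{n}} \le \frac{\abs{\beta^{n}_0}\dt}{\bar h^{n}} \normtwo{\del{\matI - \bbP^{n}}\f\del{\stateInitialNo + \state_\star^{n}}} + \frac{1}{\bar h^{n}} \normtwo{\deltaresDisc^{n-1}}.
\end{equation*}

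Next, to estimate $\normtwo{\deltaresDisc^{n-1}}$, I apply the analogous add-and-subtract trick term-by-term in the sum defining $\deltaresDisc^{n-1}$ from Eqs.~\eqref{eq:residuals}. Specifically, for each $\ell \in \nat{k}$ I write $\f\del{\stateInitialNo + \state_\star^{n-\ell}} - \bbP^{n}\f\del{\stateInitialNo + \podstate\stateRedP^{n-\ell}} = \del{\matI - \bbP^{n}}\f\del{\stateInitialNo + \state_\star^{n-\ell}} + \bbP^{n}\del{\f\del{\stateInitialNo + \state_\star^{n-\ell}} - \f\del{\stateInitialNo + \podstate\stateRedP^{n-\ell}}}$, apply $\del{\bf A_1}$ and the operator-norm bound on $\bbP^{n}$, and collect terms. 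Combining with the preceding inequality and using the definitions of $\firstCoeffMultiD{\ell}^{n}$, $\secondCoeffMultiD{\ell}^{n}$ gives \eqref{eq:error_FOM_ROM_P_multi_stepfirst2priori}.

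For the Galerkin ROM, the identical argument applies after replacing $\bbP^{n}$ by $\bbV$. Because $\bbV$ is an orthogonal projector, $\normtwo{\bbV} = 1$, so the Lipschitz step produces the coefficient $\abs{\beta^{m}_\ell}\lipschitzConstant\dt$ (rather than $\abs{\beta^{m}_\ell}\lipschitzConstant\dt\normtwo{\bbV}$), and the resulting stability constants coincide with $\firstCoeffMultin{\ell}$ and $\secondCoeffMultin{\ell}$ of Theorem \ref{theorem:localAPosterioriLMM}. The main (minor) obstacle is simply bookkeeping: ensuring that $\normtwo{\bbP^{n}}$ is carried correctly through both the $n$th time-instance estimate and each of the $k$ history-term estimates, so that the uniform constants $\firstCoeffMultiD{\ell}^{n}$ and $\secondCoeffMultiD{\ell}^{n}$ emerge with the stated form.
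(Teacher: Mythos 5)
Your proposal is correct and follows essentially the same route as the paper: the paper's proof is precisely the one-line observation that one should add and subtract $\bbP^n \f\del{\stateInitialNo + \state_\star^{n}}$ (and its history-term analogues) in place of $\f\del{\stateInitialNo + \podstate\stateRedP^{n}}$ in the \textit{a posteriori} argument, retaining $\normtwo{\bbP^{n}}$ in the Lipschitz step and using $\|\bbV\|_2=1$ for Galerkin. You have simply written out the bookkeeping that the paper leaves implicit, and the constants emerge as stated.
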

\begin{proof}
Instead of adding and subtracting $\f \del{\stateInitialNo + \podstate 
\stateRedP^{n}}$ between Eqs.~\eqref{eq:beforeAddSubtract} and
\eqref{eq:afterAddSubtract} and between Eqs.~\eqref{eq:beforeAddSubtract2} and
\eqref{eq:error_FOM_ROM_P_multi_step_b} in the proof of
Theorem~\ref{eq:error_multi_step}, adding and subtracting $\bbP^n 
\f\del{\stateInitialNo + \state_\star^{n}}$ and using $\|\bbV\|_2 = 1$ yields the
stated result. 
\end{proof}
}
}

Note that bound
\eqref{eq:error_FOM_ROM_P_multi_stepfirst2priori} is not quite an \textit{a
priori} bound, as the operator $\bbP^{\reviewerCNewer{n}}$ depends on the LSPG ROM solution
$\stateRedP^\reviewerCNewer{n}$; while this dependence could be removed, the bound in its
current form facilitates comparison with the Galerkin bound.
\reviewerCNewer{The same dependence persists for the remaining
\textit{a priori} error bounds in this section.}

\begin{corollary}[\reviewerCNewer{Global} \textit{a priori} error bounds: linear multistep schemes] \label{thm:error_multi_step_apriori}
\reviewerCNewer{Under the assumptions of Corollary
\ref{theorem:localAPrioriLMM}, we have}
\begin{align}
    \normtwo{\deltastateRedG^{n}} 
&\le\sum_{j= 0}^{n-1}\sum_{\ell =0}^{\min(k,j)}\left[
\indicator_{\{0\}}(j-\ell) +
\sum_{\multiIndexSet\in\mathcal A(j-\ell )} \prod_{i=1}^{|\multiIndexSet|}\secondCoeffMulti{\multiIndex{i}}^{n -
\sum_{m=1}^{i-1}\multiIndex{m}}\right]\firstCoeffMulti{\ell }^{n-j + \ell }
\normtwo{ \del{ \matI  -
\bbV } \f \del{\stateInitialNo + \state_\star^{n-j}} }
		\label{eq:error_FOM_ROM_G_multi_step_apriori}\\
    \normtwo{\deltastateRedP^{n}} 
&\le\sum_{j= 0}^{n-1}\sum_{\ell =0}^{\min(k,j)}\left[
\indicator_{\{0\}}(j-\ell) +
\sum_{\multiIndexSet\in\mathcal A(j-\ell )}
\prod_{i=1}^{|\multiIndexSet|}\secondCoeffMultiD{\multiIndex{i}}^{n -
\sum_{m=1}^{i-1}\multiIndex{m}}\right]\firstCoeffMultiD{\ell }^{n-j + \ell }
\normtwo{ \del{ \matI  -
\bbP^{n-j + \ell } } \f \del{\stateInitialNo + \state_\star^{n-j}} }
		\label{eq:error_FOM_ROM_P_multi_step_apriori},
\end{align}
\end{corollary}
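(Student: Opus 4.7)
The plan is to mimic the recursive unfolding argument used in the proof of Theorem~\ref{eq:error_multi_step}, but starting from the local \textit{a priori} bounds \eqref{eq:error_FOM_ROM_G_multi_stepfirst2priori}--\eqref{eq:error_FOM_ROM_P_multi_stepfirst2priori} in Corollary~\ref{theorem:localAPrioriLMM} rather than from the local \textit{a posteriori} bounds in Theorem~\ref{theorem:localAPosterioriLMM}. The structural observation is identical in both cases: at each time instance $i\innat{n}$, the incoming error contribution $\firstCoeffMultiD{j}^{i}\normtwo{(\matI - \bbP^{i})\f(\stateInitialNo + \state_\star^{i-j})}$ (respectively $\firstCoeffMulti{j}^{i}\normtwo{(\matI - \bbV)\f(\stateInitialNo + \state_\star^{i-j})}$ for Galerkin) enters with coefficient one, and all prior errors $\normtwo{\deltastateRedP^{i-\ell}}$ enter with multiplicative factors $\secondCoeffMultiD{\ell}^{i}$.

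First, I would treat the Galerkin case, which is essentially immediate: replace every occurrence of $\podstate\stateRedG^{n-j}$ in the proof of bound \eqref{eq:error_FOM_ROM_G_multi_step} by $\state_\star^{n-j}$, noting that the coefficients $\firstCoeffMulti{\ell}^{m}$ and $\secondCoeffMulti{\ell}^{m}$ are unchanged because $\|\bbV\|_2=1$, so the recursion and bookkeeping of `paths' through multi-indices $\multiIndexSet\in\mathcal{A}(j-\ell)$ proceeds verbatim. Second, I would handle the LSPG case by carrying out exactly the same unfolding but with the barred coefficients $\firstCoeffMultiD{\ell}^{m}$ and $\secondCoeffMultiD{\ell}^{m}$, which account for the factor $\|\bbP^m\|_2$ that enters when one avoids adding and subtracting $\f(\stateInitialNo + \podstate\stateRedP^n)$ and instead adds and subtracts $\bbP^n\f(\stateInitialNo + \state_\star^n)$ (as already done in the proof of Corollary~\ref{theorem:localAPrioriLMM}).

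Concretely, by induction on $n$ using $\deltastateRedP^{0}=\zero$ as the base case, each application of the local bound \eqref{eq:error_FOM_ROM_P_multi_stepfirst2priori} generates a new leading term indexed by some $\ell\innatZero{k}$ and transfers the remaining error backward in time by $\ell$ steps, accumulating a factor of $\secondCoeffMultiD{\ell}^{m}$. Enumerating all such sequences of transfers from time instance $n$ back to time instance $n-j$ is exactly the purpose of the multi-index set $\mathcal{A}(j-\ell)$, where the length $|\multiIndexSet|$ counts the number of recursive steps taken and the entries $\multiIndex{i}$ record the lengths of those steps; the indicator $\indicator_{\{0\}}(j-\ell)$ accounts for the case in which the projection term is incurred at the current time instance with no backward transfer.

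The main obstacle is purely bookkeeping: verifying that the superscripts on $\secondCoeffMultiD{\multiIndex{i}}^{n-\sum_{m=1}^{i-1}\multiIndex{m}}$ track the correct time instance after each backward step, and that the combinatorial sum over $\mathcal{A}(j-\ell)$ enumerates each admissible path exactly once. Since this is the identical combinatorics validated in the proof of Theorem~\ref{eq:error_multi_step}, no new argument is required beyond substituting the barred coefficients and replacing ROM arguments by $\state_\star^{n-j}$, which yields the claimed bounds \eqref{eq:error_FOM_ROM_G_multi_step_apriori}--\eqref{eq:error_FOM_ROM_P_multi_step_apriori}.
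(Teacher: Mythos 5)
Your proposal is correct and follows essentially the same route as the paper, whose proof consists precisely of rerunning the recursive path-enumeration argument of Theorem \ref{eq:error_multi_step} starting from the local \textit{a priori} bounds of Corollary \ref{theorem:localAPrioriLMM}, with the barred coefficients and the full-order-model arguments replacing the ROM ones. No further comment is needed.
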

\begin{proof}
\reviewerCNewer{The result can be derived by following the same steps as Theorem \ref{eq:error_multi_step} based on the
local bounds in \reviewerCNewer{Corollary} \ref{theorem:localAPrioriLMM}.}
\end{proof}

\reviewerCNewer{
\begin{corollary}[\reviewerA{Simplified global \textit{a priori} error bounds: linear
multistep schemes}] \label{eq:error_multi_step_twoApriori}
Under the assumptions of Corollary \ref{theorem:localAPrioriLMM}, if
additionally
$\dt \leq
|\alpha_0^\star|(1-\threshold)/(\lipschitzConstant|\beta_0^\star|)$ with
$0 < \threshold < 1$ for the Galerkin ROM, and 
$\dt \leq
|\bar\alpha_0^\star|(1-\thresholdLSPG)/(\lipschitzConstant|\bar\beta_0^\star|\|\|\bbP_0^\star\|_2)$ with
$0 < \thresholdLSPG < 1$ for the LSPG ROM, then
\begin{align}
\label{eq:aprioriLMMboundSimplerGal}
\normtwo{\deltastateRedG^{n}} 
\leq&(k+1)\abs{\betamax}\dt\left(\frac{k|\alpha^\star|}{|\alpha_0^\star|}\right)^n\frac{\exp\left(t^n\lipschitzConstant\threshold^{-1}\left(|\beta^\star|/|\alpha^\star|
+|\beta_0^\star|/|\alpha_0^\star|\right)\right)
-1}{(k|\alpha^\star|  -
|\alpha_0^\star|) +
(k|\beta^\star|+|\beta_0^\star|)\lipschitzConstant\dt}
\max_{j\in\nat{n}}\normtwo{ \del{ \matI  - \bbV } 
\f\del{\stateInitialNo + \state_\star^{j}} }\\
\begin{split}
\label{eq:aprioriLMMboundSimplerPG}\normtwo{\deltastateRedP^{n}} 
\leq&(k+1)\abs{\betamax}\dt\left(\frac{k|\bar \alpha^\star|}{|\bar
\alpha_0^\star|}\right)^n\frac{\exp\left(t^n\lipschitzConstant\thresholdLSPG^{-1}\left(|\bar
\beta^\star|\|\bbP^\star\|_2/|\bar \alpha^\star|
+|\bar \beta_0^\star|\|\|\bbP_0^\star\|_2/|\bar \alpha_0^\star|\right)\right)
-1}{(k|\bar \alpha^\star|  -
|\bar \alpha_0^\star|) +
(k|\bar \beta^\star|\|\bbP^\star\|_2+|\bar
\beta_0^\star|\|\bbP_0^\star\|_2)\lipschitzConstant\dt}\cdot\\
&\max_{\substack{j\in\nat{n}\\ \ell\in\natZero{\barelltwostarmax}}}\normtwo{ \del{ \matI  - \bbP^{j} } 
\f\del{\stateInitialNo + 
\state_\star^{j-\ell}} }.
\end{split}
\end{align}
Here, we have defined
\begin{gather*}
 \abs{\bar\alpha_0^\star} -
\abs{\bar\beta_0^\star}\lipschitzConstant\dt\|\bbP_0^\star\|_2\defeq \min_{j\in\nat{n} }\abs{\alpha_0^j} -
\abs{\beta_0^j}\lipschitzConstant\dt\|\bbP^j\|_2 \\
 \abs{\bar\alpha^\star} +
\abs{\bar\beta^\star}\lipschitzConstant\dt\|\bbP^\star\|_2\defeq\max_{\
j\in\nat{n},\ \ell\in\nat{k}}\abs{\alpha_\ell^j} +
\abs{\beta_\ell^j}\lipschitzConstant\dt\|\bbP^j\|_2\\
k\geq\barelltwostarmax\defeq\max_{j\in\nat{n}}\barelltwostar{j},\quad\quad
\barelltwostar{n}\defeq\underset{\ell\in\natZero{k}}{\arg\max}\
\firstCoeffMultiD{\ell}^n \normtwo{ \del{ \matI  - \bbP^{n} } \f \del{
\stateInitialNo + \state_\star^{n-\ell}} }.
\end{gather*}
\end{corollary}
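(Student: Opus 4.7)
The plan is to follow essentially the same strategy as in the proof of Theorem~\ref{thm:simplifiedGlobalAposterioriLMM}, but starting from the local \textit{a priori} bounds in Corollary~\ref{theorem:localAPrioriLMM} rather than from the local \textit{a posteriori} bounds. The two inequalities will be proved separately because the LSPG case requires different stability constants involving $\|\bbP^j\|_2$, which arise because the oblique projector $\bbP^n$ no longer coincides with a discrete residual minimizer when its argument is the full-order-model state $\state_\star^{n-\ell}$ rather than the reduced-order-model state $\podstate\stateRedP^{n-\ell}$.

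For the Galerkin bound~\eqref{eq:aprioriLMMboundSimplerGal}, I would first observe that the local \textit{a priori} bound~\eqref{eq:error_FOM_ROM_G_multi_stepfirst2priori} has exactly the same form as its \textit{a posteriori} counterpart~\eqref{eq:error_FOM_ROM_G_multi_stepfirst2} (with $\podstate\stateRedG^{n-\ell}$ replaced by $\state_\star^{n-\ell}$) and employs identical coefficients $\firstCoeffMultin{\ell}$, $\secondCoeffMultin{\ell}$. Therefore, defining the argmax indices $\elltwostar{n}$, $\ellstar{n}$ as in the proof of Theorem~\ref{thm:simplifiedGlobalAposterioriLMM}, constructing the backward path $\timestepset{j+1}=\timestepset{j}-\ellstar{\timestepset{j}}$, and applying recursion yields a bound of precisely the same structural form. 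The geometric sum is then controlled using the identity
\[
\left(\frac{|\alpha^\star|+|\beta^\star|\lipschitzConstant\dt}{|\alpha_0^\star|-|\beta_0^\star|\lipschitzConstant\dt}\right)^n
=\left(\frac{|\alpha^\star|}{|\alpha_0^\star|}\right)^n\left(1+\frac{\lipschitzConstant\dt(|\beta^\star|/|\alpha^\star|+|\beta_0^\star|/|\alpha_0^\star|)}{1-\lipschitzConstant\dt|\beta_0^\star|/|\alpha_0^\star|}\right)^n,
\]
together with $(1+x)^n\leq\exp(nx)$ and the condition $\dt\leq|\alpha_0^\star|(1-\threshold)/(\lipschitzConstant|\beta_0^\star|)$, which guarantees $1-\lipschitzConstant\dt|\beta_0^\star|/|\alpha_0^\star|\geq\threshold$.

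For the LSPG bound~\eqref{eq:aprioriLMMboundSimplerPG}, I would run the same argument starting from~\eqref{eq:error_FOM_ROM_P_multi_stepfirst2priori} but with the modified constants $\firstCoeffMultiD{\ell}^m$, $\secondCoeffMultiD{\ell}^m$ and the modified denominator $\bar h^m=|\alpha_0^m|-|\beta_0^m|\lipschitzConstant\dt\|\bbP^m\|_2$. Defining $\barelltwostar{n}$ analogously and carrying out the recursion produces geometric ratios of the form
\[
\frac{|\bar\alpha^\star|+|\bar\beta^\star|\lipschitzConstant\dt\|\bbP^\star\|_2}{|\bar\alpha_0^\star|-|\bar\beta_0^\star|\lipschitzConstant\dt\|\bbP_0^\star\|_2},
\]
which are controlled in exactly the same manner as above, using the strengthened time-step condition $\dt\leq|\bar\alpha_0^\star|(1-\thresholdLSPG)/(\lipschitzConstant|\bar\beta_0^\star|\|\bbP_0^\star\|_2)$ to ensure the denominator stays bounded below by $\thresholdLSPG$. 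Taking the maximum over $j\innat{n}$ and $\ell\innatZero{\barelltwostarmax}$ of the projection-error term $\normtwo{(\matI-\bbP^j)\f(\stateInitialNo+\state_\star^{j-\ell})}$ yields~\eqref{eq:aprioriLMMboundSimplerPG}.

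The main obstacle I anticipate is bookkeeping: the LSPG case requires carefully propagating the operator norm $\|\bbP^j\|_2$ through every step where the \textit{a posteriori} proof used the residual-minimization property to absorb this factor. In particular, the Lipschitz estimate used to pass from~\eqref{eq:beforeAddSubtract2} to~\eqref{eq:error_FOM_ROM_P_multi_step_b} must now be applied after inserting $\bbP^{n}\f(\stateInitialNo+\state_\star^n)$ rather than $\f(\stateInitialNo+\podstate\stateRedP^n)$, which is what produces the $\|\bbP^j\|_2$ factors in the constants and forces the use of $\bar h^m$ in place of $h^m$. Once these modified constants are in place, the remainder of the argument is a direct transcription of the proof of Theorem~\ref{thm:simplifiedGlobalAposterioriLMM}.
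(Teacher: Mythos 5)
Your proposal is correct and follows essentially the same route as the paper: the paper's proof of this corollary simply states that one repeats the steps of Theorem~\ref{thm:simplifiedGlobalAposterioriLMM} starting from the local \textit{a priori} bounds of Corollary~\ref{theorem:localAPrioriLMM}, which is precisely the path-recursion-plus-geometric-sum argument you describe, including the $(1+x)^n\leq\exp(nx)$ step and the threshold conditions on $\dt$. Your additional bookkeeping remarks about propagating $\|\bbP^j\|_2$ through the constants correctly identify why the LSPG case needs $\bar h^m$ and the strengthened time-step restriction.
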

\begin{proof}
The result can be derived by following the same steps as Theorem \ref{thm:simplifiedGlobalAposterioriLMM} based on the
local bounds in \reviewerCNewer{Corollary} \ref{theorem:localAPrioriLMM}.
\end{proof}

\noindent We now demonstrate conditions under which the \textit{a priori} error bound is
independent of the time step $\dt$.
\begin{corollary}[\reviewerA{Time-step-independent global \textit{a priori} error bounds: linear
multistep schemes}] \label{cor:timestepIndependenceAPriori}
Under the assumptions of Corollary \ref{eq:error_multi_step_twoApriori}, if additionally 
$k|\alpha^\star|
= |\alpha_0^\star|$ (e.g., backward Euler)
for the Galerkin ROM, and $k|\bar \alpha^\star| = |\bar \alpha_0^\star|$ (e.g.,
backward Euler) for the LSPG ROM, then the \textit{a priori} error bounds
\eqref{eq:aprioriLMMboundSimplerGal}--\eqref{eq:aprioriLMMboundSimplerPG} are
independent of the time step and simplify to 
\begin{align}
\label{eq:aprioriLMMboundSimplerGalTsindep}
\normtwo{\deltastateRedG^{n}} 
\leq&\frac{(k+1)\abs{\betamax}}{
(k|\beta^\star|+|\beta_0^\star|)\lipschitzConstant}\left(\exp\left(t^n\lipschitzConstant\threshold^{-1}\left(|\beta^\star|/|\alpha^\star|
+|\beta_0^\star|/|\alpha_0^\star|\right)\right)
-1\right)
\max_{j\in\nat{n}}\normtwo{ \del{ \matI  - \bbV } 
\f\del{\stateInitialNo + \state_\star^{j}} }\\
\begin{split}
\label{eq:aprioriLMMboundSimplerPGTsindep}\normtwo{\deltastateRedP^{n}} 
\leq&\frac{(k+1)\abs{\betamax}}{
(k|\bar \beta^\star|\|\bbP^\star\|_2+|\bar
\beta_0^\star|\|\bbP_0^\star\|_2)\lipschitzConstant}\left(\exp\left(t^n\lipschitzConstant\thresholdLSPG^{-1}\left(|\bar
\beta^\star|\|\bbP^\star\|_2/|\bar \alpha^\star|
+|\bar \beta_0^\star|\|\|\bbP_0^\star\|_2/|\bar \alpha_0^\star|\right)\right)
-1\right)\cdot\\
&\max_{\substack{j\in\nat{n}\\ \ell\in\natZero{\barelltwostarmax}}}\normtwo{ \del{ \matI  - \bbP^{j} } 
\f\del{\stateInitialNo + 
\state_\star^{j-\ell}} }.
\end{split}
\end{align}
\end{corollary}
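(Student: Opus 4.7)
The plan is to mimic exactly the argument already used in Corollary \ref{cor:timestepIndependenceAPosteriori}: the bounds \eqref{eq:aprioriLMMboundSimplerGal}--\eqref{eq:aprioriLMMboundSimplerPG} are already in closed form, so the corollary reduces to a direct algebraic simplification under the added hypotheses. First I would record what the hypothesis $k|\alpha^\star|=|\alpha_0^\star|$ does to the prefactor in \eqref{eq:aprioriLMMboundSimplerGal}: the geometric factor $(k|\alpha^\star|/|\alpha_0^\star|)^n$ collapses to $1$, and the first summand $(k|\alpha^\star|-|\alpha_0^\star|)$ in the denominator vanishes, leaving only $(k|\beta^\star|+|\beta_0^\star|)\lipschitzConstant\dt$. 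The explicit $\dt$ in the numerator then cancels against this $\dt$, eliminating all explicit time-step dependence outside the exponential and yielding precisely \eqref{eq:aprioriLMMboundSimplerGalTsindep}.

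For the LSPG bound \eqref{eq:aprioriLMMboundSimplerPG} I would perform the identical manipulation using the analogous hypothesis $k|\bar\alpha^\star|=|\bar\alpha_0^\star|$, with the bookkeeping complicated only by the fact that the barred constants incorporate the projector norms $\|\bbP^\star\|_2$ and $\|\bbP_0^\star\|_2$. Substituting the hypothesis sends $(k|\bar\alpha^\star|/|\bar\alpha_0^\star|)^n \to 1$ and annihilates the first summand $(k|\bar\alpha^\star|-|\bar\alpha_0^\star|)$ in the denominator, so the residual $\lipschitzConstant\dt$ factor again cancels the numerator's $\dt$ to produce \eqref{eq:aprioriLMMboundSimplerPGTsindep}.

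The one point I would take care to verify is that nothing in the exponential argument breaks under the hypothesis. Inspecting \eqref{eq:aprioriLMMboundSimplerGal}--\eqref{eq:aprioriLMMboundSimplerPG}, the exponent depends on the ratios $|\beta^\star|/|\alpha^\star|$ and $|\beta_0^\star|/|\alpha_0^\star|$ (and their barred analogs with projector-norm factors); none of these ratios simplify or blow up under $k|\alpha^\star|=|\alpha_0^\star|$, so the exponential survives unchanged with its $t^n\lipschitzConstant$ growth intact. There is no real obstacle in this proof: it is pure bookkeeping, and the only thing worth stressing in the write-up is that the cancellation of $\dt$ hinges on the simultaneous vanishing of the additive constant in the denominator and the collapse of the geometric factor in the numerator — both of which are consequences of the single hypothesis $k|\alpha^\star|=|\alpha_0^\star|$ (respectively its barred counterpart).
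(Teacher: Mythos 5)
Your proposal is correct and follows exactly the paper's own argument: the paper proves this corollary by direct substitution of $k|\alpha^\star|=|\alpha_0^\star|$ (respectively $k|\bar\alpha^\star|=|\bar\alpha_0^\star|$) into inequalities \eqref{eq:aprioriLMMboundSimplerGal} and \eqref{eq:aprioriLMMboundSimplerPG}, just as was done for the \textit{a posteriori} analogue in Corollary \ref{cor:timestepIndependenceAPosteriori}. Your additional bookkeeping --- the collapse of the geometric factor to $1$, the vanishing of the additive term in the denominator, and the resulting cancellation of the explicit $\dt$ --- is exactly the simplification the paper leaves implicit.
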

\begin{proof}
As in Corollary \ref{cor:timestepIndependenceAPosteriori}, the result can be
shown by substitution of the appropriate assumptions (i.e., $k|\alpha^\star|
= |\alpha_0^\star|$ for the Galerkin ROM,  $k|\bar \alpha^\star| = |\bar
\alpha_0^\star|$ for the LSPG ROM) into 
inequalities \eqref{eq:aprioriLMMboundSimplerGal} and \eqref{eq:aprioriLMMboundSimplerPG}.
\end{proof}
}
\reviewerCNewer{
\begin{remark}[Optimality in \textit{a priori} error bounds]
Because the argument of $\f$ in the \textit{a priori} error bounds corresponds
to the full-order-model solution, it is not possible to relate such quantities
to ROM residuals as was done in the case of \textit{a posteriori} error bounds
in Lemma \ref{lemm:obliqueDiscreteResMin}.
As such, it is
not possible to associate the oblique projection of the LSPG ROM with
minimizing any component of the \textit{a priori} error bounds, as was shown
for \textit{a posteriori} error bounds in Corollary \ref{cor:discreteContBetter}. However,
it is possible to associate Galerkin projection with minimizing terms in the
\textit{a priori} error bounds, i.e., the following optimality result holds
under the conditions of Corollary \ref{eq:error_multi_step_twoApriori}:
\begin{equation}
\label{eq:aprioriLMMboundSimplerGaloptimality}
\normtwo{\deltastateRedG^{n}} 
\leq\bar\continuousTerm
\max_{j\in\nat{n}}\min_{\vec y\in\range{\Phi}}\|\vec y - \f\del{\stateInitialNo +
\state_\star^{j}}\|_2.
\end{equation}
This is analogous to inequality
\eqref{eq:aposterioriLMMboundSimplerGaloptimality} for \textit{a
posteriori} error bounds,
where
$$
\bar\continuousTerm\defeq(k+1)\abs{\betamax}\dt\left(\frac{k|\alpha^\star|}{|\alpha_0^\star|}\right)^n\frac{\exp\left(t^n\lipschitzConstant\threshold^{-1}\left(|\beta^\star|/|\alpha^\star|
+|\beta_0^\star|/|\alpha_0^\star|\right)\right)
-1}{(k|\alpha^\star|  -
|\alpha_0^\star|) +
(k|\beta^\star|+|\beta_0^\star|)\lipschitzConstant\dt}.
$$
\end{remark}
}


\subsection{Backward Euler}\label{sec:errorBackEuler}
\reviewerANew{In practice, error bounds for a specific time integrator can be
derived by substituting the appropriate values of
$\alpha_\ell^\reviewerCNewer{m}$ and
$\beta_\ell^\reviewerCNewer{m}$ into
\reviewerCNewer{the appropriate error bounds.};
doing so can lend additional insight into the error bound. We now perform this
exercise for the backward Euler scheme---which is a single-step method that can be characterized by
Eq.~\eqref{eq:linearMultistepDef} with $k=1$, $\alpha_0 = 1$, $\alpha_1 = -1$,
$\beta_0 = 1$, and $\beta_1 = 0$---\reviewerCNewer{and selected bounds}.}

\reviewerCNewer{We first note that the backward Euler scheme satisfies $\beta_j^n=0$, $j\geq 1$
in Corollary \ref{cor:discreteOptBeatsGal}. Thus, that result provides
conditions under which the LSPG ROM has a lower local \textit{a posteriori}
error bound than the Galerkin ROM.}
\reviewerCNewer{Next,} we \reviewerA{specialize the \reviewerCNewer{global
\textit{a posteriori} error bounds} in Theorem~\ref{eq:error_multi_step}
to the case of the backward Euler scheme.}
\begin{corollary}[\reviewerA{\textit{a posteriori} error bounds:} backward Euler]\label{cor:backward_euler}
Under the assumptions of \thmref{theorem:localAPosterioriLMM}, for the backward Euler
scheme we obtain 
\begin{align}
    \normtwo{\deltastateRedG^{n}} 
&\le \dt  \sum_{j=0}^{n-1} \frac{1}{(h)^{j+1}} 
    \normtwo{ \del{ \matI  - \bbV } \f \del{\stateInitialNo + \podstate  \stateRedG^{n-j}} }  \label{eq:error_FOM_ROM_G_Backward_Euler} \\ 
    \normtwo{\deltastateRedP^{n}} 
&\le \dt  \sum_{j=0}^{n-1} \frac{1}{(h)^{j+1}} 
    \normtwo{ \del{ \matI  - \bbP^{n-j} } \f \del{\stateInitialNo + \podstate 
		\stateRedP^{n-j}} }
		\label{eq:error_FOM_ROM_P_Backward_Euler}\reviewerC{,}
\end{align}
where $h \defeq 1 - \lipschitzConstant  \dt $. \reviewerCNewer{Note that the time-step condition 
corresponds to $\dt < 1/
\lipschitzConstant$ in this case.}
\end{corollary}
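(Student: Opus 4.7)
The plan is to derive this corollary by direct specialization of the global \textit{a posteriori} bounds in Theorem~\ref{eq:error_multi_step} to the backward Euler coefficients $k = 1$, $\alpha_0 = 1$, $\alpha_1 = -1$, $\beta_0 = 1$, $\beta_1 = 0$; since the coefficients are now time-independent, every $\alpha_\ell^m$ and $\beta_\ell^m$ collapses to a constant. First I would evaluate the elementary constants from Theorem~\ref{theorem:localAPosterioriLMM}, namely $h^m = |\alpha_0| - |\beta_0|\lipschitzConstant\dt = 1 - \lipschitzConstant\dt = h$, $\firstCoeffMulti{0}^m = \dt/h$, $\firstCoeffMulti{1}^m = 0$ (since $\beta_1 = 0$), and $\secondCoeffMulti{1}^m = (|\alpha_1| + |\beta_1|\lipschitzConstant\dt)/h = 1/h$. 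The time-step assumption $\dt < |\alpha_0^m|/(|\beta_0^m|\lipschitzConstant)$ reduces to $\dt < 1/\lipschitzConstant$, matching the statement.

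Next I would simplify the double sum in \eqref{eq:error_FOM_ROM_G_multi_step}--\eqref{eq:error_FOM_ROM_P_multi_step}. Because $k = 1$, the inner sum runs over $\ell \in \{0, \min(1,j)\}$, and the $\ell = 1$ contributions vanish identically because $\firstCoeffMulti{1}^m = 0$, leaving only the $\ell = 0$ terms. Moreover, for $k = 1$ the multi-index set $\mathcal{A}(j)$ reduces to the singleton consisting of the length-$j$ tuple of ones, so the product $\prod_{i=1}^{|\multiIndexSet|}\secondCoeffMulti{\multiIndex{i}}^{\,\cdot}$ collapses to $(1/h)^j$ for each $j \geq 1$ (with the set empty for $j = 0$, in which case the indicator $\indicator_{\{0\}}(j-\ell)$ accounts for the bracketed factor). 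Combining these observations, the bracketed coefficient equals $(1/h)^j$ uniformly for $j \geq 0$, and multiplying by $\firstCoeffMulti{0}^{n-j} = \dt/h$ yields the claimed $\dt/h^{j+1}$ weights. The identical reasoning applied to the LSPG bound \eqref{eq:error_FOM_ROM_P_multi_step} delivers \eqref{eq:error_FOM_ROM_P_Backward_Euler}, and applied to the Galerkin bound \eqref{eq:error_FOM_ROM_G_multi_step} delivers \eqref{eq:error_FOM_ROM_G_Backward_Euler}.

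There is no genuine obstacle here; the argument is purely a substitution and counting exercise. The one subtlety worth being careful about is the correct bookkeeping of the bracketed factor at $j = 0$, where $\mathcal{A}(0)$ is empty and the indicator contributes $1$, versus $j \geq 1$, where the indicator vanishes and the multi-index sum reduces to a single term $(1/h)^j$; verifying that both cases yield the uniform weight $1/h^{j+1}$ after multiplication by $\firstCoeffMulti{0}^{n-j}$ is the only nontrivial check.
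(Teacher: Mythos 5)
Your proposal is correct and follows essentially the same route as the paper's proof: both specialize Theorem~\ref{eq:error_multi_step} by computing the backward-Euler constants ($h = 1-\lipschitzConstant\dt$, $\firstCoeffMulti{0}=\dt/h$, $\firstCoeffMulti{1}=0$, $\secondCoeffMulti{1}=1/h$), observe that $\mathcal{A}(j)$ collapses to the single all-ones tuple so the bracketed factor becomes $(1/h)^j$ uniformly in $j$, and multiply by $\dt/h$ to obtain the $\dt/h^{j+1}$ weights. The bookkeeping at $j=0$ versus $j\geq 1$ that you flag as the one subtlety is handled identically in the paper via the indicator functions.
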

\begin{proof}
 
\reviewerA{
Because it is a single-step method, the \reviewerCNewer{linear multistep} coefficients do not vary between
time \reviewerCNew{instances}; as a result, the constants appearing in Theorem
\ref{eq:error_multi_step} are also time-step independent and are $h=
1-\lipschitzConstant \dt$, $\firstCoeffMulti{0}
=
\dt/h$, $\firstCoeffMulti{1}=0$,
$\secondCoeffMulti{0} = (\lipschitzConstant \dt +
1)/h$, and $\secondCoeffMulti{1} =1/h$.
}
Substituting these values into error bound
\eqref{eq:error_FOM_ROM_P_multi_step} \reviewerA{
and noting that 
\begin{align*}
\mathcal A(j) =\begin{cases}
\{\left(1,\ldots,1\right)\in\RR{j}\}\quad &j>0\\
\emptyset\quad &\text{otherwise},
\end{cases}
\end{align*}
}
	yields
\reviewerA{
\begin{align}
    \normtwo{\deltastateRedP^{n}} 
&\le\frac{\dt}{h}\sum_{j= 0}^{n-1}\underbrace{\left[
\indicator_{\{0\}}(j) +(1/h)^{j}\indicator_{\nat{n-1}}(j)\right]}_{(1/h)^{j}}
\normtwo{ \del{ \matI  -
\bbP^{n-j } } \f \del{\stateInitialNo + \podstate  \stateRedP^{n-j}} },
\end{align}
	which simplifies to} bound \eqref{eq:error_FOM_ROM_P_Backward_Euler}.
	Derivation of bound \eqref{eq:error_FOM_ROM_G_Backward_Euler} is identical
	and is thus omitted.
\end{proof}

\reviewerANewer{We now specialize the results of the time-step-independent
global
\textit{a priori} error bounds in Corollary \ref{cor:timestepIndependenceAPriori} to the backward Euler scheme.
\begin{corollary}[Time-step-independent global \textit{a priori} error bounds:
backward Euler]\label{cor:backward_euler_exp}
Under the assumptions of Corollary
\ref{cor:timestepIndependenceAPriori}---which can be satisfied by backward
Euler as $k|\alpha^\star|
= |\alpha_0^\star|$ for this scheme---we obtain the following for the backward
Euler scheme:
\begin{align}
    \normtwo{\deltastateRedG^{n}} 
&\le 2\frac{\exp(t^n\lipschitzConstant\threshold^{-1} )-1}{\lipschitzConstant} 
      \max_{j\in\nat{n}} 
     \normtwo{ \del{ \matI  - \bbV } \f \del{\stateInitialNo + \state_\star^{j}} }  
     \label{eq:error_FOM_ROM_G_Backward_Euler_expCorollarypriori} \\ 
    \normtwo{\deltastateRedP^{n}} 
&\le 2\frac{\exp( t^n\lipschitzConstant\thresholdLSPG^{-1}\|\bbP_0^\star\|_2)-1}{\|\bbP_0^\star\|_2\lipschitzConstant} 
      \max_{j\in\nat{n}}
    \normtwo{ \del{ \matI  - \bbP^{j} } \f \del{\stateInitialNo + \state_\star
		^j } }.
		\label{eq:error_FOM_ROM_P_Backward_Euler_expCorollary}
\end{align}
Note that the time step conditions correspond to $\dt \leq
(1-\threshold)/\lipschitzConstant$ for the Galerkin ROM and
$\dt \leq (1-\thresholdLSPG)/(\lipschitzConstant\|\bbP_0^\star\|_2)$ for the LSPG ROM in this case.
\end{corollary}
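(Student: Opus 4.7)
The proof proposal is essentially a direct substitution exercise, so the plan is to verify the hypotheses of Corollary \ref{cor:timestepIndependenceAPriori} and then carefully evaluate every constant appearing in bounds \eqref{eq:aprioriLMMboundSimplerGalTsindep}--\eqref{eq:aprioriLMMboundSimplerPGTsindep} for the backward Euler choice of coefficients.

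First I would recall that backward Euler corresponds to the linear multistep scheme \eqref{eq:linearMultistepDef} with $k=1$ and constant (time-instance-independent) coefficients $\alpha_0 = 1$, $\alpha_1 = -1$, $\beta_0 = 1$, $\beta_1 = 0$. From this I can immediately read off $|\alpha^\star| = 1$, $|\alpha_0^\star| = 1$, $|\beta^\star| = 0$, $|\beta_0^\star| = 1$, and $|\betamax| = 1$ for the Galerkin-ROM constants, and the analogous barred identities $|\bar \alpha^\star| = |\bar\alpha_0^\star| = 1$, $|\bar\beta^\star| = 0$, $|\bar\beta_0^\star|=1$ for the LSPG-ROM constants. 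The consistency condition $k|\alpha^\star| = |\alpha_0^\star|$ (and its barred counterpart) holds trivially, so the hypotheses of Corollary \ref{cor:timestepIndependenceAPriori} are satisfied provided the time step obeys the respective upper bounds stated in Corollary \ref{eq:error_multi_step_twoApriori}. Specializing those yields $\dt \le (1-\threshold)/\lipschitzConstant$ for the Galerkin ROM and $\dt \le (1-\thresholdLSPG)/(\lipschitzConstant\|\bbP_0^\star\|_2)$ for the LSPG ROM, which I would state alongside the bounds.

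Next I would simply plug the above values into the right-hand sides of \eqref{eq:aprioriLMMboundSimplerGalTsindep} and \eqref{eq:aprioriLMMboundSimplerPGTsindep}. For the Galerkin bound, the prefactor becomes $(k+1)|\betamax|/((k|\beta^\star|+|\beta_0^\star|)\lipschitzConstant) = 2/\lipschitzConstant$, and the argument of $\exp(\cdot)$ reduces to $t^n\lipschitzConstant\threshold^{-1}$ because $|\beta^\star|/|\alpha^\star| = 0$ and $|\beta_0^\star|/|\alpha_0^\star| = 1$. For the LSPG bound, the prefactor reduces to $2/(\|\bbP_0^\star\|_2\lipschitzConstant)$ and the argument of $\exp(\cdot)$ collapses to $t^n\lipschitzConstant\thresholdLSPG^{-1}\|\bbP_0^\star\|_2$ by the same cancellations. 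In addition, since $\betamax$ is attained only by $\beta_0$ for backward Euler, $\barelltwostarmax = 0$ and the double maximum over $(j,\ell)$ in \eqref{eq:aprioriLMMboundSimplerPGTsindep} collapses to a single maximum over $j\in\nat{n}$, which yields precisely \eqref{eq:error_FOM_ROM_P_Backward_Euler_expCorollary}. Analogously, the Galerkin right-hand side collapses to \eqref{eq:error_FOM_ROM_G_Backward_Euler_expCorollarypriori}.

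There is no substantive obstacle here: the entire argument is a careful bookkeeping of which terms survive when $k=1$, $\beta_1=0$, and all coefficients are time-instance-independent. The only item that warrants care is confirming that $\barelltwostarmax$ (and its unbarred counterpart) equals zero so the inner maximum over $\ell$ disappears; this follows because $\firstCoeffMultiD{\ell}^n \propto |\beta_\ell^n|$ and only $\ell=0$ contributes for backward Euler, so the argument of $\f$ in the final bounds involves $\state_\star^{j}$ (not $\state_\star^{j-\ell}$ with $\ell>0$). Once that is noted, the remainder is purely algebraic substitution and the conclusion is immediate from Corollary \ref{cor:timestepIndependenceAPriori}.
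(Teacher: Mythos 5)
Your proposal is correct and follows exactly the paper's route: the paper's own proof simply reads off $k=|\alpha^\star|=|\alpha_0^\star|=|\beta_0^\star|=1$, $\barelltwostarmax=|\beta^\star|=0$ for backward Euler and substitutes into bounds \eqref{eq:aprioriLMMboundSimplerGalTsindep}--\eqref{eq:aprioriLMMboundSimplerPGTsindep}, which is precisely your bookkeeping. Your additional care in justifying why $\barelltwostarmax=0$ (via $\firstCoeffMultiD{\ell}^n\propto|\beta_\ell^n|$) is a small but welcome elaboration of a step the paper states without comment.
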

\begin{proof}
For backward Euler, 
$k=|\alpha^\star|=|\alpha_0^\star|=|\beta_0^\star|=1$ and
$\barelltwostarmax=|\beta^\star|=0$.
Substituting these quantities into bounds
\eqref{eq:aprioriLMMboundSimplerGalTsindep} and
\eqref{eq:aprioriLMMboundSimplerPGTsindep} produces the desired result.
\end{proof}
}

\reviewerCNewer{We now derive a result that highlights the (surprising) role the time step
$\dt$ plays in the LSPG error bound. As will be shown in the numerical
experiments in Section \ref{s:numerics}, this theoretical results can have an
important effect on the performance of the LSPG ROM in practice.}
\begin{corollary}\label{corr:auxiliaryProblem}
If $\bar{\vec x}$ solves \reviewerA{an} auxiliary problem \reviewerA{that
computes the full-space solution increment} centered on the
\reviewerC{LSPG} ROM trajectory
\begin{equation}
\label{eq:auxODeltaE}
\vec {\bar x}^{j} = \dt  \f \del{\stateInitialNo +\vec {\bar x}^{j}} +
\podstate \stateRedP^{j-1},\quad j\in\nat{\timestepit},
\end{equation}
then the following holds:
\begin{align}\label{eq:auxToPlot}
\normtwo{\deltastateRedP^{n}} 
&\le (1 + \lipschitzConstant \dt)\sum_{j=0}^{n-1} \frac{\mu^{n-j}}{(h)^{j+1}}\\
\label{eq:auxToPlot2}
&=
\dt(1 + \lipschitzConstant \dt)\sum_{j=0}^{n-1}
\frac{\bar\mu^{n-j}}{(h)^{j+1}}\|\f(\bar\state^{n-j})\|_\reviewerA{2} 
.
\end{align}
Here, $\mu^{j} \defeq \normtwo{\podstate  \Delta  \stateRedP^{j} - \Delta \bar{\vec
x}^{j} }$ denotes the difference in solution increments at time instance $j$,
where 
$\Delta  \stateRedP^{j} \defeq \stateRedP^{j} - \stateRedP^{j-1}$ and 
$\Delta \bar{\vec x}^{j} \defeq \vec {\bar x}^{j} -
\podstate \stateRedP^{j-1}$.
We denote the relative solution increment at time instance $j$ by $\bar\mu^{j}
\defeq \mu^{j}/ \|\Delta \bar{\vec x}^{j} \|_\reviewerA{2}$.
\end{corollary}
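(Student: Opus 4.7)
The plan is to refine the backward-Euler \textit{a posteriori} bound in Corollary~\ref{cor:backward_euler} by replacing the oblique-projection term $\normtwo{(\matI - \bbP^{n-j})\f(\stateInitialNo + \podstate \stateRedP^{n-j})}$ with a quantity controlled by the increment-difference $\mu^{n-j}$, thereby reading the error bound through the auxiliary problem \eqref{eq:auxODeltaE}.

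First I would write both dynamics in increment form. Specializing the LSPG O$\Delta$E \eqref{eq:rom_P_multi_step} to backward Euler and pre-multiplying by $\podstate$ gives $\podstate \Delta \stateRedP^{j} = \dt\,\bbP^{j}\f(\stateInitialNo + \podstate \stateRedP^{j})$, while the auxiliary problem \eqref{eq:auxODeltaE} yields $\Delta \bar{\vec x}^{j} = \dt\,\f(\stateInitialNo + \bar{\vec x}^{j})$. Solving each relation gives clean expressions for $\bbP^{j}\f(\stateInitialNo + \podstate \stateRedP^{j})$ and $\f(\stateInitialNo + \bar{\vec x}^{j})$ as rescaled solution increments, which is the key that lets the auxiliary problem enter the bound.

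Next I would add and subtract $\f(\stateInitialNo + \bar{\vec x}^{j})$ inside the oblique projection,
$$(\matI - \bbP^{j})\f(\stateInitialNo + \podstate \stateRedP^{j}) = \bigl[\f(\stateInitialNo + \podstate \stateRedP^{j}) - \f(\stateInitialNo + \bar{\vec x}^{j})\bigr] + \dt^{-1}\bigl[\Delta \bar{\vec x}^{j} - \podstate \Delta \stateRedP^{j}\bigr],$$
observe that $\podstate \stateRedP^{j} - \bar{\vec x}^{j} = \podstate \Delta \stateRedP^{j} - \Delta \bar{\vec x}^{j}$ (since both sides differ by the common quantity $\podstate \stateRedP^{j-1}$), apply assumption $(\mathbf{A}_1)$ to the first bracket, and invoke the definition of $\mu^{j}$ to deduce
$$\normtwo{(\matI - \bbP^{j})\f(\stateInitialNo + \podstate \stateRedP^{j})} \le (\lipschitzConstant + \dt^{-1})\mu^{j} = \dt^{-1}(1 + \lipschitzConstant \dt)\,\mu^{j}.$$
Substituting this into \eqref{eq:error_FOM_ROM_P_Backward_Euler} cancels the leading $\dt$ and gives \eqref{eq:auxToPlot}. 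For \eqref{eq:auxToPlot2} I would use $\mu^{n-j} = \bar\mu^{n-j}\,\normtwo{\Delta \bar{\vec x}^{n-j}}$ together with the auxiliary identity $\Delta \bar{\vec x}^{j} = \dt\,\f(\stateInitialNo + \bar{\vec x}^{j})$ to extract a factor of $\dt$ and recover the $\|\f(\bar\state^{n-j})\|_{\reviewerA{2}}$ form stated in the corollary.

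The main obstacle is the telescoping identity $\podstate \stateRedP^{j} - \bar{\vec x}^{j} = \podstate \Delta \stateRedP^{j} - \Delta \bar{\vec x}^{j}$, which is what allows the Lipschitz term to be expressed purely in terms of $\mu^{j}$ rather than in terms of an unknown full-space discrepancy; once this identity is recognized, the remainder of the argument reduces to substitution and bookkeeping, and the factor $(1 + \lipschitzConstant \dt)$ appears naturally as the sum of the Lipschitz contribution and the $\dt^{-1}$ contribution from the increment mismatch.
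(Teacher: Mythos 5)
Your proposal is correct and follows essentially the same route as the paper: the paper first converts the oblique-projection term into the increment form $\normtwo{\podstate\Delta\stateRedP^{n-j} - \dt\,\f(\stateInitialNo+\podstate\stateRedP^{n-j})}$ (via Lemma \ref{lemm:obliqueDiscreteResMin}, which is the same identity you obtain from $\podstate\Delta\stateRedP^{j} = \dt\,\bbP^{j}\f(\stateInitialNo+\podstate\stateRedP^{j})$), then adds and subtracts $\Delta\bar{\vec x}^{n-j} = \dt\,\f(\stateInitialNo+\bar{\vec x}^{n-j})$ and applies Lipschitz continuity together with the telescoping identity $\podstate\stateRedP^{j}-\bar{\vec x}^{j} = \podstate\Delta\stateRedP^{j}-\Delta\bar{\vec x}^{j}$, exactly as you do. The only difference is cosmetic (you divide through by $\dt$ before splitting rather than after), so the proof is sound.
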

\begin{proof}
Eq.~\eqref{eq:error_FOM_ROM_P_Backward_Euler} in conjunction with
\reviewerCNewer{\eqref{eq:lspgObliqueDisc} (with $\abs{\beta_0^n}=1$ and the
appropriate form of the discrete residual for backward Euler)} implies 
\begin{align}\label{eq:error_estimate_first}
\normtwo{\deltastateRedP^{n}} 
&\le \sum_{j=0}^{n-1} \frac{1}{(h)^{j+1}} 
    \normtwo{\podstate  \Delta  \stateRedP^{n-j} - \dt  \f \del{\stateInitialNo
		+\podstate  \stateRedP^{n-j}}  }.
\end{align}
We can also write the auxiliary equation \eqref{eq:auxODeltaE} as $ \Delta
\bar{\vec x}^{j} = \dt  \f \del{\vec {\bar x}^{j}}$,
$j\in\nat{\timestepit}$, which allows us to rewrite bound \eqref{eq:error_estimate_first} as 
\begin{align*}
\normtwo{\deltastateRedP^{n}} 
\le \sum_{j=0}^{n-1} \frac{1}{(h)^{j+1}}\cdot
    \Big\| &\del{\podstate  \Delta  \stateRedP^{n-j} - \Delta \bar{\vec
		x}^{n-j} } -\\
		&\dt \del{ \f \del{\stateInitialNo +\podstate  \Delta \stateRedP^{n-j} + \vec
		\Phi \hat{\vec x}_P^{n-j-1}} - \f \del{\stateInitialNo + \Delta \vec {\bar
		x}^{n-j} + \podstate \hat{\vec x}_P^{n-j-1}} } \Big\|_\reviewerA{2} .
\end{align*}
Lipschitz continuity of $\f $ leads to the bound \eqref{eq:auxToPlot}. To
obtain Eq.~\eqref{eq:auxToPlot2}, we multiply and divide by $\|\Delta
\bar{\vec x}^{n-j} \|_\reviewerA{2}$ for each term in the summation and use $ \Delta
\bar{\vec x}^{n-j} = \dt  \f \del{\vec {\bar x}^{n-j}}$.
\end{proof}

\reviewerA{Corollary \ref{corr:auxiliaryProblem} is useful in that it
expresses the \reviewerC{LSPG} ROM error in terms of the (time-local)
single-step errors incurred by projection along the \reviewerC{LSPG} ROM
trajectory. In addition, this result highlights the critical role of the time
step $\dt$ in the performance of the \reviewerC{LSPG} ROM; the following
remark provides this discussion.}

\begin{remark}\label{rem:modestTimestep} \rm
The time step $\dt$ in the error bound \eqref{eq:auxToPlot2} for the
\reviewerC{LSPG} ROM solution plays an important role. In particular,
decreasing the time step produces both beneficial effects (bound decrease)
and deleterious effects (bound increase), which we denote by `+' and `-',
respectively as follows:
\begin{enumerate} 
\item[+] The time-discretization error decreases (this does not appear in the
time-discrete error analysis above).
\item[-] The number of overall time \reviewerCNew{instances} $n$ increases,  so there are more
terms in the summation.
\item[+] The terms $\dt(1 + \lipschitzConstant \dt)$ and $1/{(h)^{j+1}}$ decrease.
\item[?] The term $\bar\mu^{n-j}$ may increase or decrease, depending on the
spectral content of the basis $\podstate$.
\end{enumerate}
We now discuss this final ambiguous effect.
The term $\bar \mu^n$ can be interpreted as the \emph{relative error in solution
increment over $\left[(n-1)\dt ,n\dt \right]$}.
Clearly, the ability of the \reviewerC{LSPG} ROM to make $\bar \mu^n$ small
depends on the spectral content of the basis $\podstate$: if the basis
only captures modes that evolve over long time scales, then $\bar \mu^n$
will be large (i.e., close to one), as the basis does not contain the `fast
evolving' solution components 
that change over a single time step.
This suggests that the time step should be `matched' to the spectral content of the
reduced basis $\podstate$. In Section \ref{sec:optimalDt} of the experiments, we explore this issue
numerically, and demonstrate that the error bound is minimized for an
intermediate value of the time step $\dt$.

We note that the above arguments do not hold for the Galerkin ROM, which is
simply an ODE that does not depend on the time step. Instead, decreasing
the time step should increase accuracy, as it has the effect of reducing
the time-discretization error.

\end{remark}

\subsection{Runge--Kutta schemes}\label{sec:errorRK}

We now derive Runge--Kutta error bounds for 
the Galerkin ROM~\eqref{eq:romResRK} and
the \reviewerC{LSPG} ROM~\eqref{eq:LSPGRK}.
\reviewerC{For notational simplicity, we define
\reviewerCNewer{$\f_i^n:\statevarf\mapsto\f(\statevarf,
t^{n-1}+c_i\dt)$}, $i\innat{s}$, $n\innat{T/\dt}$.} \reviewerCNewer{Rather
than present the full collection of results as was done for linear multistep
schemes in Section \ref{sec:errorLinear}, for compactness we instead focus
only on the most important results for Runge--Kutta schemes: global \textit{a
posteriori} and \textit{a priori} error bounds, as well as
time-step-independent \textit{a priori} error bounds.}

We rewrite Eqs.~\eqref{eq:resRKSolve}, 
\eqref{eq:romResRKSolve}, and \eqref{eq:LSPGRK} as 
\begin{align}
 \unknown^{n}_{\star,i}
&= \f ^n_i\Big(\stateInitialNo + \state_\star^{n-1}+\dt\sum_{j=1}^s a_{ij}\unknown_{\star,j}^n\Big),\quad
i\in\nat{s}\label{eq:fom_RK} \\
 \unknownRed^{n}_{G,i}
&= \podstate^T\f ^n_i\Big(\stateInitialNo + \podstate\stateRedG^{n-1}+\dt\sum_{j=1}^s a_{ij}\podstate\unknownRed_{G,j}^n \Big),\quad
i\in\nat{s}\label{eq:rom_RK}  \\
 \unknownRed^{n}_{P,i}
&= \del{(\testBasis_{ii}^n)^T \podstate}^{-1} (\testBasis_{ii}^n)^T \f ^n_i\Big(\stateInitialNo + \podstate\stateRedP^{n-1}+\dt\sum_{j=1}^s a_{ij}\podstate\unknownRed_{P,j}^n \Big)
\nonumber \\
& \ - \del{(\testBasis_{ii}^n)^T \podstate}^{-1} \sum_{e=1, e\neq i}^s
(\testBasis_{ie}^n)^T \bigg(\podstate \unknownRed^{n}_{P,e} - \f ^n_e\Big(\stateInitialNo + \podstate\stateRedP^{n-1}+\dt\sum_{j=1}^s a_{ej}\podstate\unknownRed_{P,j}^n \Big) \bigg) ,
\quad 
i\in\nat{s} \label{eq:rom_RK_D}
\end{align}
\reviewerCNewer{and set $\state_\star^0 = \state_G^0=\state_P^0=\zero$}.

\subsubsection{\reviewerCNewer{\textit{A posteriori} error bounds}}
\reviewerCNewer{We first derive \textit{a posteriori} error
bounds.}
\begin{theorem}[\reviewerA{Global \textit{a posteriori} error bounds: Runge--Kutta schemes}]\label{thm:RKerrors}
If $\del{\bf A_1}$ holds and $\dt$ is such that
\begin{enumerate}[$(a)$]
\item the matrix $\D \in\RR{s\times s}$ with entries $d_{ij}\defeq
\delta_{ij} - \lipschitzConstant \dt |a_{ij}|$ is invertible, and
\item for every $\vec x, \vec y \ge 0$, if $\D  \vec x \le \vec y$ then
$\vec x \le \D ^{-1} \vec y$,
\end{enumerate}
then
\begin{align}
\begin{split}
  \| \deltastateRedG^n \|_\reviewerA{2}
	\le \dt \sum_{\ell=0}^{n-1} &\Big(1 + \lipschitzConstant  \dt \sum_{k=1}^s |b_k| \sum_{i=1}^s[
	\D^{-1}]_{ki} \Big)^\ell\cdot\\
	&\Big( \sum_{k=1}^s |b_k| \sum_{i=1}^s [
	\D^{-1}]_{ki} 
  \Big\| (\matI  - \mathbb{V}) \f ^{n\reviewerC{-\ell}}_i\Big(\stateInitialNo +
	\podstate\stateRedG^{n-\ell-1} + \dt\sum_{j=1}^s
	a_{ij}\podstate\unknownRed_{G,j}^{n-\ell}\Big) \Big\|_\reviewerA{2}\Big)  \label{eq:RKG_est}  .
\end{split}
\end{align}
and 
\begin{multline}
  \| \deltastateRedP^n \|_2
	\le \dt \sum_{\ell=0}^{n-1} \Big(1 + \lipschitzConstant  \dt \sum_{k=1}^s |b_k| \sum_{i=1}^s[
	\D^{-1}]_{ki} \Big)^\ell\cdot\\
	\Bigg( \sum_{k=1}^s |b_k| \sum_{i=1}^s [
	\D^{-1}]_{ki} 
  \Big\| (\matI  - \bbP^{n-\ell}_i) \f ^{n-\ell}_i\Big(\stateInitialNo +
	\podstate\stateRedP^{n-\ell-1} + \dt\sum_{j=1}^s
	a_{ij}\podstate\unknownRed_{P,j}^{n-\ell}\Big) \Big\|_2 \\
   + \sum_{k=1}^s \abs{b_k} \sum_{i=1}^s [\D ^{-1}]_{ki} \bigg\| \podstate
	 \del{(\testBasis_{ii}^{n-\ell})^T \podstate}^{-1} \sum_{e=1, e\neq i}^s
	 (\testBasis_{ie}^{n-\ell})^T \bigg(\podstate \unknownRed^{n-\ell}_{P,e} -
	 \f ^{n-\ell}_e\Big(\stateInitialNo + \podstate\stateRedP^{n-\ell-1}+\dt\sum_{j=1}^s a_{ej}\podstate\unknownRed_{P,j}^{n-\ell} \Big) \bigg) \bigg\|_2
  \Bigg)  \label{eq:RKG_est_P}  ,
\end{multline}	
where $\bbP_i^n \defeq \podstate \del{(\testBasis_{ii}^n)^T \podstate}^{-1} (\testBasis_{ii}^n)^T$. 
\reviewerA{Here, inequalities applied to vectors hold entrywise.}
\end{theorem}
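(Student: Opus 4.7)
The plan is to derive a one-step recursion of the form $\|\deltastateRedG^n\|_2 \le (1 + C\dt)\|\deltastateRedG^{n-1}\|_2 + \dt \cdot (\text{local projection error at step } n)$ for the Galerkin ROM, and an analogous recursion for the LSPG ROM, and then to iterate it $n$ times using $\deltastateRedG^0 = \deltastateRedP^0 = \zero$ to obtain the geometric-series form appearing in \eqref{eq:RKG_est}--\eqref{eq:RKG_est_P}. The state-update relations $\state_\star^n = \state_\star^{n-1} + \dt \sum_k b_k \unknown_{\star,k}^n$ and $\podstate\stateRedG^n = \podstate\stateRedG^{n-1} + \dt\sum_k b_k \podstate\unknownRed_{G,k}^n$ immediately give
\begin{equation*}
\|\deltastateRedG^n\|_2 \le \|\deltastateRedG^{n-1}\|_2 + \dt \sum_{k=1}^s |b_k|\, \|\unknown_{\star,k}^n - \podstate \unknownRed_{G,k}^n\|_2,
\end{equation*}
so the crux is to bound the stage-error vector $\vec e^n \in \RR{s}$ with entries $e^n_i \defeq \|\unknown_{\star,i}^n - \podstate\unknownRed_{G,i}^n\|_2$ by data local to step $n$ plus a contribution from $\|\deltastateRedG^{n-1}\|_2$.

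To obtain the stage-wise bound, I will subtract \eqref{eq:rom_RK} from \eqref{eq:fom_RK}, add and subtract $\f_i^n(\stateInitialNo + \podstate\stateRedG^{n-1} + \dt\sum_j a_{ij}\podstate\unknownRed_{G,j}^n)$ inside the norm, and then insert the orthogonal projector $\bbV = \podstate\podstate^T$ to split off the Galerkin projection error $\|(\matI - \bbV)\f_i^n(\cdot)\|_2$. Applying $\del{\bf A_1}$ to the remaining piece and using $\|\bbV\|_2 = 1$ yields
\begin{equation*}
 e^n_i \le \kappa \|\deltastateRedG^{n-1}\|_2 + \kappa\dt\sum_{j=1}^s |a_{ij}| e^n_j + \|(\matI-\bbV)\f_i^n(\cdot)\|_2,
\end{equation*}
which in vector form reads $\D \vec e^n \le \kappa \|\deltastateRedG^{n-1}\|_2 \vec 1 + \vec p^n$, where $\vec p^n$ collects the projection errors. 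Invoking assumptions $(a)$ and $(b)$ lets me conclude $\vec e^n \le \D^{-1}\bigl(\kappa\|\deltastateRedG^{n-1}\|_2 \vec 1 + \vec p^n\bigr)$; substituting this back gives precisely the recursion
\begin{equation*}
\|\deltastateRedG^n\|_2 \le \Bigl(1 + \kappa\dt \sum_{k=1}^s |b_k|\sum_{i=1}^s [\D^{-1}]_{ki}\Bigr) \|\deltastateRedG^{n-1}\|_2 + \dt \sum_{k=1}^s |b_k| \sum_{i=1}^s [\D^{-1}]_{ki} \|(\matI-\bbV)\f_i^n(\cdot)\|_2,
\end{equation*}
from which \eqref{eq:RKG_est} follows by straightforward induction on $n$.

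The LSPG case proceeds identically except that \eqref{eq:rom_RK_D} has an additional coupling term arising from the oblique projector in stages $e \neq i$. I will insert $\bbP_i^n = \podstate((\testBasis_{ii}^n)^T \podstate)^{-1}(\testBasis_{ii}^n)^T$ instead of $\bbV$ in the add/subtract step; this produces the projection-error term $\|(\matI - \bbP_i^{n})\f_i^n(\cdot)\|_2$ together with the extra residual-coupling term $\|\podstate((\testBasis_{ii}^{n})^T\podstate)^{-1}\sum_{e \neq i}(\testBasis_{ie}^{n})^T(\cdots)\|_2$ that appears in \eqref{eq:RKG_est_P}. The remainder of the argument---collecting the stage errors into a vector inequality governed by $\D$, inverting via the monotonicity hypothesis, and iterating the one-step recursion---is entirely parallel to the Galerkin case.

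The main technical obstacle will be the stage-coupling step: verifying that the inequality on $\vec e^n$ can indeed be manipulated via $\D^{-1}$, which is why assumption $(b)$ (a Perron--Frobenius-type monotonicity of $\D^{-1}$) is imposed. Without this entrywise monotonicity one cannot invert the coupled system of stage inequalities to isolate $\vec e^n$, so this is where the technical care lies; the rest of the proof is a routine Lipschitz-plus-induction argument analogous to the linear-multistep analysis of Theorem \ref{eq:error_multi_step}.
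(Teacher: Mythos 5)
Your proposal follows essentially the same route as the paper's proof: subtract the ROM stage equations from the FOM stage equations, add and subtract the velocity evaluated at the ROM argument to isolate the (orthogonal or oblique) projection error, apply Lipschitz continuity to form the coupled stage inequality $\D\vec e^n \le \kappa\|\delta\state^{n-1}\|_2\vec 1 + \vec p^n$, invert it via assumptions $(a)$--$(b)$, combine with the explicit state update, and induct. The only cosmetic difference is that you invoke $\|\bbV\|_2=1$, which is not actually needed in this \emph{a posteriori} version (it is used only in the \emph{a priori} variant where one adds and subtracts the projected FOM velocity instead); otherwise the argument matches the paper's step for step, including the treatment of the extra test-basis coupling term in the LSPG bound.
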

\begin{proof}
\underline{Galerkin ROM}.
First we will show bound \eqref{eq:RKG_est}. 
Subtracting Eq.~\eqref{eq:rom_RK} from Eq.~\eqref{eq:fom_RK} and applying the triangle
inequality yields
\[
  \| \delta \unknown_{G,i}^n \|_\reviewerA{2}
  \le \Big\| \f ^n_i\Big(\stateInitialNo + \state_\star^{n-1}+\dt\sum_{j=1}^s
	a_{ij}\unknown_{\star,j}^n\Big) - \mathbb{V} \f ^n_i\Big(\stateInitialNo +
	\podstate\stateRedG^{n-1}+\dt\sum_{j=1}^s a_{ij}\podstate\unknownRed_{G,j}^n \Big)
	\Big\|_\reviewerA{2},
  \qquad i \in \mathbb{N}(s) ,
\]
where $\delta \unknown_{G,i}^n \defeq \unknown_{\star,i}^n - \podstate\unknownRed_{G,i}^n$. 
Adding and subtracting $\f ^n_i\Big(\stateInitialNo +
\podstate\stateRedG^{n-1}+\dt\sum_{j=1}^s a_{ij}\podstate\unknownRed_{G,j}^n\Big)$
and invoking assumption $\del{\bf A_1}$, we deduce 
\[
  \| \delta \unknown_{G,i}^n \|_\reviewerA{2}
  - \lipschitzConstant  \dt \sum_{j=1}^s |a_{ij}| \| \delta \unknown^n_{G,j}
	\|_\reviewerA{2} 
  \le \Big\| (\matI  - \mathbb{V}) \f ^n_i\Big(\stateInitialNo +
	\podstate\stateRedG^{n-1}+\dt\sum_{j=1}^s
	a_{ij}\podstate\unknownRed_{G,j}^n\Big) \Big\|_\reviewerA{2}
  + \lipschitzConstant  \| \deltastateRedG^{n-1} \|_\reviewerA{2},
    \qquad i \in \mathbb{N}(s) . 
\]  
Selecting $\dt$ small enough such that $(a)$ and $(b)$ hold yields
\begin{equation}\label{eq:etaUpperBound}
  \| \delta \unknown_{G,k}^{n} \|_\reviewerA{2}  
  \le \sum_{i=1}^s [\D ^{-1}]_{ki} \Big\| (\matI  - \mathbb{V}) 
	\f^n_i\Big(\stateInitialNo + \podstate\stateRedG^{n-1}+\dt\sum_{j=1}^s
	a_{ij}\podstate\unknownRed_{G,j}^n\Big) \Big\|_\reviewerA{2}
  + \lipschitzConstant  \| \deltastateRedG^{n-1} \|_\reviewerA{2}\sum_{i=1}^s[\D ^{-1}]_{ki} ,
\end{equation}
where $[\cdot]_{ij}$ denotes entry $(i,j)$ of the argument.
From explicit state updates \eqref{eq:fom_update} and
\eqref{eq:romG_RK_update}, we obtain 
\[
  \| \deltastateRedG^n \|_\reviewerA{2} \le \| \deltastateRedG^{n-1}
	\|_\reviewerA{2}
  + \dt \sum_{k=1}^s |b_k| \| \delta \unknown_{G,k}^n \|_\reviewerA{2}.
\]
Using upper bound \reviewerCNewer{\eqref{eq:etaUpperBound}}  yields
\begin{align*}
 \| \deltastateRedG^n \|_\reviewerA{2} \le 
&  \Big(1 + \lipschitzConstant  \dt \sum_{k=1}^s |b_k|
\sum_{i=1}^s[\D ^{-1}]_{ki} \Big) \| \deltastateRedG^{n-1}\|_\reviewerA{2}
\\
	&+ \dt \sum_{k=1}^s |b_k| \sum_{i=1}^s [\D ^{-1}]_{ki} \Big\| (\matI  -
	\mathbb{V}) \f ^n_i\Big(\stateInitialNo +
	\podstate\stateRedG^{n-1}+\dt\sum_{j=1}^s
	a_{ij}\podstate\unknownRed_{G,j}^n\Big) \Big\|_\reviewerA{2}.
\end{align*}
Finally, an induction argument produces the desired result \eqref{eq:RKG_est}.

\noindent
\underline{LSPG ROM}. We now prove bound \eqref{eq:RKG_est_P}.
Subtracting \eqref{eq:rom_RK_D} from
\eqref{eq:fom_RK} and applying the triangle inequality yields
\begin{align*}
  \| \delta \unknown_{P,i}^n \|_2
  &\le \Big\| \f ^n_i\Big(\stateInitialNo + \state_\star^{n-1}+\dt\sum_{j=1}^s
	a_{ij}\unknown_{\star,j}^n\Big) - \bbP^n_i \f ^n_i\Big(\stateInitialNo +
	\podstate\stateRedP^{n-1}+\dt\sum_{j=1}^s a_{ij}\podstate\unknownRed_{P,j}^n \Big)
	\Big\|_2 \\
  &\quad + \bigg\| \podstate \del{(\testBasis_{ii}^n)^T \podstate}^{-1}
	\sum_{e=1, e\neq i}^s (\testBasis_{ie}^n)^T \bigg(\podstate
	\unknownRed^{n}_{P,e} - \f ^n_e\Big(\stateInitialNo + \podstate\stateRedP^{n-1}+\dt\sum_{j=1}^s a_{ej}\podstate\unknownRed_{P,j}^n \Big) \bigg) \bigg\|_2 ,
  \qquad i \in \mathbb{N}(s) .
\end{align*}
Adding and subtracting $\f ^n_i\Big(\stateInitialNo + 
\podstate\stateRedP^{n-1}+\dt\sum_{j=1}^s a_{ij}\podstate\unknownRed_{P,j}^n\Big)$
and invoking assumption $\del{\bf A_1}$, for $i \in \mathbb{N}(s)$ we deduce 
\begin{multline*}
\| \delta \unknown_{P,i}^n \|_2
  - \lipschitzConstant  \dt \sum_{j=1}^s |a_{ij}| \| \delta \unknown^n_{P,j}
	\|_2 
  \le \Big\| (\matI  - \bbP^n_i) \f ^n_i\Big(\stateInitialNo +
	\podstate\stateRedP^{n-1}+\dt\sum_{j=1}^s
	a_{ij}\podstate\unknownRed_{P,j}^n\Big) \Big\|_2
  + \lipschitzConstant  \| \deltastateRedP^{n-1} \|_2 \\
   + \bigg\| \podstate \del{(\testBasis_{ii}^n)^T \podstate}^{-1} \sum_{e=1,
	 e\neq i}^s (\testBasis_{ie}^n)^T \bigg(\podstate \unknownRed^{n}_{P,e} -
	 \f ^n_e\Big(\stateInitialNo + \podstate\stateRedP^{n-1}+\dt\sum_{j=1}^s a_{ej}\podstate\unknownRed_{P,j}^n \Big) \bigg) \bigg\|_2 . 
\end{multline*}
Again selecting $\dt$ small enough such that $(a)$ and $(b)$ hold yields
\begin{multline}\label{eq:zetaUpperBound}
  \| \delta \unknown_{P,k}^{n} \|_2  
  \le \sum_{i=1}^s [\D ^{-1}]_{ki} \Big\| (\matI  - \bbP^n_i) 
	\f^n_i\Big(\stateInitialNo + \podstate\stateRedP^{n-1}+\dt\sum_{j=1}^s
	a_{ij}\podstate\unknownRed_{P,j}^n\Big) \Big\|_2
  + \lipschitzConstant  \| \deltastateRedP^{n-1} \|_2\sum_{i=1}^s[\D ^{-1}]_{ki} \\
  + \sum_{i=1}^s [\D ^{-1}]_{ki} \bigg\| \podstate
	\del{(\testBasis_{ii}^n)^T \podstate}^{-1} \sum_{e=1, e\neq i}^s
	(\testBasis_{ie}^n)^T \bigg(\podstate \unknownRed^{n}_{P,e} - \f ^n_e\Big(\stateInitialNo + \podstate\stateRedP^{n-1}+\dt\sum_{j=1}^s a_{ej}\podstate\unknownRed_{P,j}^n \Big) \bigg) \bigg\|_2 . 
\end{multline}
The explicit state updates from \eqref{eq:fom_update} and
\eqref{eq:romG_RK_update} and the bound
\reviewerCNewer{\eqref{eq:zetaUpperBound}} yield
\begin{align*}
\begin{split}
 \| \deltastateRedP^n \|_2 \le 
&\Big(1 + \lipschitzConstant  \dt \sum_{k=1}^s |b_k|
\sum_{i=1}^s[\D ^{-1}]_{ki} \Big) \| \deltastateRedP^{n-1}\|_2 \\
	&+\dt \sum_{k=1}^s |b_k| \sum_{i=1}^s [\D ^{-1}]_{ki} \Big\| (\matI  -
	\bbP^n_i) \f ^n_i\Big(\stateInitialNo +
	\podstate\stateRedP^{n-1}+\dt\sum_{j=1}^s
	a_{ij}\podstate\unknownRed_{P,j}^n\Big) \Big\|_2   \\
 &+ \dt \sum_{k=1}^s \abs{b_k} \sum_{i=1}^s [\D ^{-1}]_{ki} \cdot\\
 &\quad \bigg\|
 \podstate \del{(\testBasis_{ii}^n)^T \podstate}^{-1} \sum_{e=1, e\neq i}^s
 (\testBasis_{ie}^n)^T \bigg(\podstate \unknownRed^{n}_{P,e} - \f ^n_e\Big(\stateInitialNo + \podstate\stateRedP^{n-1}+\dt\sum_{j=1}^s a_{ej}\podstate\unknownRed_{P,j}^n \Big) \bigg) \bigg\|_2 . 
\end{split}
\end{align*}
An induction argument yields the bound \eqref{eq:RKG_est_P}.
\end{proof}

As with linear multistep schemes, the error bound for the Galerkin ROM in
\eqref{eq:RKG_est} depends on the orthogonal projection error of $\f ^n_i$
onto $\range{\podstate}$, while the LSPG ROM error bound depends on an oblique
projection; however, because the oblique projector depends on the LSPG ROM
solution, this bound can be smaller than the Galerkin bound (as was
demonstrated in Corollary \ref{cor:discreteOptBeatsGal}---note that backward
Euler is also a Runge--Kutta scheme). Further, notice that the complex `path
traversing' that appears in the linear multistep error bounds is not present for
Runge--Kutta schemes; this is a consequence of the fact that previous time
\reviewerCNew{instances} only influence the error at the current time step through induction for
Runge--Kutta schemes.
In addition, note that the LSPG ROM error bound is more complex than the Galerkin
bound; the final line in bound \eqref{eq:RKG_est_P} is a consequence of the
test-basis coupling \eqref{eq:discreteOptRKTest} for
general implicit Runge--Kutta schemes. Finally, we point out that both bounds
grow exponentially in time due to the amplification factor. We now present a simpler version of
this error bound for explicit Runge--Kutta and DIRK schemes.

\reviewerA{
\begin{corollary}[\reviewerCNewer{Global} \textit{a posteriori}
\reviewerCNewer{LSPG ROM} error bound: explicit Runge--Kutta and DIRK]\label{cor:DIRK_error}
Under the assumptions of Theorem~\ref{thm:RKerrors} for explicit RK $(\theta =
1)$ and DIRK $(\theta = 0)$ schemes, we have
\begin{multline}
  \| \deltastateRedP^n \|_2
	\le \dt \sum_{\ell=0}^{n-1} \Big(1 + \lipschitzConstant  \dt \sum_{k=1}^s |b_k| \sum_{i=1}^s[
	\D^{-1}]_{ki} \Big)^\ell\cdot\\
	\Bigg( \sum_{k=1}^s |b_k| \sum_{i=1}^s [
	\D^{-1}]_{ki} 
  \Big\| (\matI  - \bbP^{n-\ell}_i) \f ^{n-\ell}_i\Big(\stateInitialNo +
	\podstate\stateRedP^{n-\ell-1} +  \dt\sum_{j=1}^{i-\theta}
	a_{ij}\podstate\unknownRed_{P,j}^{n-\ell}\Big) \Big\|_2  \Bigg). \label{eq:RKG_est_P_DIRK}
\end{multline}	
\end{corollary}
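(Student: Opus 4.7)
The plan is to specialize the LSPG portion of Theorem~\ref{thm:RKerrors} by observing that for explicit RK (where $a_{ij}=0$ for $j\geq i$) and DIRK (where $a_{ij}=0$ for $j>i$) schemes, the stages decouple. The key simplification is that the general LSPG ROM equation \eqref{eq:rom_RK_D}, which features a coupling term summing over $e\neq i$ (arising from the cross-stage structure of the test basis \eqref{eq:discreteOptRKTest} for fully coupled implicit Runge--Kutta schemes), can be replaced by the sequential LSPG formulation \eqref{eq:galOptRKexplicit}--\eqref{eq:LSPGRKexplicit}. This sequential formulation treats each stage as an independent minimum-residual problem, which eliminates the coupling term entirely.

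First, I would rewrite the LSPG stage equation for explicit/DIRK schemes using \eqref{eq:LSPGRKexplicit}. Pre-multiplying by $\podstate\bigl[(\testBasis_i^n)^T\podstate\bigr]^{-1}$ and substituting the definition of $\resRKexplicit_i^n$ from \eqref{eq:resRKexplicitDef} yields
\begin{equation*}
\podstate \unknownRed^{n}_{P,i} = \bbP_i^n\, \f^n_i\Bigl(\stateInitialNo + \podstate\stateRedP^{n-1} + \dt \sum_{j=1}^{i-\theta} a_{ij}\podstate\unknownRed_{P,j}^{n}\Bigr), \quad i\innat{s},
\end{equation*}
where the upper limit $i-\theta$ correctly encodes both the explicit ($\theta=1$) and DIRK ($\theta=0$) cases, since $a_{ij}$ vanishes beyond this limit. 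Crucially, the ``extra'' term on the second line of \eqref{eq:rom_RK_D} (involving $\testBasis_{ie}^n$ for $e\neq i$) is absent here because sequential optimization decouples the stage problems.

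Next, I would follow the proof of Theorem~\ref{thm:RKerrors} step-by-step, using the FOM equation \eqref{eq:fom_RK} (whose sum also effectively terminates at $i-\theta$ by the assumed sparsity of $a_{ij}$). Subtracting the two equations, applying the triangle inequality, and adding/subtracting $\f^n_i\bigl(\stateInitialNo + \podstate\stateRedP^{n-1} + \dt\sum_j a_{ij}\podstate\unknownRed_{P,j}^n\bigr)$, Lipschitz continuity $\del{\bf A_1}$ yields
\begin{equation*}
\|\delta \unknown^n_{P,i}\|_2 - \lipschitzConstant \dt \sum_{j=1}^{i-\theta}|a_{ij}|\|\delta \unknown^n_{P,j}\|_2 \le \Bigl\|(\matI - \bbP_i^n)\f^n_i\bigl(\ldots\bigr)\Bigr\|_2 + \lipschitzConstant\|\deltastateRedP^{n-1}\|_2,
\end{equation*}
with no coupling-term contribution. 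Assumptions $(a)$ and $(b)$ on the matrix $\D$ then give the componentwise bound on $\|\delta\unknown^n_{P,k}\|_2$ analogous to \eqref{eq:zetaUpperBound} but without the extra summand.

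Finally, the explicit state update \eqref{eq:fom_update}--\eqref{eq:romG_RK_update} combined with the previous bound yields the recursion
\begin{equation*}
\|\deltastateRedP^n\|_2 \le \Bigl(1 + \lipschitzConstant\dt\sum_{k=1}^s|b_k|\sum_{i=1}^s[\D^{-1}]_{ki}\Bigr)\|\deltastateRedP^{n-1}\|_2 + \dt\sum_{k=1}^s|b_k|\sum_{i=1}^s[\D^{-1}]_{ki}\bigl\|(\matI-\bbP_i^n)\f^n_i(\ldots)\bigr\|_2,
\end{equation*}
and induction on $n$ starting from $\deltastateRedP^0 = \zero$ produces the desired bound \eqref{eq:RKG_est_P_DIRK}. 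The main step requiring care is verifying that the sequential LSPG formulation is genuinely equivalent to the one considered in Theorem~\ref{thm:RKerrors} for the explicit/DIRK cases; once this is accepted (by invoking \eqref{eq:galOptRKexplicit}--\eqref{eq:LSPGRKexplicit}), the remainder is a direct specialization of the earlier proof, and no new analytical obstacle arises.
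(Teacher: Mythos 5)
Your proposal is correct and rests on the same key observation as the paper: for explicit/DIRK schemes the cross-stage test bases $\testBasis_{ie}^n$ ($e\neq i$) vanish, so the coupling term in the general bound \eqref{eq:RKG_est_P} drops out. The paper states this in one line and reads the result off \eqref{eq:RKG_est_P} directly, whereas you re-derive the recursion from the sequential formulation \eqref{eq:LSPGRKexplicit}; the extra care you take in checking that the sum truncates at $i-\theta$ and that the sequential LSPG problem is the relevant one is sound but does not change the substance of the argument.
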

\begin{proof}
For explicit and diagonally implicit Runge--Kutta schemes
\eqref{eq:LSPGRKexplicit}, we have $\testBasis_{ij}^n = 0$ when $i \neq j$.
The proof is then an immediate consequence of \eqref{eq:RKG_est_P}. 
\end{proof}
}

\reviewerA{
Note that the LSPG error bound \eqref{eq:RKG_est_P_DIRK} resembles the
Galerkin error bound \eqref{eq:RKG_est} much more closely than the previous
LSPG bound \eqref{eq:RKG_est_P}, as explicit Runge--Kutta and DIRK schemes
remove the coupling of the test basis across stages.
}

\subsubsection{\reviewerCNewer{\textit{A priori} error bounds}}
We now state the \textit{a priori} versions of the Galerkin 
Runge--Kutta schemes \eqref{eq:RKG_est} and the LSPG Runge--Kutta schemes \eqref{eq:RKG_est_P}.
\begin{theorem}[\reviewerCNewer{Global} \textit{a priori} error bounds: Runge--Kutta schemes]\label{thm:RKerrors_apriori}
If $\del{\bf A_1}$ holds and $\dt$ is such that
\begin{enumerate}[$(a)$]
\item the matrices $\D \in\RR{s\times s}$ (defined in
Theorem~\ref{thm:RKerrors}) and $\bar{\D }^n\in\RR{s\times
s}$ with entries $[\bar{\D }^n]_{ij}\defeq
\delta_{ij} - \lipschitzConstant \dt |a_{ij}| \| \bbP^n_i \|_2$ are invertible, and
\item for every $\vec x, \vec y \ge 0$, if $\D  \vec x \le \vec y$ then
$\vec x \le \D ^{-1} \vec y$ and 
if $\bar{\D }^m \vec x \le \vec y$ then
$\vec x \le [\bar{\D }^m]^{-1} \vec y$, $m\innat{n}$
,
\end{enumerate}
then
\begin{align}
\begin{split}
  \| \deltastateRedG^n \|_2
	\le \dt \sum_{\ell=0}^{n-1} &\Big(1 + \lipschitzConstant  \dt \sum_{k=1}^s |b_k| \sum_{i=1}^s[
	\D^{-1}]_{ki} \Big)^\ell\cdot\\
	&\Big( \sum_{k=1}^s |b_k| \sum_{i=1}^s [
	\D^{-1}]_{ki} 
  \Big\| (\matI  - \mathbb{V}) \f ^{n-\ell}_i\Big(\stateInitialNo +
	\state_\star^{n-\ell-1} + \dt\sum_{j=1}^s
	a_{ij}\unknown_{\star,j}^{n-\ell}\Big) \Big\|_2\Big)  \label{eq:RKG_est_apriori}  .
\end{split}
\end{align}
and 
\begin{align}
\begin{split}
  \| \deltastateRedP^n \|_2
	\le &\dt \sum_{\ell=0}^{n-1} \prod_{m = 0}^{\ell-1}\Big(1 +
	\lipschitzConstant  \dt \sum_{k=1}^s |b_k| \sum_{i=1}^s[[\bar{
	\D}^{n-m}]^{-1}]_{ki} \Big)\cdot\\
	&\Bigg( \sum_{k=1}^s |b_k| \sum_{i=1}^s [[\bar{
	\D}^{n-\ell}]^{-1}]_{ki} 
  \Big\| (\matI  - \bbP^{n-\ell}_i) \f ^{n-\ell}_i\Big(\stateInitialNo +
	\state_\star^{n-\ell-1} + \dt\sum_{j=1}^s
	a_{ij}\unknown_{\star,j}^{n-\ell}\Big) \Big\|_2 \\
   &+ \sum_{k=1}^s \abs{b_k} \sum_{i=1}^s [[\bar{
	 \D}^{n-\ell}]^{-1}]_{ki}\cdot\\
	 &\bigg\| \podstate
	 \del{(\testBasis_{ii}^{n-\ell})^T \podstate}^{-1} \sum_{e=1, e\neq i}^s
	 (\testBasis_{ie}^{n-\ell})^T \bigg(\podstate \unknownRed^{n-\ell}_{P,e} -
	 \f ^{n-\ell}_e\Big(\stateInitialNo + \podstate\stateRedP^{n-\ell-1}+\dt\sum_{j=1}^s a_{ej}\podstate\unknownRed_{P,j}^{n-\ell} \Big) \bigg) \bigg\|_2
  \Bigg)  \label{eq:RKG_apriori},
\end{split}	
\end{align}	
where we have used the convention that the empty product is equal to one.
\end{theorem}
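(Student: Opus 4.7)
The plan is to adapt the proof of Theorem~\ref{thm:RKerrors} by modifying the add--subtract step so that projection errors are evaluated at the FOM trajectory rather than at the ROM trajectory; this is analogous to the modification used to pass from Theorem~\ref{eq:error_multi_step} to Corollary~\ref{thm:error_multi_step_apriori} (see also the remark in the proof of Corollary~\ref{theorem:localAPrioriLMM}). The only new wrinkle relative to the \textit{a posteriori} case is the time dependence of the matrix governing the stage-coupling inequality, which produces a product rather than a power in the final bound.

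First I would establish the Galerkin bound. Subtract Eq.~\eqref{eq:rom_RK} from Eq.~\eqref{eq:fom_RK} to obtain an expression for $\delta\unknown_{G,i}^n$; then, instead of adding and subtracting $\f_i^n$ evaluated at the \textit{ROM} state as in Theorem~\ref{thm:RKerrors}, add and subtract $\mathbb{V}\f_i^n\bigl(\stateInitialNo+\state_\star^{n-1}+\dt\sum_j a_{ij}\unknown_{\star,j}^n\bigr)$. Using $\|\mathbb{V}\|_2=1$ and assumption $(\bf A_1)$, this yields
\begin{equation*}
\|\delta\unknown_{G,i}^n\|_2 - \lipschitzConstant\dt\sum_{j=1}^s|a_{ij}|\,\|\delta\unknown_{G,j}^n\|_2
\le \Bigl\|(\matI-\mathbb{V})\f_i^n\bigl(\stateInitialNo+\state_\star^{n-1}+\dt\sum_j a_{ij}\unknown_{\star,j}^n\bigr)\Bigr\|_2
+ \lipschitzConstant\|\deltastateRedG^{n-1}\|_2,
\end{equation*}
whose right-hand side now depends only on FOM quantities. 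Invoking invertibility and monotonicity of $\D$ (conditions $(a)$ and $(b)$) as in the proof of Theorem~\ref{thm:RKerrors} gives an entrywise bound for $\|\delta\unknown_{G,k}^n\|_2$, after which applying the state update \eqref{eq:fom_update}--\eqref{eq:romG_RK_update} and induction (with the time-independent amplification factor $1+\lipschitzConstant\dt\sum_k|b_k|\sum_i[\D^{-1}]_{ki}$) yields \eqref{eq:RKG_est_apriori}.

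Next I would establish the LSPG bound by the analogous procedure applied to Eqs.~\eqref{eq:fom_RK} and \eqref{eq:rom_RK_D}. Here I would add and subtract $\bbP_i^n \f_i^n\bigl(\stateInitialNo+\state_\star^{n-1}+\dt\sum_j a_{ij}\unknown_{\star,j}^n\bigr)$. Because $\bbP_i^n$ is an oblique projector one cannot discard its norm; applying $(\bf A_1)$ produces the factor $\|\bbP_i^n\|_2$ in front of the Lipschitz term:
\begin{equation*}
\|\delta\unknown_{P,i}^n\|_2 - \lipschitzConstant\dt\sum_{j=1}^s|a_{ij}|\|\bbP_i^n\|_2\,\|\delta\unknown_{P,j}^n\|_2
\le \Bigl\|(\matI-\bbP_i^n)\f_i^n\bigl(\stateInitialNo+\state_\star^{n-1}+\dt\sum_j a_{ij}\unknown_{\star,j}^n\bigr)\Bigr\|_2 + \lipschitzConstant\|\deltastateRedP^{n-1}\|_2 + \mathcal{R}_i^n,
\end{equation*}
where $\mathcal{R}_i^n$ denotes the off-diagonal test-basis coupling term (already a ROM quantity, so unchanged from the \textit{a posteriori} argument). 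This is precisely the system whose coefficient matrix is $\bar{\D}^n$, so invertibility and monotonicity of $\bar{\D}^n$ (conditions $(a)$ and $(b)$) give an entrywise bound for $\|\delta\unknown_{P,k}^n\|_2$ in terms of $[[\bar{\D}^n]^{-1}]_{ki}$.

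Finally, combining this stage bound with the state update as in Theorem~\ref{thm:RKerrors} yields a recursion of the form
\begin{equation*}
\|\deltastateRedP^n\|_2 \le \Bigl(1+\lipschitzConstant\dt\sum_{k=1}^s|b_k|\sum_{i=1}^s[[\bar{\D}^n]^{-1}]_{ki}\Bigr)\|\deltastateRedP^{n-1}\|_2 + \mathcal{E}^n,
\end{equation*}
where $\mathcal{E}^n$ collects the two FOM-evaluated terms on the right-hand side of \eqref{eq:RKG_apriori}. The main obstacle, and the only substantive deviation from the \textit{a posteriori} proof, is that the amplification factor now depends on $n$ through $\bar{\D}^n$; unrolling the recursion therefore yields $\prod_{m=0}^{\ell-1}\bigl(1+\lipschitzConstant\dt\sum_k|b_k|\sum_i[[\bar{\D}^{n-m}]^{-1}]_{ki}\bigr)$ in place of the $\ell$-th power that appeared in \eqref{eq:RKG_est_P}. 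Using the empty-product convention for the $\ell=0$ term and re-indexing the sum produces bound \eqref{eq:RKG_apriori}.
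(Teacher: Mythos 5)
Your proposal is correct and follows essentially the same route as the paper: the paper's proof likewise consists of rerunning the argument of Theorem~\ref{thm:RKerrors} with $\bbV\f^n_i$ (resp.\ $\bbP^n_i\f^n_i$) evaluated on the FOM trajectory added and subtracted in place of the ROM-evaluated velocity, using $\|\bbV\|_2=1$ for Galerkin and retaining $\|\bbP^n_i\|_2$ inside $\bar{\D}^n$ for LSPG. Your additional observation that the $n$-dependence of $\bar{\D}^n$ turns the $\ell$-th power of the amplification factor into the product $\prod_{m=0}^{\ell-1}(\cdot)$ is exactly the one substantive change, which the paper leaves implicit.
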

\begin{proof}
The proof of \eqref{eq:RKG_est_apriori} follows the Galerkin-ROM derivation in Theorem~\ref{thm:RKerrors}, wherein the quantity $\bbV\f ^n_i\Big(
\stateInitialNo +
	\state_\star^{n-1} + 
	\dt\sum_{j=1}^{s}
	a_{ij}\unknown_{\star,j}^{n}\Big)$ is added and
subtracted rather than $\f ^n_i\Big(\stateInitialNo + 
\podstate\stateRedG^{n-1}+\dt\sum_{j=1}^s
a_{ij}\podstate\unknownRed_{G,j}^n\Big)$ and $\|\bbV\|_2 = 1$ is used. On the
other hand, \eqref{eq:RKG_apriori} follows the LSPG-ROM derivation by
adding and subtracting $\bbP^n_i\f ^n_i\Big(
\stateInitialNo +
	\state_\star^{n-1} + 
	\dt\sum_{j=1}^{s}
	a_{ij}\unknown_{\star,j}^{n}\Big)$
instead of 
$\f ^n_i\Big(\stateInitialNo + 
\podstate\stateRedP^{n-1}+\dt\sum_{j=1}^s a_{ij}\podstate\unknownRed_{P,j}^n\Big)$.
\end{proof}

\reviewerANewer{
Similarly to Corollary~\ref{cor:backward_euler_exp}, we now derive a
time-step-independent variant of the error bound \eqref{eq:RKG_est_apriori}. A
similar result can be shown for the LSPG bound \eqref{eq:RKG_apriori};
however, we omit this result for simplicity and instead will provide (more
readily interpretable) \textit{a priori} LSPG ROM error bounds for explicit Runge--Kutta
and DIRK schemes in Corollary \ref{thm:LSPGDirkRK_apriori}.%
\begin{corollary}[Time-step-independent global \textit{a priori} Galerkin ROM error bound:
RK schemes]\label{thm:GalRK_apriori}
Under the assumptions of Theorem~\ref{thm:RKerrors_apriori}, assume
additionally $\dt
\leq(1-\thresholdRK)/(\lipschitzConstant\astar)$  with
$\astar\defeq\|[|a_{ij}|]_{ij}\|_2$ and $0 < \thresholdRK < 1$.
Then,
\begin{equation}\label{eq:RKG_est_apriori_exp}
\begin{aligned}
   \| \deltastateRedG^n \|_2 \le 
    \bigg(\frac{\exp( t^n\lipschitzConstant \thresholdRK^{-1}\bmax s^{3/2}) -
		1}{\lipschitzConstant} 
    \bigg)
    \Big(\max_{\substack{\ell\in\natZero{n-1}\\ i\in\nat{s}}} 
    \Big\| (\matI  - \mathbb{V}) \f ^{n-\ell}_i\Big(\stateInitialNo +
	\state_\star^{n-\ell-1} + \dt\sum_{j=1}^s
	a_{ij}\unknown_{\star,j}^{n-\ell}\Big) \Big\|_2 \Big),
\end{aligned}	
\end{equation}
where 
$\bmax \defeq \max_{i\in\nat{s}}|b_i|$.
\end{corollary}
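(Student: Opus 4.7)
The plan is to start from the global \textit{a priori} bound \eqref{eq:RKG_est_apriori} and collapse the sum over $\ell$ into a closed form, then bound the stability constant by a uniform quantity independent of $\dt$. Specifically, define $S \defeq \sum_{k=1}^s |b_k| \sum_{i=1}^s [\D^{-1}]_{ki}$ and
$M^n \defeq \max_{\substack{\ell\in\natZero{n-1}\\ i\innat{s}}} \| (\matI  - \mathbb{V}) \f ^{n-\ell}_i(\stateInitialNo + \state_\star^{n-\ell-1} + \dt\sum_{j=1}^s a_{ij}\unknown_{\star,j}^{n-\ell}) \|_2$. Pulling $M^n$ out of the outer summation in \eqref{eq:RKG_est_apriori} and factoring out $S$ reduces the right-hand side to the geometric sum $\dt \, M^n S \sum_{\ell=0}^{n-1} (1 + \lipschitzConstant \dt S)^\ell$.

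Next I would establish a $\dt$-independent upper bound on $S$. The key observation is that $\D = \matI - \dt\lipschitzConstant [|a_{ij}|]$, so by the time-step assumption $\dt \leq (1-\thresholdRK)/(\lipschitzConstant \astar)$ we have $\dt\lipschitzConstant \astar \leq 1-\thresholdRK < 1$. A Neumann-series expansion then gives $\|\D^{-1}\|_2 \leq 1/(1 - \dt\lipschitzConstant \astar) \leq 1/\thresholdRK$, and also ensures the entries of $\D^{-1}$ are nonnegative (so the absolute values can be dropped). Applying Cauchy--Schwarz to each row, $\sum_{i=1}^s [\D^{-1}]_{ki} \leq \sqrt{s}\,\|\D^{-1}\|_2 \leq \sqrt{s}/\thresholdRK$; summing against $|b_k|$ and using $\sum_k |b_k| \leq s\,\bmax$ yields $S \leq s^{3/2}\bmax/\thresholdRK$.

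I would then evaluate the geometric sum explicitly:
\begin{align*}
\dt M^n S \sum_{\ell=0}^{n-1} (1 + \lipschitzConstant \dt S)^\ell
= \frac{M^n}{\lipschitzConstant}\bigl[(1 + \lipschitzConstant \dt S)^n - 1\bigr].
\end{align*}
The standard inequality $(1+x)^n \leq \exp(nx)$ applied with $x = \lipschitzConstant \dt S$ gives $(1+\lipschitzConstant \dt S)^n \leq \exp(n\lipschitzConstant \dt S) = \exp(t^n \lipschitzConstant S)$. Monotonicity of $\exp(\cdot)-1$ together with the bound $S \leq s^{3/2}\bmax/\thresholdRK$ then produces the claimed inequality \eqref{eq:RKG_est_apriori_exp}.

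The main obstacle is the second step: extracting a $\dt$-independent bound on $S$ while preserving the structure that allows the geometric sum to telescope cleanly. The subtlety is that $S$ itself multiplies $\dt$ inside the exponential factor but also appears as a prefactor, and it is the cancellation between these two occurrences that removes $\dt$ from outside the exponential. One must therefore upper-bound $S$ by a constant before invoking $(1+x)^n \leq \exp(nx)$, rather than after, otherwise monotonicity of the right-hand side in $S$ is not immediate. The $s^{3/2}$ factor, while looser than a direct Cauchy--Schwarz estimate would produce, is convenient because it decouples the bound on the row sums of $\D^{-1}$ from the bound on $\|\vec b\|_1$.
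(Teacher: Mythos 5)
Your proposal is correct and follows essentially the same route as the paper: pull the maximum projection-error term out of the global \textit{a priori} bound \eqref{eq:RKG_est_apriori}, bound $\sum_k|b_k|\sum_i[\D^{-1}]_{ki}$ by $\bmax s^{3/2}\|\D^{-1}\|_2$ with $\|\D^{-1}\|_2\le 1/(1-\lipschitzConstant\dt\astar)\le\thresholdRK^{-1}$, sum the geometric series, and finish with $(1+x)^n\le\exp(nx)$. Your ordering---evaluating the geometric sum exactly in terms of $S$ and only then invoking $S\le\bmax s^{3/2}\thresholdRK^{-1}$ via monotonicity of $\exp(t^n\lipschitzConstant S)-1$---is in fact a slight simplification of the paper's argument, which carries the $\dt$-dependent denominator $1-\lipschitzConstant\dt\astar$ through the geometric sum and then needs the auxiliary inequality $(x-y)/y\le\thresholdRK^{-1}(x-y)$ to reach the same exponential.
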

\begin{proof}
As in Corollary~\ref{cor:backward_euler_exp}, by using an upper bound for the right hand side in \eqref{eq:RKG_est_apriori} we obtain
\begin{equation}\label{eq:RKG_est_apriori_aux1Kev}
\begin{aligned}
    \| \deltastateRedG^n \|_2 \le \dt \sum_{\ell=0}^{n-1} &\Big(1 + \lipschitzConstant  \dt \sum_{k=1}^s |b_k| 
    \sum_{i=1}^s[\D ^{-1}]_{ki} \Big)^\ell\cdot 
	\Big(  \sum_{k=1}^s |b_k| \sum_{i=1}^s [\D^{-1}]_{ki} \Big) \cdot \\
    &\Big(\max_{\substack{\ell\in\natZero{n-1}\\i\in\nat{s}}} 
    \Big\| (\matI  - \mathbb{V}) \f ^{n-\ell}_i\Big(\stateInitialNo +
	\state_\star^{n-\ell-1} + \dt\sum_{j=1}^s
	a_{ij}\unknown_{\star,j}^{n-\ell}\Big) \Big\|_2 \Big)
\end{aligned}	
\end{equation}
We now derive a bound for the term $\sum\limits_{k=1}^s |b_k| \sum\limits_{i=1}^s
[\D^{-1}]_{ki}$ as
\begin{align}\label{eq:boundDterm}
\begin{split}
\sum_{k=1}^s |b_k| \sum_{i=1}^s [\D^{-1}]_{ki} 
&\leq \bmax \sum_{k=1}^s\sum_{i=1}^s [\D^{-1}]_{ki}
\leq \bmax \sum_{k=1}^s\sum_{i=1}^s |[\D^{-1}]_{ki}|
\leq\bmax s\|\D^{-1}\|_F\leq\bmax s^{3/2}\|\D^{-1}\|_2\\
&\leq\frac{\bmax s^{3/2}}{\sigma_\text{min}(\matI) -
\sigma_\text{max}(\lipschitzConstant\dt[|a_{ij}|]_{ij})}
=\frac{\bmax s^{3/2}}{1 - 
\lipschitzConstant\dt\astar},
\end{split}
\end{align}
where we have used the vector-norm equivalence relation 
 $\|\state\|_1\leq \sqrt{\ndof}\|\state\|_2$ with $\state\in\RR{\ndof}$, the
matrix-norm equivalence relation 
$\|\weightingMatrix\|_F\leq\sqrt{\ndof}\|\weightingMatrix\|_2$  with
$\weightingMatrix\in\RR{\ndof\times\ndof}$, 
the relation 
$\|\weightingMatrix^{-1}\|_2 = 1/\sigma_\text{min}(\weightingMatrix)$,
$\sigma_\text{min}(\A + \B) \geq \sigma_\text{min}(\A) -
\sigma_\text{max}(\B) = \sigma_\text{min}(\A) -
\|\B\|_2$ (with $\A = \matI $ and $\B =
-\lipschitzConstant\dt[|a_{ij}|]_{ij})$), and the
assumption $\dt \leq(1-\thresholdRK)/(\lipschitzConstant\astar)$.

We can now compute an upper bound on the constant in inequality
\eqref{eq:RKG_est_apriori_aux1Kev}
as 
\begin{align}
    \dt \sum_{\ell=0}^{n-1} \Big(1 + \lipschitzConstant  & \dt \sum_{k=1}^s |b_k| 
    \sum_{i=1}^s[\D ^{-1}]_{ki} \Big)^\ell\cdot 
	\Big( \sum_{k=1}^s |b_k| \sum_{i=1}^s [\D^{-1}]_{ki} \Big) \nonumber \\
	&\le \dt \sum_{\ell=0}^{n-1} \Bigg(1 +  \frac{\lipschitzConstant\dt \bmax
	s^{3/2}}{1 - 
\lipschitzConstant\dt\astar} 
	  \Bigg)^\ell \Big(\frac{\bmax
	s^{3/2}}{1 - 
\lipschitzConstant\dt\astar} \Big) \label{eq:RKG_est_apriori_aux2KevAlmost}\\
	 &
	 = \frac{\bigg(1+\lipschitzConstant\dt(\bmax s^{3/2}-\astar)
	 \bigg)^n (1 - \lipschitzConstant \dt \astar)^{-n} -
	 1}{\lipschitzConstant}
	 \label{eq:RKG_est_apriori_aux2Kev}
\end{align}
Then, setting $t^n = n \dt$ we obtain
\begin{equation}\label{eq:RKG_est_apriori_aux3Kev}
    \bigg( 1 +  \lipschitzConstant \dt \Big(\bmax s^{3/2}-\astar\Big) \bigg)^n
		(1 - \lipschitzConstant \dt \astar)^{-n} 
    \le \exp(t^n\lipschitzConstant \thresholdRK^{-1} \bmax s^{3/2})
\end{equation}
As in Theorem \ref{theorem:localAPosterioriLMM}, we have used
the relation $(1 + x)^n\leq \exp(nx)$, and the result (with $x =  1 +  \lipschitzConstant \dt \Big(\bmax s^{3/2}-\astar\Big) $ and $y=1 - 
\lipschitzConstant\dt\astar$) that states if $x\geq y$, then $(x-y)/y\leq
\thresholdRK^{-1} (x-y)$
if and only if
$y \geq \thresholdRK > 0$.
Finally, substituting in \eqref{eq:RKG_est_apriori_aux3Kev} in
\eqref{eq:RKG_est_apriori_aux2Kev} and combining the resulting expression with
\eqref{eq:RKG_est_apriori_aux1Kev} yields the desired result. 
\end{proof}  
}

As with linear multistep schemes, the rightmost term in the
Galerkin \textit{a priori} bound will always be smaller than that for the LSPG
bound, as the former associates with an orthogonal projection error of a
fixed vector. In addition, the LSPG bound depends on the LSPG ROM solution;
while this dependence could be removed, the bound in its current form
	facilitates comparison with the Galerkin bound.
We again notice the complex structure of the estimator in \eqref{eq:RKG_apriori} compared to \eqref{eq:RKG_est_apriori}. To better understand the behavior the LSPG estimator in \eqref{eq:RKG_apriori} we consider two subcases: explicit Runge--Kutta and DIRK schemes. 

%
\begin{corollary}[\reviewerCNewer{Global} \textit{a priori}
\reviewerCNewer{LSPG ROM} error bounds: explicit Runge--Kutta and DIRK]\label{cor:RKerrors_apriori_explicit}
Under the assumptions of Theorem~\ref{thm:RKerrors_apriori} for explicit RK $(\theta =
1)$ and DIRK $(\theta = 0)$ schemes, we have 
\begin{align}
\begin{split}
	  \| \deltastateRedP^n \|_2
	\le \dt \sum_{\ell=0}^{n-1} &
	\prod_{m = 0}^{\ell-1}\Big(1 +
	\lipschitzConstant  \dt \sum_{k=1}^s |b_k| \sum_{i=1}^s[[\bar{
	\D}^{n-m}]^{-1}]_{ki} \Big)
	\cdot\\
	&\Bigg( \sum_{k=1}^s |b_k| \sum_{i=1}^s [[\bar{
	\D}^{n-\ell}]^{-1}]_{ki} 
  \Big\| (\matI  - \bbP^{n-\ell}_i) \f ^{n-\ell}_i\Big(\stateInitialNo +
	\state_\star^{n-\ell-1} + 
	\dt\sum_{j=1}^{i-\theta}
	a_{ij}\unknown_{\star,j}^{n-\ell}\Big) \Big\|_2  \Bigg)  \label{eq:RKG_est_P_apriori} . 
\end{split}
\end{align}	
\end{corollary}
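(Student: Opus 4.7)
The plan is to derive the bound \eqref{eq:RKG_est_P_apriori} directly from the general \textit{a priori} bound \eqref{eq:RKG_apriori} by specializing to explicit Runge--Kutta and DIRK schemes, in complete analogy with how Corollary \ref{cor:DIRK_error} was derived from Theorem \ref{thm:RKerrors} in the \textit{a posteriori} setting. The two structural simplifications that occur for explicit/DIRK schemes will suffice to collapse \eqref{eq:RKG_apriori} into \eqref{eq:RKG_est_P_apriori} without any further analytical work.

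First, I would invoke the structural observation already used in the proof of Corollary \ref{cor:DIRK_error}: for explicit RK schemes ($a_{ij}=0$ for $j \geq i$) and DIRK schemes ($a_{ij}=0$ for $j > i$), the LSPG test bases defined via \eqref{eq:discreteOptRKTest} satisfy $\testBasis_{ij}^n = 0$ for $i \neq j$. Consequently the second summand inside the parentheses in \eqref{eq:RKG_apriori}---the term involving $\sum_{e=1,e\neq i}^s (\testBasis_{ie}^{n-\ell})^T(\cdots)$---vanishes identically. This is precisely the simplification already exploited in Corollary \ref{cor:DIRK_error} and requires no new argument.

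Second, I would note that the argument appearing inside $\f^{n-\ell}_i$ in the surviving term of \eqref{eq:RKG_apriori}, namely $\stateInitialNo + \state_\star^{n-\ell-1} + \dt \sum_{j=1}^s a_{ij}\unknown_{\star,j}^{n-\ell}$, can be rewritten using the sparsity of the Butcher tableau. For explicit schemes the sum truncates at $j = i-1$, and for DIRK schemes it truncates at $j = i$; using the parameter $\theta \in \{0,1\}$ introduced in the statement, both cases can be written uniformly as $\sum_{j=1}^{i-\theta} a_{ij}\unknown_{\star,j}^{n-\ell}$. Substituting these two simplifications into \eqref{eq:RKG_apriori} yields \eqref{eq:RKG_est_P_apriori} directly.

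Because both simplifications follow purely from the sparsity patterns of $a_{ij}$ and of $\testBasis_{ij}^n$, the proof is essentially a one-line specialization of Theorem \ref{thm:RKerrors_apriori}. I do not anticipate a genuine obstacle here; the only point requiring minor care is verifying that the $\D$-matrix invertibility and monotonicity conditions $(a)$--$(b)$ of Theorem \ref{thm:RKerrors_apriori} carry over unchanged, which they do since those conditions only involve $|a_{ij}|$ and $\|\bbP_i^n\|_2$ and do not depend on whether the scheme is explicit, diagonally implicit, or fully implicit.
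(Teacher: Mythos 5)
Your proposal matches the paper's proof: the paper likewise observes that $\testBasis_{ij}^n = 0$ for $i \neq j$ in the explicit and DIRK cases, so the coupling term in \eqref{eq:RKG_apriori} vanishes and the bound follows immediately, with the truncation of the stage sum to $j \le i-\theta$ being an automatic consequence of the Butcher-tableau sparsity. Your additional check that conditions $(a)$--$(b)$ of Theorem~\ref{thm:RKerrors_apriori} are unaffected is correct but is left implicit in the paper.
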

\begin{proof}
For explicit and diagonally implicit Runge--Kutta schemes
\eqref{eq:LSPGRKexplicit}, we have $\testBasis_{ij}^n = 0$ when $i \neq j$.
The proof is then an immediate consequence of \eqref{eq:RKG_apriori}.  
\end{proof}

Owing to the fact that the explicit Runge--Kutta and DIRK schemes removes the
coupling of the basis, we again notice that the LSPG error bound
\eqref{eq:RKG_est_P_apriori} resembles the Galerkin ROM error bound
\eqref{eq:RKG_est_apriori}. 

\reviewerANewer{
\begin{corollary}[Time-step-independent global \textit{a priori} LSPG ROM error bound:
explicit RK and DIRK]\label{thm:LSPGDirkRK_apriori}
Under the assumptions of Theorem~\ref{thm:RKerrors_apriori} for explicit RK $(\theta =
1)$ and DIRK $(\theta = 0)$ schemes, assume
additionally $\dt
\leq(1-\thresholdRKLSPG)/(\lipschitzConstant\barastar)$  with
$\barastar\defeq\min_{\ell\in\natZero{n-1}}\|[|a_{ij}|\|\bbP_i^\ell\|_2]_{ij}\|_2$
and $0 < \thresholdRKLSPG < 1$.
Then,
\begin{equation}\label{eq:RKG_est_apriori_exp}
\begin{aligned}
   \| \deltastateRedP^n \|_2 \le 
    \bigg(\frac{\exp(t^n\lipschitzConstant \thresholdRKLSPG^{-1}\bmax s^{3/2})
		- 1}{\lipschitzConstant} 
    \bigg)
    \Big(\max_{\substack{\ell\in\natZero{n-1}\\i\in\nat{s}}}
  \Big\| (\matI  - \bbP^{n-\ell}_i) \f ^{n-\ell}_i\Big(\stateInitialNo +
	\state_\star^{n-\ell-1} + 
	\dt\sum_{j=1}^{i-\theta}
	a_{ij}\unknown_{\star,j}^{n-\ell}\Big) \Big\|_2\Big).
\end{aligned}	
\end{equation}
\end{corollary}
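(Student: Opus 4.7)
The plan is to mirror the proof of Corollary~\ref{thm:GalRK_apriori} while starting from the explicit-RK/DIRK LSPG bound \eqref{eq:RKG_est_P_apriori} rather than the Galerkin bound \eqref{eq:RKG_est_apriori}. First, I would pull the maximum over $\ell\in\natZero{n-1}$ and $i\in\nat{s}$ of the oblique-projection error $\|(\matI-\bbP^{n-\ell}_i)\f^{n-\ell}_i(\cdot)\|_2$ outside of the outer $\ell$-sum, leaving the constant prefactor
\[
\dt\sum_{\ell=0}^{n-1}\prod_{m=0}^{\ell-1}\Big(1+\lipschitzConstant\dt\sum_{k=1}^s|b_k|\sum_{i=1}^s[[\bar\D^{n-m}]^{-1}]_{ki}\Big)\cdot\sum_{k=1}^s|b_k|\sum_{i=1}^s[[\bar\D^{n-\ell}]^{-1}]_{ki}.
\]

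The key step is then to derive a bound on $\sum_{k,i}|b_k|\,[[\bar\D^m]^{-1}]_{ki}$ that is \emph{uniform} in $m$, so that the prefactor collapses to a geometric series. I would follow the chain of inequalities \eqref{eq:boundDterm} verbatim: namely, $\sum_{k,i}|b_k|[[\bar\D^m]^{-1}]_{ki}\le \bmax s^{3/2}\|[\bar\D^m]^{-1}\|_2$, then use $\|[\bar\D^m]^{-1}\|_2=1/\sigma_{\min}(\bar\D^m)$ together with the Weyl inequality $\sigma_{\min}(\bar\D^m)\ge 1-\lipschitzConstant\dt\,\|[|a_{ij}|\,\|\bbP_i^m\|_2]_{ij}\|_2$. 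Uniformity in $m$ follows from the role of $\barastar$ as a worst-case envelope of $\|[|a_{ij}|\,\|\bbP_i^m\|_2]_{ij}\|_2$ across the time instances appearing in the product, which is exactly what the hypothesis $\dt\le(1-\thresholdRKLSPG)/(\lipschitzConstant\barastar)$ uses to guarantee that $1-\lipschitzConstant\dt\barastar\ge\thresholdRKLSPG>0$.

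With the uniform bound $\sum_{k,i}|b_k|[[\bar\D^m]^{-1}]_{ki}\le \bmax s^{3/2}/(1-\lipschitzConstant\dt\barastar)$ in hand, the prefactor reduces to the geometric sum
\[
\dt\,\frac{\bmax s^{3/2}}{1-\lipschitzConstant\dt\barastar}\sum_{\ell=0}^{n-1}\Big(1+\lipschitzConstant\dt\,\frac{\bmax s^{3/2}}{1-\lipschitzConstant\dt\barastar}\Big)^{\ell},
\]
which closes in the same way as \eqref{eq:RKG_est_apriori_aux2KevAlmost}--\eqref{eq:RKG_est_apriori_aux2Kev}. Summing the series, applying $(1+x)^n\le\exp(nx)$ with $n\dt=t^n$, and invoking the auxiliary relation $(x-y)/y\le\thresholdRKLSPG^{-1}(x-y)$ whenever $y\ge\thresholdRKLSPG>0$—exactly as in \eqref{eq:RKG_est_apriori_aux3Kev}—replaces the denominator $1-\lipschitzConstant\dt\barastar$ with $\thresholdRKLSPG$ in the exponent and recovers the stated constant $(\exp(t^n\lipschitzConstant\thresholdRKLSPG^{-1}\bmax s^{3/2})-1)/\lipschitzConstant$.

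The main obstacle is the uniform-in-$m$ control of $\|[\bar\D^m]^{-1}\|_2$: since each $\bbP_i^m$ depends on the LSPG ROM state at time instance $m$, one must justify that a single solution-dependent constant $\barastar$ dominates $\|[|a_{ij}|\,\|\bbP_i^m\|_2]_{ij}\|_2$ uniformly for every $m$ appearing in the product, which is what pulls the product out of the $\prod_{m=0}^{\ell-1}$ and enables the geometric collapse. Once this is granted, the remaining algebra is a direct reuse of the manipulations in Corollary~\ref{thm:GalRK_apriori}, and the bound for explicit RK ($\theta=1$) and DIRK ($\theta=0$) follows identically because these schemes eliminate the off-diagonal test-basis coupling in \eqref{eq:RKG_apriori}, reducing it to \eqref{eq:RKG_est_P_apriori}.
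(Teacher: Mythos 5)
Your proposal is correct and follows essentially the same route as the paper's own proof: start from the explicit-RK/DIRK LSPG bound \eqref{eq:RKG_est_P_apriori}, pull out the maximum oblique-projection error, bound $\sum_{k=1}^s|b_k|\sum_{i=1}^s[[\bar{\D}^{m}]^{-1}]_{ki}$ uniformly by $\bmax s^{3/2}/(1-\lipschitzConstant\dt\barastar)$ via the same chain of inequalities as \eqref{eq:boundDterm}, and collapse the prefactor to the geometric sum of Corollary~\ref{thm:GalRK_apriori} with $\barastar$ replacing $\astar$ before finishing with $(1+x)^n\le\exp(nx)$ and the $\thresholdRKLSPG$ relation. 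Your reading of $\barastar$ as a worst-case (i.e., maximal) envelope of $\|[|a_{ij}|\,\|\bbP_i^m\|_2]_{ij}\|_2$ is indeed what the uniform bound requires, even though the stated definition uses a $\min$; the paper's proof makes the same implicit identification.
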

\begin{proof}
The proof follows closely that of Corollary \ref{thm:GalRK_apriori}. We begin
by deriving an upper bound for the right hand side in
\eqref{eq:RKG_est_apriori} as
\begin{equation}\label{eq:RKLSPG_est_apriori_aux1Kev}
\begin{aligned}
	  \| \deltastateRedP^n \|_2\leq
\dt \sum_{\ell=0}^{n-1} &
	\prod_{m = 0}^{\ell-1}\Big(1 +
	\lipschitzConstant  \dt \sum_{k=1}^s |b_k| \sum_{i=1}^s[[\bar{
	\D}^{n-m}]^{-1}]_{ki} \Big)
	\Bigg( \sum_{k=1}^s |b_k| \sum_{i=1}^s [[\bar{
	\D}^{n-\ell}]^{-1}]_{ki}   \Bigg)\cdot\\
	&
\Big(\max_{\substack{\ell\in\natZero{n-1}\\i\in\nat{s}}}
  \Big\| (\matI  - \bbP^{n-\ell}_i) \f ^{n-\ell}_i\Big(\stateInitialNo +
	\state_\star^{n-\ell-1} + 
	\dt\sum_{j=1}^{i-\theta}
	a_{ij}\unknown_{\star,j}^{n-\ell}\Big) \Big\|_2\Big). 
\end{aligned}	
\end{equation}
Analogously to the derivation in Eq.~\eqref{eq:boundDterm}, we bound the term 
$\sum_{k=1}^s |b_k| \sum_{i=1}^s [[\bar{
	\D}^{n-\ell}]^{-1}]_{ki} $
as
\begin{align}
\sum_{k=1}^s |b_k| \sum_{i=1}^s [[\bar{
	\D}^{n-\ell}]^{-1}]_{ki} 
\leq\frac{\bmax s^{3/2}}{1 - 
\lipschitzConstant\dt\barastar}.
\end{align}
Finally, we compute an upper bound on the constant in inequality
\eqref{eq:RKLSPG_est_apriori_aux1Kev}
as 
\begin{align}
    \dt \sum_{\ell=0}^{n-1}
	\prod_{m = 0}^{\ell-1}&\Big(1 +
	\lipschitzConstant  \dt \sum_{k=1}^s |b_k| \sum_{i=1}^s[[\bar{
	\D}^{n-m}]^{-1}]_{ki} \Big)
	\cdot
	\Bigg( \sum_{k=1}^s |b_k| \sum_{i=1}^s [[\bar{
	\D}^{n-\ell}]^{-1}]_{ki}   \Bigg)
 \nonumber
	&\le \dt \sum_{\ell=0}^{n-1} \Bigg(1 +  \frac{\lipschitzConstant\dt \bmax
	s^{3/2}}{1 - 
\lipschitzConstant\dt\barastar} 
	  \Bigg)^\ell \Big(\frac{\bmax
	s^{3/2}}{1 - 
\lipschitzConstant\dt\barastar} \Big) \nonumber 
	 \label{eq:RKG_est_apriori_aux2Kev}
\end{align}
As this result is identical to upper bound
\eqref{eq:RKG_est_apriori_aux2KevAlmost} in the proof of Corollary
\ref{thm:GalRK_apriori} with $\barastar$
replacing $\astar$, we obtain the desired result by applying the
remaining steps as Corollary \ref{thm:GalRK_apriori}.
\end{proof}  
	}



\section{Numerical experiments}\label{s:numerics}
This section compares the performance of Galerkin and \reviewerA{LSPG} ROMs
on a \reviewerC{computational-fluid-dynamics (CFD)} application using a basis constructed by proper orthogonal
decomposition. These experiments highlight the importance of the
previous analyses, in particular the limiting equivalence of Galerkin and
\reviewerA{LSPG} ROMs (Theorem~\ref{thm:dtZeroEquiv}), superior accuracy of
the \reviewerA{LSPG} ROM compared with the Galerkin ROM (Corollary~\ref{cor:discreteOptBeatsGal}),
and performance improvement of the \reviewerA{LSPG} ROM when an intermediate time
step is selected (Corollary~\ref{corr:auxiliaryProblem} and
Remark~\ref{rem:modestTimestep}). 

\reviewerB{Note that these experiments could
be carried out on any dynamical system yielding a system of nonlinear ODEs
\eqref{eq:ODE}; we have selected compressible turbulent fluid dynamics due to
both its wide interest and challenging nature: limited progress has been made
to date on developing robust, accurate ROMs for such problems. The numerical
experiments highlight this fact, as standard Galerkin ROMs generate unstable
responses in all cases.}

\subsection{Problem description}\label{sec:probDesc}

The Galerkin and \reviewerA{LSPG} ROMs are implemented in AERO-F
~\citep{geuzaine2003aeroelastic,farhat2003application}, a massively parallel
compressible-flow solver.  AERO-F solves the steady or unsteady compressible
Navier--Stokes equations with various closure models available for turbulent
flow, and employs a second-order node-centered finite-volume scheme. For
model-reduction algorithms, all linear least-squares problems and singular
value decompositions are computed in parallel using the ScaLAPACK library
\citep{scalapack}.  

The full-order model corresponds to an unsteady Navier--Stokes simulation of a
two-dimensional open cavity \reviewerCNew{with a length-to-depth ratio of 4.5} using AERO-F's DES turbulence model (based on the
Spalart--Allmaras one-equation model \reviewerA{\cite{spalartAllmaras}}) and a
wall-function boundary condition applied on solid surface boundaries.  The
fluid domain is discretized by a mesh with 192,816 nodes and 573,840
tetrahedra (Figure~\ref{fig:fluidDomain}).  The two-dimensional geometry is
discretized in three dimensions by considering a slab of thin, but finite
thickness, in the $z$-direction\reviewerA{; the resulting grid is one element
wide and is created by extruding a distance that is 1\% of the cavity length.}
The viscosity is assumed to be constant, and the Reynolds number based on
cavity length is \reviewerCNew{$2.97\times 10^6$}, while the free-stream Mach number is 0.6.
Due to the turbulence model and three-dimensional domain, the number of
conservation equations per node is $6$, and therefore the dimension of the CFD
model is $\ndof=1,156,896$. Roe's scheme is employed to discretize the
convective fluxes, and a linear variation of the solution is assumed within
each control volume, which leads to a second-order space-accurate scheme
\reviewerANew{on general unstructured, multi-dimensional meshes; however, we
employ an extended stencil that gives fifth-order formal order of accuracy
\reviewerA{(with uniform mesh spacing)} on inviscid, one-dimensional
problems.} \reviewerA{The viscous flux is discretized using a centered
Galerkin scheme.}
\begin{figure}[htbp] 
\centering 
\subfigure[Full domain]{
\includegraphics[width=\textwidth]{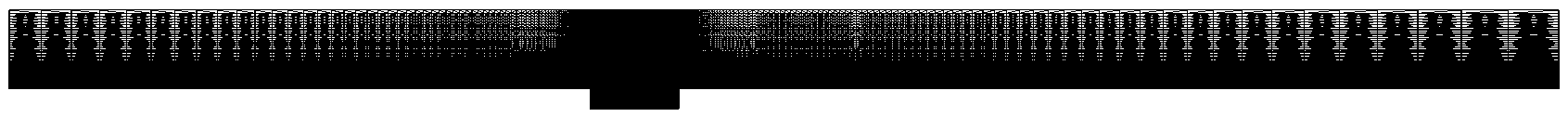} }
\subfigure[Detail around cavity]{
\includegraphics[width=0.6\textwidth]{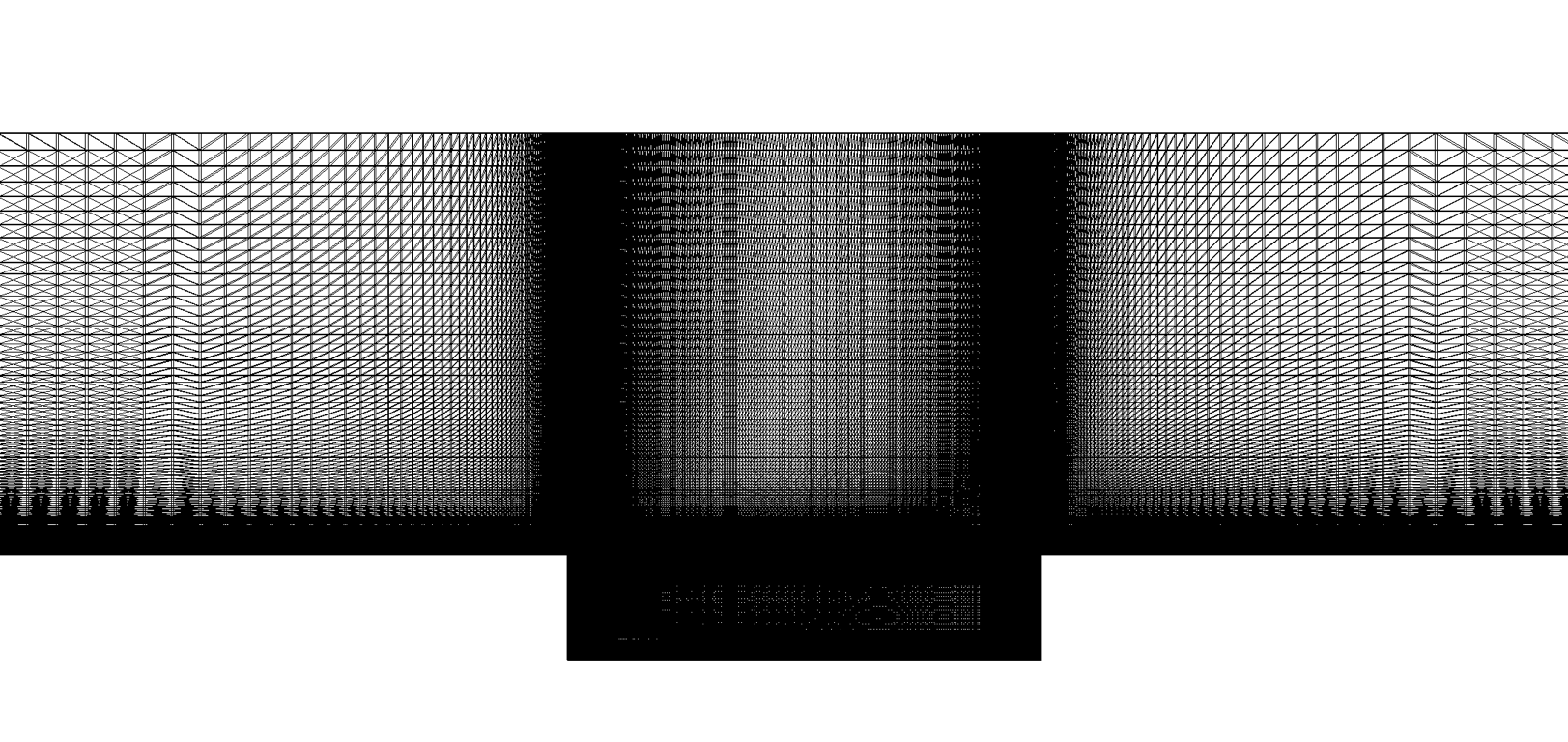} }
\caption{Computational mesh: $x-y$ plane cut.} 
\label{fig:fluidDomain} 
\end{figure}

Flow simulations are performed within a time interval $t\in\timedomain$ with
$\maxT=12.5$ \reviewerANew{time units}.
We employ the
second-order accurate implicit three-point backward \reviewerCNewer{differentiation formula}, which is a linear multistep scheme characterized by $k=2$, $\alpha_0=1$,
$\alpha_1 = -4/3$, $\alpha_2 = 1/3$, $\beta_0 = 2/3$, $\beta_1=\beta_2=0$,
for time integration\reviewerA{; future work will perform numerical
	experiments with Runge--Kutta
	schemes}. The O$\Delta$E \eqref{eq:resLinMultiSolve} arising at
	each time step is solved by a Newton--Krylov method, where GMRES  is employed as
	the iterative linear solver with a restrictive additive Schwarz
	preconditioner (with no fill in) and the previous 50 Krylov vectors are employed
	for orthogonalization.  Convergence is declared when the residual norm is
		reduced to a factor of $10^{-3}$ of its starting value.  All flow
		computations are performed in a non-dimensional setting.

The initial condition $\stateInitialNo$ is provided by first computing a
steady-state solution, and using that solution as an initial guess for an
unsteady `transient' simulation (which captures the initial
transient before the flow reaches a quasi-periodic state) of 7.5 \reviewerANew{time units}. The state at the end of the unsteady transient
simulation is then used as the initial condition for the subsequent simulations.
The steady-state calculation is characterized by the same
parameters as above, except that it employs local time stepping with a maximum
CFL number of 100, it uses the first-order implicit backward Euler time
integration scheme, it assumes a linear variation of the solution within each
control volume, it employs a Spalart--Allmaras turbulence model, and it employs only one Newton iteration per (pseudo) time
step.

The output of interest is the pressure at location (0.0001,-0.0508,0.0025),
which is shown in the bottom row of Figure \ref{fom_fields}. All
errors are reported as the $\ell^2$ relative error in this quantity, i.e.,
\begin{equation*} 
\errorValue(\pressure,\pressuretruth) = \frac{\sqrt{\sum_{\timestepit =
1}^{\maxT/\dttruth}\left(\timeproj(\pressure)(\timestepit\dttruth) -
\pressuretruth(\timestepit\dttruth)\right)^2}}{\sqrt{\sum_{\timestepit =
1}^{\maxT/\dt}\pressuretruth(\timestepit\dttruth)^2}}\reviewerC{,}
\end{equation*} 
where $p:\nat{\maxT/\dt}\rightarrow \RR{}$ is the pressure for the model of
interest, $\pressuretruth:\nat{\maxT/\dttruth}\rightarrow \RR{}$ is this
pressure response of the designated `truth' model (typically the full-order
model), and $\timeproj$ is a linear interpolation of the pressure response
onto the grid based on the truth-model time step $\dttruth$.

All computations are performed in double-precision
arithmetic on a parallel Linux cluster\footnote{The cluster contains
8-core compute nodes that each contain a
2.93 GHz dual socket/quad core Nehalem X5570 processor with 12 GB of memory. The
interconnect is a 3D torus InfiniBand.} using 48 cores across 6 nodes.

\subsection{Time-step verification}

Because this paper considers the time step to be an important parameter in
model reduction, we first perform a time-step verification study to ensure we
employ an appropriate `nominal' time step. Figure
\ref{fig:fomTimestepVerification} reports these results using a
time-step refinement factor of two.  A time step of $\dttruth = 0.0015$
\reviewerANew{time units} yields observed convergence rates in both the instantaneous drag force
on the lower wall
and instantaneous pressure at $t=\maxT$ that are close to the asymptotic rate
of convergence (2.0) of three-point BDF2 scheme.  Further, this value also
leads to sub-2\% errors in both quantities, which we deem to be sufficient for
this set of experiments.

 \begin{figure}[htbp] 
  \centering 
	\subfigure[Drag: $\dttruth  = 0.0015$ yields an
		approximate rate of convergence of 1.94 and an estimated error in the output quantity (computed via
		Richardson extrapolation) of $1.26\times 10^{-2}$. \reviewerA{The red points denote
		the result for $\dttruth  = 0.0015$}. The rightmost plot
		shows the time-dependent response for \reviewerA{the finest tested time step $\dt
		= 1.875\times 10^{-4}$ (black solid) and the converged time step $\dttruth  =
		0.0015$ (red dashed).}]{
	 \includegraphics[width=\textwidth]{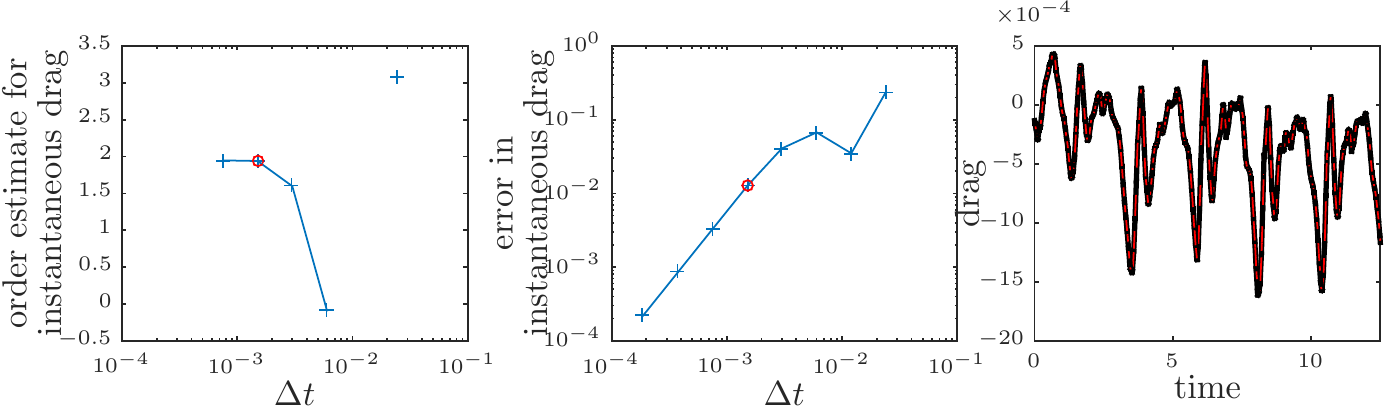}
	}
	\subfigure[Pressure: $\dttruth  = 0.0015$ yields an
		approximate rate of convergence of 1.83 and an estimated error in the output quantity (computed via
		Richardson extrapolation) of $7.68\times 10^{-4}$. \reviewerA{The red points denote
		the result for $\dttruth  = 0.0015$}. The rightmost plot
		shows the time-dependent response for \reviewerA{the finest tested time step $\dt
		= 1.875\times 10^{-4}$ (black solid) and the converged time step $\dttruth  =
		0.0015$ (red dashed).}]{
	 \includegraphics[width=\textwidth]{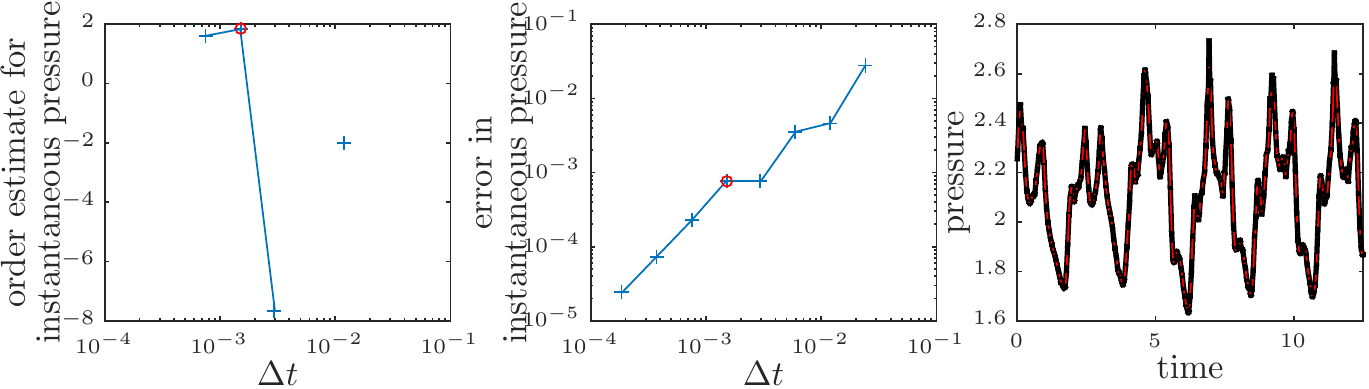}
	 }
	  \caption{Time-step verification study. Note that the approximated
		convergence rates are close to the asymptotic value of 2.0 for the 
		BDF2
		scheme.} 
		 \label{fig:fomTimestepVerification} 
		  \end{figure} 

Figure~\ref{fom_fields} shows several instantaneous snapshots of the
vorticity field and corresponding pressure field generated by the
high-fidelity CFD model.  The flow within the cavity is
quasi-periodic; during one cycle, vorticity is shed from the leading
edge of the cavity, convects downstream, and impinges on the aft edge
of the cavity.  Upon impingement, an acoustic disturbance is generated
which propagates upstream and scatters on the leading edge of the
cavity, generating a new vortical disturbance to initiate the next
oscillation cycle.  The pressure fields in the bottom row of Figure~\ref{fom_fields}
reveal regions of low pressure (blue contours) associated with
vortices, as well as acoustic disturbances both within the cavity and
radiating outside the cavity.  This complex flow is governed by the
interactions of several nonlinear processes, including roll-up of the
shear layer vortices, impingement of the vortices on the aft wall
resulting in sound generation, propagation of nonlinear acoustic
waves, and interaction of these waves with the shear layer vorticity.
\begin{figure}[h]
  \centering
  \subfigure[time = 2.10]
     {\includegraphics[width=0.22\textwidth]{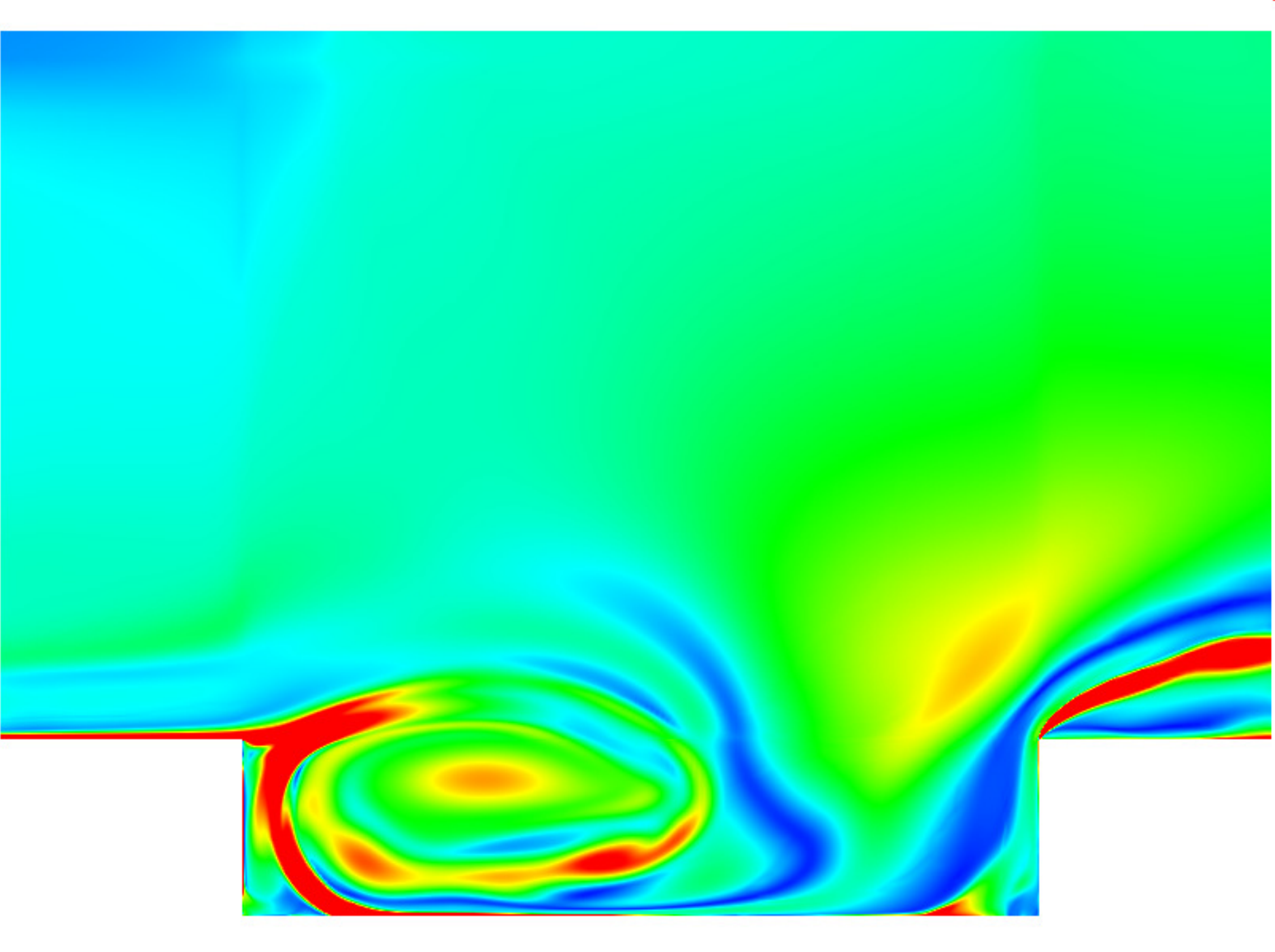}}
  \subfigure[time = 2.61]
     {\includegraphics[width=0.22\textwidth]{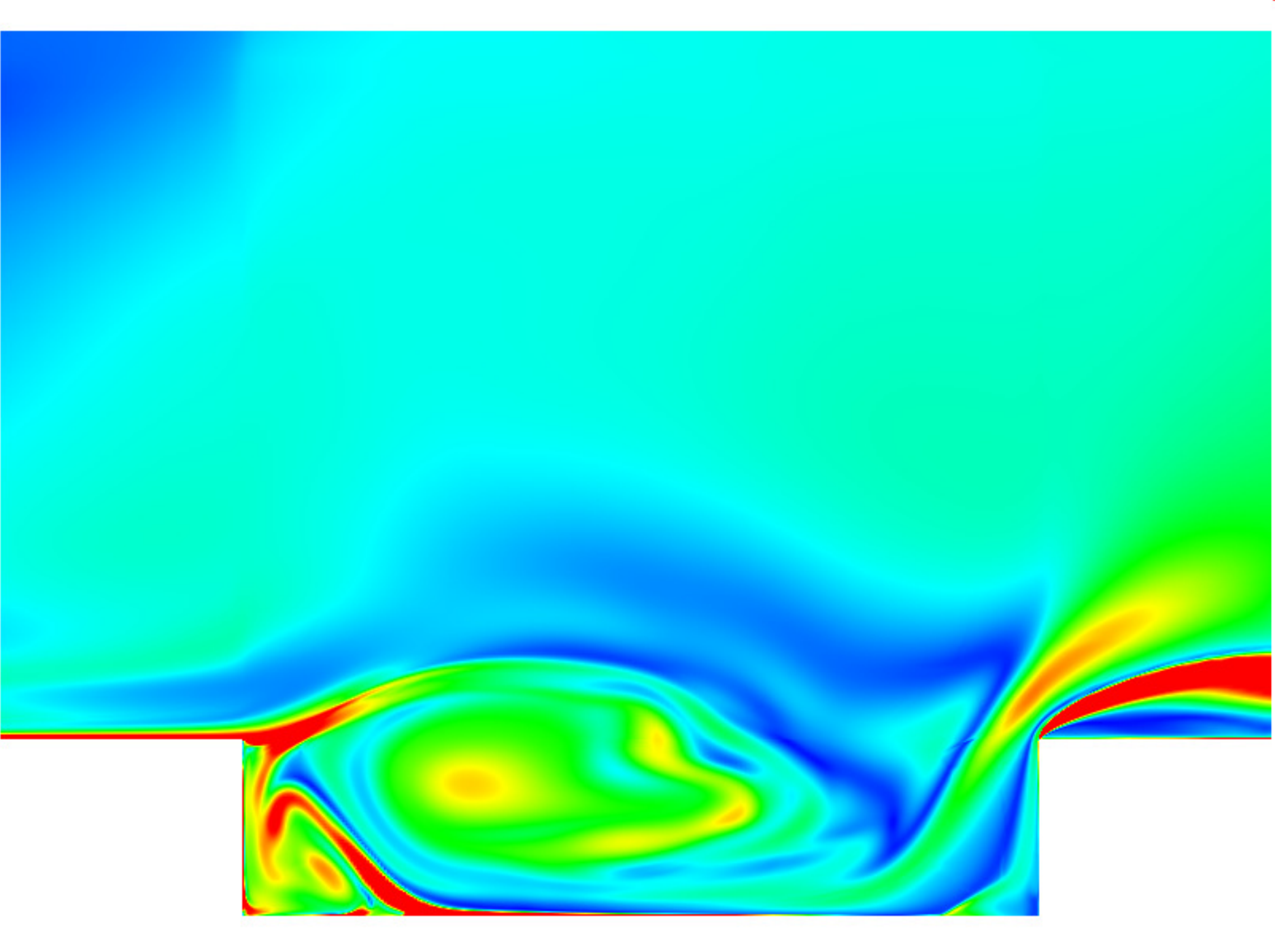}}
  \subfigure[time = 3.12 ]
     {\includegraphics[width=0.22\textwidth]{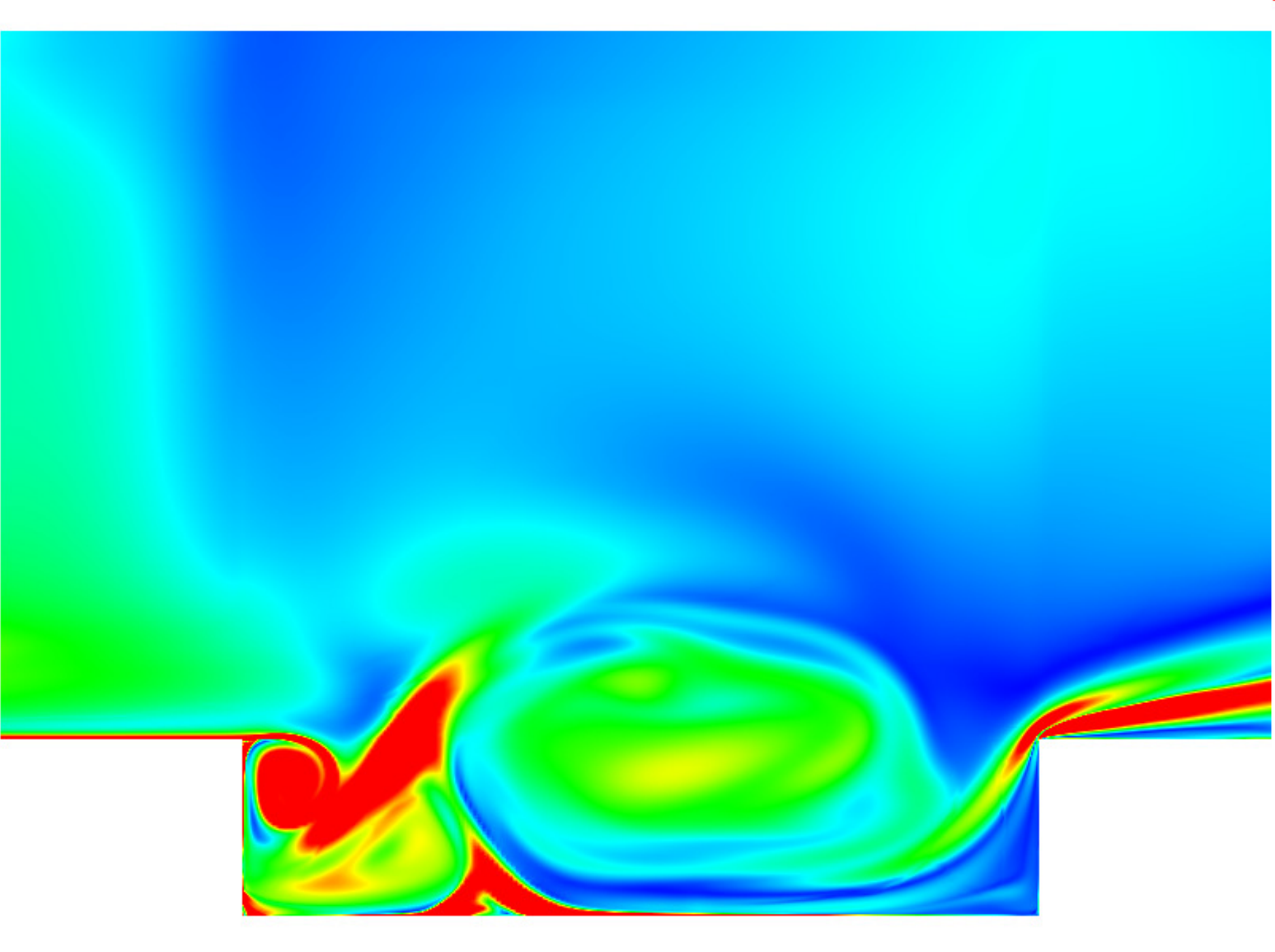}}
  \subfigure[time = 3.63]
     {\includegraphics[width=0.22\textwidth]{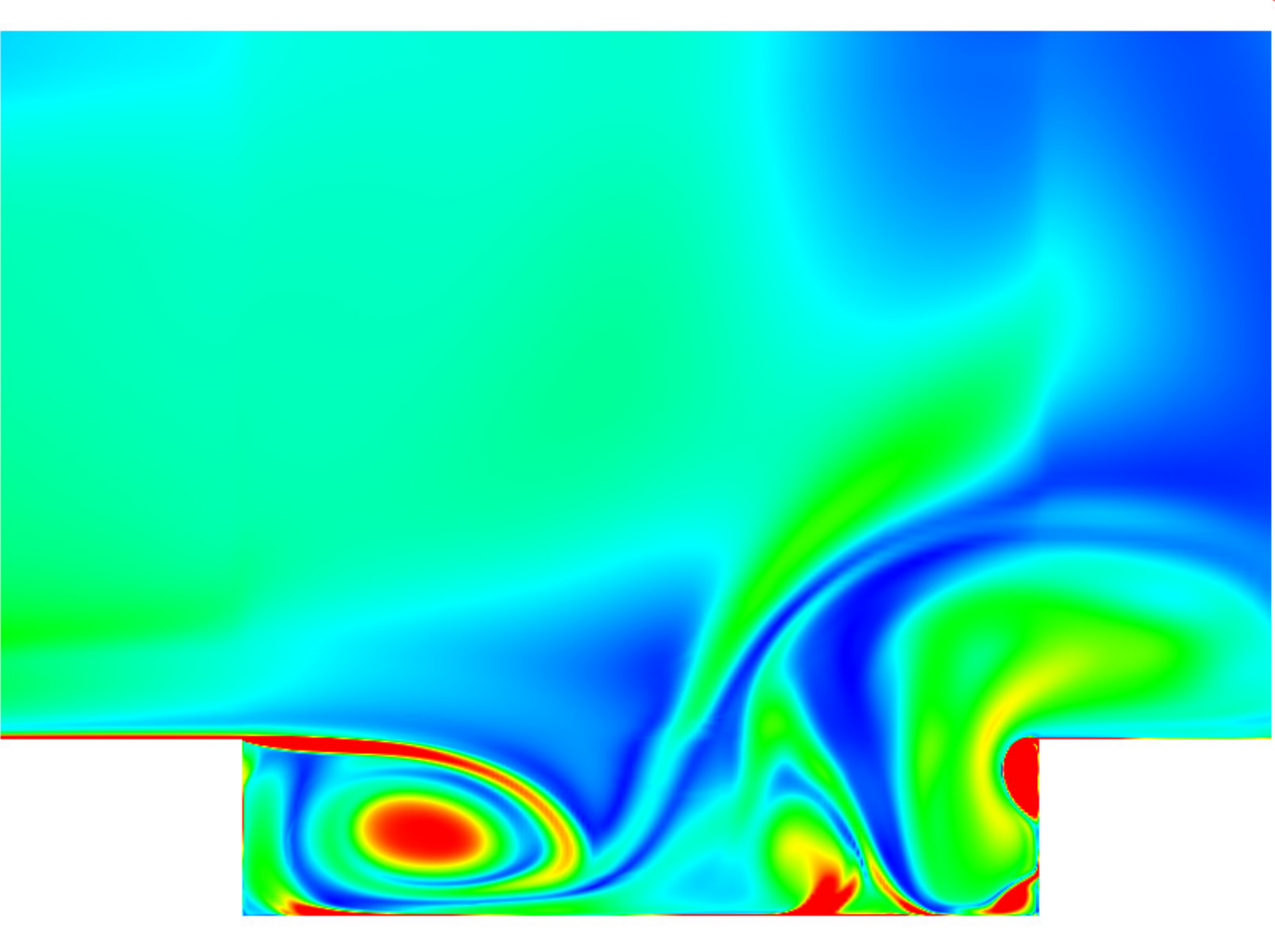}}
  \subfigure[time = 2.10]
     {\includegraphics[width=0.22\textwidth]{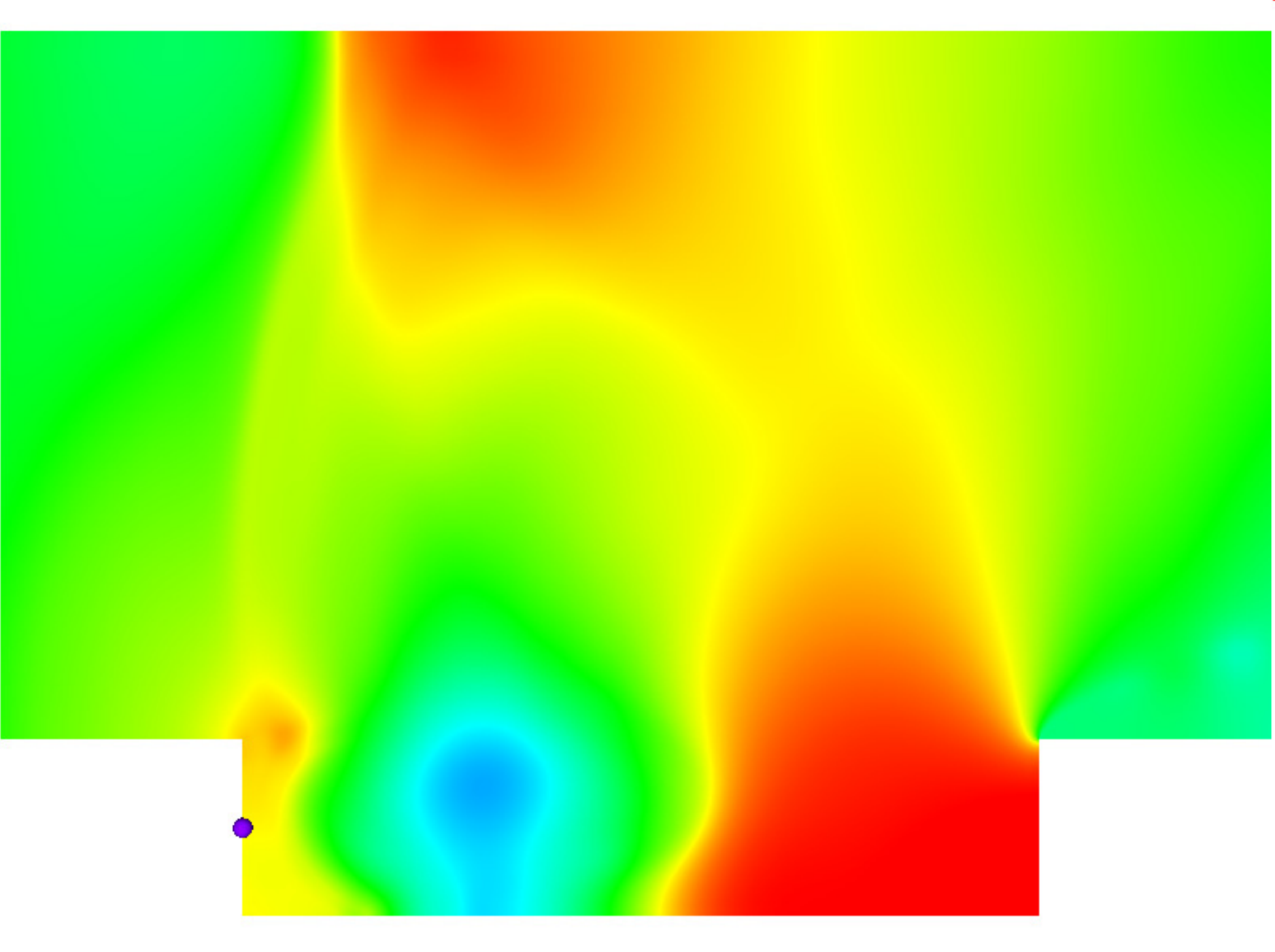}}
  \subfigure[time = 2.61]
     {\includegraphics[width=0.22\textwidth]{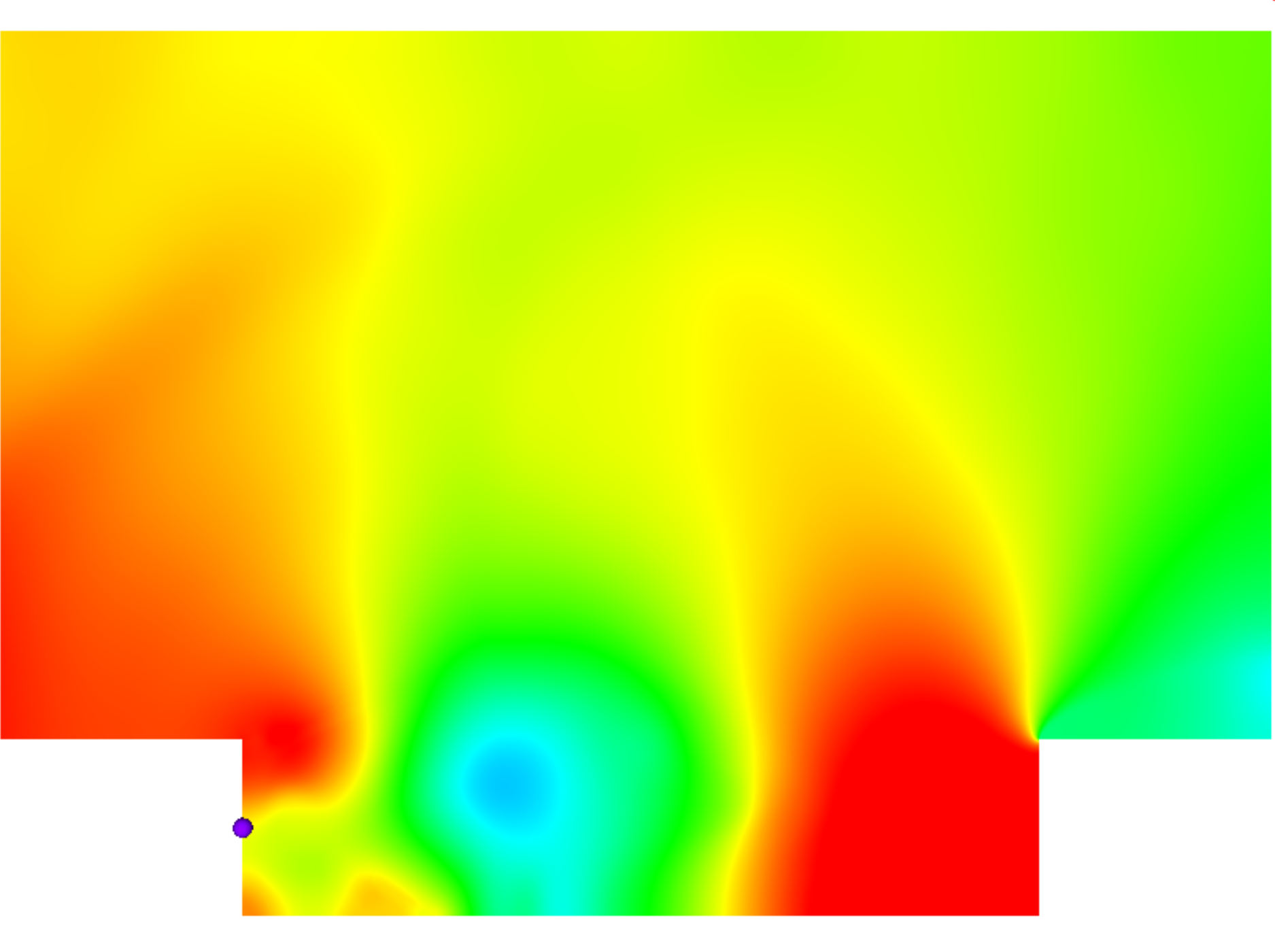}}
  \subfigure[time = 3.12 ]
     {\includegraphics[width=0.22\textwidth]{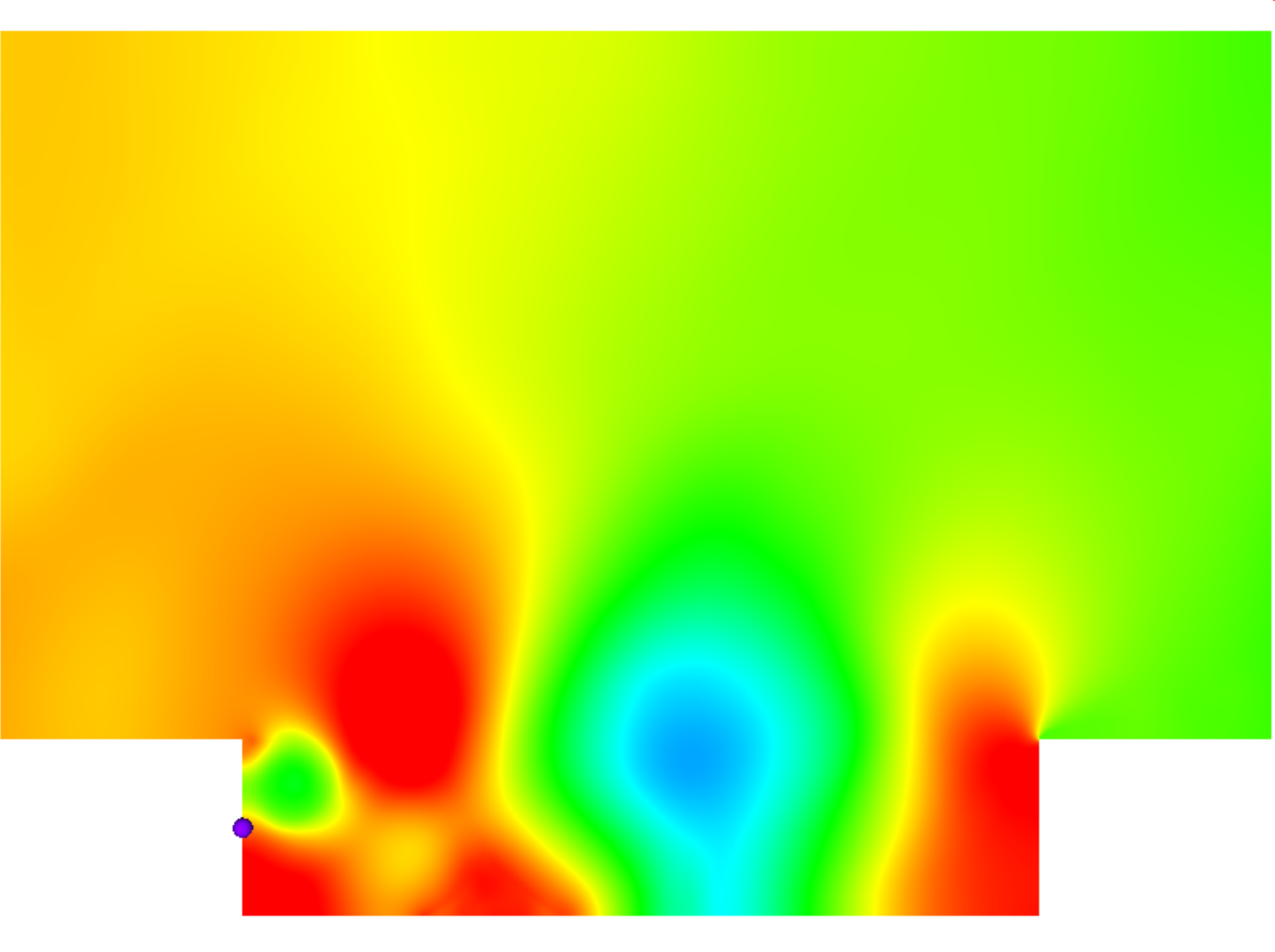}}
  \subfigure[time = 3.63]
     {\includegraphics[width=0.22\textwidth]{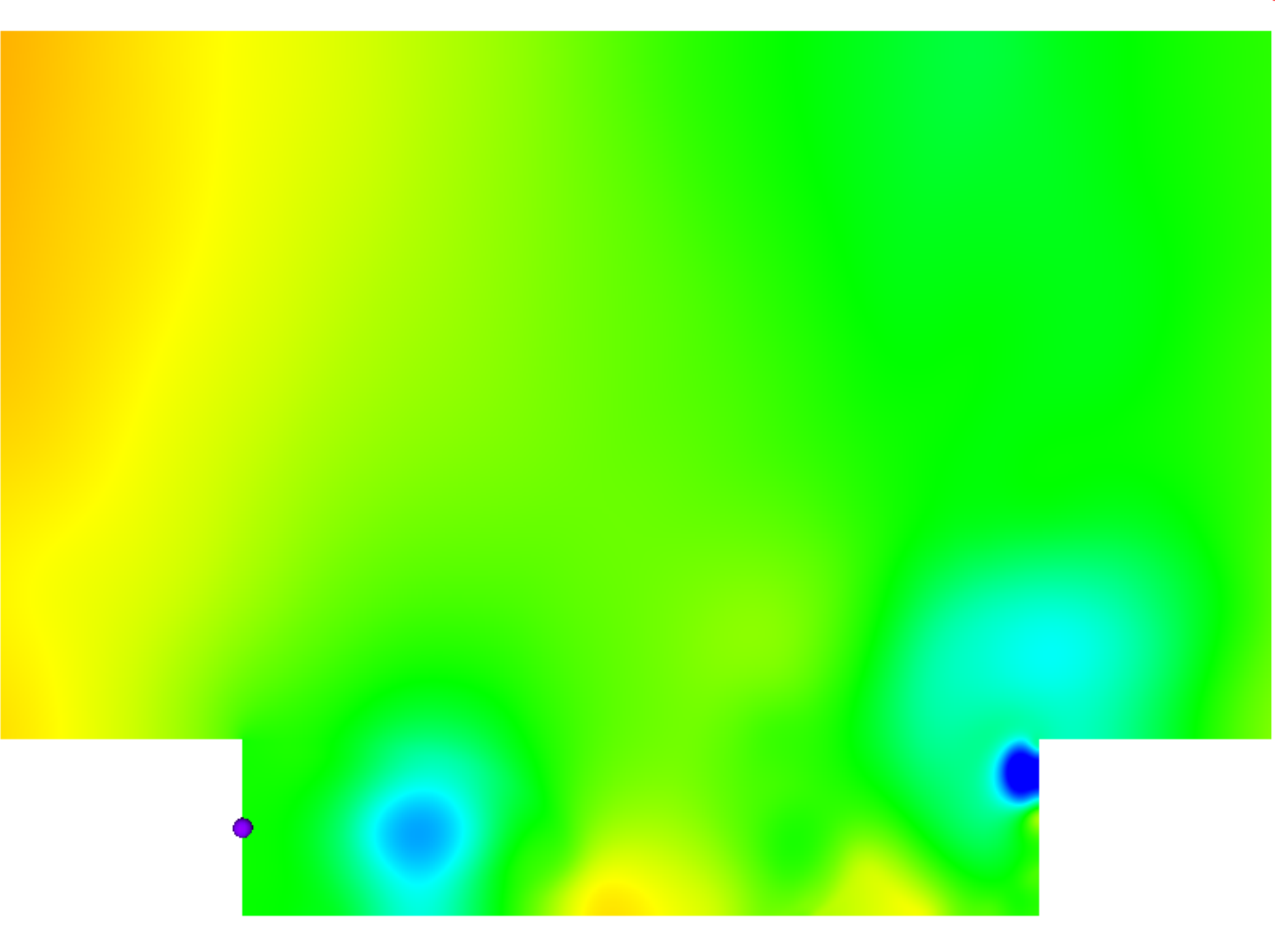}}
  \caption{Instantaneous CFD vorticity field (top) and pressure field
    (bottom) during one oscillation cycle. The dot on the forward wall
    of the cavity indicates the location of the pressure signal
    output. \label{fom_fields}}
\end{figure}

\subsection{Reduced-order models}


To construct both the Galerkin and \reviewerA{LSPG} ROMs, we employ the proper
orthogonal decomposition (POD) technique\reviewerA{; we employ a constant
weighting matrix $\weightingMatrix = \matI$
for the \reviewerA{LSPG} ROM. To construct the POD basis}, we set
$\podstate\leftarrow \podArgs{\snapsNo}{\energyCrit}$, where $\podArgsNo$ is
computed via Algorithm \ref{alg:PODSVD} of the appendix with snapshots consisting of the
initial-condition-centered full-order model states $\snapsNo =
\{\state_\star(k\dttruth)-\stateInitialNo\}_{k=1}^{8334}$, where
$\state_\star$  
denotes the FOM response computed for a time step of $\dttruth \reviewerA{= 0.0015}$. Three values of the energy
criterion $\energyCrit\in[0,1]$ are used during the experiments:
$\energyCrit=1-10^{-4}$ ($\nstate = 204$), $\energyCrit=1-10^{-5}$ ($\nstate =
368$), and $\energyCrit=1-10^{-6}$ ($\nstate = 564$). Figure
\ref{fig:podModes} shows a selection of the energy component of the computed
POD modes. Note that as the mode number increases, the modes capture finer
spatial-scale behavior, which we expect to be associated with finer time-scale
behavior; this will be verified in Section \ref{sec:spectralPOD}.

\begin{figure}[htb] 
\centering 
\subfigure[mode 1]{\includegraphics[width=0.32\textwidth]{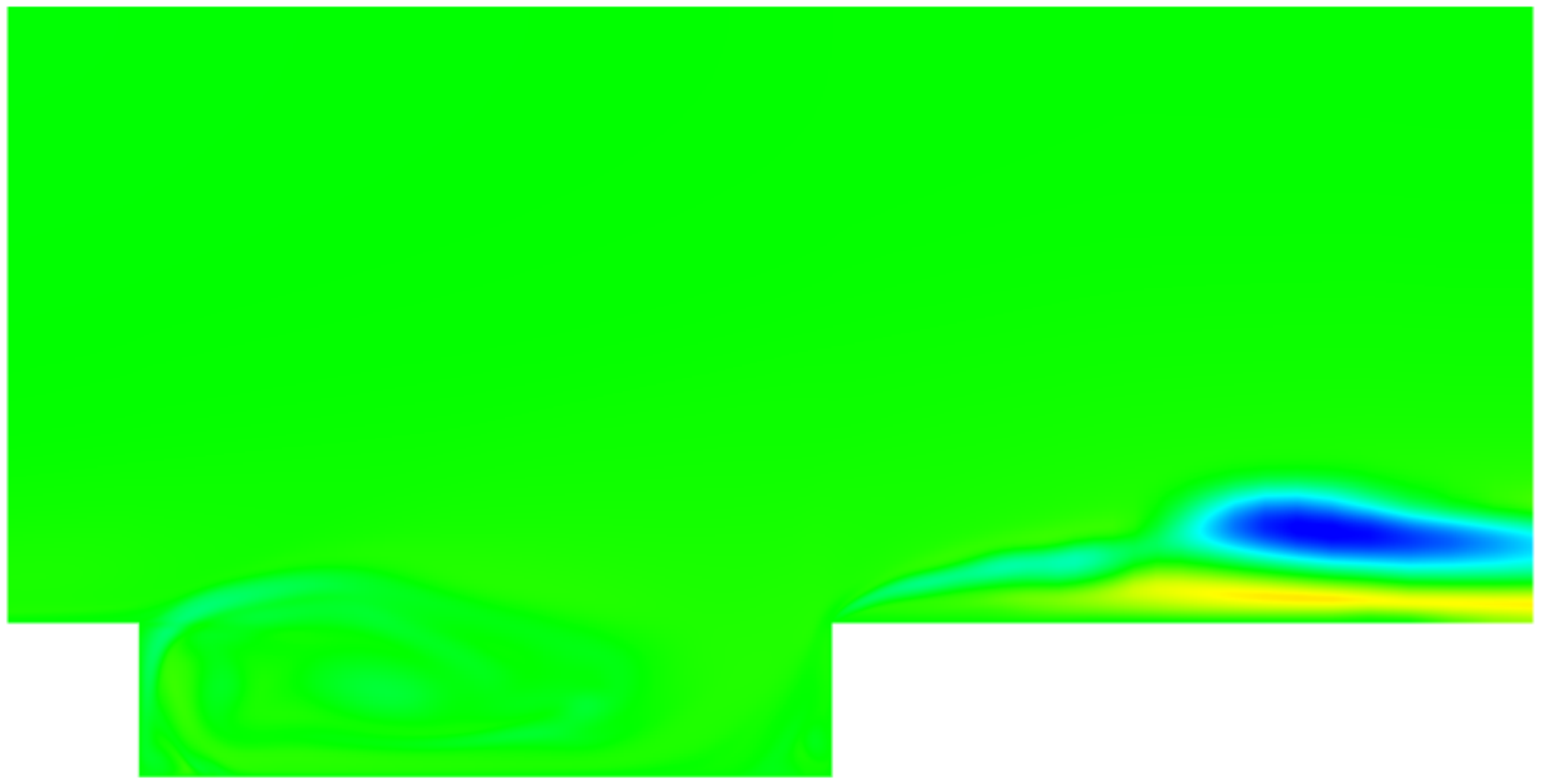}}
\subfigure[mode 21]{\includegraphics[width=0.32\textwidth]{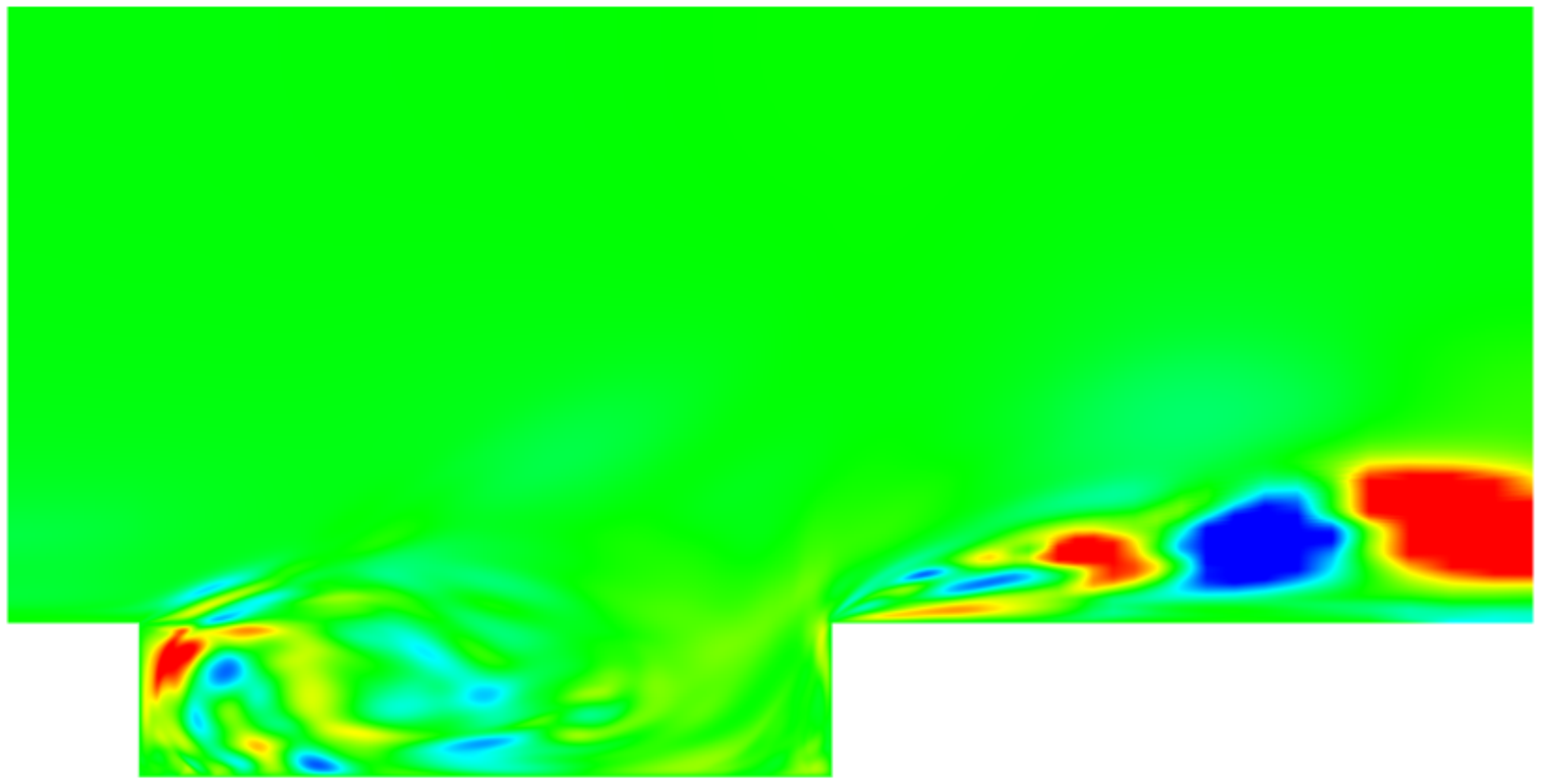}}
\subfigure[mode 101]{\includegraphics[width=0.32\textwidth]{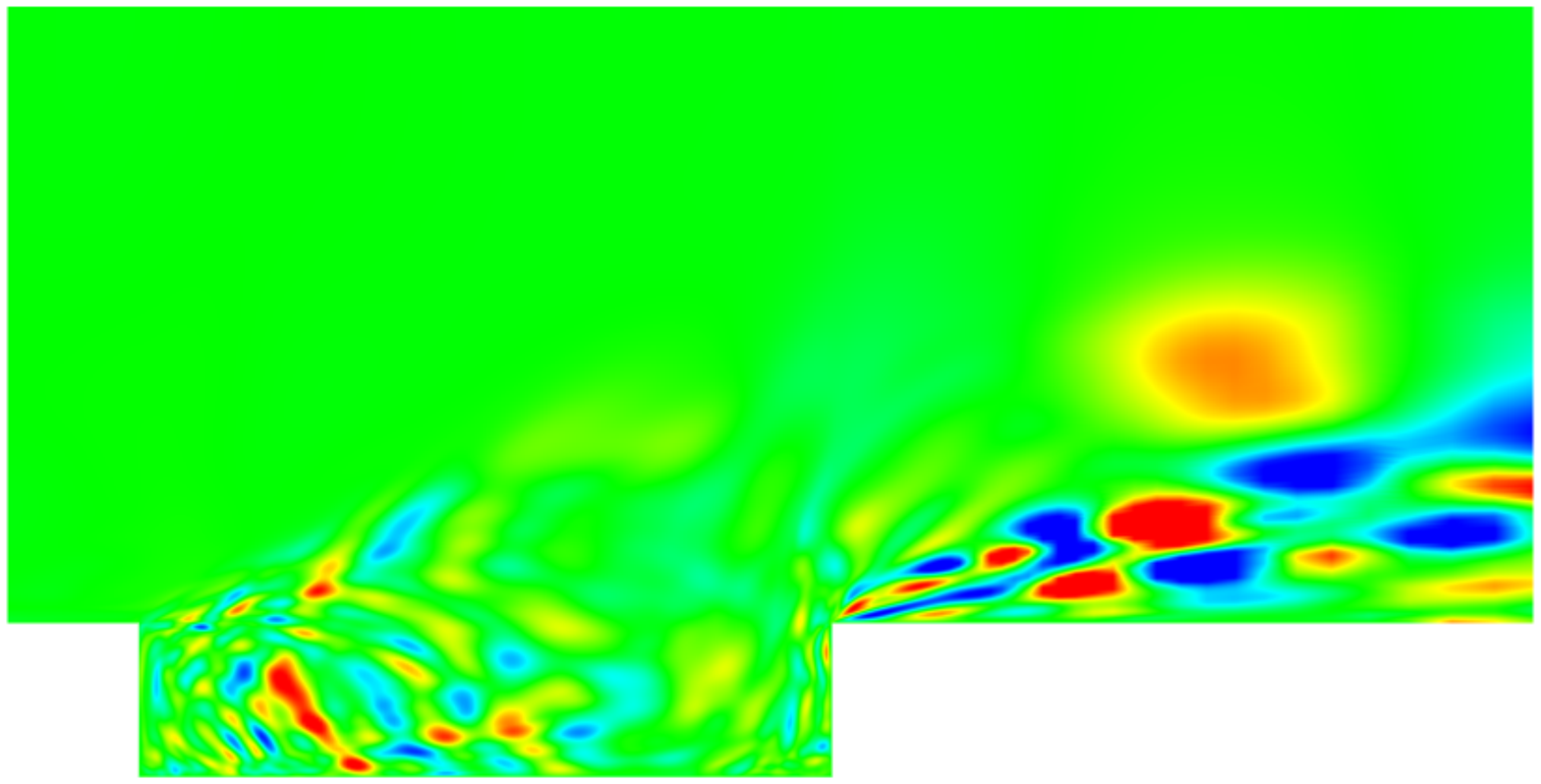}}
\subfigure[mode 201]{\includegraphics[width=0.32\textwidth]{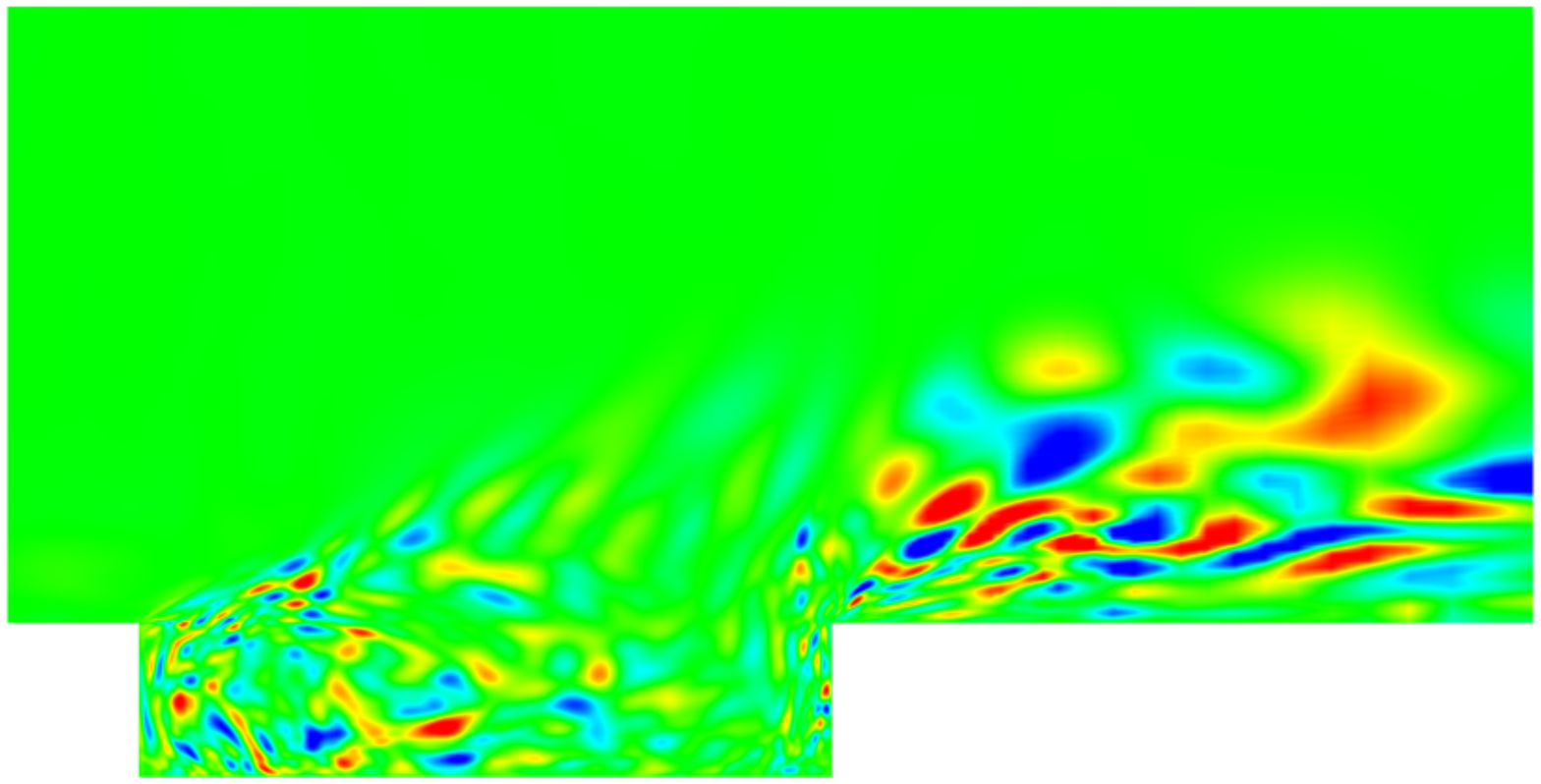}}
\subfigure[mode 401]{\includegraphics[width=0.32\textwidth]{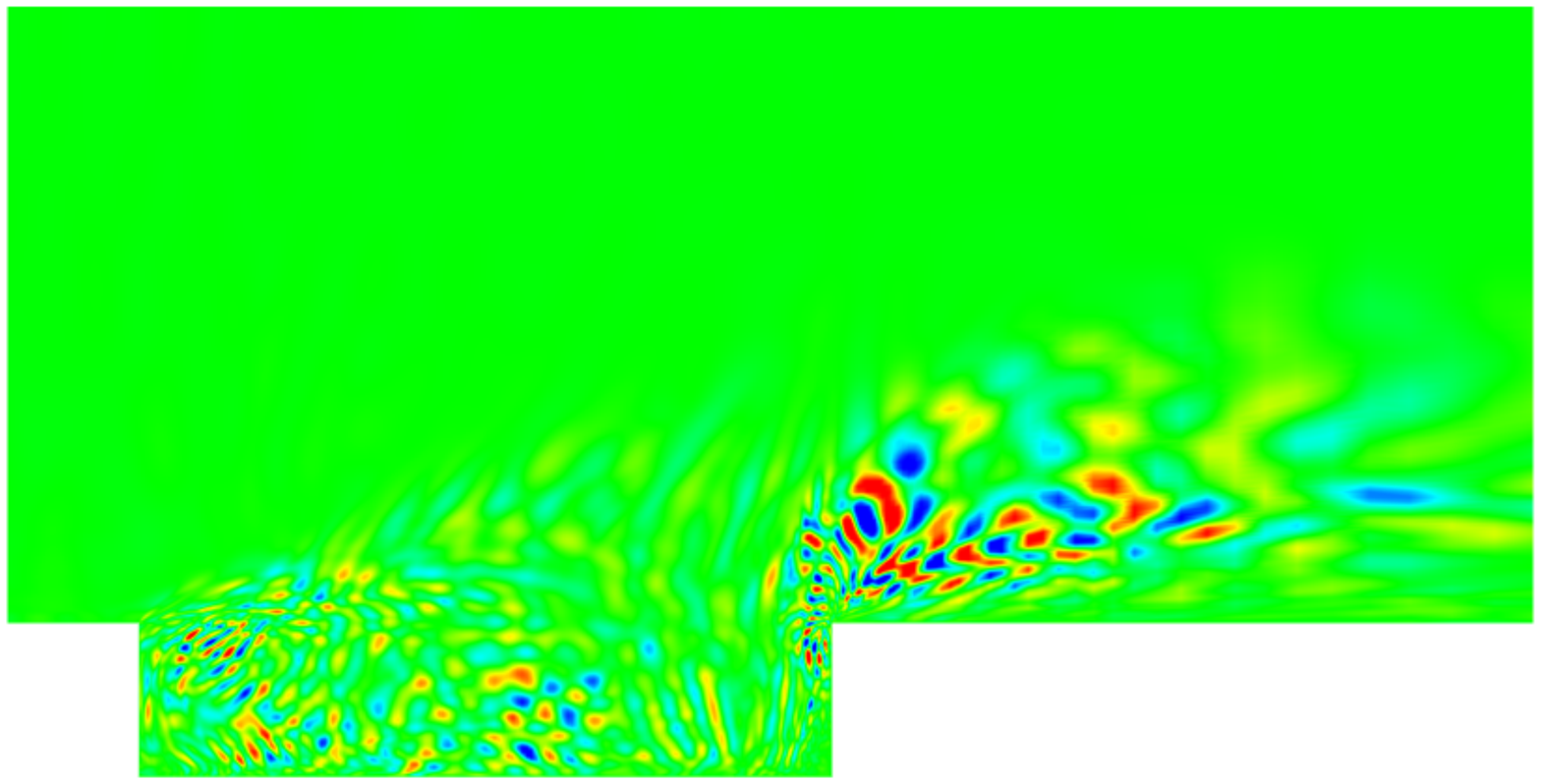}}
\caption{Visualization of the energy component of the POD modes.} 
\label{fig:podModes} 
\end{figure} 

 We first repeat the time-step verification study, but we do so for the
 reduced-order models \reviewerA{(again using the BDF2 scheme)} in the time interval $0\leq t\leq 0.55$, as all
 Galerkin ROMs remain stable in this time interval. Figure
 \ref{fig:romTimestepVerification} reports these results.  First, we note that
 the Galerkin ROM converges an approximated rate of 2.0, which is what we
 expect given that the Galerkin ROM simply associates with a
 time-step-independent ODE \eqref{eq:romODE}.  However, the \reviewerA{LSPG}
 ROM does not exhibit this behavior; in fact the error convergence is not even
 monotonic. This is \reviewerA{likely} due to the fact that the method does not associate with a
 time-step-independent ODE. 

 \begin{figure}[htb] 
  \centering 
	\subfigure[Galerkin reduced-order model]{
	 \includegraphics[width=0.48\textwidth]{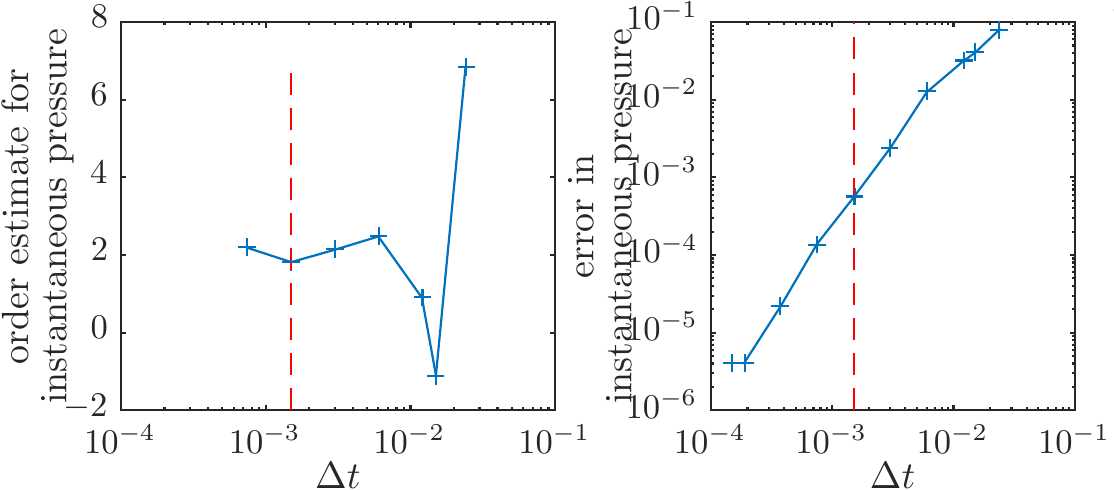}
	}
	\subfigure[\reviewerA{LSPG} reduced-order model]{
	 \includegraphics[width=0.48\textwidth]{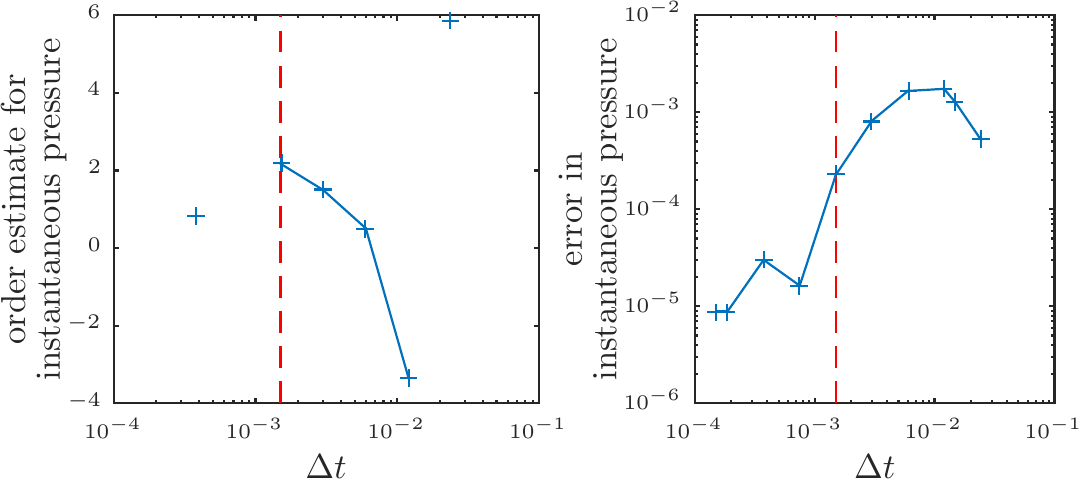}
	 }
	  \caption{Time-step verification study for Galerkin and \reviewerA{LSPG}
		reduced-order models for $\nstate = 368$ and $0\leq t\leq 0.55$. While the
		approximated convergence rate for the Galerkin reduced-order model is
		close to the asymptotic value of 2.0 for the BDF2 scheme, this 
		is not observed for the \reviewerA{LSPG} reduced-order model.
		\reviewerA{Note that the error for each ROM is computed with respect its
		response for finest Richardson extrapolation, and
		a dashed red line indicates the snapshot-collection time step $\dttruth  =
		0.0015$. See Figure \ref{fig:ROMresponses}(c--d) for the associated time-dependent
		responses.}}
		 \label{fig:romTimestepVerification} 
		  \end{figure} 

We next perform simulations for both reduced-order models for all tested basis
dimensions and time steps; Figure \ref{fig:ROMresponses} reports the
time-dependent responses. When a response stops before the end of the time
interval, this indicates that a negative pressure was encountered, which
causes AERO-F to exit the simulation. We interpret this phenomenon as a
non-physical instability.

First, note that the Galerkin ROMs become unstable (i.e., generate a negative
pressure) for all time steps and all basis dimensions. This is consistent
with previously reported results
\cite{carlbergHawaii,carlbergJCP,CarlbergGappy,carlbergThesis} that indicate
Galerkin projection almost always leads to inaccurate responses for
compressible fluid-dynamics problems. In contrast, the \reviewerA{LSPG} ROM
results in many stable, accurate responses for all basis dimensions. Further,
\reviewerA{LSPG} responses exhibit a clear dependence on the time step $\dt$.
Subsequent sections provide a deeper analysis of this dependence.

\begin{figure}[htbp] 
\centering 
\subfigure[Galerkin, $\nstate = 204$]
{\includegraphics[width=8cm]{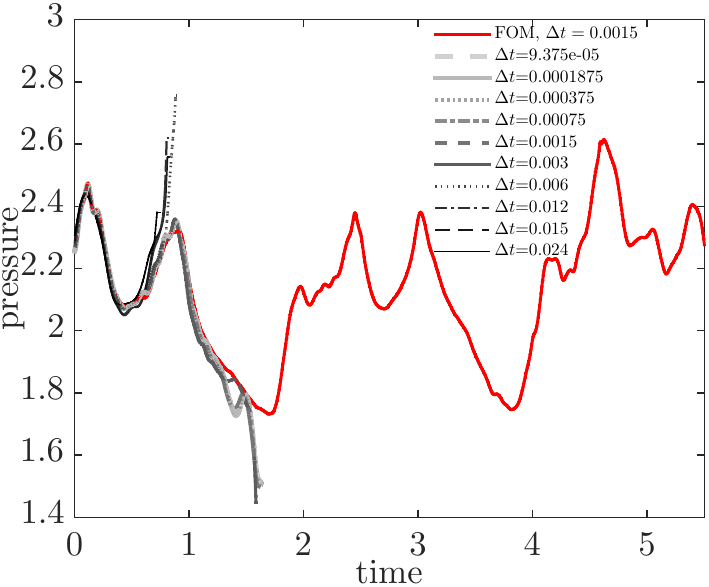}}
\subfigure[\reviewerC{LSPG}, $\nstate = 204$ ]
{\includegraphics[width=8cm]{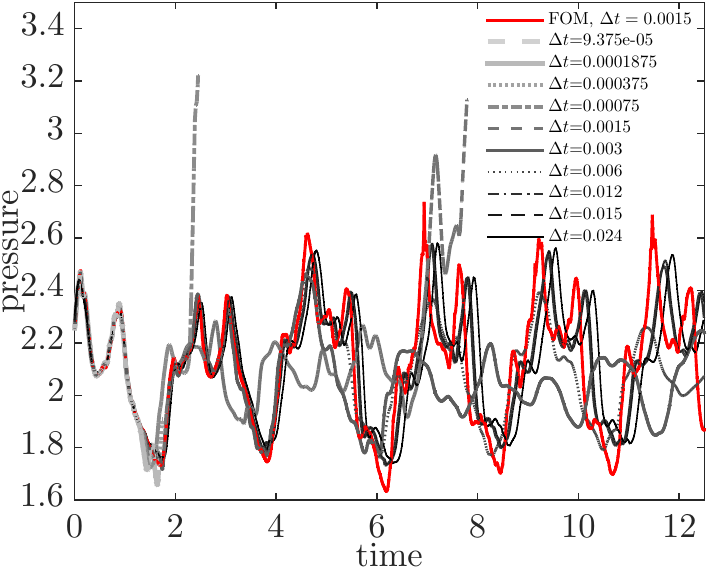}}
\subfigure[Galerkin, $\nstate = 368$ ]
{\includegraphics[width=8cm]{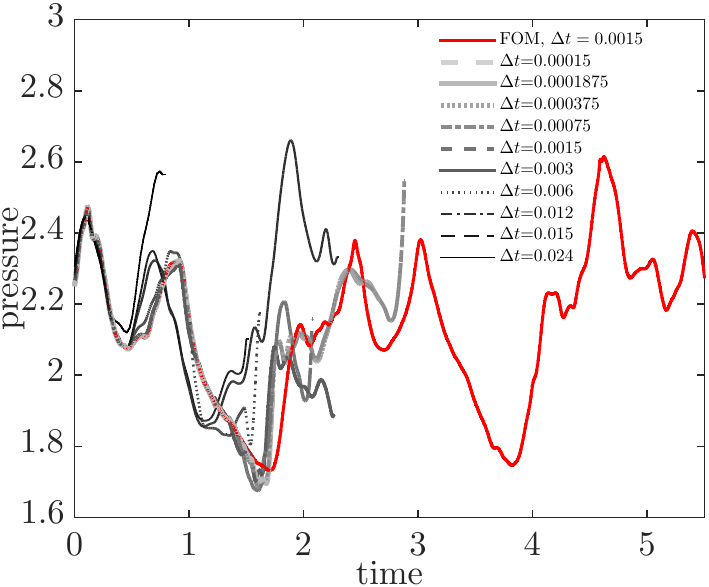}}
\subfigure[\reviewerC{LSPG}, $\nstate = 368$]
{\includegraphics[width=8cm]{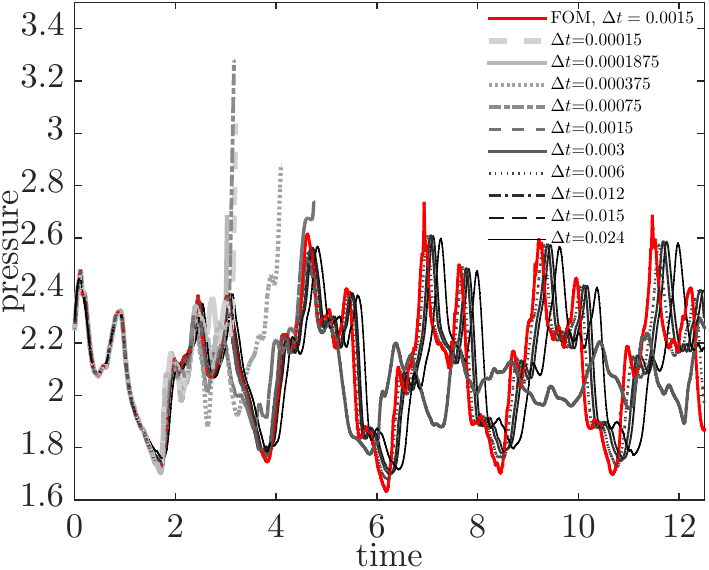}}
\subfigure[Galerkin, $\nstate = 564$]
{\includegraphics[width=8cm]{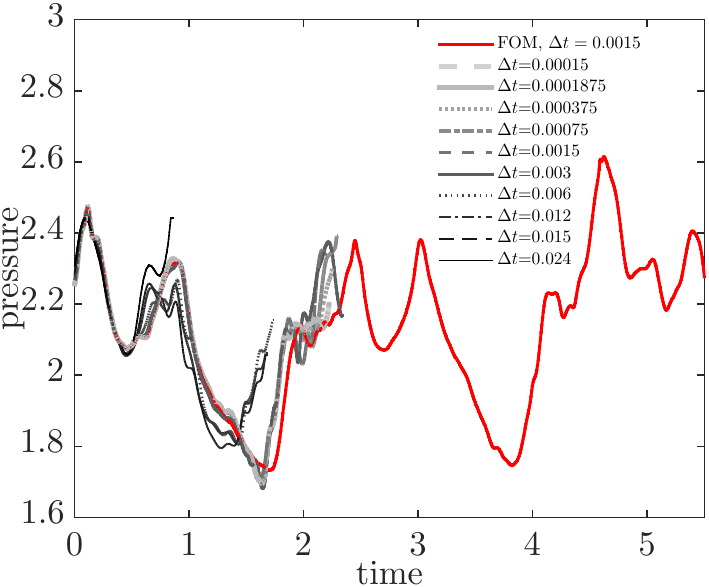}}
\subfigure[\reviewerC{LSPG}, $\nstate = 564$]
{\includegraphics[width=8cm]{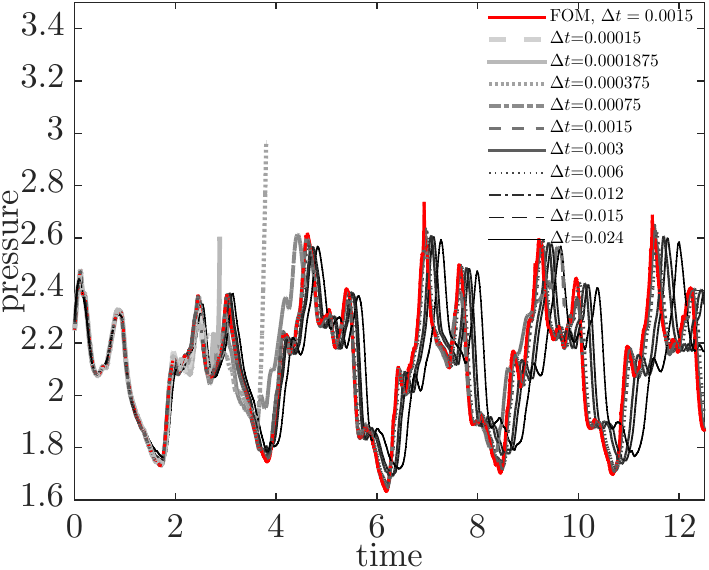}}
\caption{Responses generated by Galerkin and \reviewerC{LSPG ROMs} for different basis sizes
$\nstate$ and time steps $\dt $} 
\label{fig:ROMresponses} 
\end{figure}

\subsection{Limiting case: comparison}

We next compare the responses of the Galerkin and \reviewerA{LSPG} ROMs for
small time windows (when the Galerkin responses remain stable) and small
time steps. Figure \ref{fig:GalDiscOptErr} reports
$\errorValue(\pressure_{\text{discrete~opt.}},\pressure_{\text{Gal}_\star})$---which
is the difference between the \reviewerA{pressure responses generated by the}
\reviewerA{LSPG} ROM \reviewerA{with different time steps and the Galerkin ROM
with a fixed time step} $\dt = 1.875\times 10^{-4}$ (the smallest
tested time step)---\reviewerA{for a time window $0\leq t\leq 1.1$}.  These
responses support an important conclusion (see Theorem
\ref{thm:dtZeroEquiv}): the Galerkin and \reviewerA{LSPG} ROMs are equal in
the limit of $\dt\rightarrow 0$ \reviewerCNewer{for $\weightingMatrix =
1/\sqrt{\alpha_0}\matI$,
which is what we employ for the LSPG ROM (note that
$\alpha_0=1$ for this time integrator)}.\footnote{\reviewerA{Note that in the $\nstate =
564$ case, it is not clear if the difference is converging to zero.
This is likely due to the fact that the time steps are
not sufficiently small to detect convergence to zero in this case. In fact, as the basis
dimension $\nstate$ increases, the basis captures finer temporal behavior
(as will be shown in Figure \ref{fig:spectralContent})
and so the time scale of the ROM response will be smaller; in
turn, smaller time steps $\dt$ will be required to detect convergent behavior.}} This has significant consequences for the
\reviewerA{LSPG} ROM, as decreasing the time step leads to the same 
\emph{unstable} response as Galerkin; larger time steps are needed to ensure
the \reviewerA{LSPG} ROM is stable for the entire time interval.

\begin{figure}[htbp] 
%
\centering 
\subfigure[$\nstate = 204$]
{\includegraphics[width=0.3\textwidth]{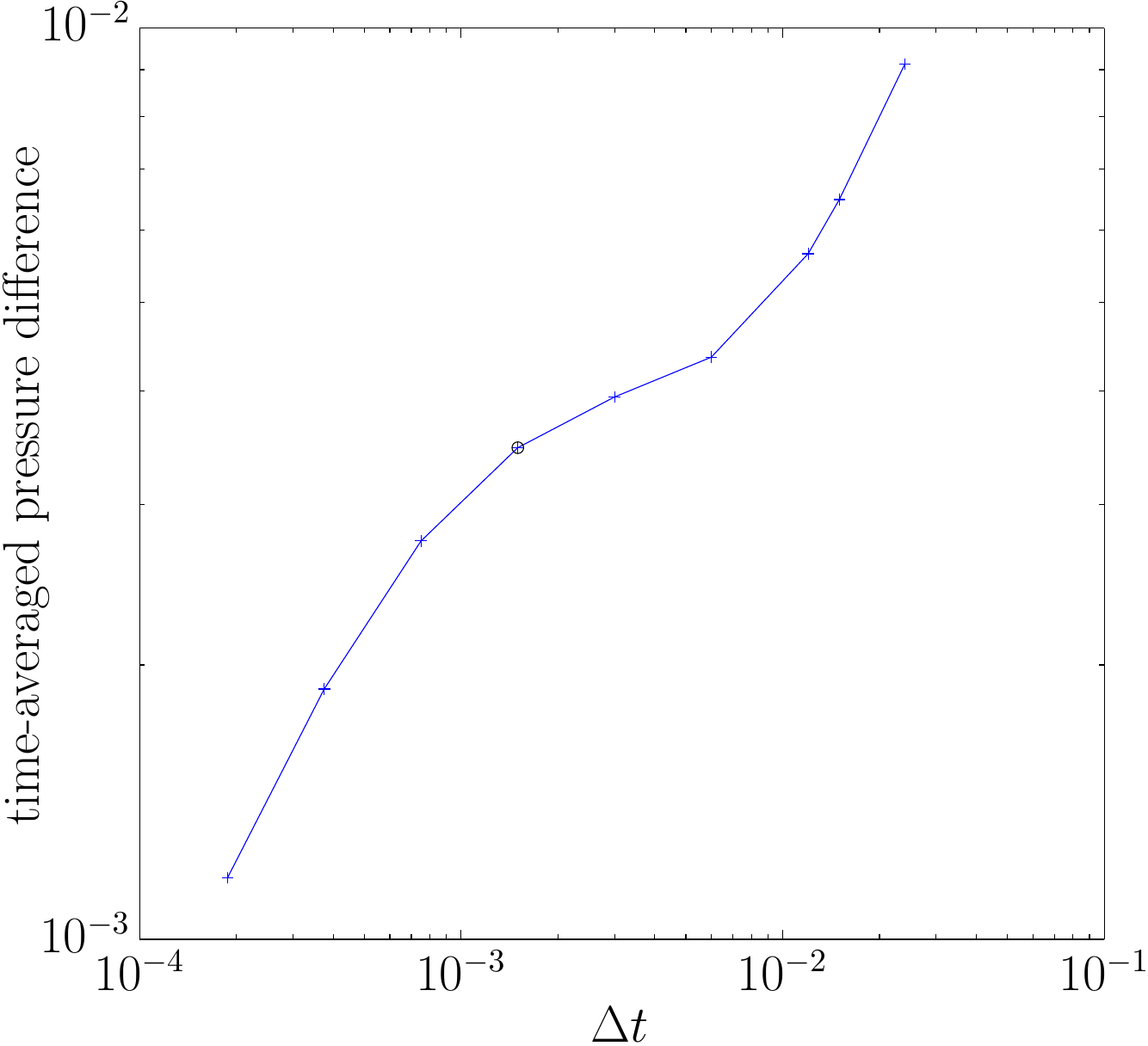}}
\subfigure[$\nstate = 368$]
{\includegraphics[width=0.3\textwidth]{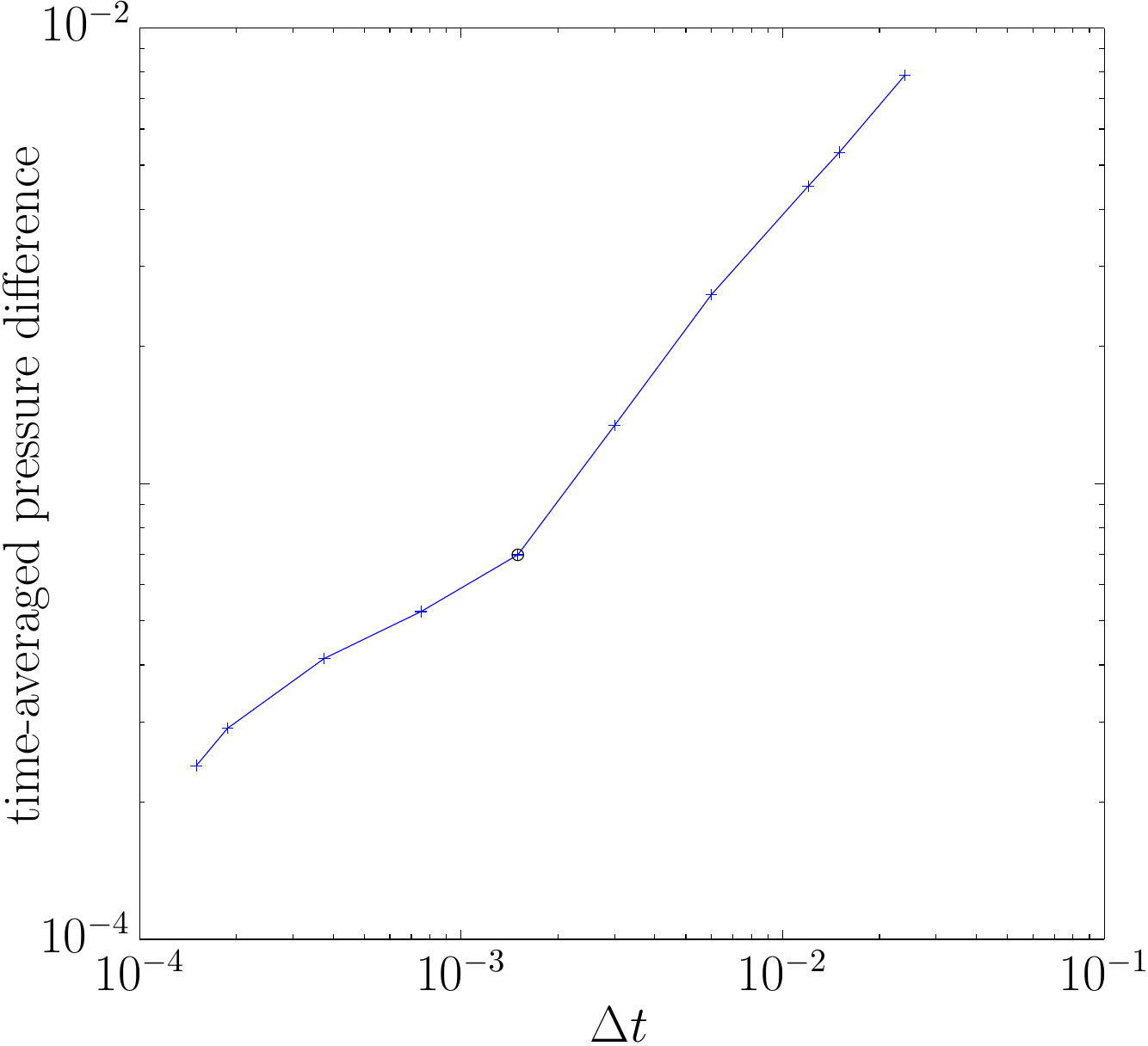}}
\subfigure[$\nstate = 564$]
{\includegraphics[width=0.3\textwidth]{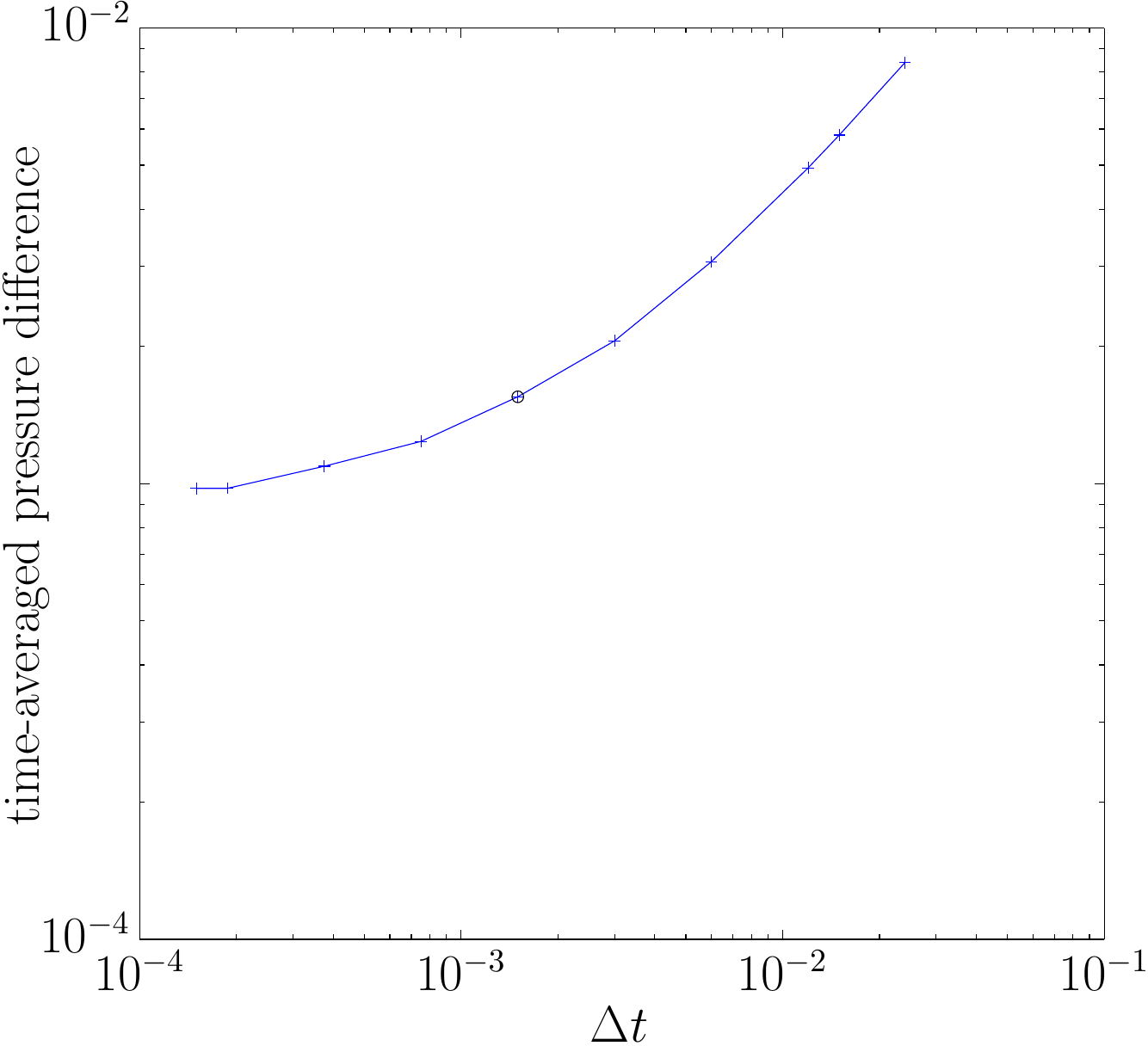}}
\caption{\reviewerA{Difference 
$\errorValue(\pressure_{\text{\reviewerANew{LSPG}}},\pressure_{\text{Gal}_\star})$
between the pressure responses generated by the
\reviewerA{LSPG} ROM with different time steps and the Galerkin ROM with a
fixed time step $\dt
= 1.875\times 10^{-4}$}
in $0\leq t\leq 1.1$. This demonstrates convergence of
	the \reviewerA{LSPG} ROM to Galerkin as $\dt \rightarrow 0$.}
\label{fig:GalDiscOptErr} 
\end{figure} 

Figure \ref{fig:galPgErrors} reports
$\errorValue(\pressure_{\text{discrete~opt.}},\pressure_{\text{FOM}_\star})$
and
$\errorValue(\pressure_{\text{Gal.}},\pressure_{\text{FOM}_\star})$---which
are the differences between the two ROM-generated pressure responses and the
full-order model pressure response for $\dt = 1.875\times 10^{-4}$---as a
function of the time step for all three basis dimensions and three time
intervals. These results highlight a critical observation: the
\reviewerA{LSPG} ROM is \emph{more accurate} for an intermediate time step. This not only supports the result of Corollary
\ref{corr:auxiliaryProblem},
but provides an interesting insight: taking a larger time step not only
leads to better speedups (i.e., the end of the time interval is reached
in fewer time steps), but it also decreases the error, sometimes significantly. This is
further explored in the next section.

\begin{figure}[htbp] 
\centering 
\subfigure[$0\leq t\leq 0.55$, $\nstate = 204$]{
 \includegraphics[width=0.25\textwidth]{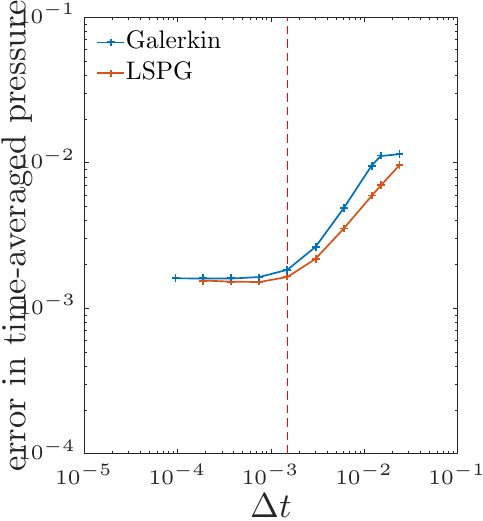}}
\subfigure[$0\leq t\leq 1.1$, $\nstate = 204$]{
	\includegraphics[width=0.25\textwidth]{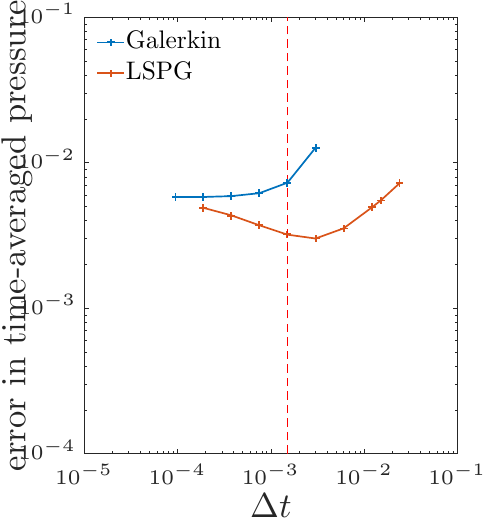}}
\subfigure[$0\leq t\leq 1.54$, $\nstate = 204$]{
	\includegraphics[width=0.25\textwidth]{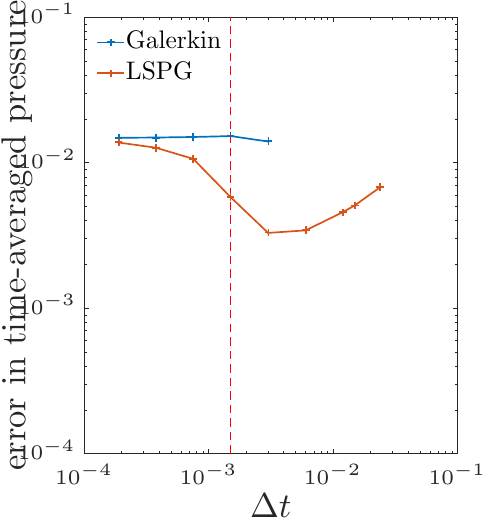}}
\subfigure[$0\leq t\leq 0.55$, $\nstate = 368$]{
	\includegraphics[width=0.25\textwidth]{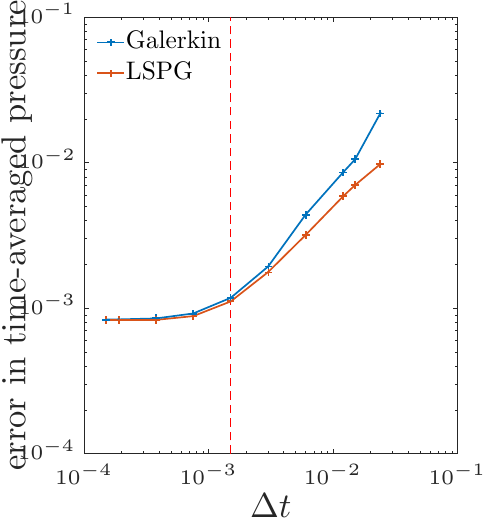}}
\subfigure[$0\leq t\leq 1.1$, $\nstate = 368$]{
	\includegraphics[width=0.25\textwidth]{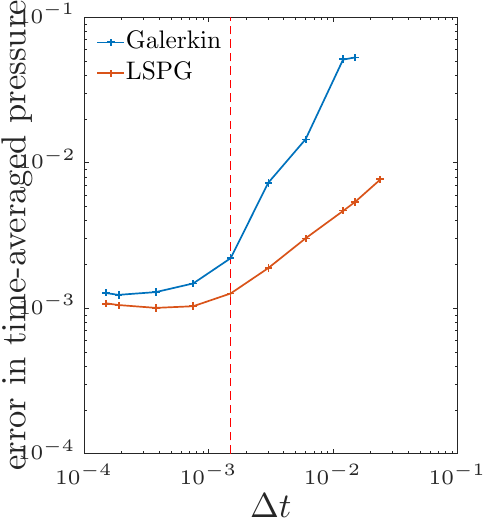}}
\subfigure[$0\leq t\leq 1.65$, $\nstate = 368$]{
	\includegraphics[width=0.25\textwidth]{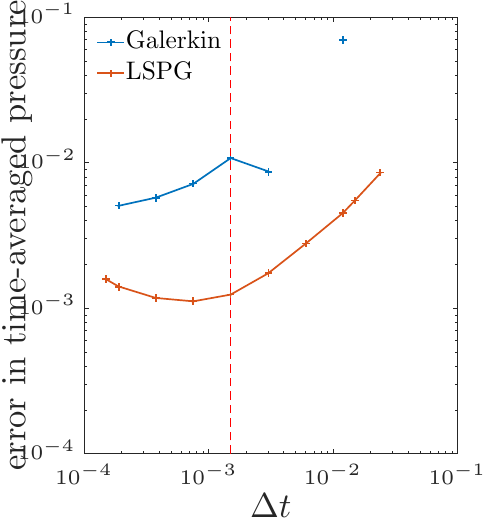}}
\subfigure[$0\leq t\leq 0.55$, $\nstate = 564$]{
	\includegraphics[width=0.25\textwidth]{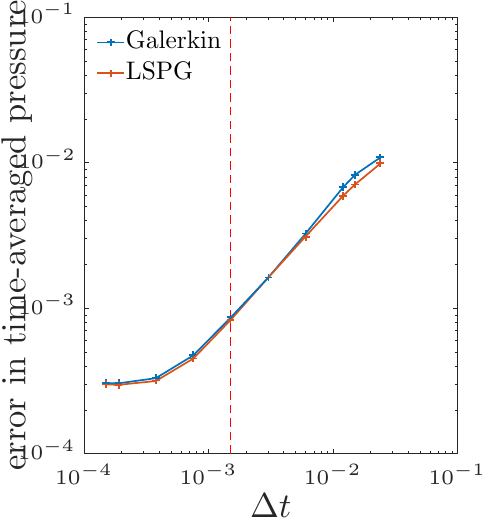}}
\subfigure[$0\leq t\leq 1.1$, $\nstate = 564$]{
	\includegraphics[width=0.25\textwidth]{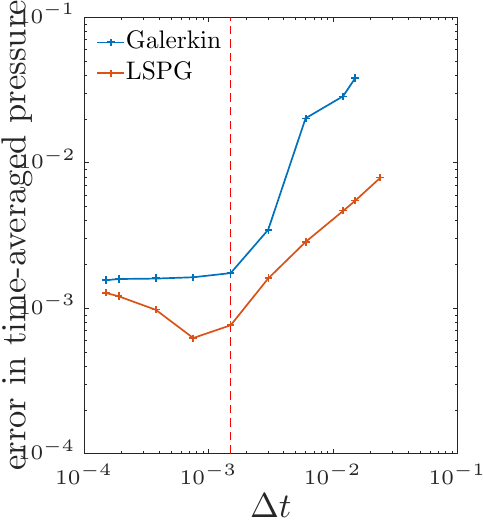}}
\subfigure[$0\leq t\leq 1.65$, $\nstate = 564$]{
	\includegraphics[width=0.25\textwidth]{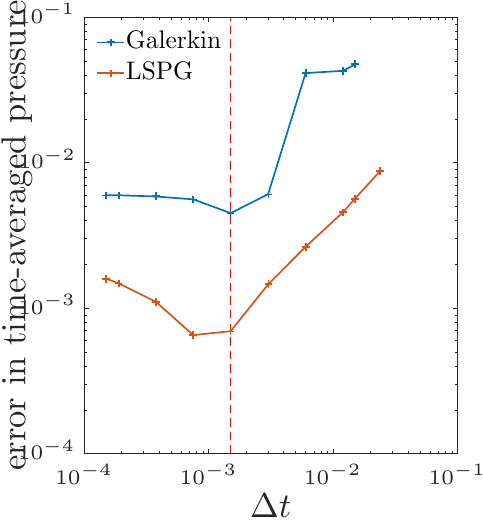}}
\caption{Galerkin errors $\errorValue(\pressure_{\text{\reviewerANew{Gal.}}},\pressure_{\text{FOM}_\star})$
and \reviewerANew{LSPG} errors 
$\errorValue(\pressure_{\text{\reviewerANew{LSPG}}},\pressure_{\text{FOM}_\star})$
over different time intervals, time steps, and basis dimensions.
\reviewerA{For reference, a dashed red line indicates the snapshot-collection
time step $\dttruth  = 0.0015$.}}
\label{fig:galPgErrors} 
\end{figure} 

\subsection{Time-step selection}\label{sec:optimalDt}

Recall from Corollary \ref{corr:auxiliaryProblem} and Remark
\ref{rem:modestTimestep} that decreasing the time step $\dt$ has a non-obvious
effect on the error bound for the \reviewerA{LSPG} ROM.  We now assess these
effects for the current problem.

\subsubsection{Spectral content of POD basis}\label{sec:spectralPOD}
In our interpretation of the error bound \eqref{eq:auxToPlot2} for the
\reviewerA{LSPG} ROM applied to the backward Euler scheme, we noted that
the time step should be `matched' to the spectral content of the trial basis
$\podstate$.  This is of practical importance, as selecting an
appropriate time step for the ROM should take into
account the relevant temporal dynamics associated with the 
basis.  For example, a time step may be too small if the basis has filtered
out modes with a time scale matching that of the time step. 
If we assume that the basis $\podstate$ is computed via POD, then we would
expect the vectors to be naturally ordered such that lower mode numbers are
associated with lower temporal frequencies. Then, including additional modes
has the effect of encoding information at higher frequencies.  It follows
that the time step should be decreased as additional modes are retained in
construction of the ROM.
 
Here we investigate the validity of this assumption by examining the spectral
content of the POD basis vectors for the current cavity-flow problem. We
compute the time histories of the generalized coordinates by projecting the
FOM solution onto the POD basis as $\stateRedFOM(k\dttruth) \defeq
\podstate^T(\state_\star(k\dttruth)-\stateInitialNo)$, $k\in\nat{8334}$.  We then compute
power spectral densities of the generalized coordinates $\stateRedFOM(t)$.
Figure~\ref{state_psds} shows sample spectra, normalized by the total energy
in each signal,\footnote{The energy in a time series within some frequency
range is obtained by integrating the power spectral density over that range.}
for several of the POD modes.  The figure shows that energy shifts to higher
	frequencies as the POD mode number increases, confirming our assumption for
	this example.  This is further quantified by calculating a characteristic
	time-scale $\tau_{95}$ associated with each mode; we define this time scale
	as the inverse of the frequency below which 95 percent of the energy is
	captured for that mode.  Figure~\ref{tau95} plots this time scale versus the
	mode number, showing a clear trend of decreasing time scale with increasing
	mode number.

Thus, at least for the present application problem, we expect the optimal time
step for the \reviewerA{LSPG} ROM to decrease as modes are added to the POD
basis (this will be verified by Figure \ref{fig:pgSelfCompare}).  Note that
systematic calibration could be performed to attempt to automate selection of
the ROM time step as a function of basis dimension.  \reviewerB{While
this would be of clear practical interest, we do not pursue it here, as
optimal-timestep computation would be complicated in practice by nonlinear
interactions arising from the dynamical system, as well as effects from the
spatial-discretization error and POD truncation error.}
\begin{figure}[h]
\centering
\subfigure[Power spectral densities of several generalized coordinate
  time series for the cavity-flow problem. \label{state_psds}]{
\includegraphics[width=0.45\textwidth]{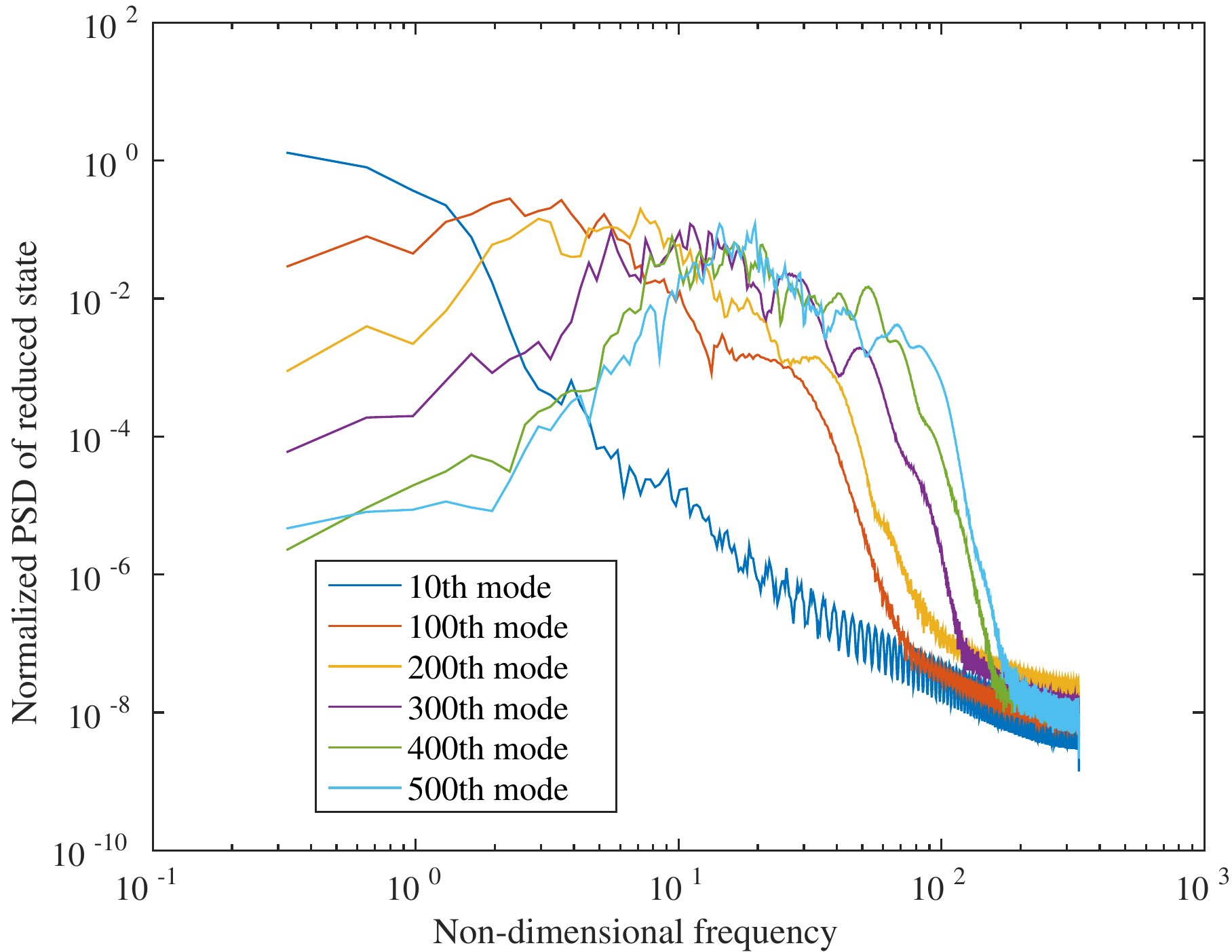}
}
\subfigure[Dependence of the POD mode characteristic time scale on
  mode number for the cavity-flow problem. \label{tau95}]{
\includegraphics[width=0.45\textwidth]{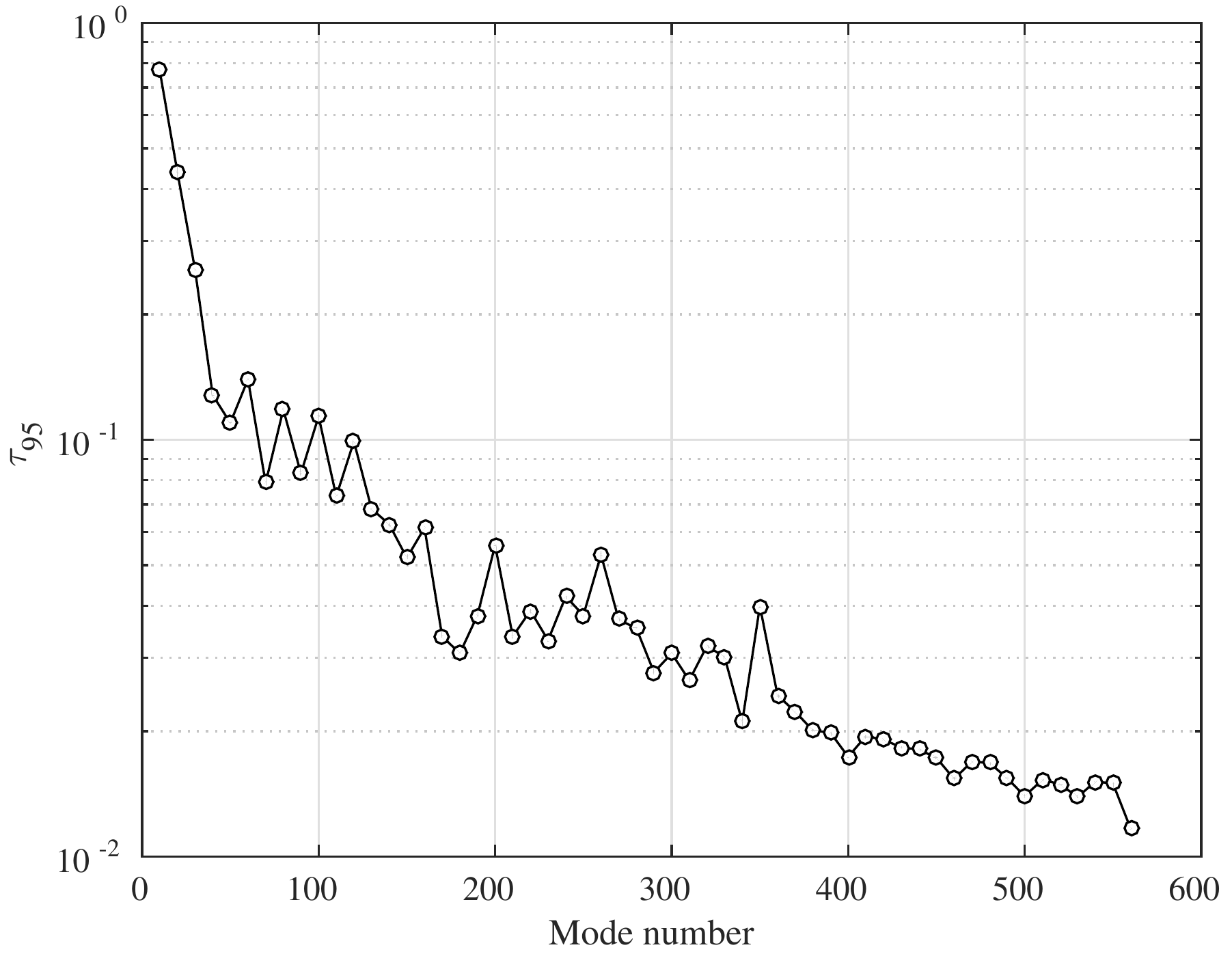}
}
\caption{Spectral content of the POD basis.}
\label{fig:spectralContent}
\end{figure}

\subsubsection{Error bound behavior}\label{sec:experimentErrorBound}

Having verified that higher POD mode numbers correspond to smaller
wavelengths, we now numerically assess quantities related to the error bound
\eqref{eq:auxToPlot2}. 
First, Figure \ref{fig:projErrorPlots} reports the dependence of the maximum
relative projection error $\max_k\relProjErrork{\podstate}{\dt}$ on the time
step
$\dt$ and the basis dimension, where
\begin{equation*}
\relProjErrork{\podstate}{\dt} \defeq
\frac{\|(\matI-\podstate\podstate^T)(\state_\star(k\dt) -
\state_\star((k-1)\dt))\|}{\|\state_\star(k\dt) -
\state_\star((k-1)\dt)\|}\reviewerC{.}
\end{equation*}
 Note that $\relProjErrorNok$ is closely related to  $\bar
\mu^k$ from error bound \eqref{eq:auxToPlot2}, as they are equal if $\vec
x_0 + \vec\Phi\stateRedP(t) = \state(t)$ and the \reviewerA{LSPG} ROM computes
$\stateRedP^k$ such that $\bar \mu^k$ is minimized.
 
These results confirm that adding basis
vectors---which we know has the effect of encoding higher frequency
content---significantly reduces the projection error for small time steps
$\dt$, but has less of an effect on larger time steps, as retaining the first
POD vectors already enables dynamics at that scale to be captured.

Next, Figure \ref{fig:errorboundresult1} plots the error bound
\eqref{eq:auxToPlot2} for a value of $\lipschitzConstant=1$ and with  $\bar \mu^k=
\relProjErrorNok$. This highlights an important result: \emph{selecting an
intermediate time step $\dt$ leads to the lowest error bound, regardless of
the basis dimension.} Even though this result corresponds to the backward
Euler integrator, we expect a similar trend to hold for the present
experiment, which uses the BDF2 scheme. The next section assesses the
performance of the \reviewerA{LSPG} ROM, including its dependence on the time
step.

 \begin{figure}[htbp] 
  \centering 
	\subfigure[Dependence of the maximum relative projection error
	$\max_{k}\bar\mu_\star^k$ on the time step $\dt$ and basis dimension
	$\nstate$]{
	 \includegraphics[width=0.45\textwidth]{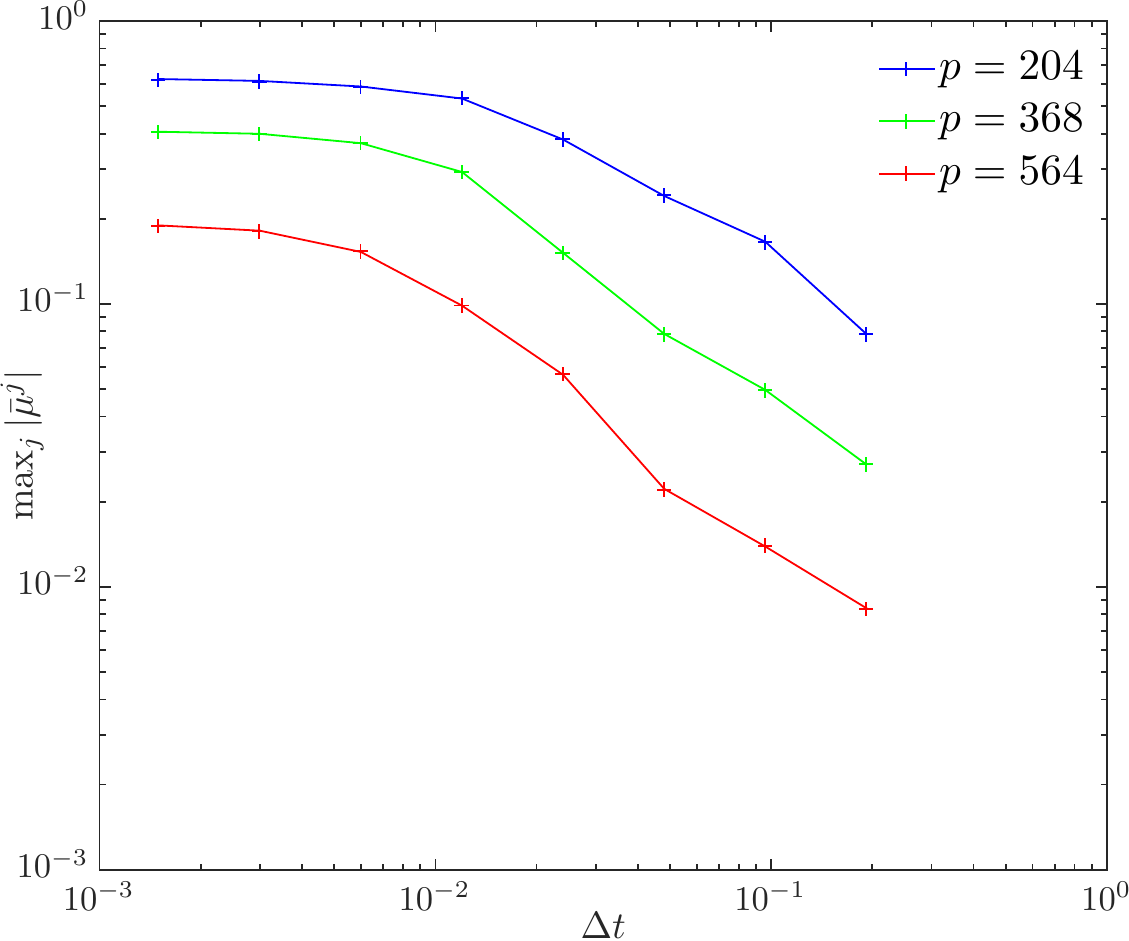} 
	 \label{fig:projErrorPlots}
		}
	\subfigure[\reviewerA{LSPG} error bound for the backward Euler
	scheme \eqref{eq:auxToPlot2} using experimental data for $\bar \mu_\star^k$ and
	setting $\lipschitzConstant=1$ and $\bar \mu^k= \bar \mu_\star^k$.]{
	 \includegraphics[width=0.45\textwidth]{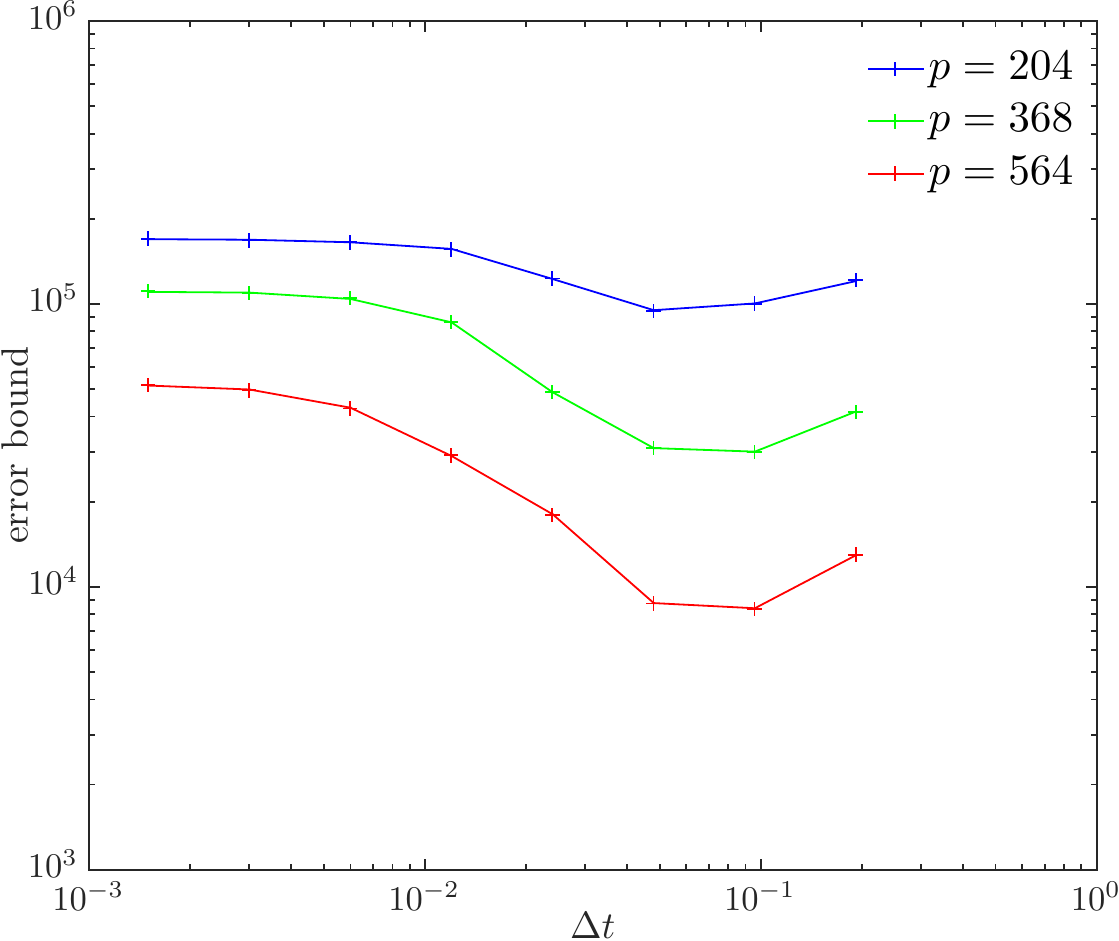} 
		 \label{fig:errorboundresult1} 
		}
	  \caption{Assessment of quantities appearing in error bound
		\eqref{eq:auxToPlot2}. This analysis suggests that an intermediate time step $\dt$ can reduce errors for
		the \reviewerA{LSPG} ROM.} 
		 \label{fig:errorboundresult} 
		  \end{figure} 

\subsection{LSPG ROM performance}

We now compare the accuracy and walltime performance of the \reviewerA{LSPG}
ROM as the dimension of the basis, time step, and time interval
change. The most salient result from Figure \ref{fig:pgSelfCompare} is that
choosing an intermediate time step leads to both better accuracy and
faster simulation times. This shows that our theoretical analysis of the error
bound performed in Section \ref{sec:experimentErrorBound} leads to an actual observed performance
improvement.
For example, consider the $\nstate=564$ case over the
time interval $0\leq t\leq 2.5$. In this case, a time step of
$\dt=1.875\times 10^{-4}$ leads to a relative error of $0.0140$ and a simulation
time of $289$ hours; increasing this value to $\dt=1.5\times 10^{-3}$ reduces the
relative error to $9.46\times 10^{-4}$ and the simulation time to $35.8$
hours, which constitutes roughly an order of magnitude improvement in both
quantities. Again, this supports the theoretical results of Corollary
\ref{corr:auxiliaryProblem} and highlights the critical importance of the
time step for \reviewerA{LSPG} reduced-order models.

In addition, Figure \ref{fig:pgSelfCompare} shows that 
as the basis dimension increases, the
optimal time step decreases; this was anticipated from the spectral analysis
performed in Section \ref{sec:spectralPOD}.
In addition, adding POD basis vectors does not improve accuracy for large time
steps.  We interpret this effect as follows: for larger time steps, the first
few POD modes accurately capture `coarse' phenomena on the scale of the time
step.  Therefore, accuracy improvement is not achieved by adding modes that
encode dynamics that evolve on a time scale finer than the time step itself.

Further, Figure \ref{fig:timeInterval} highlights that as the basis dimension
increases, the error generally decreases, which is an artifact of
\reviewerANew{the monotonic decrease in the FOM O$\Delta$E residual} achieved by the \reviewerA{LSPG} ROM (Remark
\ref{rem:discreteOptAPriori}).  Finally, the figure shows that as the time
interval grows, the optimal time step generally increases. 

\begin{figure}[htbp] 
\centering 
\subfigure[errors for $0\leq t\leq 2.5$]{
\includegraphics[width=0.3\textwidth]{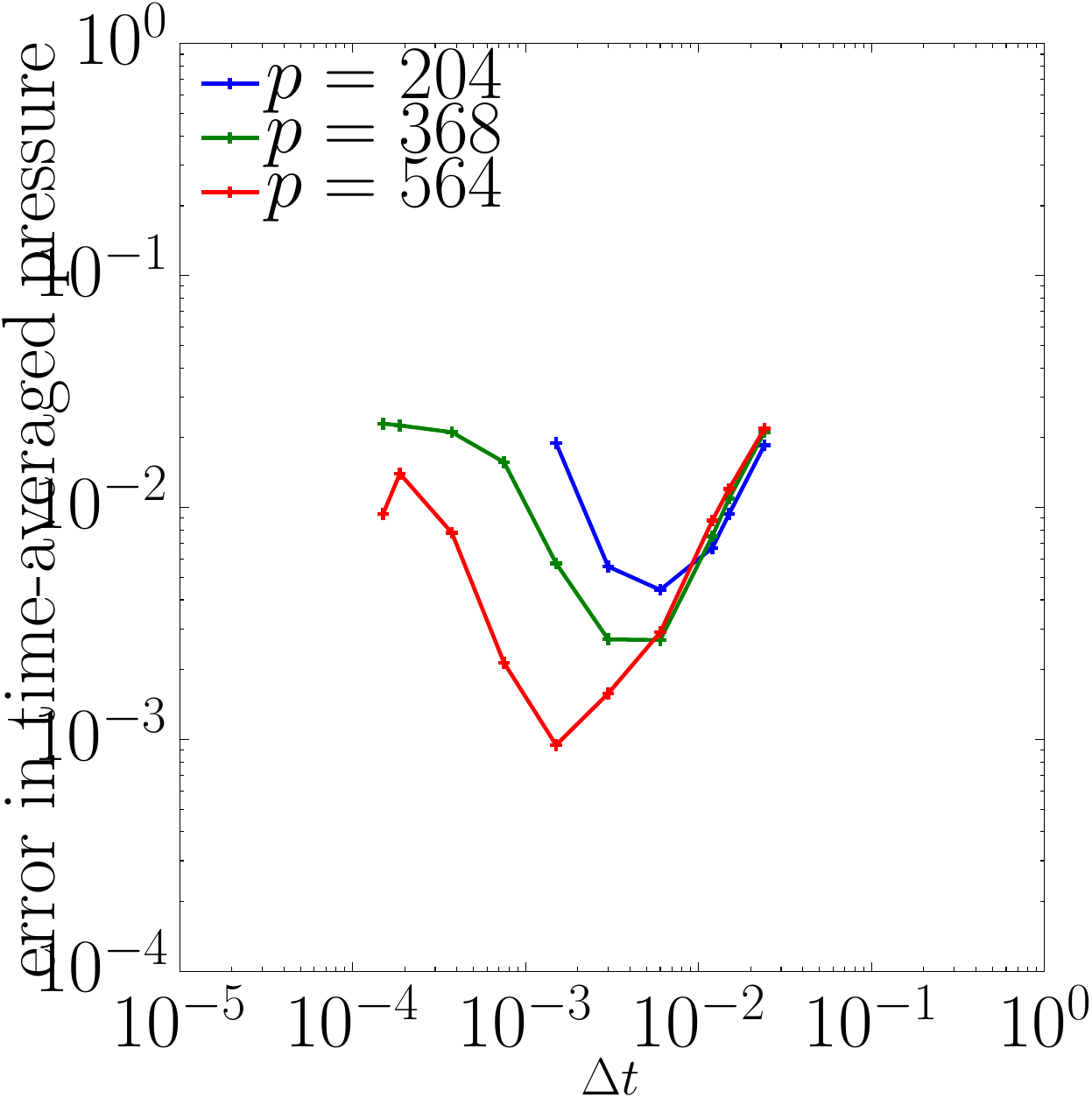}
}
\subfigure[errors for $0\leq t\leq 5.0$]{
\includegraphics[width=0.3\textwidth]{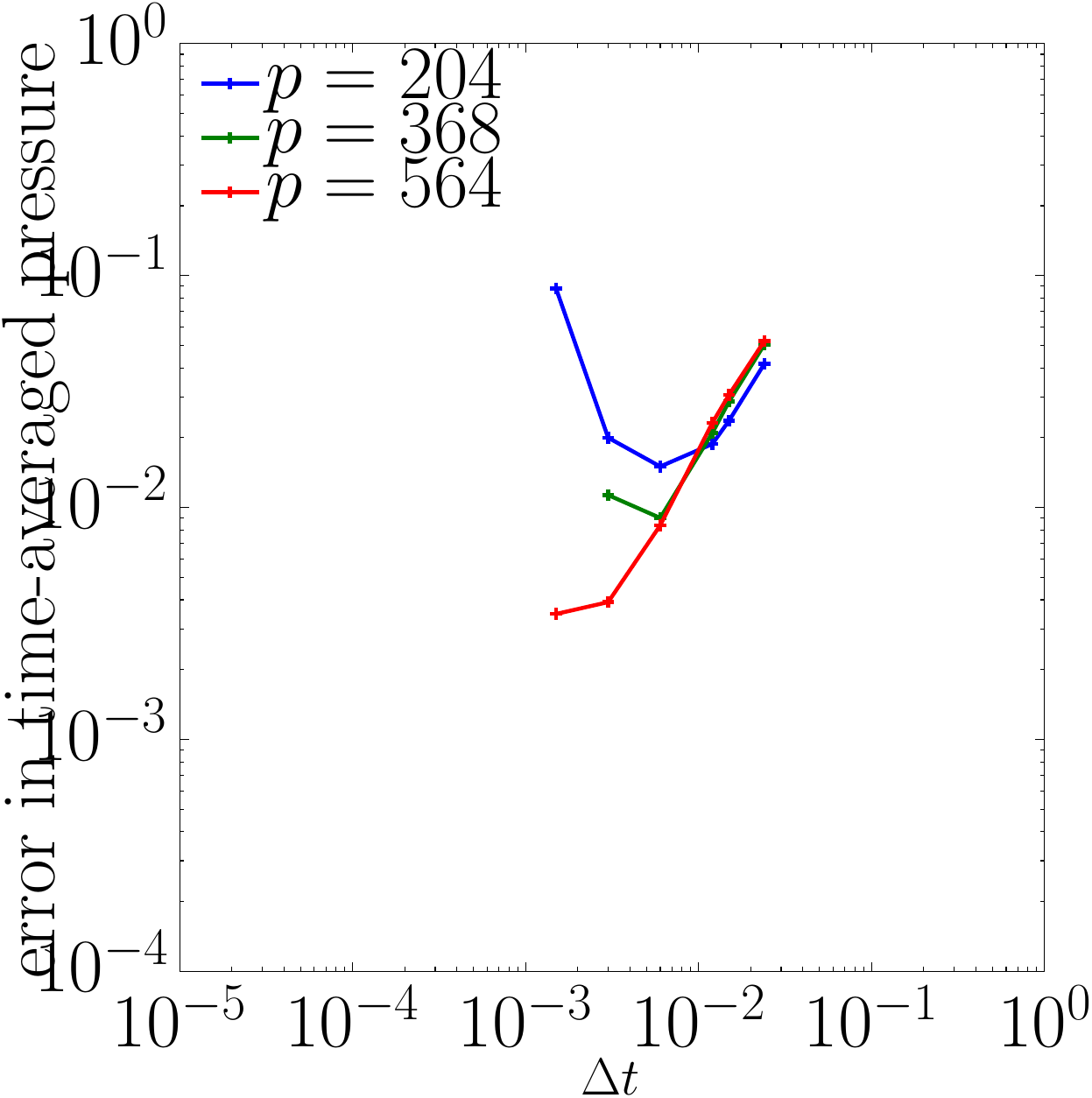}
}
\subfigure[errors for $0\leq t\leq 12.5$]{
\includegraphics[width=0.3\textwidth]{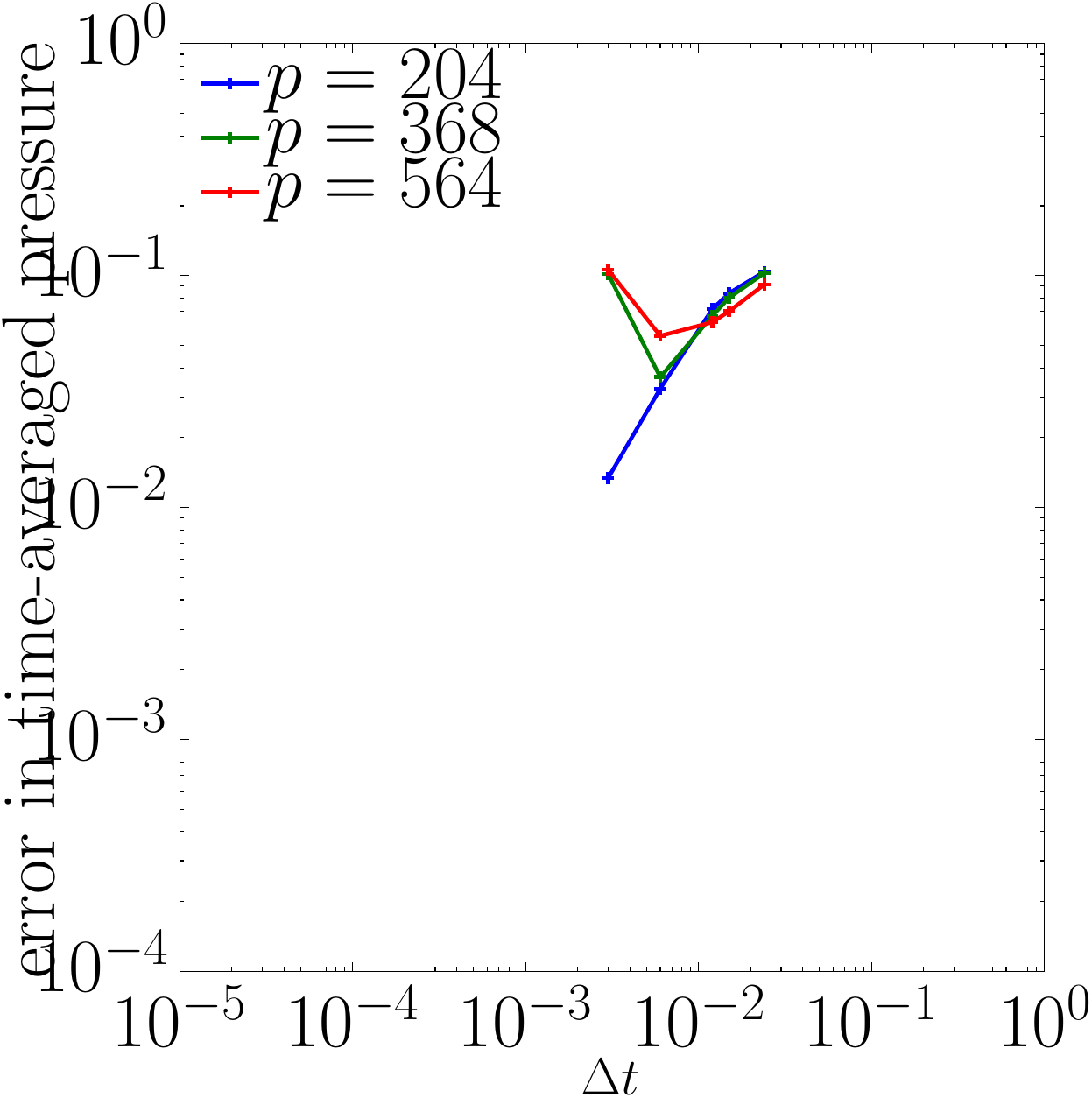}
}
\centering 
\subfigure[simulation times for $0\leq t\leq 2.5$]{
\includegraphics[width=0.3\textwidth]{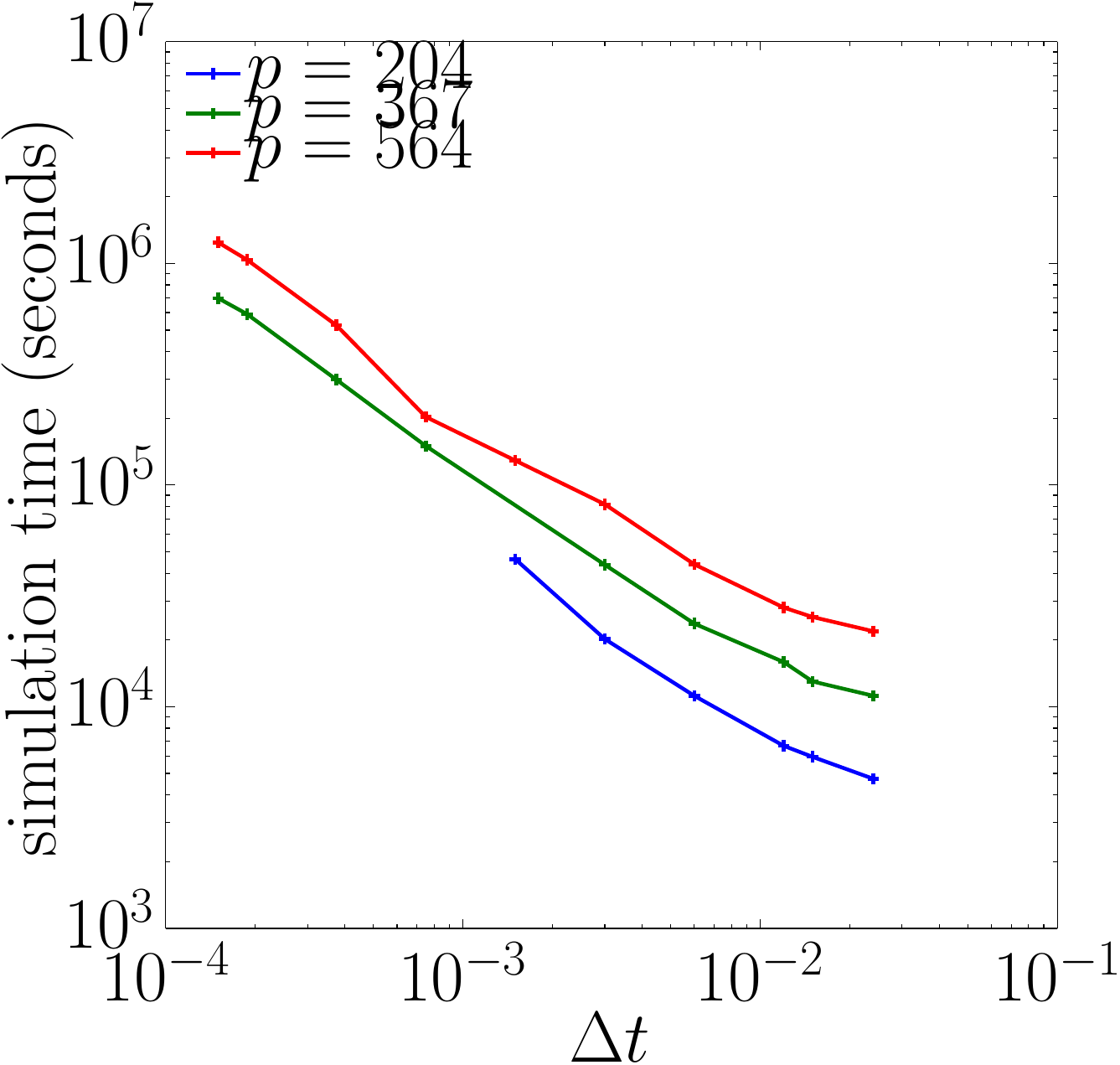}
}
\subfigure[simulation times for $0\leq t\leq 5.0$]{
\includegraphics[width=0.3\textwidth]{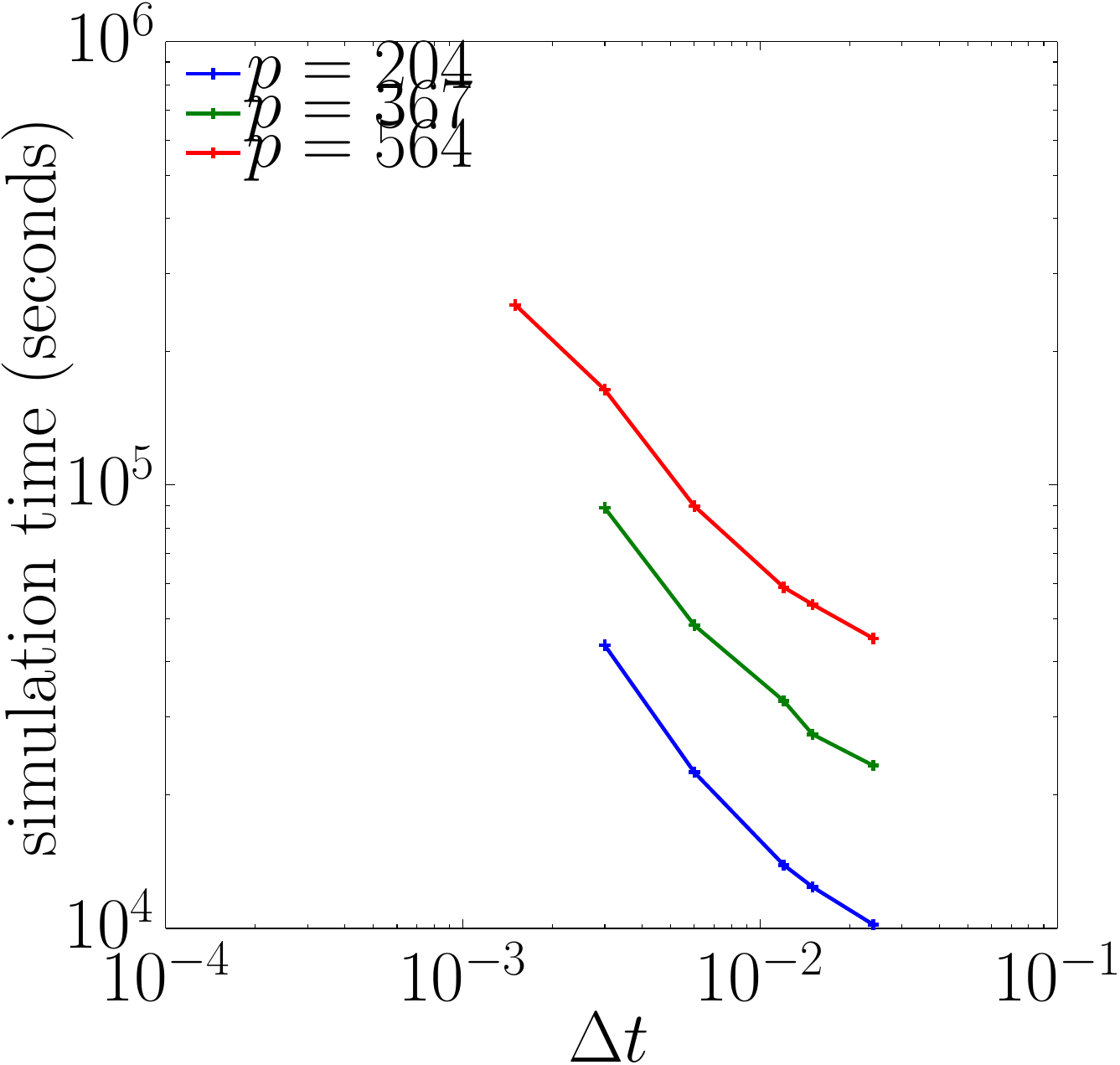}
}
\subfigure[simulation times for $0\leq t\leq 12.5$]{
\includegraphics[width=0.3\textwidth]{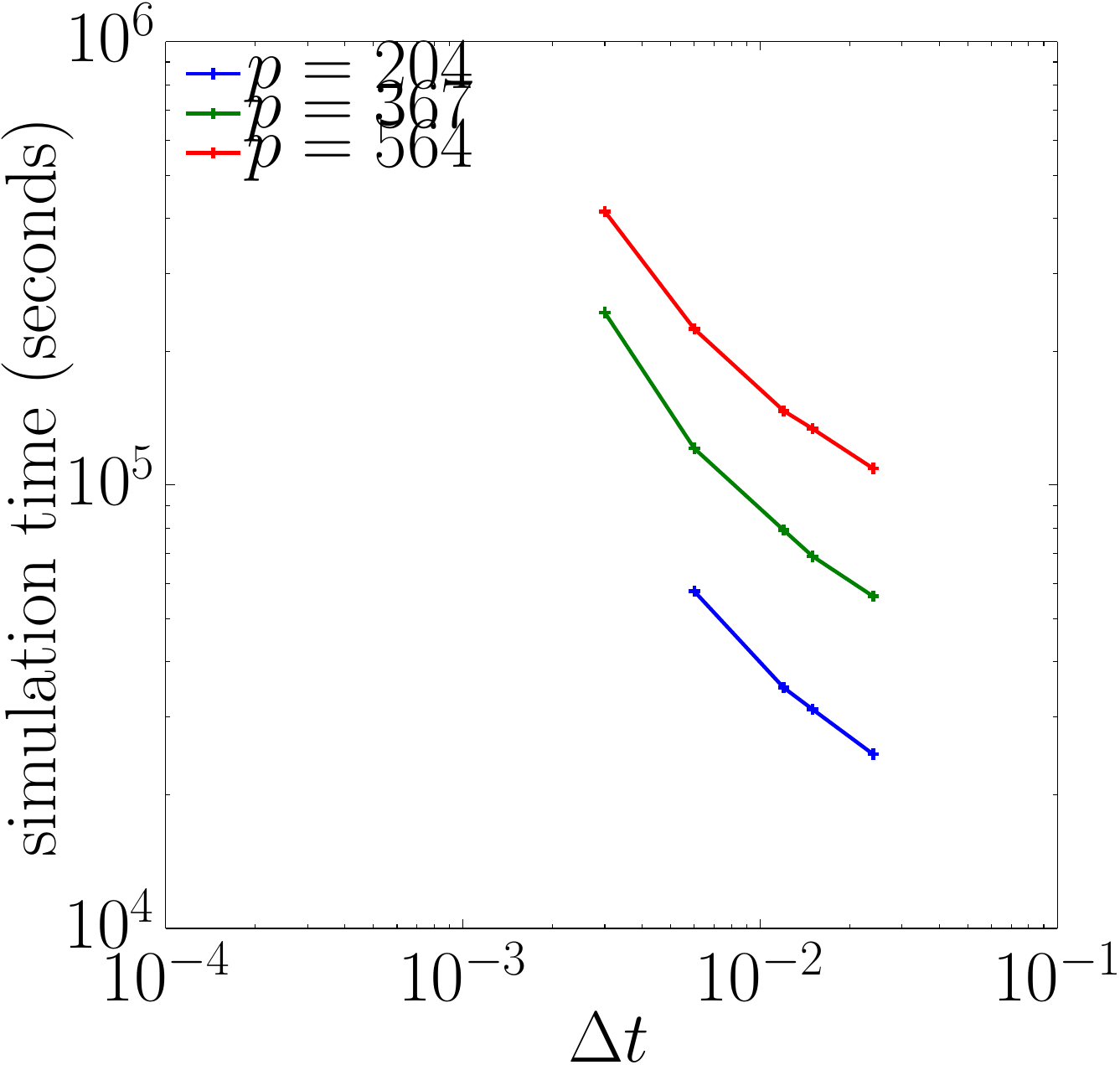}
}

\subfigure[Dependence of optimal time step (in terms of minimizing error)
on time interval and basis dimension]
{\includegraphics[width=0.5\textwidth]{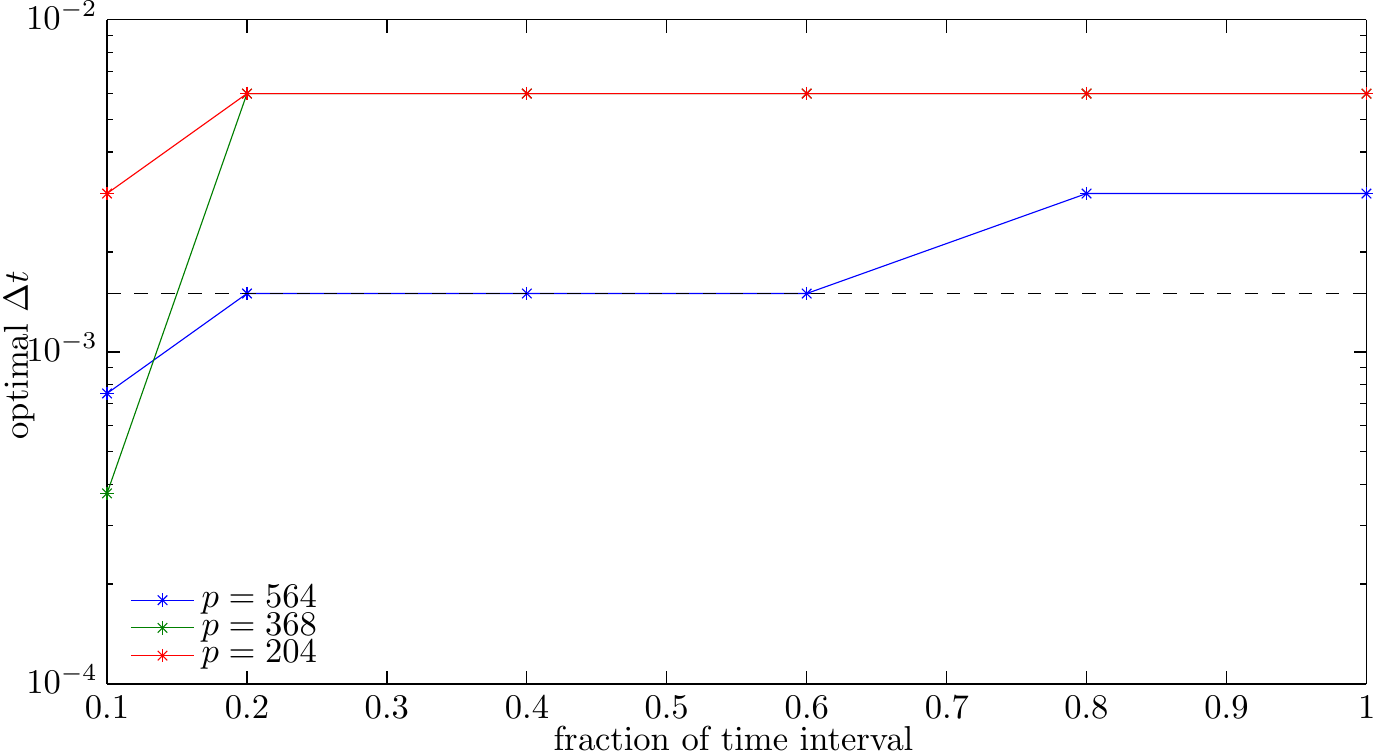} 
\label{fig:timeInterval}
}
\caption{
Dependence of error and simulation time for the \reviewerA{LSPG}
reduced-order model on the time step $\dt$, basis dimension, and time interval
} 
\label{fig:pgSelfCompare} 
\end{figure}

\subsection{GNAT: ROM with complexity reduction}

In this section, we perform a similar study, but equip the \reviewerA{LSPG}
ROM with complexity reduction in order to achieve computational savings. In
particular, we employ the GNAT method
\cite{CarlbergGappy,carlbergHawaii,carlbergJCP}, which solves
Eq.~\eqref{eq:discreteOptLinear} with $\weightingMatrix =
\left(\sampleMat\podres\right)^+\sampleMat$, where $\podres$ is a basis for
the residual and $\sampleMat$ consisting of selected rows of the identity
matrix.

The problem is identical to that described in Section \ref{sec:probDesc}
except that we take $\maxT=5.5$ \reviewerANew{time units} and employ a second-order
space-accurate dissipation scheme wherein a linear variation of the solution
is assumed within each control volume.\footnote{This is done to ensure the
sample mesh requires two layers of neighboring nodes for each sample
node.} For this simulation, the full-order model consumes 5.0 hours on 48
cores across six compute nodes.

To construct the trial basis $\podstate$ and basis for the residual $\podres$
for the GNAT models, we again employ POD.  In particular, we set
	$\podstate\leftarrow \podArgs{\snapsNo}{\energyCrit}$, where $\podArgsNo$ is
	computed via Algorithm \ref{alg:PODSVD} with snapshots consisting of the
	centered full-order model states $\snapsNo =
	\{\state_\star(k\dttruth)-\stateInitialNo\}_{k=1}^{3668}$. An energy criterion of
	$\energyCrit=1-10^{-5}$ ($\nstate = 179$) is used during the experiments.
	For the residual, we employ $\podres\leftarrow
	\podArgs{\snapsNo_\res}{\energyCrit_\res}$ via Algorithm \ref{alg:PODSVD}
	with snapshots $\snapsNo_\res = \{\res^\timestepit (\stateInitialNo +
	\podstate\unknownRed^{\timestepit(k)}),\ k\in\nat{K(\timestepit)},\
	\timestepit\in\nat{2228}\}$ and $\unknownRed^{\timestepit(k)}$ corresponding
	to the \reviewerA{LSPG} ROM solution at Gauss--Newton iteration $k$ within
	time step $\timestepit$ using a time step of $\dt = 6\times 10^{-3}$.  Here,
	$K(\timestepit)$ denotes the number of Newton iterations required for
	convergence of at time instance $\timestepit$. An
	energy criterion of $\energyCrit_\res=1.0$ is employed. In addition, the GNAT model
	sets the Jacobian basis equal to
	residual basis $\podjac=\podres$ and employs 
	$n_s=743$ sample nodes that define $\sampleMat$, which leads to 4458 rows in
	$\sampleMat$ as there are six
	conservation equations per node due to the turbulence model (see
	Ref.~\cite{carlbergJCP} for definitions).


The GNAT implementation in AERO-F is characterized by the sample-mesh concept
\cite{carlbergJCP}. Figure \ref{fig:sampleMesh} depicts the sample mesh for
this problem, which was constructed using $n_c = 2228$ working columns
\cite[Algorithm 3]{carlbergJCP}, and includes two layers of nodes around the
sample nodes (to enable the residual to be computed at the sample nodes). It
is characterized by 7,974 total nodes (4.1\% of the original mesh) and 17,070
total volumes (3.0\% of the original mesh). Due to the small footprint of the
sample mesh, the GNAT simulations are run using only 2 cores on a single
compute node. 

\begin{figure}[htbp] 
\centering 
\subfigure[Full domain]{
	\includegraphics[width=1.0\textwidth]{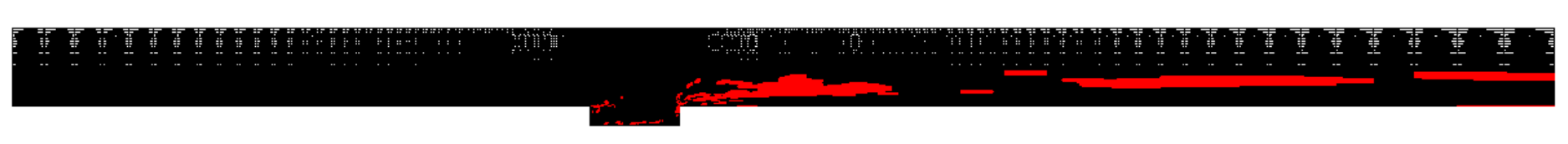} }
\subfigure[Zoom on cavity]{
	\includegraphics[width=0.5\textwidth]{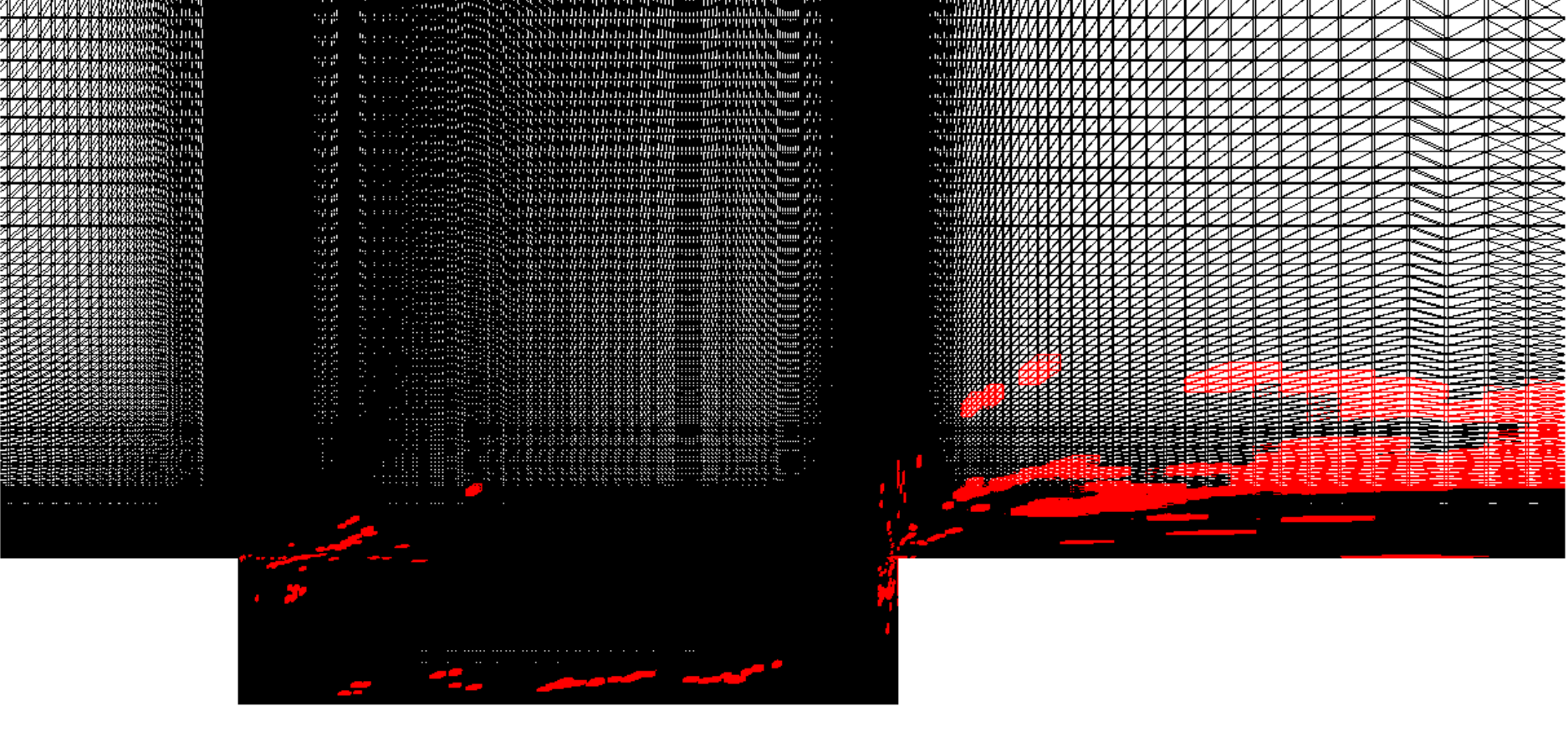} }
\caption{Sample mesh (red) embedded within original mesh. \reviewerC{The
sample mesh is used to to enable low-computational-footprint simulations with
the GNAT reduced-order model.}} \label{fig:sampleMesh} 
\end{figure} 

Figure \ref{fig:gnat} reports the results obtained with the GNAT ROM using
different time steps. Critically, note that the GNAT ROM also exhibits a `dip'
in the optimal time step, with a time step of $6.0\times 10^{-3}$ yielding the
lowest error. In fact, increasing the time step from $1.5\times 10^{-3}$ to
$6.0\times 10^{-3}$ decreases the error from 3.32\% to 2.25\% and also
significantly increases the
computational savings relative to the full-order model (as measured in core--hours) from 14.9 to 55.7. This
highlights that the analysis is also relevant to ROMs equipped with
complexity reduction.
\begin{figure}[htbp] 
\centering 
\subfigure[\reviewerA{responses}]{
\includegraphics[width=0.8\textwidth]{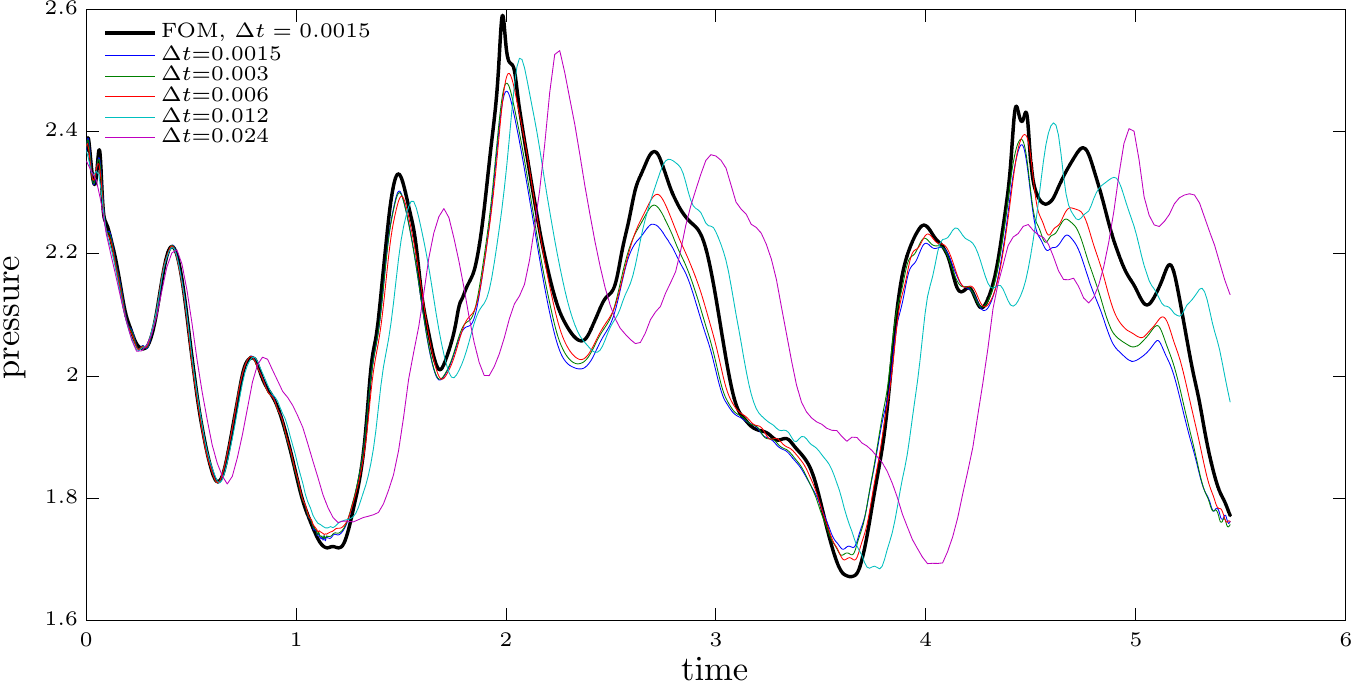} }
\subfigure[\reviewerA{errors}]{
\includegraphics[height=0.3\textwidth]{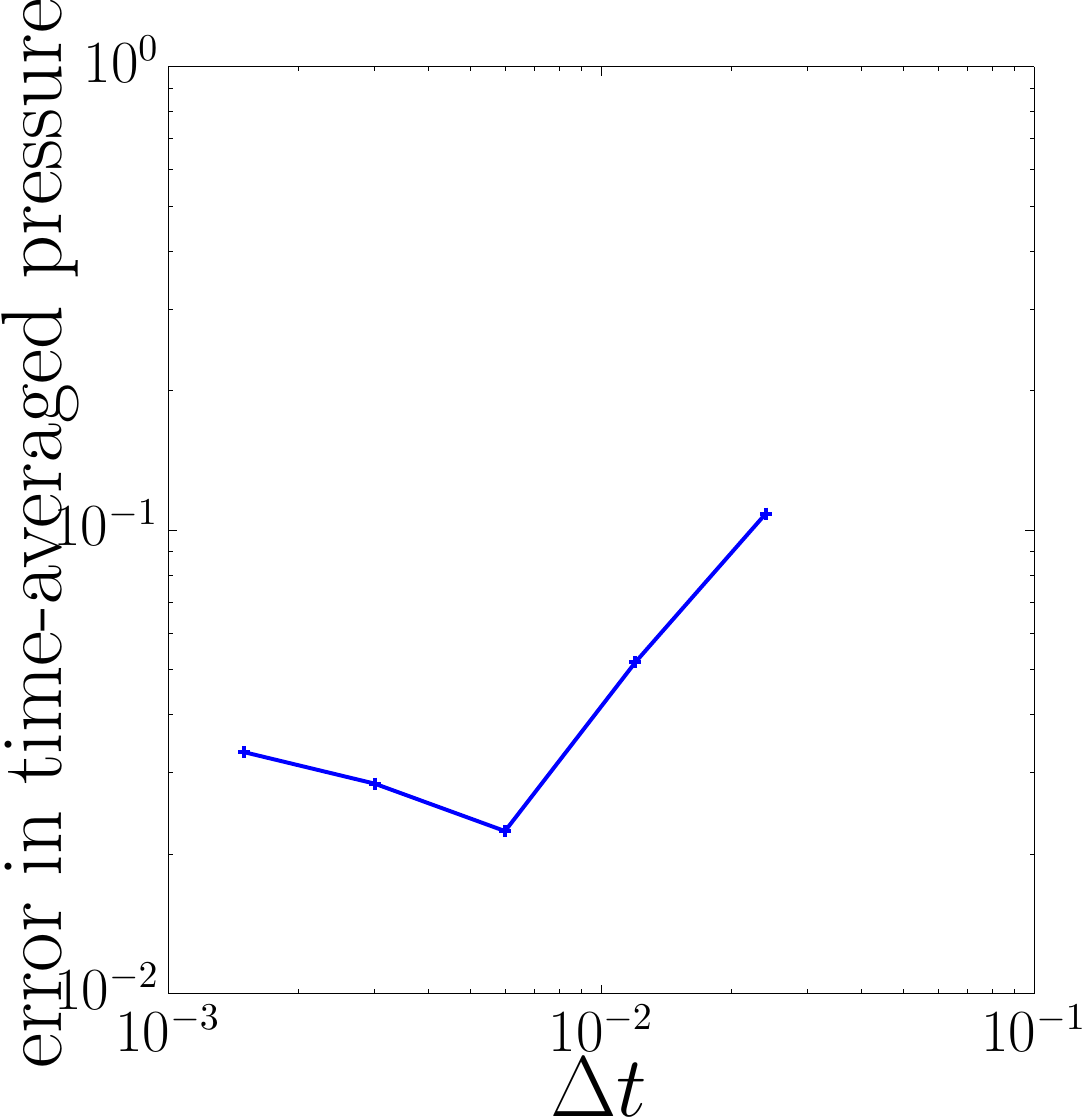} }
\subfigure[\reviewerA{computational savings}]{
\includegraphics[height=0.29\textwidth]{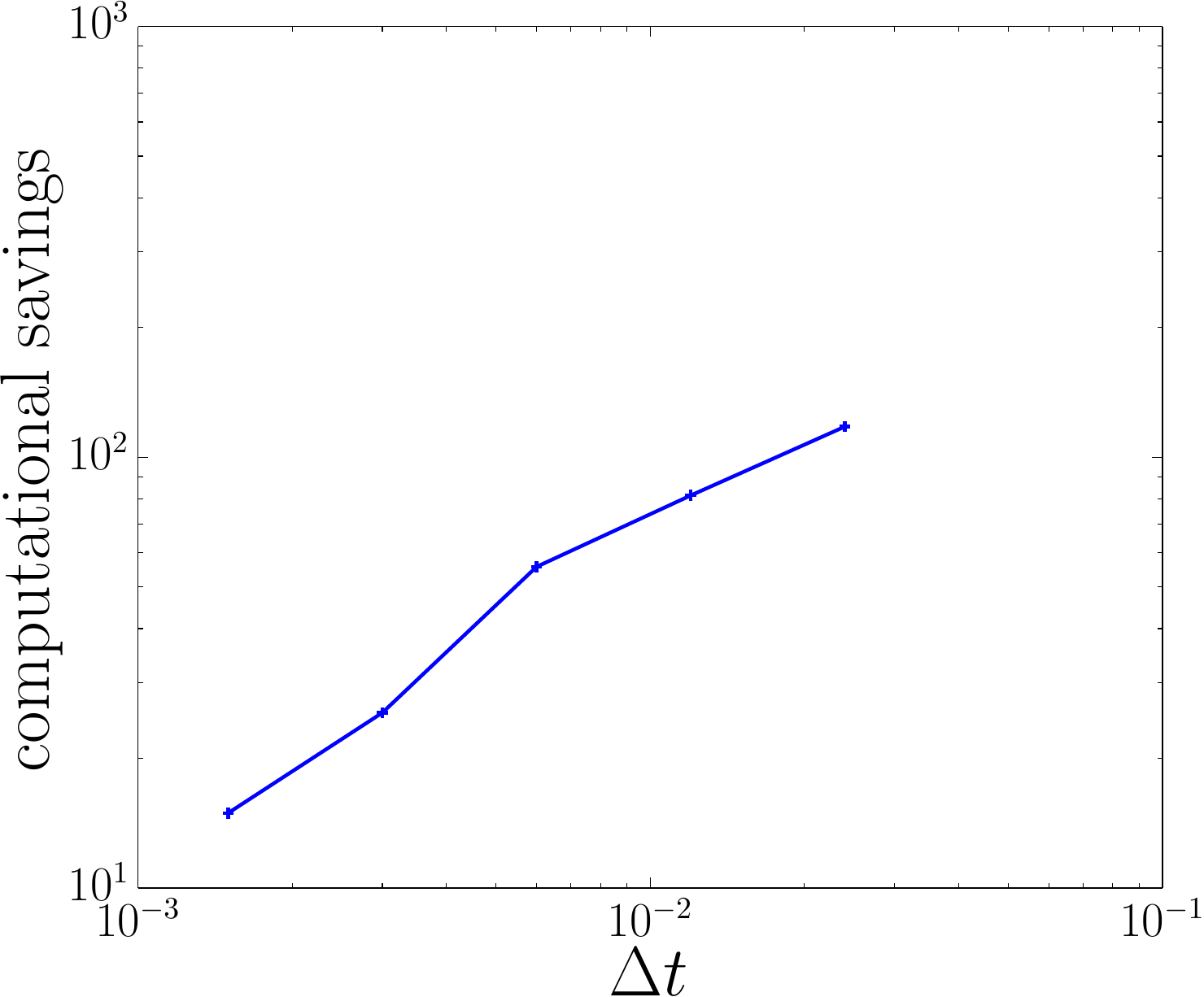} }
\caption{\reviewerA{Responses, errors
$\errorValue(\pressure_{\text{GNAT}},\pressure_{\text{FOM}_\star})$, and
computational savings (as measured in core--hours) produced by the GNAT reduced-order model for different
time steps $\dt $.}} 
\label{fig:gnat} 
\end{figure} 
\subsection{Summary of experimental results}
\noindent We now briefly summarize the main experimental results:
\begin{itemize} 
\item Galerkin ROMs are unstable for long time intervals (Figure
\ref{fig:ROMresponses}).
\item \reviewerA{LSPG} ROMs are only unstable for small time steps (Figure
\ref{fig:ROMresponses}).
\item Galerkin and \reviewerA{LSPG} ROMs are equivalent as $\dt\rightarrow 0$
(Figure \ref{fig:GalDiscOptErr}).
\item \reviewerA{LSPG} ROMs are more accurate than Galerkin ROMs over small
time windows where Galerkin is stable (Figure \ref{fig:galPgErrors}).
\item \reviewerA{LSPG} ROMs are most accurate for an intermediate time step
(Figure \ref{fig:galPgErrors}).
\item Adding POD modes has the effect of including higher-frequency response
components (Figure \ref{fig:spectralContent}).
\item The theoretical error bound for the \reviewerA{LSPG} ROM exhibits the
same time step `dip' as the experimentally observed error (Figure
\ref{fig:errorboundresult}).
\item The optimal time step for the \reviewerA{LSPG} ROM decreases as modes
are added to the POD basis (Figure \ref{fig:pgSelfCompare}).
\item Adding modes to the POD basis has little effect on \reviewerA{LSPG} ROM accuracy for large
time steps (Figure \ref{fig:pgSelfCompare}).
\item The optimal time step for the \reviewerA{LSPG} ROM tends to increase as
the time interval increases (Figure \ref{fig:timeInterval}).
\item The GNAT ROM, which is discrete optimal and is equipped
with complexity reduction, also produces minimal error for an intermediate
time step (Figure \ref{fig:gnat}).
\end{itemize}

\section{Conclusions}\label{sec:conclusions}

This work has performed a comparative theoretical and experimental analysis of
Galerkin and \reviewerA{LSPG} reduced-order models for linear multistep
schemes and Runge--Kutta schemes. We have demonstrated a number of new
findings that have important practical implications, including conditions
under which the \reviewerA{LSPG} ROM has a time-continuous representation,
conditions under which the two techniques are equivalent, and time-discrete
error bounds for the two approaches. 

Perhaps most surprisingly, we demonstrated that decreasing the time step does
not necessarily decrease the error for the \reviewerA{LSPG} ROM. This
phenomenon arose in both the theoretical analysis and in numerical
experiments. In particular, our results suggest that the time step should be
`matched' to the spectral content of the reduced basis. In the experiments, we
showed that increasing the time step to an intermediate value decreased both
the error and the simulation time by an order of magnitude in certain cases.
Alternatively, decreasing the time step cause the \reviewerA{LSPG} ROM to
become unstable for longer time intervals. This highlights the critical
importance of time-step selection for \reviewerA{LSPG} ROMs.

\section*{Acknowledgments}

We thank Prof.\ Stephen Pope for insightful conversations related to comparing
Galerkin and \reviewerA{least-squares Petrov--Galerkin} reduced-order models; these conversations
inspired this work. \reviewerC{We also thank the anonymous reviewers for their
extremely helpful and insightful comments and suggestions.} We also thank Prof.\ Charbel Farhat for permitting us the
use of AERO-F, as well as Julien Cortial, Charbel Bou-Mosleh, and David
Amsallem for their previous contributions in implementing nonlinear
reduced-order models in AERO-F.  K.~Carlberg acknowleges an appointment
to the Sandia National Laboratories Truman Fellowship in National Security
Science and Engineering.  The Truman Fellowship is sponsored by Sandia
National Laboratories. Sandia National Laboratories is a multi-program
laboratory managed and operated by Sandia Corporation, a wholly owned
subsidiary of Lockheed Martin Corporation, for the U.S.\ Department of
Energy's National Nuclear Security Administration under contract
DE-AC04-94AL85000. \reviewerA{H.\ Antil acknowledges the support by the NSF-DMS-1521590.
} The content of this publication does not necessarily
reflect the position or policy of any of these institutions, and no official
endorsement should be inferred.

\section*{Appendix}
Algorithm \ref{alg:PODSVD} reports the algorithm for computing a POD basis
using normalized snapshots.
\begin{algorithm}[htbp]
\caption{Proper-orthogonal-decomposition basis computation (normalized
snapshots)}
\begin{algorithmic}[1]\label{alg:PODSVD}
\REQUIRE Set of snapshots $\snapsNo\equiv\{\w _i\}_{i=1}^\nsnap\subset\RR{\ndof }$,
energy criterion $\energyCrit\in[0,1]$
\ENSURE  $\podArgs{\snapsNo}{\energyCrit}$
\STATE\label{step:SVD} Compute thin singular value decomposition $
\snapmat= \boldsymbol U \boldsymbol \Sigma \boldsymbol V^T $, where
$\boldsymbol W\equiv\left[\w _1/\|\w _1\|\ \cdots\
\w _{n_\w }/\|\w _{n_\w }\|\right]$.
\STATE Choose dimension of truncated basis
$\nstate = \nenergy(\nu)$, where
 \begin{align*} 
 \nenergy(\nu) &\reviewerC{=} \arg\min_{i\in \mathcal V(\nu)}i\\
 \mathcal V(\nu)&\reviewerC{\defeq} \{n\in\nat{\nsnap}\ | \
 \sum_{i=1}^n\sigma_i^2/\sum_{i=1}^{\nsnap}\reviewerA{\sigma_i^2}\geq\nu\},
  \end{align*}
  and $\boldsymbol \Sigma \equiv \text{diag}\left(\sigma_i\right)$.
\STATE $\podArgs{\snapsNo}{\energyCrit}=\vecmat{\boldsymbol u}{\nstate}$,
where $\boldsymbol U
\equiv\vecmat{\boldsymbol u}{\nsnap}$.
\end{algorithmic}
\end{algorithm}

\bibliography{references}
\bibliographystyle{siam}
\end{document}